\documentclass[11pt]{amsart}

\usepackage{amsmath,amsthm,amssymb,mathtools}
\usepackage{color,tikz-cd,hyperref,enumitem}
\hypersetup{colorlinks=true,linkcolor=blue,citecolor=blue,urlcolor=blue,linktocpage}
\usetikzlibrary{patterns}
\addtolength{\textwidth}{+4cm} \addtolength{\textheight}{+2cm}
\hoffset-2cm \voffset-1cm \setlength{\parskip}{5pt}
\setlength{\parskip}{5pt}
\numberwithin{equation}{section}
\setcounter{tocdepth}{1}
\setcounter{secnumdepth}{3}

\newtheorem{thm}{Theorem}[section]
\newtheorem{prp}[thm]{Proposition}
\newtheorem{lem}[thm]{Lemma}
\newtheorem{cor}[thm]{Corollary}

\newtheorem{thm-intro}{Theorem}
\newtheorem{cor-intro}[thm-intro]{Corollary}

\theoremstyle{definition}
\newtheorem{dfn}[thm]{Definition}

\newtheorem{rmk}[thm]{Remark}
 
\theoremstyle{remark}

\newcommand{\mx}[1]{\begin{pmatrix}#1\end{pmatrix}}
\newcommand{\vb}{\,|\,}

\newcommand{\Z}{\mathbb{Z}}

\newcommand{\R}{\mathbb{R}}

\newcommand{\D}{\mathbb{D}}

\newcommand{\cA}{\mathcal{A}}
\newcommand{\cB}{\mathcal{B}}
\newcommand{\cH}{\mathcal{H}}
\newcommand{\cC}{\mathcal{C}}
\newcommand{\cD}{\mathcal{D}}
\newcommand{\cE}{\mathcal{E}}

\newcommand{\cG}{\mathcal{G}}

\newcommand{\cS}{\mathcal{S}}
\newcommand{\cW}{\mathcal{W}}
\newcommand{\cP}{\mathcal{P}}

\newcommand{\cM}{\mathcal{M}}

\newcommand{\colim}{\textup{colim}}

\newcommand{\hocolim}{\textup{hocolim}}
\newcommand{\Cone}{\textup{Cone}}

\newcommand{\Hom}{\textup{Hom}}

\newcommand{\Tw}{\textup{Tw}}

\newcommand{\dgCat}{\textup{dgCat}}

\newcommand{\Ho}{\textup{Ho}}
\newcommand{\intr}{\mathrm{int}}
\newcommand{\s}{\mathrm{sgn}}

\title{The wrapped Fukaya category of plumbings}
\author{Dogancan Karabas}
\address[Dogancan Karabas]{Kavli Institute for the Physics and Mathematics of the Universe (WPI), The University of Tokyo Institutes for Advanced Study, The University of Tokyo, Kashiwa, Chiba 277-8583, Japan}
\email{dogancan.karabas@ipmu.jp}

\author{Sangjin Lee}
\address[Sangjin Lee]{Korea Institute for Advanced Study, 85 Hoegiro Dongdaemun-gu, Seoul 02455, Republic of Korea}
\email{sangjinlee@kias.re.kr}

\begin{document}

\maketitle

\begin{abstract}
	Plumbing spaces have drawn significant attention among symplectic topologists due to their natural occurrence as examples of Weinstein manifolds. 
	In our paper, we provide an explicit general formula for the wrapped Fukaya category of plumbings (with arbitrary grading structure) of cotangent bundles along any quiver, in terms of homotopy types of cotangent bundles.
	The resulting category is freely generated by finitely many morphisms with differentials. 
	Our approach relies on ``local-to-global" computations. Especially, we give a specific presentation of the wrapped Fukaya category of ``plumbing sectors" that serve as local models for the singularities of Lagrangian skeletons of plumbing spaces.
	As corollaries, we explicitly describe the wrapped Fukaya category of plumbing spaces in dimension $4$ and plumbings of $T^*S^n$ for $n \geq 3$. 
	We show that any Ginzburg dg algebra/category of a graded quiver without potential is equivalent to the wrapped Fukaya category of a plumbing of $T^*S^n$ (with the corresponding grading structure).
\end{abstract}

\tableofcontents

\section{Introduction}
\label{section introduction}

Fukaya categories have had numerous significant impacts in symplectic topology and modern mathematics: It is not only a powerful invariant of symplectic manifolds explained in, for example, \cite{Seidel08, Fukaya-Oh-Ohta-Ono09a, Fukaya-Oh-Ohta-Ono09b}, but also related to homological mirror symmetry, Gromov-Witten theory, quantum cohomology, topological quantum field theory, etc. 

One of the obstructions to study Fukaya category is its computational difficulty. 
However, in the case of Weinstein manifolds and their ``wrapped" Fukaya categories, there is a well-established strategy for computation. 
More precisely, Kontsevich conjectured that the wrapped Fukaya category of a Weinstein manifold is equivalent to the microlocal sheaf category of its Lagrangian skeleton.
Since a microlocal sheaf category can be computed via a ``local-to-global" approach, it would be natural to expect that the wrapped Fukaya categories also can be computed in the same manner.
The expectation is proven to be true by Ganatra, Pardon and Shende \cite{gps1, gps2}. 

The aim of the current paper is to compute wrapped Fukaya categories of plumbing spaces via the local-to-global approach, using the techniques established in \cite{hocolim, Karabas-Lee24}.
We note that plumbing spaces admit ``simple" Lagrangian skeletons, where ``simple" refers to the fact that these skeletons are unions of smooth Lagrangian submanifolds intersecting transversally.
This simplicity makes plumbing spaces ideal examples for demonstrating ``local-to-global" computations. 

Plumbing spaces are interesting spaces to study, even without mentioning their simple Lagrangian skeletons. 
In the literature, numerous known results shed light on these spaces. 
For instance, Abouzaid \cite{Abouzaid11} computed the Lagrangian Floer homology of two zero sections in a plumbing space, where the plumbing space is obtained by plumbing two cotangent bundles whose zero sections intersect clearly. 
Abouzaid and Smith \cite{Abouzaid-Smith12} investigated generation results for the compact Fukaya categories of plumbing spaces satisfying some conditions.
Etgu and Lekili \cite{Etgu-Lekili17, Etgu-Lekili19} and Ekholm and Lekili \cite{Ekholm-Lekili17} explored Koszul duality patterns within the wrapped and compact Fukaya categories of plumbing spaces, again satisfying some conditions.
Additionally, Lekili and Ueda \cite{lekili-ueda} studied the homological mirror symmetry of Milnor fibers with simple singularities, which can be seen as a particular class of plumbing spaces.
Recently, Hu, Lau, and Tan \cite{Hu-Lau-Tan24} studied localized mirrors of (framed) Lagrangian immersions in certain plumbing spaces of real dimension four, and Nakajima quiver varieties are constructed as the localized mirrors.  

To be more precise, let us briefly recall the construction of plumbing spaces. 
One can find more detailed explanations on the construction in Section \ref{section plumbing space}. 
Plumbing spaces are constructed by doing surgeries of cotangent bundles. 
The surgery information can be encoded in a quiver with extra data, which we call the {\em plumbing quiver} and {\em plumbing data}.  
More precisely, one can assign a cotangent bundle to each vertex of the given plumbing quiver. 
If there exists an arrow connecting two vertices, then one ``combines" two cotangent bundles assigned to those vertices.
The given plumbing data and the direction of the arrow determine the way of combining two cotangent bundles.

To compute the wrapped Fukaya category of plumbing spaces, we approach plumbing spaces from a ``local-to-global" perspective.
More precisely, each singularity of their Lagrangian skeletons can be viewed as two transversally intersecting half-dimensional disks.
We consider a local model around such singularities, which is referred to as the ``plumbing sector" in \cite[Section 6.2]{gps3}.
Note that, in \cite[Lemma 6.2]{gps3}, Ganatra, Pardon, and Shende provided a homotopy colimit diagram for the wrapped Fukaya category of the plumbing sector of any dimension $2n$.
Then, plumbing spaces can be constructed by gluing plumbing sectors with cotangent bundles.
Therefore, the wrapped Fukaya category of plumbing spaces can be obtained by ``gluing" the wrapped Fukaya category of plumbing sectors and that of cotangent bundles via a homotopy colimit diagram as outlined in \cite[Corollary 6.3]{gps3}.

The wrapped Fukaya category of cotangent bundles can be understood through the work of \cite{Abouzaid12, gps2} and is explained explicitly in Section \ref{section wrapped Fukaya categories of Weinstein sectors} for cotangent bundles of punctured $n$-spheres and surfaces.
We determine the wrapped Fukaya category of the plumbing sector in Section \ref{sec:plumbing-sectors} (Theorem \ref{thm:wfuk-plumbing-3} for $n\geq 3$ and Theorem \ref{thm:wfuk-plumbing-2} for $n=2$) together with the inclusions of the wrapped Fukaya categories of its two boundary sectors $T^*S^{n-1}$, by explicitly computing the homotopy colimit and choosing ``nice'' generators.
Here, by ``nice" generators, we mean those that are the images of the natural generators, i.e., cotangent fibers of $T^*S^{n-1}$, under the inclusion functors above.
We note that the finite-dimensional representations resulting from these computations correspond to constructible sheaves on $\R^n$ with a stratification defined by the origin and its complement.
When $n=2$, these constructible sheaves are also described by \cite[Proposition 4.10]{combinatorics} and studied within the context of perverse sheaves in \cite{verdier-perverse-sheaf}.

In Sections \ref{sec:plumbing-sectors} and \ref{section wrapped Fukaya categories of Weinstein sectors}, all wrapped Fukaya categories, including the inclusion functors for wrapped Fukaya categories of their boundary sectors, are presented in a manner that facilitates explicit computation and simple presentation of the homotopy colimit determining the wrapped Fukaya category of plumbing spaces. Consequently, we prove the following theorem by computing the homotopy colimit in Section \ref{section wrapped Fukaya categories of plumbings}:
\begin{thm}[Theorem \ref{thm:wfuk-plumbing-general}]
	\label{thm main intro}
	Fix $n\geq 2$. Let $P$ be a plumbing of cotangent bundles of connected, oriented $n$-manifolds (with or without boundary) along any quiver with or without negative intersections.
	In other words, there is an arbitrary plumbing data $(Q,M,\s)$ such that $P = P(Q,M,\s)$. 
	Then, the wrapped Fukaya category of $P$ is, up to pretriangulated equivalence, given by
	\[\cW(P)\simeq\begin{cases}
		\cP_2[\{1+y_ex_e\vb e\in E(Q)\}^{-1}] & \text{if $n=2$,}\\
		\cP_n & \text{if $n\geq 3$,}
	\end{cases}\]
	where $\cP_n$ is a semifree dg category given as follows:
	\begin{enumerate}[label = (\roman*)]
		\item {\em Objects:} $L_v$ (representing a Lagrangian cocore dual to $M(v)$) for any $v \in V(Q)$.
		\item {\em Generating morphisms:} There are three types of generating morphisms: 
		\begin{itemize}
			\item For any $v \in V(Q)$, $h_v\colon L_v\to L_v$. 
			\item For any $v\in V(Q)$, the generating morphisms of $ C_{-*}(\Omega_p (M(v)\setminus\text{pt}))$, where \\ $C_{-*}(\Omega_p (M(v)\setminus\text{pt}))$ is considered as a semifree dg algebra, see Remark \ref{rmk:wfuk-plumbing}\eqref{item:based-loop-1} and \eqref{item:based-loop-2}.
			\item For any $e=v\to w\in E(Q)$, $x_e\colon L_v \to L_w,\quad y_e\colon L_w \to L_v$.
		\end{itemize}
		\item {\em Degrees:} $|h_v|=1-n,\quad |x_e|=0,\quad |y_e|=2-n$. 
		\item {\em Differentials:} $d x_e = d y_e= 0$, and
		\[dh_v=\begin{cases}
			\displaystyle \eta_v(z)\prod_{\substack{e= \bullet \to v\\ \s(e)=-1}} (1+x_ey_e)\prod_{e= v\to \bullet} (1+y_ex_e) - \prod_{\substack{e= \bullet \to v\\ \s(e)=1}}(1+x_ey_e) &\text{if $n=2$,}\\
			\displaystyle \eta_v(z)+\sum_{e= v\to \bullet} y_ex_e + \sum_{e= \bullet \to v}(-1)^{n(n-1)/2}\s(e)\, x_ey_e &\text{if $n\geq 3$,}
		\end{cases}\]
	\end{enumerate}
	where $\eta_v(z)\in C_{-*}(\Omega_p (M(v)\setminus\text{pt}))$ is as in \eqref{eq:eta-inclusion}.
\end{thm}
\begin{rmk}
	We would like to note that Theorem \ref{thm:wfuk-plumbing-grading} is a generalized version of Theorem \ref{thm main intro}, since Theorem \ref{thm main intro} deals with wrapped Fukaya categories equipped with the ``standard'' grading structure but Theorem \ref{thm:wfuk-plumbing-grading} considers an arbitrary grading structure.
\end{rmk}

It is worth noting that cotangent bundles can be seen as special plumbing spaces whose plumbing quiver is a point without arrows. 
Abouzaid \cite{Abouzaid12} described the wrapped Fukaya category of cotangent bundles in terms of their homotopy type.
Theorem \ref{thm main intro} can be seen as a generalization of the result in \cite{Abouzaid12} to plumbing spaces.

As corollaries of Theorem \ref{thm main intro}, we provide an explicit computation for plumbing spaces of $T^*S^n$ for $n\geq 3$ (Corollary \ref{cor homotopy colimit for sphere plumbing}) and cotangent bundles of oriented surfaces (Corollary \ref{cor:wfuk-plumbing-surface}), with arbitrary grading structures.
Then, we can see that the resulting wrapped Fukaya categories are equivalent to Ginzburg dg categories (Corollary \ref{cor:ginzburg-equal-wfuk}) and derived multiplicative preprojective algebras (Corollary \ref{cor:multiplicative-preprojective-algebra}), respectively.
As for the other direction, any Ginzburg dg category of a graded quiver without potential or any derived multiplicative preprojective algebra can be realized as such a wrapped Fukaya category.
Consequently, plumbing spaces can be seen as geometric models to investigate Ginzburg dg categories and derived multiplicative preprojective algebras.
For example, we can reprove a result of \cite{keller-calabi-yau} that Ginzburg dg categories are smooth Calabi-Yau categories, see Corollary \ref{cor:ginzburg-smooth-cy}.

From the viewpoint of Homological Mirror Symmetry (HMS) conjecture, Theorem \ref{thm main intro} provides the $A$-side of HMS.
Recall that HMS suggests an equivalence between the (wrapped) Fukaya category of $X$ and the category of coherent sheaves on a mirror variety $X^\vee$ (or LG model, etc.)
A future direction involves finding a variety $X^\vee$ on $B$-side such that the category of coherent sheaves on $X^\vee$ is equivalent to the computed wrapped Fukaya category of a plumbing $X$. 
Especially, if one considers a plumbing of $T^*S^n$, then the category on $B$-side should be equivalent to the Ginzburg dg category of a graded quiver without potential, which can be seen as a generalization of the result in \cite{lekili-ueda}.
%

Before concluding the introduction, it is worth noting that the wrapped Fukaya categories of ``some" plumbing spaces are known by \cite{Etgu-Lekili17, Etgu-Lekili19, Asplund21}. 
More specifically, Etgu and Lekili \cite{Etgu-Lekili17, Etgu-Lekili19} computed wrapped Fukaya categories of plumbing spaces of {\em dimension $4$}, and Asplund \cite{Asplund21} computed wrapped Fukaya categories of plumbing spaces of $T^*S^n$ with $n \geq 3$ without considering intersection signs of plumbing points. 

In the current paper, we are employing a different method from \cite{Etgu-Lekili17, Etgu-Lekili19, Asplund21}. 
The previous works used Chekanov-Eliashberg dg algebras of Legendrians for the computation, but we follow a local-to-global approach. 
As a consequence, we can take care of broader cases; for example, Theorem \ref{thm main intro} computes wrapped Fukaya categories with nonstandard grading structures or arbitrary commutative ring coefficients, rather than being limited to particular grading structures or field coefficients.

\subsection{Acknowledgment}
\label{subsection acknowledgment}
We extend our deepest gratitude to the anonymous referee and the editor for their invaluable suggestions and insightful comments.

The first-named author is supported by World Premier International Research Center Initiative (WPI), MEXT, Japan.
The second-named author is supported by a KIAS Individual Grant (MG094401) at Korea Institute for Advanced Study.

\section{Preliminaries}

We review categorical preliminaries in Section \ref{sec:preliminaries-dg-categories} and symplectic preliminaries in Section \ref{sec:preliminaries-symplectic}.
We point out that \cite[Section 2]{Karabas-Lee24} covers similar material to Section \ref{sec:preliminaries-dg-categories} with some overlaps, but in greater detail. 

\subsection{Dg categories}\label{sec:preliminaries-dg-categories}
A {\em differential graded (dg) category} is a category enriched over the symmetric monoidal category of complexes over a fixed commutative ring $k$.
It can also be viewed as an $A_{\infty}$-category in which compositions of order greater than 2 are set to vanish.
For further details on dg categories, readers may refer to \cite{dgcat}, and for a review of $A_{\infty}$-categories, one can consult \cite{seidel}.
We will use $\hom^*$ for the morphism (cochain) complex, $d$ for differential, $\circ$ for compositions of morphisms, and we will omit it whenever it is convenient.
When introducing a dg category, we will follow the convention of providing the following five items:
\begin{enumerate}[label = (\roman*)]
	\item {\em Objects:} We list the objects in the category.
	\item {\em Generating morphisms:} We give a set of generating morphisms. They generate all the morphisms as an algebra, not as a module. We will not explicitly mention the existence of identity morphisms, but it should be understood that every object has the identity endomorphism.
	\item {\em Degrees:} For each generating morphism, we specify its degree.
	\item {\em Differentials:} For each generating morphism, we specify its differential.
	\item {\em Relations:} We specify the relations between generating morphisms. This item will be omitted if the generating morphisms freely generate all other morphisms.
\end{enumerate}

\subsubsection{Weak equivalence}

Let $\dgCat$ denote the category of small dg categories, where morphisms are dg functors. We aim to invert certain dg functors, referred to as {\em weak equivalences}, in $\dgCat$. 
The resulting categories can be studied by introducing {\em model structures} on $\dgCat$, making $\dgCat$ a {\em model category}. See \cite{tabuada-model} for a review of model structures for $\dgCat$. 

First, we introduce the following notation and definition:
\begin{itemize}
	\item For a given dg category $\cC$, $\Tw\,\cC$ is the dg category of twisted complexes in $\cC$, which is a pretriangulated envelope of $\cC$.
	\item A {\em pretriangulated equivalence} is a dg functor from $\cC$ to $\cD$, which induces a quasi-equivalence from $\Tw \, \cC$ to $\Tw \, \cD$.
\end{itemize}

Then, we choose weak equivalences as either {\em quasi-equivalences} or {\em pretriangulated equivalences}.
By inverting quasi-equivalences (resp.\ pretriangulated equivalence), we obtain the homotopy category associated to the Dwyer--Kan model structure (resp.\ quasi-equiconic model structure) $\Ho_{qe}(\dgCat)$ (resp.\ $\Ho_{tr}(\dgCat)$).

We say two dg categories are equivalent in the following sense: 
\begin{dfn}
	\label{dfn:equivalence}
	Let $\cC$ and $\cD$ be dg categories.
	\begin{enumerate}
		\item $\cC$ and $\cD$ are {\em quasi-equivalent} if there is a chain of dg categories and dg functors
		\[\cC\xleftarrow{\sim}\cC'\xrightarrow{\sim}\cD\]
		for some dg category $\cC'$, where each dg functor in the chain is a quasi-equivalence.
		
		\item $\cC$ and $\cD$ are {\em pretriangulated equivalent} if $\Tw\,\cC$ and $\Tw\,\cD$ are quasi-equivalent.
	\end{enumerate}
\end{dfn}

Using Definition \ref{dfn:equivalence}, one can define the notion of {\em generation}.
\begin{dfn} Let $\cC$ be a dg category. Let $\{L_i\}$ be a collection of objects in $\cC$, and let $\cD$ be the full dg subcategory of $\cC$ whose object set is $\{L_i\}$. Then, $\{L_i\}$ {\em generates} $\cC$ if $\cD$ is pretriangulated equivalent to $\cC$.
\end{dfn}

\subsubsection{Semifree dg category and homotopy colimit}
\label{subsubsection semifree dg category and homotopy colimit}
We introduce a particular class of dg categories and dg functors that will play a crucial role throughout the paper.
For more details, see \cite{hocolim, Karabas-Lee24}.
\begin{dfn}\label{dfn:semifree}\mbox{}
	\begin{enumerate}
		\item A dg category $\cC$ is called {\em semifree} if its morphisms, treated as a non-commutative algebra, are freely generated (as a non-commutative algebra) by a set of morphisms $\{f_i\}$ (indexed by an ordinal), with the condition that $df_i$ is generated by the set $\{f_j\vb j<i\}$.
		
		\item\label{item:semifree-extension} A dg functor $F\colon \cC\to\cD$ is called a {\em semifree extension} by a set of objects $R$ and a set of morphisms $S=\{f_i\}$ if it satisfies the following conditions:
		\begin{itemize}			
			\item $F$ is an inclusion.						
			\item The objects of $F(\cC)$, along with $R$, form the objects of $\cD$.						
			\item The morphisms of $\cD$, treated as an algebra, can be expressed as a free non-commutative extension of the morphisms of $F(\cC)$ by $\{f_i\}$ (indexed by an ordinal), with the condition that $df_i$ is generated by the morphisms of $F(\cC)$ and $\{f_j\vb j<i\}$.		
		\end{itemize}			
		\item A dg category $\cD$ is called a {\em semifree extension} of a dg category $\cC$ by a set of objects $R$ and a set of morphisms $S$ if there exists a semifree extension $F\colon\cC\to\cD$ as in Definition \ref{dfn:semifree}\eqref{item:semifree-extension}.
	\end{enumerate}
\end{dfn}
It is worth noting that a semifree dg category has no relations under composition.
But relations arise on the level of homology, since a semifree dg category can have a non-trivial differential.
We also note that every dg category has a semifree resolution (see \cite{drinfeld}).
In the most of the paper, dg categories are semifree.
In the rest of Section \ref{subsubsection semifree dg category and homotopy colimit}, we discuss the advantages of considering semifree dg categories. 
Finally, we would like to point out that semifree dg categories are fibrant/cofibrant objects in Dwyer--Kan model structure and quasi-equiconic model structure.
See, for example, \cite{drinfeld, hocolim}.

When $\cC$ is a dg category, and $S$ is a subset of closed degree zero morphisms in $\cC$, there exists a dg category $\cC[S^{-1}]$, known as the {\em dg localization} of $\cC$ at the morphisms in $S$. This localization is essentially obtained from $\cC$ by inverting morphisms in $S$. For a precise definition, one can refer to sources such as \cite{toen} or \cite{hocolim}. The dg localization is unique up to quasi-equivalence, and its existence is established in \cite{toen-morita}.
One advantage of considering semifree dg category $\cC$ is that we can explicitly describe $\cC[S^{-1}]$.
\begin{prp}[\cite{hocolim}]\label{prp:localise-semifree}
	When $\cC$ is a semifree dg category, and $S = \{g_i \colon A_i \to B_i\}$ is a subset of closed degree zero morphisms in $\cC$, the dg localization $\cC[S^{-1}]$ can be viewed as the semifree extension of $\cC$ by the morphisms $g'_i, \hat{g}_i, \check{g}_i, \bar{g}_i$
		\[\begin{tikzcd}
			A_i\ar[loop left,"\hat g_i"]\ar[r,"g_i"]\ar[r,"\bar g_i",bend left=60] & B_i\ar[l,"g'_i",bend left=30]\ar[loop right,"\check g_i"]
		\end{tikzcd}\]
		for each $i$, whose gradings and differentials are 
		\begin{gather*}
			|g_i'|=0, |\hat g_i|=|\check g_i|=-1, |\bar g_i|=-2, \text{  and  }
			dg_i'=0, d\hat g_i=1_{A_i}-g_i'g_i, d\check g_i=1_{B_i}-g_ig_i', d\bar g_i=g_i\hat g_i - \check g_ig_i .
		\end{gather*}
\end{prp}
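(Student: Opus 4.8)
The plan is to verify directly that the proposed semifree extension $\cC'$ of $\cC$ — obtained by adjoining $g'_i,\hat g_i,\check g_i,\bar g_i$ with the stated degrees and differentials — satisfies the universal property of the dg localization $\cC[S^{-1}]$ up to quasi-equivalence, and then invoke the uniqueness of dg localizations (from \cite{toen-morita}) to conclude. First I would check that $\cC'$ is a well-defined dg category: the only thing to verify is $d^2=0$ on the new generators. This is immediate for $g'_i$, and for $\hat g_i,\check g_i$ we get $d^2\hat g_i = -d(g'_ig_i) = -(dg'_i)g_i \mp g'_i(dg_i)=0$ since $g_i,g'_i$ are closed; for $\bar g_i$ one computes
\[
d^2\bar g_i = d(g_i\hat g_i)-d(\check g_i g_i) = g_i(1_{A_i}-g'_ig_i)-(1_{B_i}-g_ig'_i)g_i = g_i - g_ig'_ig_i - g_i + g_ig'_ig_i = 0,
\]
using that $g_i$ is closed of degree zero (so Leibniz has no sign) and associativity. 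So $\cC'$ is a legitimate semifree extension of $\cC$, and in particular the inclusion $\cC\hookrightarrow\cC'$ is a cofibration in the standard model structure on $\dgCat$.

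Next I would establish that $\cC\to\cC'$ does invert $S$: in $H^0(\cC')$ the classes $[g_i]$ and $[g'_i]$ are mutually inverse because $d\hat g_i$ and $d\check g_i$ exhibit $1_{A_i}-g'_ig_i$ and $1_{B_i}-g_ig'_i$ as nullhomotopic. Hence the canonical functor $\cC\to\cC'$ factors through $\cC[S^{-1}]$, giving a comparison functor $\Phi\colon \cC[S^{-1}]\to \cC'$ (after choosing a model for the localization). The core of the argument is then to show $\Phi$ is a quasi-equivalence. The cleanest route is the universal property itself: for any dg category $\cD$, one shows that the restriction map
\[
\mathrm{Map}_{\dgCat}(\cC',\cD)\longrightarrow \mathrm{Map}_{\dgCat}(\cC,\cD)
\]
is a weak equivalence onto the union of components consisting of functors sending each $g_i$ to a quasi-isomorphism. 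Because $\cC'$ is a pushout of $\cC$ along the cofibrations $k[\Delta^1]\hookrightarrow k[\mathrm{cyl}]$ (the inclusion of the free dg category on one arrow into the ``contractible interval'' dg category that formally inverts it — precisely the data $g_i,g'_i,\hat g_i,\check g_i,\bar g_i$ encode a chain homotopy equivalence data between $A_i$ and $B_i$), this mapping-space statement reduces to the single computation that $k[\Delta^1]\to k[\mathrm{cyl}]$ presents the localization of the free arrow category, which is standard (this is the dg analogue of the Dwyer–Kan localization of a single arrow, worked out in \cite{toen} and \cite{hocolim}). Assembling over all $i$ by a (possibly transfinite) composition of such pushouts and using that cofibrations are stable under transfinite composition yields the claim.

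The main obstacle — or rather the point requiring the most care — is \textbf{getting the homotopy-theoretic bookkeeping right}: one must check that the pushout defining $\cC'$ is a \emph{homotopy} pushout (guaranteed here precisely because $\cC$ is semifree, hence cofibrant, and the maps being pushed out are cofibrations), and that the transfinite assembly over the index set of $S$ does not disturb this. A secondary subtlety is purely sign-theoretic: with the grading conventions $|\hat g_i|=|\check g_i|=-1$, $|\bar g_i|=-2$ and the Koszul sign rule, one must confirm the Leibniz signs in $d\bar g_i = g_i\hat g_i-\check g_i g_i$ are exactly those that make $d^2=0$ and that make $\bar g_i$ witness the equality of the two homotopies $g_i\hat g_i$ and $\check g_i g_i$ (each of which is a homotopy between $g_i$ and $g_ig'_ig_i$); this is the content of the displayed $d^2\bar g_i$ computation above and is where a wrong sign would be caught. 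Everything else — associativity of the free algebra, the identification $H^0(\cC')\cong H^0(\cC)[S^{-1}]$ on objects and morphisms — is formal once the homotopy pushout description is in hand, and indeed this proposition is quoted from \cite{hocolim}, so the proof here would mostly recall that argument in the notation of the present paper.
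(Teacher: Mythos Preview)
The paper does not supply its own proof of this proposition; it is simply quoted from \cite{hocolim}, as you yourself note at the end of your proposal. So there is no ``paper's own proof'' to compare against here. Your sketch is a faithful and correct reconstruction of the standard argument: verify $d^2=0$ on the new generators, observe that the added data constitute precisely a homotopy-equivalence datum making each $g_i$ invertible in $H^0$, and then appeal to the universal property of dg localization together with the fact that semifree extensions are cofibrations (so the relevant pushouts are homotopy pushouts). The $d^2\bar g_i$ computation and the sign discussion are both fine.
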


Similar to localizations, semifreeness allows us to have an explicit formula for colimits.
Moreover, because of the semifreeness, the colimits are homotopy colimits that we would like to compute. 
See Remark \ref{rmk colimit}.
\begin{prp}[\cite{hocolim}]\label{prp:colimit-dg}
	For dg categories $\cA, \cB, \cC$, with a dg functor $\alpha\colon\cC\to\cA$ and a semifree extension $\beta\colon\cC\to\cB$ by a set of objects $R$ and a set of morphisms $S$, the colimit
	\[\cD:=\colim\left(
	\begin{tikzcd}
		\cA&\cC\ar[l,"\alpha"]\ar[r,"\beta"'] & \cB	 
	\end{tikzcd}\right)\]
	is the semifree extension of $\cA$ by the same sets $R$ and $S$. In $\cD$, the domains, codomains, gradings, and differentials of the morphisms in $S$ are determined in a straightforward manner.
\end{prp}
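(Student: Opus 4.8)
The plan is to verify directly that the explicitly described semifree extension satisfies the universal property of the pushout in $\dgCat$. First I would set up the candidate dg category $\cD$. Its objects are the pushout of object sets: since $\beta$ is a semifree extension, $\beta$ is an inclusion and $\Ob\cB=\beta(\Ob\cC)\sqcup R$, so the set-level pushout of $\Ob\cA\xleftarrow{\alpha}\Ob\cC\xrightarrow{\beta}\Ob\cB$ is simply $\Ob\cA\sqcup R$. For morphisms I take $\cD$ to be the free extension of $\cA$ by the generators $S=\{f_i\}$, where each $f_i$ is given source and target obtained by applying $\alpha$ to its source and target in $\cB$ (or the corresponding element of $R$), the same degree as in $\cB$, and differential obtained from the expression of $df_i$ in $\cB$ — a word in morphisms of $\cC$ and earlier $f_j$'s — by pushing the $\cC$-morphisms forward along $\alpha$. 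One checks $d^2=0$ on $\cD$ from $d^2=0$ in $\cB$ together with the fact that $\alpha$ is a dg functor, so $\cD$ is a genuine dg category and, tautologically, a semifree extension of $\cA$ by $R$ and $S$.

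Next I would construct the two structure functors. The inclusion $\iota_\cA\colon\cA\hookrightarrow\cD$ is evident. To define $\iota_\cB\colon\cB\to\cD$ I use that a dg functor out of a semifree extension $\cC\hookrightarrow\cB$ is precisely the data of a dg functor out of $\cC$ together with compatible images of the new objects $R$ and the new generators $S$: here I take $\cC\xrightarrow{\alpha}\cA\hookrightarrow\cD$ on $\cC$, the identity on $R$, and $f_i\mapsto f_i$ on $S$. The required compatibility $\iota_\cB(df_i)=d\,\iota_\cB(f_i)$ holds by the very definition of the differential on $\cD$. It is then immediate from the constructions that $\iota_\cA\circ\alpha=\iota_\cB\circ\beta$, so the square commutes.

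Then I would check the universal property. Given a dg category $\cE$ with dg functors $\phi\colon\cA\to\cE$ and $\psi\colon\cB\to\cE$ satisfying $\phi\alpha=\psi\beta$, I define $\theta\colon\cD\to\cE$ by $\phi$ on the objects and morphisms of $\cA$, by $\psi$ on $R$, and by $f_i\mapsto\psi(f_i)$ on $S$, extended multiplicatively. Uniqueness is forced because the morphisms of $\cD$ are generated as an algebra by those of $\cA$ and $S$. For well-definedness, the only relations in $\cD$ are those of $\cA$, which $\phi$ respects; and the only thing to verify for $\theta$ to be a dg functor is $\theta(df_i)=d\,\psi(f_i)$. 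Writing $df_i$ in $\cD$ as the $\alpha$-pushforward of $df_i$ in $\cB$ and applying $\theta$, the $\cC$-parts become $\phi\alpha=\psi\beta$ and the $f_j$-parts become $\psi(f_j)$, so $\theta(df_i)=\psi(df_i)=d\,\psi(f_i)$. Finally $\theta\circ\iota_\cA=\phi$ and $\theta\circ\iota_\cB=\psi$ are clear.

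The bookkeeping I expect to be most delicate is making the phrase ``free extension of $\cA$ by $S$'' fully precise when $\alpha$ is neither injective on objects nor faithful — that is, exhibiting a basis of each hom-complex of $\cD$ as alternating composable words in $\cA$-morphisms and elements of $S$ modulo exactly the relations of $\cA$, and checking that composition and differential are well defined on this model. Once that description of $\cD$ is in hand, every functoriality check above is formal, and the last sentence of the statement (``the domains, codomains, gradings, and differentials of the morphisms in $S$ are determined in a straightforward manner'') is read off the construction: source/target are the $\alpha$-images, grading is unchanged, and $df_i$ is the $\alpha$-pushforward of $df_i$ in $\cB$. Alternatively one may invoke that semifree extensions are cofibrations for a model structure on $\dgCat$ and that the strict pushout of a cofibration along any dg functor computes the colimit, but the explicit identification of $\cD$ still requires the hands-on argument; I refer to \cite{hocolim} for the details.
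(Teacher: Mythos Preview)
The paper does not supply its own proof of this proposition; it is stated with a citation to \cite{hocolim} and no argument. Your direct verification of the universal property of the pushout is correct and is the expected content of such a proof: construct the candidate $\cD$, exhibit the two structure functors, and check existence and uniqueness of the mediating functor. The point you flag as delicate---making precise the free extension of $\cA$ by $S$ when $\alpha$ is not injective on objects or morphisms---is indeed the only place requiring care, but it is a standard path-algebra/tensor-algebra construction and poses no real obstacle. Your closing remark that one could alternatively observe that semifree extensions are cofibrations is also apt, though as you note it does not by itself give the explicit description of $\cD$.
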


\begin{rmk}
	\mbox{}
	\begin{enumerate}
		\item In a more casual sense, the colimit in Proposition \ref{prp:colimit-dg} can be thought as the disjoint union $\cA\amalg\cB$ with the identification of the images of $\alpha$ with the images of $\beta$.
		\item In the context of a semifree dg category $\cC$ and a generating morphism $f\colon A\to B$ in $\cC$, where $A\neq B$, the dg localization $\cC[f^{-1}]$ can be understood by considering $\cC$ with the identifications $A=B$ and $f=1_{A=B}$. This description relies on a description of dg localization through a colimit diagram, as presented in \cite{toen-morita}, and Proposition \ref{prp:colimit-dg}.
	\end{enumerate}
\end{rmk}

We also note that there exists a specific formula computing {\em homotopy colimit functor for semifree dg categories}, which is proven in \cite{hocolim, Karabas-Lee24}.
We will state the formula in below for reader's convenience, but we need preparations.

\begin{dfn}
	Let $\dgCat$ be the category of dg categories with weak equivalences chosen as quasi-equivalences or pretriangulated equivalences, and let $\dgCat^I$ be the category of functors $I\to\dgCat$, i.e., $I$-diagrams, where $I$ is an index category, such that weak equivalences in $\dgCat^I$ are the objectwise weak equivalences. The {\em homotopy colimit functor}
	\[\hocolim\colon\Ho_{qe}(\dgCat^I)\to\Ho_{qe}(\dgCat) \text{  (resp.\  } \hocolim\colon\Ho_{tr}(\dgCat^I)\to\Ho_{tr}(\dgCat))\]
	is defined (up to natural equivalence) as the total left derived functor of the colimit functor
	\[\colim\colon\dgCat^I\to\dgCat.\]
\end{dfn}

\begin{thm}
	\label{thm:hocolim-functor-dg}
	Let $\cA, \cB, \cC \in \dgCat$ be semifree dg categories, and let $\alpha, \beta$ be dg functors. 
	\begin{enumerate}
		\item (\cite{hocolim}) The homotopy colimit
			\[\hocolim(\begin{tikzcd}
				\cA & \cC \ar[l, "\alpha"'] \ar[r, "\beta"] & \cB.
			\end{tikzcd})\] 
			is given as a semifree extension of $\cA \amalg \cB$ by a set of morphisms of the following two types: 
			\begin{itemize}
				\item For any object $X \in \cC$, we add a morphism $t_X: \alpha(X) \to \beta(X)$, and then we take a dg localization of $t_X$. 
				\item For any generating morphism $f: X \to Y$ of $\cC$, we add $t_f:\alpha(X) \to \beta(Y)$ such that $|t_f| = |f|-1$ and
				\begin{gather}
					\label{eqn correction term}
					dt_f = (-1)^{|f|} \left(\beta(f) \circ t_X - t_Y \circ \alpha(f)\right) + \text{  correction term}.
				\end{gather}
			\end{itemize}
			We note that the correction term in Equation \eqref{eqn correction term} is determined by $df$ in $\cC$, as explained in \cite{hocolim}. 
			Especially, if $df=0$, then the correction term is $0$. 
		\item Let $S$ be a collection of degree $0$ closed morphisms in $\cC$. 
			Then, 
			\[\hocolim\left(\cA \leftarrow \cC[S^{-1}] \rightarrow \cB\right) \simeq \hocolim\left(\cA \leftarrow \cC \rightarrow \cB\right).\]  
		\item (\cite{Karabas-Lee24}) Let $F$ be a morphism in $\dgCat^I$, i.e., $F$ is given as the following commuting diagram 
		\[\begin{tikzcd}
			\cA \ar[d, "F_\cA"] & \cC \ar[l, "\alpha"'] \ar[r, "\beta"] \ar[d, "F_{\cC}"] & \cB \ar[d, "F_\cB"] \\
			\cA' & \cC' \ar[l, "\alpha'"'] \ar[r, "\beta'"]  & \cB' 
		\end{tikzcd}.\] 
		Then, the induced functor between homotopy colimits 
			\[\hocolim(F): \hocolim\left(\cA \xleftarrow{\alpha} \cC \xrightarrow{\beta} \cB\right) \to \hocolim\left(\cA' \xleftarrow{\alpha'} \cC' \xrightarrow{\beta'} \cB'\right),\] 
		is given by 
		\[\hocolim(F)\vert_{\cA \sqcup \cB} = F_\cA \sqcup F_\cB, \quad \hocolim(F)(t_X) = t_{F(X)},\quad \text{and} \quad \hocolim(F)(t_f)= t_{F(f)},\]
		where $t_{F(f)}$ is defined in \cite{Karabas-Lee24}.
	\end{enumerate}
\end{thm}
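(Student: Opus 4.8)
The plan is to realise the homotopy colimit of the span as an ordinary colimit over an explicit cofibrant replacement and then to read the presentation off from Proposition~\ref{prp:colimit-dg}; morally this is the double mapping cylinder (two-sided bar) construction for a homotopy pushout, carried out so as to stay inside semifree dg categories. I would work with the Tabuada model structure on $\dgCat$ (\cite{tabuada-model}), in which every semifree dg category is cofibrant and every semifree extension a cofibration, together with the projective model structure on $\dgCat^{I}$ for $I$ the span shape $\bullet\leftarrow\bullet\rightarrow\bullet$; inspecting latching objects, a span is projectively cofibrant exactly when its apex is cofibrant and both its legs are cofibrations. Since $\cC$ is already cofibrant, it then suffices to factor $\alpha$ and $\beta$ as $\cC\hookrightarrow\widehat{\cA}\xrightarrow{\ \sim\ }\cA$ and $\cC\hookrightarrow\widehat{\cB}\xrightarrow{\ \sim\ }\cB$ with the first arrows semifree extensions and the second quasi-equivalences, whereupon $\hocolim(\cA\leftarrow\cC\to\cB)\simeq\widehat{\cA}\amalg_{\cC}\widehat{\cB}$.

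For the factorisation I would use a mapping cylinder: build $\widehat{\cA}$ from $\cA$ by adjoining a copy of every object and every generating morphism of $\cC$, then, for each object $X\in\Ob\cC$, a closed degree-$0$ morphism $\theta^{\alpha}_{X}\colon\alpha(X)\to X$ which is afterwards dg-localised (still a semifree extension, by Proposition~\ref{prp:localise-semifree}), and, for each generating morphism $f\colon X\to Y$ of $\cC$, a morphism $\theta^{\alpha}_{f}$ of degree $|f|-1$ with $d\theta^{\alpha}_{f}=(-1)^{|f|}\bigl(\alpha(f)\,\theta^{\alpha}_{X}-\theta^{\alpha}_{Y}\,f\bigr)+(\textup{correction})$, the correction being precisely the expression that $d^{2}=0$ forces once $df$ is expanded by the Leibniz rule (so it vanishes when $df=0$). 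By construction $\cC\hookrightarrow\widehat{\cA}$ is a semifree extension; that $\widehat{\cA}\to\cA$ is a quasi-equivalence is a deformation-retraction (homological perturbation) argument showing the adjoined cylinder part is acyclic relative to $\cA$. Define $\widehat{\cB}$ symmetrically.

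Since $\cC\hookrightarrow\widehat{\cB}$ is a semifree extension, Proposition~\ref{prp:colimit-dg} identifies $\widehat{\cA}\amalg_{\cC}\widehat{\cB}$ with $\widehat{\cA}$ further extended by the generators of $\widehat{\cB}$ over $\cC$; unwinding, this is $\cA\amalg\cB$ extended by a copy of $\Ob\cC$ and of the generating morphisms $\{f_{i}\}$ of $\cC$, all the localised $\theta^{\alpha}_{X},\theta^{\beta}_{X}$, and the $\theta^{\alpha}_{f},\theta^{\beta}_{f}$. Now, using the Remark following Proposition~\ref{prp:colimit-dg}, the dg-localisation of $\theta^{\alpha}_{X}$ lets us identify the copied object $X$ with $\alpha(X)$ and set $\theta^{\alpha}_{X}=1$; after this the relation $d\theta^{\alpha}_{f}=(-1)^{|f|}(\alpha(f)-f)+(\textup{correction})$ makes the pair $(f,\theta^{\alpha}_{f})$ Gauss-eliminable, the localised $\theta^{\beta}_{X}$ becomes the localised morphism $t_{X}\colon\alpha(X)\to\beta(X)$, the $\theta^{\beta}_{f}$ become the $t_{f}$, and collecting the perturbations produced by eliminating the $f$ turns the two relations $d\theta^{\alpha}_{f}$, $d\theta^{\beta}_{f}$ into exactly \eqref{eqn correction term}. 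This is part~(1). For part~(2): the very existence of a span with apex $\cC[S^{-1}]$ forces $\alpha(S)$, hence its image in any dg category receiving $\alpha$, to be homotopy-invertible; writing $\cC[S^{-1}]$ itself as the homotopy pushout that formally inverts the arrows of $S$ and using that homotopy colimits commute with homotopy colimits, one gets $\hocolim(\cA\leftarrow\cC[S^{-1}]\to\cB)\simeq\cD_{0}[\bar S^{-1}]\simeq\cD_{0}$, where $\cD_{0}=\hocolim(\cA\leftarrow\cC\to\cB)$ and $\bar S$, the image of $S$, is already homotopy-invertible in $\cD_{0}$. For part~(3): a morphism of spans lifts to a morphism of the two cylinder replacements above, inducing a functor of the strict pushouts whose effect on generators is $F_{\cA}\sqcup F_{\cB}$ on $\cA\amalg\cB$, $t_{X}\mapsto t_{F(X)}$, and $t_{f}\mapsto t_{F(f)}$, the last using the extension of the assignment $f\mapsto t_{f}$ to non-generating morphisms defined in \cite{Karabas-Lee24}.

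I expect the crux to be the middle step: proving that the mapping cylinder is a genuine cofibrant replacement — the deformation-retraction argument for $\widehat{\cA}\xrightarrow{\sim}\cA$ — while keeping the whole construction manifestly semifree, and then running the Gaussian-elimination bookkeeping that both identifies the cleaned-up pushout with the stated presentation and pins down the precise form of the correction term in \eqref{eqn correction term}. Everything else — the model-categorical generalities, Proposition~\ref{prp:colimit-dg} and its Remark, and the interchange of homotopy colimits — is formal once that computation is in hand.
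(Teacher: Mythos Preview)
The paper does not prove this theorem: it is stated ``for reader's convenience'' with parts (1) and (3) explicitly attributed to the authors' companion papers \cite{hocolim} and \cite{Karabas-Lee24}, and part (2) is likewise not argued here. So there is no in-paper proof to compare against.

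Your sketch is the standard and correct strategy---cofibrantly replace the span in the projective model structure by mapping cylinders, take the strict pushout, then simplify---and is almost certainly close in spirit to what the cited references do. A few places deserve care. First, for $\cC\hookrightarrow\widehat{\cA}$ to be a semifree extension you are implicitly using that $\cA$ is itself semifree, so that its generating objects and morphisms can be freely adjoined to $\cC$; this is available by hypothesis but worth making explicit. Second, the Gaussian-elimination step is exactly where the correction term in \eqref{eqn correction term} acquires its shape: after collapsing the $\alpha$-side cylinder, the eliminated pairs $(f,\theta^{\alpha}_{f})$ feed their correction into the surviving $\theta^{\beta}_{f}$, and tracking this through the semifree ordering is the genuine computation. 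You are right to flag it as the crux. Third, in part (2) the assertion that $\bar S$ is already homotopy-invertible in $\cD_{0}$ rests on the fact that the given functors out of $\cC[S^{-1}]$ force $\alpha(S)\subset\cA$ and $\beta(S)\subset\cB$ to be homotopy-invertible there, hence in $\cD_{0}$; this is correct but should be said explicitly rather than left to ``the very existence of a span''.
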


\begin{rmk}
	\label{rmk colimit}
	We note that if $\cA, \cB, \cC$ are semifree dg categories and either $\alpha: \cC\to \cA$ or $\beta: \cC \to \cB$ is a semifree extension, then 
	\[\hocolim\left(\cA \xleftarrow{\alpha} \cC \xrightarrow{\beta} \cB \right) \simeq \colim\left(\cA \xleftarrow{\alpha} \cC \xrightarrow{\beta} \cB \right).\]
\end{rmk}

Finally, we introduce two useful propositions from \cite{chekanov, subcritical}.
We will use them to simplify semifree dg categories, because they can be thought as ``basis change'' and ``cancellation of generators'' for the morphisms of semifree dg categories, respectively.

\begin{prp}\label{prp:variable-change}
	Let $\cC$ be a semifree dg category with a set of generating morphisms $\{f_i\}$ (indexed by an ordinal).
	For a unit $u_i \in k$ and a morphism $g_i$ in $\cC$ generated by $\{f_j\vb j<i\}$, let 
	\[\tilde f_i:=u_i f_i + g_i.\] 
	Then, the set $\{\tilde f_i\}$ also generates the morphisms in $\cC$ semifreely.
\end{prp}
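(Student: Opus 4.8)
The plan is to produce a second free presentation of the morphism algebra of $\cC$ whose generators are exactly the $\tilde f_i$, and then to verify the differential inequality in the definition of semifreeness. I use throughout that each $\tilde f_i$ has the same source, target, and degree as $f_i$ (implicit for the sum $u_i f_i + g_i$ to make sense) and that the $\tilde f_i$ are indexed by the same ordinal, with the same order.

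First I would set up an algebra automorphism. Let $F$ be the free graded $k$-linear category on the objects of $\cC$ with one generating arrow $x_i$ of degree $|f_i|$ (and the same source and target as $f_i$) for each $i$, so that semifreeness of $\cC$ gives a graded isomorphism $\psi\colon F\xrightarrow{\ \sim\ }\hom^*_\cC$ with $\psi(x_i)=f_i$. Since $g_i$ is generated by $\{f_j\mid j<i\}$, the element $\gamma_i:=\psi^{-1}(g_i)$ involves only the symbols $\{x_j\mid j<i\}$. Define $\Phi\colon F\to F$ to be the algebra map with $\Phi(x_i)=u_i x_i+\gamma_i$. Then $\Phi$ is a graded automorphism: every element of $F$ is supported on finitely many generators, so considering the largest index occurring and using $x_i=u_i^{-1}(\Phi(x_i)-\gamma_i)$ together with the fact that $\gamma_i$ only involves strictly smaller indices, one constructs an inverse by transfinite induction, and the same bookkeeping forces $\ker\Phi=0$. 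Consequently $\psi\circ\Phi\colon F\xrightarrow{\ \sim\ }\hom^*_\cC$ is a graded isomorphism sending $x_i$ to $u_i f_i+g_i=\tilde f_i$, which says precisely that $\{\tilde f_i\}$ freely generates the morphisms of $\cC$ as an algebra.

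Next I would check the semifreeness condition. Write $\cC_{<i}$ for the subalgebra of $\cC$ generated by $\{f_j\mid j<i\}$; restricting the triangular inversion above to indices $<i$ shows that $\cC_{<i}$ is equally the subalgebra generated by $\{\tilde f_j\mid j<i\}$. Now $d\tilde f_i=u_i\,df_i+dg_i$. By semifreeness of $\cC$, $df_i\in\cC_{<i}$; and since $d$ is a derivation with $df_j\in\cC_{<j}\subseteq\cC_{<i}$ for every $j<i$, it preserves $\cC_{<i}$, so $dg_i\in\cC_{<i}$ because $g_i\in\cC_{<i}$. Hence $d\tilde f_i\in\cC_{<i}=\langle\tilde f_j\mid j<i\rangle$, which is exactly the condition that $\{\tilde f_i\}$ generates $\cC$ semifreely.

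The only point requiring genuine care is the transfinite bookkeeping when inverting $\Phi$: one must confirm that the recursion terminates, which it does because every element of a free algebra has finite support and passing from $\gamma_i$ to the indices it involves strictly lowers the index. There is no homotopical content, since the two generating sets present the identical dg category $\cC$; everything beyond that termination check is a direct verification.
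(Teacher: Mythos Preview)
Your proof is correct. The paper does not actually prove this proposition; it merely cites it from \cite{chekanov, subcritical} without giving an argument. Your argument---a triangular (unipotent-plus-scalar) change of generators on the underlying free graded category, followed by the observation that $d\tilde f_i = u_i\,df_i + dg_i$ lies in the subalgebra $\cC_{<i}$ because $d$ preserves that subalgebra---is exactly the standard justification one would expect behind that citation, and it is complete as written. The finite-support remark handling termination of the transfinite recursion is the right thing to flag and is handled correctly.
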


\begin{prp}\label{prp:destabilisation}
	Let $\cC$ be a semifree dg category, and $\cD$ be the semifree extension of $\cC$ by the morphisms $\{a_i,b_i\}$ such that $da_i=b_i$ for all $i$. Then, $\cC$ and $\cD$ are quasi-equivalent. 
\end{prp}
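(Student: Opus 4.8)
The plan is to prove that the structural inclusion dg functor $F\colon\cC\to\cD$ that comes with the semifree extension is itself a quasi-equivalence; then $\cC$ and $\cD$ are quasi-equivalent via the trivial zigzag $\cC\xleftarrow{\mathrm{id}}\cC\xrightarrow{F}\cD$ in the sense of Definition~\ref{dfn:equivalence}. Since the extension adjoins no new objects, $F$ is a bijection on objects, so the essential-surjectivity condition is automatic and it remains only to show that $F$ induces a quasi-isomorphism on each morphism complex $\hom(A,B)$.

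To this end I would first make the morphism complexes of $\cD$ explicit. By the definition of a semifree extension, every morphism of $\cD$ is a $k$-linear combination of words $\phi_0\,c_1\,\phi_1\cdots c_N\,\phi_N$ with each $\phi_j$ a morphism of $\cC$ and each $c_\ell\in\{a_i,b_i\}$; let $\hom^{(N)}(A,B)$ denote the span of those words having exactly $N$ letters from $\{a_i,b_i\}$. Because $d$ sends morphisms of $\cC$ into $\cC$, because $da_i=b_i$, and because $db_i=d(da_i)=0$, the differential preserves $N$; hence $\hom(A,B)=\bigoplus_{N\ge0}\hom^{(N)}(A,B)$ as a direct sum of cochain complexes, with $\hom^{(0)}(A,B)$ equal to the image of $\hom_{\cC}(A,B)$ under $F$. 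So it suffices to prove that each $\hom^{(N)}(A,B)$ with $N\ge1$ is acyclic.

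For this I would use the standard contracting homotopy that ``undoes'' the leftmost special letter: set $h=0$ on $\hom^{(0)}$, and for $N\ge1$ and a word $w=\phi_0\,c_1\cdots c_N\,\phi_N$ put $h(w)=0$ if $c_1=a_i$ and $h(w)=\pm\,\phi_0\,a_i\,\phi_1\,c_2\cdots c_N\,\phi_N$ if $c_1=b_i$ (with the appropriate Koszul sign), extended $k$-linearly. Let $\pi\colon\hom(A,B)\to\hom_\cC(A,B)$ be the projection onto $\hom^{(0)}$ with kernel $\bigoplus_{N\ge1}\hom^{(N)}$; it is a chain map and satisfies $\pi F=\mathrm{id}$. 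A Leibniz-rule computation --- using that the leftmost special letter keeps its position under the terms of $d$ coming from the $\phi_j$ and under the rewriting $a_i\mapsto b_i$, while differentiating a leading $b_i$ produces zero --- gives $dh+hd=\mathrm{id}-F\pi$. Thus $(F,\pi,h)$ exhibits $\hom_\cC(A,B)$ as a deformation retract of $\hom(A,B)$, so each $\hom^{(N)}(A,B)$ with $N\ge1$ is contractible, $F$ is a quasi-isomorphism on all morphism complexes, and hence a quasi-equivalence. The only genuine work is verifying the identity $dh+hd=\mathrm{id}-F\pi$, which is elementary but sign-sensitive; it is the simultaneous form of the classical fact that freely adjoining an acyclic pair $a,b$ with $da=b$ does not change the quasi-isomorphism type, used in \cite{chekanov, subcritical}, and since $h$ is defined word-by-word no transfinite induction over the ordinal indexing $\{a_i,b_i\}$ is needed. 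I expect the sign bookkeeping to be the main obstacle.
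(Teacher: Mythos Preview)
Your argument is correct. The paper does not actually prove this proposition: it merely states it as one of ``two useful propositions from \cite{chekanov, subcritical}'' and gives no argument of its own, so there is no in-paper proof to compare against. Your contracting-homotopy approach is the standard one for this kind of statement (and is essentially what the cited references do): the key observations that the $N$-grading by number of adjoined letters is preserved by $d$ (since $da_i=b_i$ stays special and $db_i=0$), and that the ``replace leftmost $b_i$ by $a_i$'' operator furnishes the required null-homotopy on $\bigoplus_{N\ge1}\hom^{(N)}$, are exactly right. The sign check you flag as the main obstacle does go through with the choice $h(\phi_0\, b_i\,\psi)=(-1)^{|\phi_0|}\phi_0\, a_i\,\psi$ on homogeneous $\phi_0$.
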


\subsection{Symplectic topology}\label{sec:preliminaries-symplectic}
Now, we recall some basic notions in symplectic topology. 
First of all, we recall that a {\em Liouville domain} means a pair $(W, \lambda)$ such that 
\begin{itemize}
	\item $W$ is a $2n$-dimensional compact exact symplectic manifold with boundary equipped with the symplectic form $\omega := d \lambda$, 
	\item on the boundary of $W$, $\lambda|_{\partial W}$ is a contact form, and 
	\item the orientation of $\partial W$ by the form $\lambda \wedge (d\lambda)^{n-1}$ coincides with its orientation as the boundary of symplectic manifold $(W,\omega)$. 
\end{itemize}
One can always {\em complete} a Liouville domain $W$ by attaching a cylindrical end. The completion $\hat{W}$ is given as 
\[\hat{W}:= W \cup \left(\partial W \times [0,\infty)\right) .\]
On the cylindrical end $\partial W \times [0,\infty)$, the Liouville form is given as $e^r \lambda$ where $r$ denotes the second factor of the product space. We note that the completion $\hat{W}$ is a {\em Liouville manifold}, and $\partial_{\infty}\hat W:=\partial W$ is called its {\em ideal contact boundary}.

A Liouville domain/manifold admits the {\em skeleton}, or the {\em core} as follows: 
Let $Z$ be the Liouville vector field and $Z^t$ be the time $t$-flow of $Z$. 
In other words, $Z$ is the dual vector field of $\lambda$ with respect to the symplectic 2-form $d \lambda$. 
Then, the skeleton of a Liouville domain $W$ is given as 
\[\mathrm{Skel}(W, \lambda) := \bigcap_{t >0} Z^{-t}(W).\]
If $W$ is a Liouville manifold, then its skeleton is defined as that of a Liouville domain $W_0$ whose completion is $W$.

A {\em Weinstein domain/manifold} means a triple $(W, \lambda, \phi)$ such that $(W, \lambda)$ is a Liouville domain/manifold equipped with a Lyapunov function $\phi: W \to \mathbb{R}$ for the Liouville vector field dual to $\lambda$.
For convenience, we simply say that $W$ is a Weinstein manifold without mentioning a Liouville form $\lambda$ on it and a corresponding Lyapunov function. 

There exists a slightly generalized notion of Weinstein manifold, called {\em Weinstein sector}, which can be loosely understood as a Weinstein manifold with a boundary. 
Or equivalently, a Weinstein sector is a pair of a Weinstein manifold $W$ and a (possibly singular) isotropic subset $\Lambda \subset \partial_\infty W$, called {\em stop}. 
Further details will be given in Section \ref{subsection Weinstein pair}, and we also refer to \cite{gps1}.

Let $W$ (resp.\ $(W, \Lambda)$) be a Weinstein manifold (resp.\ Weinstein sector).
The main focus of this paper is a crucial invariant known as the {\em wrapped Fukaya category} $\cW(W)$ (resp.\ {\em partially wrapped Fukaya category} $\cW(W,\Lambda)$). This is an $A_{\infty}$-category with objects being certain exact Lagrangians with cylindrical ends equipped with additional data. Morphisms are generated by the intersections of Lagrangians after perturbing them through a process known as {\em wrapping}, and $A_{\infty}$-operations arise from counting pseudoholomorphic polygons bounded by Lagrangians (edges) and their intersections (corners). For a rigorous definition, see \cite{gps1, seidel}.

We note that any $A_{\infty}$-category $\cC$ can be regarded as a dg category up to quasi-equivalence by replacing $\cC$ with its image under the $A_{\infty}$-Yoneda embedding. Hence, wrapped Fukaya categories can be regarded as dg categories up to quasi-equivalence.

\begin{rmk}\label{rmk:grading-pin}
	The wrapped Fukaya category $\cW(W)$ (or $\cW(W,\Lambda)$) can be given $\Z$-grading if $2c_1(W)=0\in H^2(W;\Z)$. Also, the definition of $\cW(W)$ (or $\cW(W,\Lambda))$ depends on the classes $\eta\in H^1(W;\Z)$ (grading structure) and $b\in H^2(W;\Z/2)$ (background class), which are used to give gradings on the Lagrangian intersections and orientations of moduli spaces of pseudoholomorphic disks, respectively. See \cite{seidel} for more details, or \cite{cat-entropy} for a quick overview. We will use the notation $\cW(W;\eta)$ to denote the wrapped Fukaya category of $W$ with a grading structure $\eta$, specifically when we wish to emphasize the chosen grading structure. This notation will only appear in Section \ref{section wrapped Fukaya categories of plumbings}.
\end{rmk}

In the literature, several results regarding the wrapped Fukaya categories have been established. See Theorems \ref{thm:cocore-generate}, \ref{thm:gps}, and \ref{thm:cotangent-generation} below:

\begin{thm}[\cite{CDRGG,gps2}]\label{thm:cocore-generate}
	Let $W$ be a Weinstein manifold (or sector) of dimension $2n$. Consider a mostly Legendrian set $\Lambda\subset\partial_{\infty}W$ (see \cite{gps2} for the definition). Then
	\begin{enumerate}
		\item $\cW(W)$ is generated by the {\em Lagrangian cocores}, which are Lagrangian disks dual to $n$ dimensional strata of the skeleton of $W$,
		
		\item $\cW(W,\Lambda)$ is generated by the Lagrangian cocores, and linking disks of $\Lambda$.
	\end{enumerate}
\end{thm}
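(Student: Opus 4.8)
The plan is to reduce the statement, via a Weinstein handle decomposition, to two structural properties of wrapped Fukaya categories proved (together with the rest of the machinery) in \cite{gps1,gps2}, plus the vanishing of $\cW$ on subcritical domains. First I would fix a Weinstein handle presentation of the underlying manifold $W$ in which the Lyapunov function has only critical points of index $\leq n$: this exhibits $W$, with the stop $\Lambda$ isotoped to be disjoint from the handle-attaching regions, as built from a subcritical Weinstein domain $W_0$ by successively attaching critical $n$-handles $h_1,\dots,h_k$ along Legendrian spheres $L_1,\dots,L_k$ in the ideal boundaries of the intermediate manifolds. By construction the top-dimensional strata of $\Skel(W)$ are the cores of the $h_i$, so the Lagrangian cocores appearing in the statement are exactly the cocore disks $D_1,\dots,D_k$.

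The two inputs I would invoke are the following. \textbf{(B)} Attaching a critical $n$-handle $h$ to a Weinstein manifold $V$ with stop $\sigma$, along a Legendrian $L\subset\partial_{\infty}V$ disjoint from $\sigma$, produces $V'=V\cup_{L}h$ with stop $\sigma$, and $\cW(V',\sigma)$ is generated by the image of $\cW(V,\sigma)$ under the inclusion functor together with the cocore disk of $h$. \textbf{(C)} (\emph{Stop removal}) For a Weinstein manifold $V$ with stop $\sigma$, erasing $\sigma$ identifies $\cW(V)$ with the Verdier quotient of $\cW(V,\sigma)$ by the pretriangulated subcategory generated by the linking disks of $\sigma$, a quotient which is a dg localization in the sense of Section \ref{sec:preliminaries-dg-categories}. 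Granting these, I would argue in two steps. First, $\cW(W_0)=0$ for the subcritical domain $W_0$: by Cieliebak's subcritical splitting $W_0\cong W_0''\times\C$ and the K\"unneth formula of \cite{gps1} this reduces to $\cW(\C)=0$, which holds since the skeleton of $\C$ is a point; hence \textbf{(C)} applied to $(W_0,\Lambda)$ forces the linking disks of $\Lambda$ to generate $\cW(W_0,\Lambda)$. Second, attaching $h_1,\dots,h_k$ one at a time and carrying the stop $\Lambda$ along, \textbf{(B)} shows that after the $k$-th attachment $\cW(W,\Lambda)$ is generated by the image of $\cW(W_0,\Lambda)$ together with $D_1,\dots,D_k$. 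Combining the two steps, $\cW(W,\Lambda)$ is generated by the linking disks of $\Lambda$ together with $D_1,\dots,D_k$ --- precisely the asserted generating set --- and part (1) is the case $\Lambda=\emptyset$.

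The substantial content, and the main obstacle, is supplying \textbf{(B)} and \textbf{(C)}: these are genuine theorems of wrapped Floer theory, not formal manipulations. \textbf{(B)} requires understanding how the wrapped Floer complex of a Lagrangian changes as it is pushed across a critical handle (the Legendrian-surgery and acceleration picture), and \textbf{(C)} is the stop-removal localization theorem; both rely on the covariant functoriality, acceleration, and locality of $\cW$ established in \cite{gps1}, which also underpin the K\"unneth formula used for subcritical vanishing. Everything else is formal: the handle bookkeeping, the general-position isotopy keeping $\Lambda$ off the attaching regions, and the elementary fact that if a Verdier quotient $\cW(V,\sigma)/\cD$ vanishes then $\cD$ generates $\cW(V,\sigma)$ --- this last point being what turns \textbf{(C)} into the generation statement used in the argument. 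An alternative route, carried out in \cite{CDRGG}, bypasses \textbf{(B)}--\textbf{(C)} by computing $\cW(W,\Lambda)$ directly from the Chekanov--Eliashberg dg algebras of the attaching Legendrians via the Legendrian surgery formula and reading off the generators; I would regard that as a parallel strategy rather than a genuine shortcut.
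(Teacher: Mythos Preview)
The paper does not prove this theorem; it is quoted as a known result from \cite{CDRGG,gps2} and used as input. So there is no ``paper's own proof'' to compare against. Your outline is a faithful sketch of the argument in \cite{gps2}: subcritical vanishing via K\"unneth plus $\cW(\C)=0$, then inductive generation under critical handle attachment (your \textbf{(B)}), with stop removal (your \textbf{(C)}) supplying generation by linking disks in the stopped case. You are also correct that \textbf{(B)} and \textbf{(C)} are the substantive theorems and that everything else is bookkeeping. One small caution: in your step-two induction you carry the stop $\Lambda$ through the intermediate stages, but $\Lambda$ lives in $\partial_\infty W$, not in $\partial_\infty W_0$; the cleaner route (and the one in \cite{gps2}) is to first prove part (1) for $W$ with no stop, and then obtain part (2) by observing that adding a stop $\Lambda$ to $W$ amounts to attaching critical handles (the linking disks are the new cocores) or, equivalently, by running stop removal in reverse. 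Your alternative via \cite{CDRGG} is indeed a parallel, independent proof.
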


Given an inclusion of Liouville/Weinstein sectors $F\hookrightarrow W$, there is an induced $A_{\infty}$-functor $\cW(F)\to\cW(W)$ as described in \cite{gps1}. Then, we can state the following theorem:

\begin{thm}[\cite{gps2}]\label{thm:gps}
	Let $W$ be a Weinstein manifold (or sector). Suppose $W=W_1\cup W_2$ for some Weinstein sectors $W_1$ and $W_2$, and $W_1\cap W_2$ is a hypersurface in $W$ whose neighborhood can be written as $F\times T^*[0,1]$, where $F$ is a Weinstein manifold/sector of codimension 2 (up to deformation). Then there is a pretriangulated equivalence
	\[\cW(W)\simeq\hocolim\left(
	\begin{tikzcd}
		\cW(W_1)  & \cW(F) \ar[l]\ar[r] & \cW(W_2)
	\end{tikzcd}
	\right) ,\]
	where the arrows are induced by the inclusion of Weinstein sectors $F\hookrightarrow W_i$ for $i=1,2$.
\end{thm}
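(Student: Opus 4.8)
The plan is to deduce this as the two-piece instance of the \emph{sectorial descent} theorem of Ganatra--Pardon--Shende: I would build a canonical comparison functor from the homotopy pushout into $\cW(W)$ and prove it is a pretriangulated equivalence. After slightly thickening $W_1$ and $W_2$ one obtains a Weinstein sectorial cover $\{W_1,W_2\}$ of $W$ whose overlap deformation-retracts onto the collar $N\cong F\times T^*[0,1]$ of the sectorial hypersurface $W_1\cap W_2$. The inclusions of Weinstein sectors $N\hookrightarrow W_i\hookrightarrow W$ induce covariant $A_\infty$-functors $\cW(N)\to\cW(W_i)\to\cW(W)$ as in \cite{gps1}, while a K\"unneth formula for product sectors gives $\cW(N)\simeq\cW(F)\otimes\cW(T^*[0,1])\simeq\cW(F)$ (using $\cW(T^*[0,1])\simeq\Perf(k)$). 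This assembles the data into the diagram $\cW(W_1)\leftarrow\cW(F)\rightarrow\cW(W_2)$ together with a functor $\Phi$ from its homotopy pushout to $\cW(W)$. That $\Phi$ is essentially surjective onto a generating subcategory follows from Theorem \ref{thm:cocore-generate}: after isotoping the hypersurface towards $\partial_\infty W$, the Lagrangian cocores of $W$ can be taken to lie among the images of the cocores of $W_1$, the cocores of $W_2$, and the cocores of $F$ (the last appearing as the linking disks along the hypersurface) --- all of which are in the image of $\Phi$.

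The heart of the proof is full faithfulness on this generating subcategory: for cocores $L,L'$ one must show that the morphism complex of the homotopy pushout --- which by Theorem \ref{thm:hocolim-functor-dg} is the explicit semifree extension of $\cW(W_1)\amalg\cW(W_2)$ by the continuation-type generators $t_X$ and the correction generators $t_f$ --- maps quasi-isomorphically onto the wrapped Floer complex of $(L,L')$ computed in $W$. This is a Mayer--Vietoris principle for wrapped Floer cohomology. One chooses a wrapping Hamiltonian on $W$ \emph{adapted to the cover}, so that the positive wrapping of a cylindrical Lagrangian breaks into fingers, each localised in $W_1$, in $W_2$, or in the collar $N$; the essential geometric input is the ``no-escape'' behaviour of holomorphic curves in Liouville sectors established in \cite{gps1} (curves with boundary on cylindrical Lagrangians cannot cross a sectorial hypersurface the wrong way), which forces the Floer differential to respect the resulting decomposition. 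One thereby obtains a filtration of the wrapped complex whose associated graded is the sum of the wrapped complexes computed inside $W_1$, inside $W_2$, and inside the collar $N$ (the last governed by $\cW(N)\simeq\cW(F)$); cofinality of cover-adapted wrappings in the colimit defining wrapped cohomology, together with a stop-removal identification of the complexes computed inside the $W_i$ with their images in $\cW(W)$, then matches this filtration with the homotopy-pushout morphism complex, the connecting maps of the Mayer--Vietoris sequence being precisely the maps $t_X$ and $t_f$ produced by Theorem \ref{thm:hocolim-functor-dg}.

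Passing to twisted complexes then promotes ``quasi-equivalence onto a generating subcategory'' into the asserted pretriangulated equivalence $\cW(W)\simeq\hocolim(\cW(W_1)\leftarrow\cW(F)\rightarrow\cW(W_2))$. The step I expect to be the main obstacle is the second one --- making precise that wrapped Floer cohomology genuinely localises to the sectorial cover --- since it rests entirely on the analytic package for Liouville sectors of \cite{gps1}: monotonicity estimates near a sectorial hypersurface, compatibility of the wrapping functor with the cover, and stop removal. Granting that package, the remaining work --- identifying the continuation and correction maps with the algebraic generators $t_X$ and $t_f$ of the homotopy-colimit formula --- is essentially the bookkeeping that the semifree/homotopy-colimit formalism of Section \ref{sec:preliminaries-dg-categories} is designed to handle.
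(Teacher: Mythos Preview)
The paper does not prove this theorem at all: it is stated with the citation \cite{gps2} and used as a black box throughout. There is no ``paper's own proof'' to compare against; your proposal is attempting to reprove a result that the authors explicitly import from Ganatra--Pardon--Shende.

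That said, as a sketch of the GPS argument your outline is broadly faithful to their strategy: build the comparison functor from covariant functoriality of $\cW$ under sector inclusions, identify the overlap via K\"unneth, hit generators using cocore generation (Theorem~\ref{thm:cocore-generate}), and then argue full faithfulness. One point to flag: your full-faithfulness step leans on a direct Mayer--Vietoris decomposition of wrapped Floer complexes via ``cover-adapted'' Hamiltonians and a resulting filtration. The actual GPS proof in \cite{gps2} does not proceed quite this way --- it goes through stop removal and the comparison with the partially wrapped category of $(W,\Lambda)$ where $\Lambda$ is the core of $F$, together with their generation result, rather than a direct geometric splitting of the differential. The filtration picture you describe is morally right but making it precise (showing the connecting maps literally match the $t_X$, $t_f$ of Theorem~\ref{thm:hocolim-functor-dg}) is more delicate than ``bookkeeping''; GPS instead prove descent at the level of pretriangulated categories and bypass this explicit identification. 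So your plan is a plausible alternative route, but the step you correctly flag as the obstacle is genuinely hard and is not how \cite{gps2} actually argues.
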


We recall some known facts about cotangent bundles. The following definition can be found in \cite{Abouzaid12}:

\begin{dfn}\label{dfn:pontryagin-category}
	Let $M$ be a topological space. The {\em Pontryagin category} $\cP(M)$ of $M$ is a dg category (over a fixed commutative ring $k$) whose objects are the points of $M$, and the cochain complex of morphisms from $p$ to $q$ are given by the normalized cubical chains on $\Omega(p,q)$, i.e.,
	\[\hom^*_{\cP(M)}(p,q) :=C_{-*}(\Omega(p,q);k) ,\]
	where $\Omega(p,q)$ is the Moore path space, i.e.,
	\[\Omega(p,q):=\{\gamma\colon [0,R] \to M\vb \gamma(0) = p,\gamma(R) = q \text{ with } R \in (0, \infty]\}.\]
	The product on $\cP(M)$ is induced by the concatenation of Moore paths, which is strictly associative.
\end{dfn}

\begin{rmk}\label{rmk:loop-space}
	If $p,q\in\cP(M)$ correspond to two points in the same path component of $M$, then they are homotopy equivalent in $\cP(M)$. If $p$ and $q$ are in different path components of $M$, then $\hom^*(p,q)$ is the zero module. As a consequence, there is a quasi-equivalence
	\[\cP(M)\simeq\coprod_{[p]\in \pi_0(M)} C_{-*}(\Omega_p M)\]
	where $\Omega_p M$ is the based loop space of $M$ at $p$, $C_{-*}(\Omega_p M)$ is seen as a dg category with a single object whose endomorphism (dg) algebra is $C_{-*}(\Omega_p M)$, and we choose a representative point $p$ for each path component of $M$. In particular, if $M$ is path connected, we have a quasi-equivalence
	\[\cP(M)\simeq C_{-*}(\Omega_p M), \text{ for any  } p \in M.\]
\end{rmk}

\cite[Example 1.36]{gps2} implies the following generalization of \cite{abouzaid-wrapped-generation, Abouzaid12}:

\begin{thm}\label{thm:cotangent-generation}
	Let $M$ be a smooth manifold with or without a boundary. 
	Let $\cW(T^*M)$ is equipped with the standard grading structure and the background class as described in \cite{nadler-zaslow}.
	Then, there is a pretriangulated equivalence
	\begin{align*}
		\cP(M) \xrightarrow{\sim} \cW(T^*M), \qquad p \mapsto L_p:= T^*_pM.
	\end{align*}
\end{thm}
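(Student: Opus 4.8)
The plan is to establish the pretriangulated equivalence $\cP(M) \xrightarrow{\sim} \cW(T^*M)$ by reducing to the known generation and endomorphism-algebra results of Abouzaid and Ganatra--Pardon--Shende. The starting point is Theorem \ref{thm:cocore-generate}: for $M$ a closed manifold, the cotangent fiber $L_p = T^*_pM$ is the unique Lagrangian cocore (dual to the zero section, which is the whole skeleton), so it generates $\cW(T^*M)$; for $M$ with boundary, $T^*M$ is a Weinstein sector whose skeleton is still the zero section and whose cocore is again a single cotangent fiber $L_p$, so $L_p$ generates $\cW(T^*M)$ in this case too (this is exactly the content invoked via \cite[Example 1.36]{gps2}). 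Hence it suffices to produce an $A_\infty$-functor $\cP(M) \to \cW(T^*M)$ sending the object $p$ to $L_p$ which is a quasi-isomorphism on the relevant hom-complexes, i.e.\ to show
\[
C_{-*}(\Omega(p,q);k) \xrightarrow{\ \sim\ } \hom^*_{\cW(T^*M)}(L_p, L_q)
\]
compatibly with composition. Once such a functor is shown to be a quasi-equivalence onto the full subcategory on $\{L_p\}$, combining with the generation statement and the definition of generation (pretriangulated equivalence of a full subcategory with the ambient category) gives the claimed pretriangulated equivalence.

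The construction of the functor on the chain level is the geometric heart of the matter and is precisely Abouzaid's theorem in \cite{Abouzaid12, abouzaid-wrapped-generation}: one builds a chain-level $A_\infty$-map from cubical chains on the based loop space (equivalently, Moore path space, using Remark \ref{rmk:loop-space} to pass between the two models) to the wrapped Floer complex of a cotangent fiber, geometrically by sending a loop (or path) to the Hamiltonian chord it traces out after wrapping, and more precisely by a moduli-space construction interpreting the higher $A_\infty$-structure maps via counts of half-disks with boundary on $L_p$ carrying incidence conditions recording the path. The key input is that the wrapped Floer differential and products on $\hom^*(L_p,L_q)$ are computed, after an appropriate choice of wrapping Hamiltonian (quadratic at infinity), by Hamiltonian chords from $T^*_pM$ to $T^*_qM$, and these are in bijection with geodesics — equivalently with a cellular/Morse model for $\Omega(p,q)$ — so that the resulting complex is quasi-isomorphic to $C_{-*}(\Omega(p,q))$ with the Pontryagin product matching concatenation of paths. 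For $M$ without boundary this is stated in \cite{Abouzaid12}; for $M$ with boundary one regards $T^*M$ as a Liouville sector (with the cotangent-fiber stop coming from $\partial M$) and appeals to \cite[Example 1.36]{gps2}, which extends Abouzaid's equivalence to this setting with no change in the statement.

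Concretely, the steps I would carry out are: (i) Fix the standard grading structure $\eta$ and background class $b$ on $T^*M$ as in \cite{nadler-zaslow} so that $L_p$ is gradable and the signs in the $A_\infty$-structure are pinned down; verify $L_p$ is an object of $\cW(T^*M)$ (it is exact, conical at infinity, and admits the required brane data). (ii) Invoke Theorem \ref{thm:cocore-generate} to conclude $\{L_p\}_{p\in M}$ generates $\cW(T^*M)$; note that by Remark \ref{rmk:loop-space}, it is even enough to take one $L_p$ per path component. (iii) Construct the $A_\infty$-functor $\cP(M)\to\cW(T^*M)$, $p\mapsto L_p$, citing \cite{Abouzaid12, abouzaid-wrapped-generation} (and \cite[Example 1.36]{gps2} for the boundary case), and record that on morphisms it realizes the chain-level map $C_{-*}(\Omega(p,q);k)\to\hom^*(L_p,L_q)$. (iv) Prove this map is a quasi-isomorphism: this is the main obstacle, and it is exactly the statement of Abouzaid's generation theorem; I would not reprove it but would cite it, indicating that the proof proceeds by a filtration/degeneration argument comparing the wrapped complex with a Morse-theoretic model of the path space. (v) Conclude: the functor is a quasi-equivalence onto the full subcategory of $\cW(T^*M)$ on $\{L_p\}$ by (iv), and since that subcategory is pretriangulated equivalent to $\cW(T^*M)$ by (ii), the composite $\cP(M)\xrightarrow{\sim}\cW(T^*M)$ is a pretriangulated equivalence. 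The main obstacle is step (iv), the identification of the wrapped Floer complex of a cotangent fiber with chains on the based loop space — but since this is precisely the cited theorem of Abouzaid (generalized by \cite[Example 1.36]{gps2} to the case with boundary), in the present paper it is a matter of assembling the citations rather than carrying out new Floer-theoretic work.
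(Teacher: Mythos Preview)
Your proposal is correct and matches the paper's approach: the paper does not give a proof of this statement at all, but simply records it as a known result implied by \cite[Example 1.36]{gps2} generalizing \cite{abouzaid-wrapped-generation, Abouzaid12}. Your write-up is an accurate expansion of what those citations contain, and your steps (i)--(v) are exactly the logical structure underlying that citation; there is nothing to add.
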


Using the ``higher'' Seifert-Van Kampen theorem in \cite[A.3]{lurie-ha}, we get the following theorem as described in the footnote for \cite[Example 1.36]{gps2}:

\begin{thm}\label{thm:seifert-van-kampen}
	Let $M$ be a smooth manifold (possibly with a boundary) or a sufficiently nice topological space as described in \cite{lurie-ha}. Let $\{M_1,M_2\}$ be an open covering of $M$. Then we have a quasi-equivalence
	\[\cP(M)\simeq\hocolim\left(\cP(M_1)\leftarrow \cP(M_1\cap M_2)\rightarrow \cP(M_2)\right)\]
	where the arrows are the obvious maps.
\end{thm}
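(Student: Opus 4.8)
The plan is to deduce this from the higher Seifert--van Kampen theorem of \cite{lurie-ha} applied to the singular-simplicial-set functor, together with the formal fact that passing from a space to its Pontryagin category commutes with homotopy colimits; everything of substance is packaged into one classical comparison of models.

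\textbf{Step 1: factor $\cP$ through $\mathrm{Sing}$.} First I would recall that $\cP(M)$ depends only on the weak homotopy type of $M$, recorded simultaneously over all basepoints. Concretely, there is a functor $\cP'$ from simplicial sets to $\dgCat$ sending a simplicial set $X$ to the $k$-linearization of the rigidification $\mathfrak{C}[X]$ (that is, replace each mapping simplicial set of $\mathfrak{C}[X]$ by its normalized chain complex), and a natural quasi-equivalence $\cP(M)\simeq\cP'(\mathrm{Sing}\,M)$. This comparison --- cubical chains on the Moore path category of $M$ versus simplicial chains on the rigidification of $\mathrm{Sing}\,M$ --- is the classical Adams-type statement that the cubical chains on the based Moore loop space compute $C_{-*}(\Omega M)$ as a dg algebra, made functorial and uniform in the basepoint; this is the only point at which any genuine ``analysis'' enters, everything afterwards being formal homotopical algebra.

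\textbf{Step 2: Seifert--van Kampen for $\mathrm{Sing}$.} Since $\{M_1,M_2\}$ is an \emph{open} cover of $M$, the higher Seifert--van Kampen theorem \cite[Appendix A.3]{lurie-ha} applies: the canonical map from the homotopy colimit of $\mathrm{Sing}$ over the \v{C}ech nerve of the cover to $\mathrm{Sing}\,M$ is a weak homotopy equivalence. For a two-element cover the \v{C}ech homotopy colimit collapses to the homotopy pushout of the span $\mathrm{Sing}\,M_1\leftarrow\mathrm{Sing}(M_1\cap M_2)\rightarrow\mathrm{Sing}\,M_2$ in the Kan--Quillen model structure, and because every monomorphism of simplicial sets is a cofibration the ordinary pushout already computes this homotopy pushout. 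Hence $\mathrm{Sing}\,M$ is a homotopy pushout of that span.

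\textbf{Step 3: apply $\cP'$, and conclude.} The functor $\cP'$ is a composite of (derived) left adjoints: $\mathfrak{C}[-]$ is left Quillen for the Bergner model structure on simplicial categories, and taking normalized chains on the mapping spaces is a left adjoint from simplicial categories to $\dgCat$ (its right adjoint applies the Dold--Kan functor to the mapping complexes), compatible with Tabuada's model structure. A composite of homotopy-colimit-preserving functors preserves homotopy colimits, so $\cP'$ sends the homotopy pushout of Step 2 to a homotopy pushout of dg categories; here I also use that the homotopy colimit in $\dgCat$ relevant on this side agrees with the one of Section \ref{subsubsection semifree dg category and homotopy colimit}, both being the total left derived functor of $\colim$. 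Combining Steps 1--2 with naturality of all the comparison maps gives
\[\cP(M)\simeq\cP'(\mathrm{Sing}\,M)\simeq\hocolim\bigl(\cP'(\mathrm{Sing}\,M_1)\leftarrow\cP'(\mathrm{Sing}(M_1\cap M_2))\rightarrow\cP'(\mathrm{Sing}\,M_2)\bigr)\simeq\hocolim\bigl(\cP(M_1)\leftarrow\cP(M_1\cap M_2)\rightarrow\cP(M_2)\bigr),\]
and one checks that under these identifications the two structure maps are exactly those induced by the inclusions $M_1\cap M_2\hookrightarrow M_i$.

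\textbf{Expected main obstacle.} The genuinely nontrivial ingredient is Step 1: reconciling the strictly associative Moore-path cubical model underlying $\cP(M)$ with the simplicial rigidification model $\cP'(\mathrm{Sing}\,M)$, and checking that the comparison is natural enough to be compatible with the \v{C}ech structure maps. This is ``well known'' but technical. An alternative that sidesteps $\mathrm{Sing}$ is to use the standard homotopy-pushout decomposition $M\simeq\hocolim(M_1\leftarrow M_1\cap M_2\rightarrow M_2)$ of topological spaces valid for open covers, together with the $\infty$-categorical statement that the based-loop equivalence $\Omega$ followed by $k$-linearization preserves homotopy colimits --- but this merely relocates the same Moore-path technicality into verifying that the Moore path category is a homotopically well-behaved strict model.
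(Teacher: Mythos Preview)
Your approach is essentially the one the paper indicates: the paper does not give a proof of this theorem at all, but simply states it as a consequence of the higher Seifert--van Kampen theorem in \cite[A.3]{lurie-ha}, pointing to the footnote for \cite[Example 1.36]{gps2} for further description. Your sketch supplies the intermediate steps the paper omits --- factoring through $\mathrm{Sing}$, invoking Lurie's result to get a homotopy pushout of simplicial sets, then passing back to dg categories via a homotopy-colimit-preserving functor --- and you correctly flag the cubical-vs-simplicial comparison in Step~1 as the only genuinely delicate point.
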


We end this subsection with a proposition whose proof can be found in \cite{Karabas-Lee24}. 

\begin{prp}\label{prp:wfuk-sphere}
	Let $n\geq 1$. The wrapped Fukaya category of $T^*S^n$ is given, up to pretriangulated equivalence, by
	\[\cW(T^*S^n)\simeq 
	\begin{cases}
		\cC_1[z^{-1}] &\text{if }n=1\\
		\cC_n &\text{if }n\geq 2
	\end{cases}\]
	where $\cC_n$ is the semifree dg category given as follows:
	\begin{enumerate}[label = (\roman*)]
		\item {\em Objects:} $L$ (representing a cotangent fiber of $T^*S^n$).
		\item {\em Generating morphisms:} $z\in\hom^*(L,L)$.
		\item {\em Degrees:} $|z|=1-n$.
		\item {\em Differentials:} $dz=0$.
	\end{enumerate}
\end{prp}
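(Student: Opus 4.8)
The statement to prove is Proposition~\ref{prp:wfuk-sphere}, computing $\cW(T^*S^n)$ up to pretriangulated equivalence. My plan is to apply Theorem~\ref{thm:cotangent-generation} with $M=S^n$, which reduces the problem to identifying the Pontryagin category $\cP(S^n)$, or equivalently (by Remark~\ref{rmk:loop-space}, since $S^n$ is path connected) the dg algebra $C_{-*}(\Omega_p S^n)$ of cubical chains on the based loop space, up to pretriangulated equivalence. So the whole problem becomes: find an explicit semifree dg algebra model for $C_{-*}(\Omega S^n)$ that is pretriangulated equivalent to the one-generator algebra $\cC_n$ (and to $\cC_1[z^{-1}]$ when $n=1$).

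\textbf{The case $n\geq 2$.} Here $\Omega S^n$ has the rational/integral homology of a free associative algebra: by the James splitting, $H_*(\Omega S^n;\Z)$ is a polynomial algebra on one generator in degree $n-1$ when $n$ is odd, and a tensor product of a polynomial algebra and an exterior algebra when $n$ is even; in all cases $H_*(\Omega S^n;\Z)\cong \Z\langle x\rangle$ — wait, more precisely it is the tensor algebra on a single class $x$ in degree $n-1$ only when $n$ is odd; for $n$ even there is a subtlety with the square. The cleanest route is: take the standard free (cofibrant, i.e.\ semifree) model of $C_{-*}(\Omega S^n)$. Since $S^n$ is an $(n-1)$-connected CW complex with a single cell in dimension $n$ (besides the basepoint), $\Omega S^n$ has a well-known cellular chain model which, after passing to the cobar construction $\Omega C_*(S^n)$ on the coalgebra of chains of $S^n$ (Adams' cobar equivalence), is the free associative algebra on a single generator $z$ in degree $1-n$ (homological degree $n-1$, cohomological $1-n$) with zero differential, because $C_*(S^n)$ has only the fundamental class and the point, so the cobar differential vanishes. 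This is exactly $\cC_n$ as a dg algebra: one object $L$, one generator $z$ with $|z|=1-n$, $dz=0$. Since $\cC_n$ is already semifree, no resolution is needed, and the pretriangulated (indeed quasi-) equivalence $\cC_n\simeq C_{-*}(\Omega S^n)\simeq \cP(S^n)\simeq \cW(T^*S^n)$ follows by composing the Adams cobar equivalence, Remark~\ref{rmk:loop-space}, and Theorem~\ref{thm:cotangent-generation}.

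\textbf{The case $n=1$.} Here $S^1$ is not simply connected, so the cobar equivalence does not directly apply; instead $\Omega S^1\simeq \Z$ as a topological monoid, so $C_{-*}(\Omega S^1)\simeq k[z,z^{-1}]$, the group ring of $\Z$, with $|z|=0$. As a semifree dg category this is the localization of the free algebra $\cC_1=k\langle z\rangle$ (object $L$, generator $z$ of degree $0$, $dz=0$) at the morphism $z$, i.e.\ $\cC_1[z^{-1}]$; one can either invoke Theorem~\ref{thm:seifert-van-kampen} with the standard two-arc cover of $S^1$ to compute $\cP(S^1)$ as a homotopy pushout of two contractible pieces over two contractible pieces — yielding the homotopy colimit presentation that, by Proposition~\ref{prp:colimit-dg} and the remark following it, is precisely $\cC_1[z^{-1}]$ — or simply observe directly that $k[z^{\pm1}]$ is quasi-isomorphic to the explicit semifree model for $\cC_1[z^{-1}]$ from Proposition~\ref{prp:localise-semifree}, since $\hat z$ and $\bar z$ are destabilized away by Proposition~\ref{prp:destabilisation} once $z$ becomes invertible. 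Combining with Theorem~\ref{thm:cotangent-generation} gives $\cW(T^*S^1)\simeq \cC_1[z^{-1}]$.

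\textbf{Main obstacle.} The only real content is identifying $C_{-*}(\Omega S^n)$ with $\cC_n$ on the nose as a \emph{semifree} dg category with the stated degree and differential, rather than merely up to quasi-isomorphism of the underlying homology algebra. For $n$ even one must be slightly careful: although $H_*(\Omega S^n;\Z)$ is not a free algebra (there is a relation governing $x^2$ in terms of a degree-$2(n-1)$ generator), the cobar construction $\Omega C_*(S^n)$ \emph{is} freely generated by one element with zero differential, and the relation only appears after taking homology — so as a dg algebra up to quasi-equivalence the free model $\cC_n$ is still correct. Making this point cleanly, and handling the grading conventions ($-*$ versus $*$, homological versus cohomological) consistently with Definition~\ref{dfn:pontryagin-category}, is where care is needed; everything else is a composition of results already cited in the excerpt. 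I expect the write-up to simply assemble Theorem~\ref{thm:cotangent-generation}, Remark~\ref{rmk:loop-space}, Adams' cobar theorem (for $n\geq2$) / the explicit monoid structure of $\Omega S^1$ (for $n=1$), and Propositions~\ref{prp:localise-semifree}--\ref{prp:destabilisation}.
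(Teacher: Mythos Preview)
The paper does not actually prove this proposition; it simply refers the reader to \cite{Karabas-Lee24}. Your approach---reduce via Theorem~\ref{thm:cotangent-generation} and Remark~\ref{rmk:loop-space} to identifying $C_{-*}(\Omega_p S^n)$, then use Adams' cobar equivalence for $n\geq 2$ and the direct identification $\Omega S^1\simeq\Z$ for $n=1$---is a correct and self-contained argument that the paper does not supply.

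One correction to a side remark: your claim that $H_*(\Omega S^n;\Z)$ with the Pontryagin product is \emph{not} a free associative algebra for $n$ even is wrong. The Bott--Samelson theorem (via the James model $\Omega\Sigma S^{n-1}\simeq J(S^{n-1})$) gives $H_*(\Omega S^n;\Z)\cong T(\tilde H_*(S^{n-1}))$, the tensor algebra on a single class in degree $n-1$, for \emph{all} $n\geq 2$; there is no relation on $x^2$. You may be conflating this with the cup product on $H^*(\Omega S^n)$ or with divided-power structures. This does not affect your proof, since you correctly observe that the cobar construction $\Omega C_*(S^n)$ is the free associative algebra on one generator with zero differential regardless, but the parenthetical should be removed or corrected. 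Also, for $n=1$ your invocation of Proposition~\ref{prp:destabilisation} to simplify the explicit semifree localization is a bit loose; it is cleaner to note that the dg localization $\cC_1[z^{-1}]$ is by definition the universal dg category receiving a functor from $\cC_1$ that inverts $z$, so it is quasi-equivalent to $k[z,z^{-1}]$ directly.
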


\section{Plumbing spaces}
\label{section plumbing space}
In the main part of the paper, we discuss about the wrapped Fukaya categories of plumbing spaces of dimension $\geq 4$. 
Before starting the main discussion, we define the notion of plumbing spaces as gluings of Weinstein sectors in Section \ref{section plumbing space}.
We would like to point out that the plumbing space construction is equivalent to the conventional plumbing procedure that is explained in \cite[Chapter 7.6]{Geiges08} and \cite[Section 2.3]{Abouzaid11}, for example. 
In Remark \ref{rmk equivalence with the conventional plumbing}, we briefly discuss why our construction and the conventional construction match each other.

In the first two subsections, we give some preliminary knowledge. 
In Section \ref{subsection notation}, we define the notions of {\em plumbing data} and {\em plumbing sector}. 
In Section \ref{subsection construction of plumbing spaces}, we will describe how to construct a {\em plumbing space} from a plumbing data. 

\subsection{Weinstein pair and sector} 
\label{subsection Weinstein pair}
The main goal of Section \ref{subsection Weinstein pair} is to review the notion of {\em Weinstein pair} and to show the equivalence of the notions of Weinstein pair and {\em Weinstein sector}.
We note that one can find (original statements of) the most of this section, for example, Definitions \ref{dfn Weinstein hypersurface}, \ref{dfn Weinstein pair}, \ref{dfn adjusted}, and Proposition \ref{prp modified Liouville form}, in \cite[Chapters 2 and 3]{Eliashberg18} and the references therein.
We also refer the reader to \cite{Avdek21, Sylvan19, gps1, Ekholm-Lekili17}.
Lastly, see Section \ref{sec:preliminaries-symplectic} to review some definitions and facts about {\em Weinstein manifolds} and {\em Weinstein domains}.

We first define the notion of Weinstein hypersurfaces.
\begin{dfn}
	\label{dfn Weinstein hypersurface}
	Let $(Y, \xi)$ be a contact manifold. A codimension 1 submanifold $\Sigma \subset Y$ with boundary is called {\em Weinstein hypersurface} if there exists a contact one form $\lambda$ such that $(\Sigma, \lambda|_\Sigma)$ is a Weinstein domain.  
\end{dfn} 

\begin{rmk}
	\label{rmk skeleton of stops}
	It is known that the induced Weinstein structure on a Weinstein hypersurface $\Sigma$ depends on the choice of contact one form, but the {\em skeleton} of $\Sigma$ is independent of the choice. 
	For the proof, see \cite[Chapter 2]{Eliashberg18}.
	Moreover, the skeleton should be a stratified subset that consists of isotopic strata.
\end{rmk}

\begin{dfn}
	\label{dfn Weinstein pair}
	A {\em Weinstein pair} consists of a Weinstein domain $(W, \lambda)$ together with a Weinstein hypersurface $(\Sigma, \lambda|_\Sigma) \subset \partial W$. 
	Equivalently, one can define a Weinstein pair as a pair $(W, \Sigma)$ such that $W$ is a Weinstein manifold with cylindrical end and $\Sigma$ is a	Weinstein hypersurface in its ideal contact boundary.
\end{dfn}

We define the {\em core}, or the {\em skeleton}, of a Weinstein pair as follows: 
Let $(W, \lambda, \phi)$ be a Weinstein domain together with a Weinstein hypersurface $\Sigma \subset \partial W$. 
Let $\Lambda$ denote the skeleton of $\Sigma$ and let 
\[\hat{\Lambda}:= \cup_{t\geq 0} Z^{-t}(\Lambda),\]
where $Z$ is the Liouville vector field dual to $\lambda$. 
Then, the {\em core}, or {\em skeleton}, of a Weinstein pair $(W, \Sigma)$ is defined as 
$\mathrm{Skel}(W, \Sigma) := \mathrm{Skel}(W) \cup \hat{\Lambda}$.
Then, it is known that one can modify a Liouville form on $W$ so that 
\begin{gather}
	\label{eqn modified Liouville form}
	\mathrm{Skel}(W, \Sigma) = \cap_{t > 0} Z_0^{-t}(W) =: \mathrm{Skel}(W, \lambda_0),
\end{gather}
where $\lambda_0$ is the modified Liouville form and $Z_0$ is a Liouville vector field with respect to $\lambda_0$. 
To state the known fact more rigorously, let us present Definition \ref{dfn adjusted}.
\begin{dfn}
	\label{dfn adjusted}
	Let a Weinstein pair $(W, \Sigma)$ consist of a Weinstein domain $W = (W, \lambda, \phi)$ and a Weinstein hypersurface $(\Sigma, \lambda|_\Sigma, \phi_\Sigma) \subset \partial W$.
	Let $U$ be a neighborhood of $\Sigma$ in $\partial W$ such that 
	\[\Sigma \times [-\epsilon, \epsilon] \simeq U \subset \partial W, \text{  and  } \phi_\Sigma (\partial U) = 0. \]
	Let us denote by $Z_\Sigma$ the Liouville vector field on $\Sigma$ dual to $\lambda|_\Sigma$. 
	A Liouville 1-form $\lambda_0$, the Liouville vector field $Z_0$ dual to $\lambda_0$, and a smooth function $\phi_0$ are {\em adjusted} to the structure of Weinstein pair $(W, \Sigma)$ if the following hold:
	\begin{itemize}
		\item $Z_0$ is tangent to $\partial W$ on $U$ and transversal to $\partial W$ elsewhere, 
		\item $Z_0|_U = Z_\Sigma + u \tfrac{\partial}{\partial u}$, where $u$ is the interval coordinate of $U \simeq \Sigma \times [-\epsilon, \epsilon]$, 
		\item $\mathrm{Skel}(W, \lambda_0) = \mathrm{Skel}(W, \Sigma)$,
		\item the function $\phi_0: W \to \mathbb{R}$ is Lyapunov for $Z_0$ such that $\phi_0|_U= \phi_\Sigma + u^2$ and $\phi_0$ has no critical values bigger than or equal to the constant $\epsilon^2 = \phi_0(\partial U)$.
	\end{itemize}
\end{dfn}

Proposition \ref{prp modified Liouville form} claims the existence of modified Liouville form $\lambda_0$ satisfying Equation \eqref{eqn modified Liouville form}.
\begin{prp}[Proposition 2.9 of \cite{Eliashberg18}]\label{prp:pair-sector-correspondence}
	\label{prp modified Liouville form}
	Let $(W, \lambda, \phi)$ be a Weinstein domain and let $(W, \Sigma)$ be a Weinstein pair.
	There exist a Liouville form $\lambda_0$ for $W$ and a function $\phi_0: W \to \mathbb{R}$ such that 
	\begin{itemize}
		\item $\lambda_0, \phi_0$ are adjusted to $(W, \Sigma)$, and
		\item $\lambda_0$ coincides with $\lambda$ outside a neighborhood of $\Sigma$.
	\end{itemize} 
\end{prp}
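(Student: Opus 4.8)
The plan is to put $\lambda$ into a normal form near $\Sigma$, modify it there by adding an exact $1$-form, and then verify the properties in Definition~\ref{dfn adjusted} while keeping the modification supported near $\Sigma$.

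First I would apply the Weinstein neighborhood theorem for the Weinstein hypersurface $\Sigma\subset\partial W$ (see \cite[Chapter~2]{Eliashberg18}): a neighborhood of $\Sigma$ in $(\partial W,\ker\lambda|_{\partial W})$ is contactomorphic to a neighborhood of $\Sigma\times\{0\}$ in $(\Sigma\times\R_u,\ker(du+\lambda|_\Sigma))$, and after shrinking one may assume the contact form is $du+\lambda_\Sigma$ there, where $\lambda_\Sigma:=\lambda|_\Sigma$. Gluing in the Liouville collar $\partial W\times[-\delta,0]_s$ with Liouville form $e^s\lambda|_{\partial W}$ gives a model neighborhood $\mathcal N\cong\Sigma\times[-\epsilon_0,\epsilon_0]_u\times[-\delta,0]_s$ of $\Sigma$ in $W$ with $\lambda=e^s(du+\lambda_\Sigma)$, whose Liouville vector field is $\partial_s$; in particular $\Sigma$ is disjoint from $\mathrm{Skel}(W,\lambda)$.

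Next, for any function $h=h(s,u)$ on $\mathcal N$ pulled back from the $[-\epsilon_0,\epsilon_0]_u\times[-\delta,0]_s$ factor and extended by $0$, the form $\lambda_0:=\lambda+dh$ satisfies $d\lambda_0=d\lambda$, so $\lambda_0$ is automatically a Liouville form for $W$ with the same symplectic form, and $\lambda_0=\lambda$ outside $\mathcal N$. Solving $\iota_{Z_0}d\lambda=\lambda_0$ gives
\[
Z_0=\bigl(1+e^{-s}\,\partial_u h\bigr)\,\partial_s-e^{-s}\,(\partial_s h)\,\partial_u-e^{-s}\,(\partial_u h)\,Z_\Sigma .
\]
I would then take $h(s,u)=\beta(s)\,G(u)$ with $G$ smooth, odd, $G(u)=-u$ for $|u|\le\epsilon$ (the constant of Definition~\ref{dfn adjusted}, $\epsilon<\epsilon_0$), $G'\ge 0$ on $[\epsilon,\epsilon_0]$, $G\equiv 0$ near $|u|=\epsilon_0$; and $\beta$ a cutoff with $\beta(0)=1$, $\beta'(0)=1$, $\beta\equiv 0$ near $s=-\delta$, $\beta'>0$ on $(-\delta,0)$, and $\beta(s)<e^s$ on $[-\delta,0)$. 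Then: over $U:=\Sigma\times[-\epsilon,\epsilon]$ one has $\partial_u h(0,\cdot)=-1$, so $Z_0$ is tangent to $\partial W$ there and equals $Z_\Sigma+u\,\partial_u$; the $\partial_s$-component of $Z_0$ along $\partial W$ equals $1+G'(u)>0$ for $|u|>\epsilon$, so $Z_0$ is outward-transverse elsewhere on $\partial W$; the zero set of $Z_0$ in $\mathcal N$ is exactly $\mathrm{Skel}(\Sigma)\times\{0\}\times\{0\}=\Lambda$; and tracking the forward $Z_0$-flow (orbits with $u\neq 0$ are pushed to $|u|\ge\epsilon$ and leave $W$; orbits on $\{u=0\}$ whose $\Sigma$-coordinate is not in $\mathrm{Skel}(\Sigma)$ exit $\mathcal N$ through $\partial\Sigma$ and are then pushed out by the unmodified $Z$; the rest converge to $\Lambda$ inside $\Sigma\times\{0\}\times[-\delta,0]$) shows $\mathrm{Skel}(W,\lambda_0)=\mathrm{Skel}(W)\cup\hat\Lambda=\mathrm{Skel}(W,\Sigma)$.

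Finally I would build $\phi_0$: set $\phi_0:=\phi_\Sigma+u^2$ on $U$, extend it to a Lyapunov function for $Z_0$ on a neighborhood of $\hat\Lambda$ in $\mathcal N$ (for instance $\phi_\Sigma+u^2+c(e^s-1)$ with $0<c\ll 1$, using $\beta(s)\le e^s$), keep $\phi_0:=\phi$ outside a slightly larger neighborhood of $\Sigma$, interpolate in between by a partition of unity, and finally rescale so that no critical value reaches $\epsilon^2=\phi_0(\partial U)$. Both $\phi$ (for $Z$) and the model function (for $Z_0$) are gradient-like, so the only real content is that the interpolant remains gradient-like for $Z_0$ across the transition region --- this is the main obstacle. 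It is overcome by arranging, as in the choices of $\beta$ and $G$ above, that $Z_0$ is uniformly noncritical and transverse to $\partial W$ on that region, after which one invokes the standard fact that Lyapunov/gradient-like functions for a fixed vector field can be interpolated over such a region; the detailed version of this interpolation is carried out in \cite[Chapters~2--3]{Eliashberg18}. By construction $\lambda_0$ and $\phi_0$ agree with $\lambda$ and $\phi$ away from a neighborhood of $\Sigma$, which is the last claim.
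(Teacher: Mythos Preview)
The paper does not give its own proof of this proposition; it is quoted verbatim as Proposition~2.9 of \cite{Eliashberg18} and used as a black box. So there is nothing in the paper to compare against.

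Your sketch follows the standard construction from \cite{Eliashberg18}: put $\lambda$ into the symplectization normal form $e^s(du+\lambda_\Sigma)$ near $\Sigma$, add an exact correction $dh$ supported there to redirect the Liouville field so that it becomes $Z_\Sigma+u\,\partial_u$ along $U$, and then patch a Lyapunov function. The computation of $Z_0$ and the verification that $Z_0|_{s=0,|u|\le\epsilon}=Z_\Sigma+u\,\partial_u$ are correct. One small inconsistency: you require $G(u)=-u$ for $|u|\le\epsilon$ (so $G'(\epsilon^-)=-1$) and simultaneously $G'\ge 0$ on $[\epsilon,\epsilon_0]$; a smooth $G$ cannot do both at $u=\epsilon$. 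This is harmless---just make $G(u)=-u$ on a slightly smaller interval and let $G'$ transition smoothly---but as written the two conditions clash. The Lyapunov interpolation is, as you note, the genuinely delicate step, and deferring it to \cite{Eliashberg18} is appropriate here since the paper itself does exactly that.
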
 

We note that $\phi_0(\partial U) = \epsilon^2$. 
If we set $W_0 := \phi_0^{-1}(\leq \epsilon^2)$, 
then there is no critical point of $\phi_0$ in $W \setminus W_0$. 
Thus, $W_0$ is a manifold with corners whose corners are along $\partial U$.
Moreover, $W_0$ is homeomorphic to $W$. 
Then, it is easy to check that one can see $W_0$ as a {\em Weinstein sector} whose convex completion (for the definition of convex completion, see \cite{gps2} and references therein) is the Weinstein pair $(W, \Sigma)$.
For the formal definition of Weinstein sector and more details, we refer the reader to \cite{gps1} and we omit the details. 
It implies that a Weinstein pair $(W,\Sigma)$ and a Weinstein sector $W_0$ are equivalent in the sense that one can recover from one the other. 

In this preliminary subsection, we reviewed various (equivalent) versions of a Weinstein pair/sector. 
In the rest of the paper, we will use the term Weinstein pair/sector for any of those versions.

\subsection{Gluing of Weinstein pairs/sectors}
\label{subsection gluing of Weinstein pairs/sectors}
Section \ref{subsection gluing of Weinstein pairs/sectors} reviews the gluing operation of Weinstein pairs/sectors.
For more details, we refer the reader to \cite{Avdek21, Eliashberg18}. 
We will use the same notation that we used in the previous section (Section \ref{subsection Weinstein pair}) without mentioning/defining.

Let $(W, \Sigma)$ and $(W', \Sigma')$ be two Weinstein pairs such that there exists a Weinstein isomorphism between $\Sigma$ and $\Sigma'$, 
\[F: (\Sigma, \lambda|_\Sigma, \phi|_\Sigma) \stackrel{\sim}{\to} (\Sigma', \lambda'|_{\Sigma'}, \phi'|_{\Sigma'}).\]
We consider the cornered version of the pairs $(W, \Sigma), (W', \Sigma')$. 
In other words, we consider a manifold with corners $W_0 \subset W$ (resp.\ $W_0' \subset W'$) equipped with a Liouville one form $\lambda_0$ (resp.\ $\lambda_0'$) and a function $\phi_0: W \to \mathbb{R}$ (resp.\ $\phi_0': W' \to \mathbb{R}$) such that $\lambda_0$ and $\phi_0$ (resp.\ $\lambda_0'$ and $\phi_0'$) are adjusted to $(W, \Sigma)$ (resp.\ $(W', \Sigma')$). 
We note that $W_0$ (resp.\ $W_0'$) has boundary of two types, one is a contact neighborhood of $\Sigma$ (resp.\ $\Sigma'$), denoted by $U$ (resp.\ $U'$), and the other is transversal to the Liouville vector field $Z_0$ (resp.\ $Z_0'$) dual to $\lambda_0$ (resp.\ $\lambda_0'$). 

Since $F$ is a Weinstein isomorphism, we extend $F$ to a contactomorphism between $U$ and $U'$. 
Let $F$ still denote the extended contactomorphism. 
We set 
\[W \sqcup_F W' := W_0 \sqcup W_0' / \{ \left(x \in U\right) \sim \left(F(x) \in U'\right)\}.\]
Then the Liouville forms $\lambda_0$ and $\lambda_0'$ and the Lyapunov functions $\phi_0$ and $\phi_0'$ can be glued together and define a Weinstein structure on $W \sqcup_F W'$.

\begin{dfn}
	\label{dfn gluing}
	With the above notation, the {\em gluing of Weinstein pairs} $(W, \Sigma)$ and $(W', \Sigma')$ is the Weinstein domain $(W \sqcup_F W', \lambda_F, \phi_F)$ where the gluing of $\lambda_0$ and $\lambda_0'$ (resp.\ $\phi_0$ and $\phi_0'$) is denoted by $\lambda_F$ (resp.\ $\phi_F$). 
\end{dfn}

\subsection{Plumbing data and plumbing sector}
\label{subsection notation}
Definition \ref{dfn plumbing data} defines the notion of pluming data. 

\begin{dfn}
	\label{dfn plumbing data}
	A {\em plumbing data} is a triple $(Q, M, \s)$ such that 
	\begin{itemize}
		\item $Q$ is a (finite) quiver, i.e., a directed graph, 
		\item $M$ is a map from the set of vertices of $Q$, denoted by $V(Q)$, to the collection of $n$-dimensional connected, oriented, smooth manifolds (with or without boundary), denoted by $\mathcal{O}_n$, i.e., 
		\[M: V(Q) \to \mathcal{O}_n,\]
		\item $\s$ is a map from the set of arrows, denoted by $E(Q)$, to $\{1, -1\}$, i.e., 
		\[\s: E(Q) \to \{1, -1\}. \]
	\end{itemize}
\end{dfn}
\begin{rmk}
	We note that in Definition \ref{dfn plumbing data}, $Q$ can be any graph, with or without loops, multiple edges between two vertices. 
	Moreover, it is also worthy note that by Definition \ref{dfn plumbing data}, plumbing spaces in the current paper mean plumbings of cotangent bundles of {\em connected, oriented, smooth} manifolds. 
	Also, manifolds can be compact or non-compact, and the cotangent bundles of non-compact manifolds would be seen as {\em open Liouville sectors} defined and studied in \cite[Remark 2.8]{gps1}.
	Additionally, we expect that we can compute the wrapped Fukaya category of plumbings of cotangent bundles of {\em nonorientable} manifolds, by employing the same techniques given in the paper.  
\end{rmk}

We would like to define the notion of plumbing sector. 
To do that, we use the following notation: for a fixed $n \in \mathbb{N}$, 
\begin{itemize}
	\item $\mathbb{R}^{2n}$ is a Weinstein manifold equipped with the standard Liouville one form
	\[\lambda_n:= \sum_i \tfrac{1}{2} \left(x_i d y_i - y_i d x_i\right),\]
	where $x_1, \dots, x_n$ (resp.\ $y_1, \dots, y_n$) coordinate the first (resp.\ last) $n$-factors of $\mathbb{R}^{2n}$.
	\item $\mathbb{D}^{2n}$ is a subset of $\mathbb{R}^{2n}$ such that 
	\[\mathbb{D}^{2n} := \{(x_1, \dots, x_n, y_1, \dots, y_n) \in \mathbb{R}^{2n} | \sum_i(x_i^2 + y_i^2) \leq 1\}.\]
	We note that $\left(\mathbb{D}^{2n}, \lambda_n\right)$ is a Weinstein domain. 
	\item Let $S^{n-1}$ be the unit sphere in $\mathbb{R}^n$, and let $\mathbf{0}_n$ denote the origin point $(0, \dots, 0)$ in $\mathbb{R}^n$. 
\end{itemize}

With the above notations, Definition \ref{dfn plumbing sector} defines the notion of plumbing sector, see also \cite[Section 6.2]{gps3}.
\begin{dfn}
	\label{dfn plumbing sector}
	A {\em plumbing sector of dimension $\mathit{2n}$} is a Weinstein sector 
	\[\Pi_n := \left(\mathbb{D}^{2n}, \lambda_n, (S^{n-1} \times \mathbf{0}_n) \sqcup (\mathbf{0}_n \times S^{n-1})\right). \]
	For convenience, let $\mathit{\Lambda_1}$ (resp.\ $\mathit{\Lambda_2}$) denote $S^{n-1} \times \mathbf{0}_n$ (resp.\ $\mathbf{0}_n \times S^{n-1}$).
\end{dfn}
In Section \ref{subsection construction of plumbing spaces}, the plumbing spaces will be obtained by gluing Weinstein sectors. 
The following two inclusion maps will be used for representing the gluing information.

\begin{gather}
	\label{eqn inclusion map alpha}
	\Phi: S^{n-1} \stackrel{\sim}{\to} \Lambda_1 \hookrightarrow \Pi_n,\quad p \mapsto \left(p, \mathbf{0}_n \right), \\
	\label{eqn inclusion map beta}
	\Psi: S^{n-1} \stackrel{\sim}{\to} \Lambda_2 \hookrightarrow \Pi_n,\quad p \mapsto \left(\mathbf{0}_n, p \right).
\end{gather}

It is easy to check that one can extend the above inclusion map $\Phi$ in \eqref{eqn inclusion map alpha} (resp.\ $\Psi$ in \eqref{eqn inclusion map beta}) to a contactomorphism between a small neighborhood of $S^{n-1}$ in $1$-jet space $T^*S^{n-1} \times \mathbb{R}$ and a neighborhood of $\Lambda_1$ (resp.\ $\Lambda_2$) in $\partial \Pi_n$. 
We note that the second mentioned space, i.e., a neighborhood of $\Lambda_1$ (resp.\ $\Lambda_2$) in $\partial \Pi_n$, is a codimension $1$ (contact-)submanifold of $\Pi_n$, whose contact one form is the restriction of the Liouville one form of $\Pi_n$, i.e., $\lambda_n$. Note also that by considering the pullbacks of \eqref{eqn inclusion map alpha} and \eqref{eqn inclusion map beta}, we have the induced inclusions (called {\em inclusions of Liouville sectors} in \cite{gps1})
\begin{gather}
	\label{eq:plumbing-inclusion-1}
	\Phi: T^*S^{n-1} \hookrightarrow \Pi_n ,
	\Psi: T^*S^{n-1} \hookrightarrow \Pi_n .
\end{gather}
Here, $\Pi_n$ is regarded as a Weinstein manifold with stops, instead of a Weinstein domain with stops. For the equivalence of these two notions, see Definition \ref{dfn Weinstein pair}.

Finally, it is clear that the skeleton $\text{Skel}(\Pi_n)$ of $\Pi_n$ is given by $(\mathbb{D}^n\times\mathbf{0}_n)\cup(\mathbf{0}_n\times\mathbb{D}^n)$ as shown in Figure \ref{fig:skel-plumb}.

\begin{figure}[ht]
	\centering	
	\begin{tikzpicture}	
		\draw[domain=0:1.5,smooth,variable=\x] plot ({\x^2},{\x});
		\draw[domain=0:1.5,smooth,variable=\x] plot ({\x^2},{-\x});
		\draw[domain=0:1.5,smooth,variable=\x] plot ({-\x^2},{\x});
		\draw[domain=0:1.5,smooth,variable=\x] plot ({-\x^2},{-\x});
		\draw[blue] (1.5^2,-1.5) to[bend left] (1.5^2,1.5);
		\draw[blue] (1.5^2,-1.5) to[bend right] (1.5^2,1.5);
		\draw[blue] (-1.5^2,-1.5) to[bend left] (-1.5^2,1.5);
		\draw[blue,dashed] (-1.5^2,-1.5) to[bend right] (-1.5^2,1.5);
		
		\node[blue] at (-4,0) (X) {$\Phi(S^{n-1})= \Lambda_1$};
		\node[blue] at (4,0) (Y) {$\Lambda_2=\Psi(S^{n-1})$};
	\end{tikzpicture}
	
	\caption{The skeleton of $\Pi_n$}
	\label{fig:skel-plumb}
\end{figure}
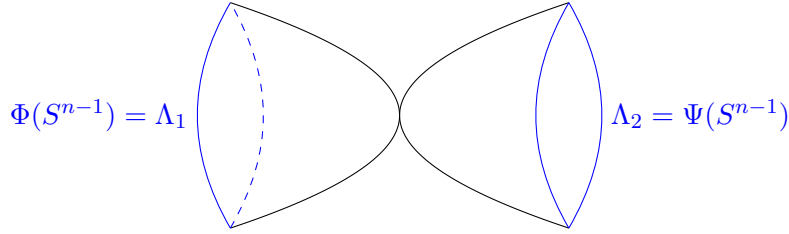

\begin{rmk}\label{rmk orientations of stops}
	In the rest of the paper, we consider $\Lambda_1$ and $\Lambda_2$ as oriented manifolds. 
	Their orientations are the induced orientations by $\Phi, \Psi$ from the standard orientation of $S^{n-1}$. 
	Or equivalently, $\Lambda_1 = \partial \left(\mathbb{D}^n \times \mathbf{0}_n \right)$ (resp.\ $\Lambda_2 = \partial \left(\mathbf{0}_n \times \mathbb{D}^n\right)$) is oriented by the standard orientation of $\mathbb{D}^n \times \mathbf{0}_n$ (resp.\ $\mathbf{0}_n \times \mathbb{D}^n$) when $\mathbb{D}^n \times \mathbf{0}_n$ (resp.\ $\mathbf{0}_n \times \mathbb{D}^n$) is oriented by the the volume form $d x_1 \wedge \dots \wedge d x_n$ (resp.\ $d y_1 \wedge \dots \wedge d y_n$).
		
	On the other hand, the symplectic form $d \lambda_n = \sum_i d x_i \wedge d y_i$ on $\Pi_n=(\mathbb{D}^{2n},\lambda_n,\Lambda_1\sqcup\Lambda_2)$ implies that $\mathbb{D}^{2n}$ admits an orientation induced from $\left(d \lambda_n\right)^{\wedge n}$.
	Equivalently, the orientation is same as that induced by the volume form
	\[d x_1 \wedge d y_1 \wedge d x_2 \wedge d y_2 \wedge \dots \wedge d x_n \wedge d y_n = (-1)^{\tfrac{1}{2}(n-1)n} d x_1 \wedge \dots \wedge d x_n \wedge d y_1 \wedge \dots \wedge d y_n .\]
	Hence, if $\tfrac{1}{2}(n-1)n$ is an even (resp.\ odd) integer, then two disks $\mathbb{D}^n \times \mathbf{0}_n$ and $\mathbf{0}_n \times \mathbb{D}^n$ intersect positively (resp.\ negatively) in $\mathbb{D}^{2n}$, or equivalently, in $\Pi_n$. This observation will be important in Section \ref{subsubsection gluing information}.
\end{rmk}

\subsection{Construction of plumbing spaces}
\label{subsection construction of plumbing spaces}
Now, we construct a plumbing space from a plumbing data. 
We first consider a collection of Weinstein sectors from a given plumbing data, then glue the collection. 

\subsubsection{Collection of Weinstein sectors}
\label{subsubsection collection of Weinstein sectors}
Let 
\[(Q, M: V(Q) \to \mathcal{O}_n, \s: E(Q) \to \{\pm 1\})\] 
be an arbitrary plumbing data. 
The collection of Weinstein sectors corresponding to $(Q, M, \s)$ consists of Weinstein sectors of two types. 
The first type is a cotangent bundle, and the second type is the plumbing sector $\Pi_n$. 
More precisely, the collection consists of
\begin{itemize}
	\item a cotangent bundle for each vertex $v \in V(Q)$, and 
	\item a plumbing sector $\Pi_n$ for each arrow $e \in E(Q)$.
\end{itemize}

For each vertex $v \in V(Q)$, we add a cotangent bundle as follows: $M(v)$ is an oriented manifold of dimension $n \geq 2$ (with or without boundary). 
For every edge $e = v \to w \in E(Q)$, we choose two {\em plumbing points} $p(e)$ in the interior of $M(v)$ and $q(e)$ in the interior of $M(w)$ so that, for all $v \in V(Q)$, the points of the subset of $M(v)$ given by
\begin{gather}
	\label{eqn choice of points}
	\{p(e) \in M(v) |  e= v \to \bullet \text{  for a  } e \in E(Q)\}
	\sqcup
	\{q(e) \in M(v) |  e= \bullet \to  v \text{  for a  } e \in E(Q)\}
\end{gather}
are all distinct. Then, one can choose small, disjoint, closed neighborhoods $U_{p(e)}, U_{q(e)} \subset M(v)$ of $p(e)$ and $q(e)$ such that $U_{p(e)}, U_{q(e)} \simeq \mathbb{D}^n$.
We note that the choice of plumbing points $p(e)$ and $q(e)$ does not affect the symplectomorphism type of resulting plumbing space, if $n \geq 2$. 
See Remark \ref{rmk choice of plumbing points} for more details. 

\begin{dfn}
	\label{dfn M_v}
	For a given plumbing data $(Q, M,\s)$, we set {\em $\mathit{M_v}$} as 
	\[M_v :=\text{  the closure of  } \left(M(v) \setminus \left(\bigcup_{e= v \to \bullet} U_{p(e)} \cup \bigcup_{e =\bullet \to v} U_{q(e)}\right)\right).\]
\end{dfn}

Let us assume that $T^*M_v$ is equipped with the standard Liouville one form, $\sum_{i=1}^n -p_i dq_i$ where $(q_1, \dots, q_n)$ (resp.\ $(p_1, \dots, p_n)$) coordinates the base (resp.\ fiber) direction of the cotangent bundle $T^*M$. 
Then, $T^*M_v$ is a Weinstein sector (or equivalently, Weinstein pair) as described in Section \ref{subsection Weinstein pair}. 
We note that the symplectomorphism classes of resulting plumbing spaces do not depend on the specific choice of points $p(e)$ and $q(e)$ in \eqref{eqn choice of points}, since $n \geq 2$. 

Now, for a given plumbing data, we consider the following collection of Weinstein sectors: 
\[\left\{T^*M_v, \Pi_e = \Pi_n | v \in V(Q), e \in E(Q)\right\}.\]

\subsubsection{Gluing information}
\label{subsubsection gluing information}
In Section \ref{subsubsection collection of Weinstein sectors}, we obtained a collection of Weinstein sectors from an arbitrary plumbing data $(Q, M, \s)$. 
To finish our construction of plumbing spaces, we need to glue the Weinstein sectors in the collection. 

We note that the relevant stops of Weinstein sectors in the given collections are 
\[\partial U_{p(e)}, \partial U_{q(e)} \text{  (in  } T^*M_v), \Lambda_1 = S^{n-1} \times \mathbf{0}_n, \text{  and  } \Lambda_2=\mathbf{0}_n \times S^{n-1} \text{  (in  } \Pi_e = \Pi_n).\]
Thus, all these Legendrian stops are diffeomorphic to $S^{n-1}$. 
Moreover, the corresponding Weinstein hypersurfaces are $T^*S^{n-1}$ since their skeletons are $S^{n-1}$. 

In order to glue, it is enough to give the following information:
\begin{enumerate}
	\item[(A)] We need to {\em pair} Legendrian stops so that two Legendrians in a pair will be identified/glued to each other, and
	\item[(B)] We need to choose a diffeomorphism to $S^{n-1}$ for each of Legendrian stops. 
\end{enumerate}
We note that the diffeomorphisms in (B) can induce a diffeomorphism between two Legendrian stops that are paired in (A), and the pullback of the induced diffeomorphism becomes a Weinstein isomorphism between two Weinstein hypersurfaces. 
As explained in Section \ref{subsection gluing of Weinstein pairs/sectors}, the induced Weinstein isomorphism allows us to glue along the paired/identified Weinstein hypersurfaces of the Weinstein sectors. 

{\em Information for (A)} :
Let an arrow $e \in E(Q)$ start (resp.\ end) at a vertex $v$ (resp.\ $w$), i.e.,
\[e = v \to w.\] 
Then, there are four Legendrian spheres related to $e$, $\Lambda_1, \Lambda_2$ in $\Pi_e$ (which is $\Pi_n$ corresponding to $e$), $\partial U_{p(e)}$ in $T^*M_v$, and $\partial U_{q(e)}$ in $T^*M_w$. 
The stop $\Lambda_1$ (resp.\ $\Lambda_2$) in the corresponding $\Pi_e$ is paired with $\partial U_{p(e)} \subset M_v$ (resp.\ $\partial U_{q(e)} \subset M_w$). 

{\em Information for (B)}: 
Again, let an arrow $e$ be the arrow starting at $v$ and ending at $w$. 
As mentioned above, there exist two pairs of Legendrian spheres related to $e$, $\left(\Lambda_1, \partial U_{p(e)}\right)$ and $\left(\Lambda_2, \partial U_{q(e)}\right)$. 

For the first two spheres $\Lambda_1$ and $\Lambda_2$, we choose diffeomorphisms $\Phi, \Psi$ (onto their images) defined in \eqref{eqn inclusion map alpha} and \eqref{eqn inclusion map beta}.
To indicate that those maps are related to the plumbing sector corresponding to a specific arrow $e$, we put $e$ as a subscript, and we have 
\[\Phi_e : S^{n-1}\stackrel{\sim}{\to}\Lambda_1 ,\quad \Psi_e : S^{n-1}\stackrel{\sim}{\to} \Lambda_2 .\]
We note that $\Phi_e$ and $\Psi_e$ are orientation preserving diffeomorphisms. 

For the third sphere $\partial U_{p(e)}$, we remark that $U_{p(e)}$ is an oriented disk since $U_{p(e)}$ is a submanifold of an oriented manifold $M(v)$. 
We assume that $\partial U_{p(e)}$ is equipped with the natural boundary orientation inherited from $U_{p(e)}$. 
Then, we choose any orientation preserving diffeomorphism between $S^{n-1}$ and $\partial U_{p(e)}$. 
Let $F_e: S^{n-1} \stackrel{\sim}{\to}\partial U_{p(e)}$ be the chosen diffeomorphism. 
Then, we can identify $\partial U_{p(e)}$ and $\Lambda_1$ via the following:
\begin{equation}\label{eq:weinstein-isomorphism-1}
	\partial U_{p(e)}\xrightarrow{F_e^{-1}} S^{n-1}\xrightarrow{\Phi_e}\Lambda_1.
\end{equation}

\begin{rmk}
	\label{rmk identifying stops 1}
	We glue/identify $\Lambda_1$ in $\Pi_n$ and $\partial U_{p(e)}$ via $\Phi_e$ and $F_e$. 
	Instead of this, one can identify $U_{p(e)}$ and $\mathbb{D}^n \times \mathbf{0}_n \subset \Pi_n$, as `oriented' manifolds. 
	Then, this identification induces the boundary identification between $\partial U_{p(e)}$ and $\partial \left(\mathbb{D}^n \times \mathbf{0}_n\right) = \Lambda_1$. 
\end{rmk}

For the last sphere $\partial U_{q(e)}$, we also remark that $U_{q(e)}$ is an oriented disk and $\partial U_{q(e)}$ admits a natural boundary orientation. 
Then, we choose $G_e: S^{n-1}\stackrel{\sim}{\to} \partial U_{q(e)} $ by the following rule:
\begin{itemize}
	\item If $\tfrac{1}{2}(n-1)n$ is an even integer, then 
	\begin{gather*}
		G_e = \begin{cases}
			\text{  any orientation preserving diffeomorphism, if  } \s(e)=1 ,\\
			\text{  any orientation reversing diffeomorphism, if  } \s(e)=-1 .
		\end{cases}
	\end{gather*}
	\item If $\tfrac{1}{2}(n-1)n$ is an odd integer, then 
	\begin{gather*}
		G_e = \begin{cases}
			\text{  any orientation reversing diffeomorphism, if  } \s(e)=1 ,\\
			\text{  any orientation preserving diffeomorphism, if  } \s(e)=-1 .
		\end{cases}
	\end{gather*}
\end{itemize}
More concisely,
\begin{gather*}
	G_e = \begin{cases}
		\text{  any orientation preserving diffeomorphism, if  } (-1)^{*_e}=1 ,\\
		\text{  any orientation reversing diffeomorphism, if  } (-1)^{*_e}=-1 ,
	\end{cases}
\end{gather*}
where $(-1)^{*_e}:=(-1)^{n(n-1)/2}\s(e)$.
Then, as in the previous case, we can identify $\partial U_{q(e)}$ and $\Lambda_2$ via the following:
\begin{equation}\label{eq:weinstein-isomorphism-2}
	\partial U_{q(e)}\xrightarrow{G_e^{-1}} S^{n-1}\xrightarrow{\Psi_e}\Lambda_2.
\end{equation}

\begin{rmk}
	\label{rmk identifying stops 2}
	As in Remark \ref{rmk identifying stops 1}, one can identify $U_{q(e)}$ and $\mathbf{0}_n \times \mathbb{D}^n$ instead of identifying their boundaries. 
	However, we should take care of orientations according to the value of $\s(e)$, as opposed to Remark \ref{rmk identifying stops 1},. 
\end{rmk} 

Finally, note that by considering the pullbacks of $F_e$ and $G_e$, we have the induced inclusions (called inclusions of Liouville sectors in \cite{gps1})
\begin{gather}
	\label{eq:F-inclusion} F_e : T^*S^{n-1}\hookrightarrow T^*M_v,\qquad G_e : T^*S^{n-1}\hookrightarrow T^*M_w .
\end{gather}

Now, we are done with the specification of the information (A) and (B). 
Thus, one can glue the Weinstein sectors in the collection corresponding to a given plumbing data.
By gluing the Weinstein sectors in a collection, we can construct a plumbing space.
\begin{dfn}
	\label{dfn plumbing space}
	\mbox{}
	\begin{enumerate}
		\item Let $(Q, M, \s)$ be a plumbing data.
		Let {\em $\mathit{P(Q, M, \s)}$} denote the Weinstein manifold constructed from $(Q, M, \s)$ by the above procedure, i.e.,
		\[P(Q,M,\s) = \left(\bigcup_{v \in V(Q)} T^*M_v \cup \bigcup_{e \in E(Q)} \Pi_n \right)/\sim,\]
		where the gluing occurs via the Weinstein isomorphism $\Phi_e\circ F_e^{-1}$ in \eqref{eq:weinstein-isomorphism-1} and $\Psi_e\circ G_e^{-1}$ in \eqref{eq:weinstein-isomorphism-2} for all $e\in E(Q)$.
		\item A Weinstein manifold $P$ is a {\em plumbing space} if there is a plumbing data $(Q, M, \s)$ such that 
		$P = P(Q, M, \s)$.
		We sometimes say that $P$ is the {\em plumbing of $\{T^*M(v)\vb v\in V(Q)\}$ along the quiver $Q$} (where $Q$ is equipped with $\s$ function).
		\item A plumbing space $P=P(Q, M, \s)$ is a {\em plumbing space with negative intersection} if there exists an arrow $e$ such that $\s(e) = -1$. 
	\end{enumerate}
\end{dfn}
\begin{rmk}
	\label{rmk choice of plumbing points} 
	We note that in the construction, we fixed specific plumbing points $p(e)$ and $q(e)$ for each $e \in E(Q)$, in \eqref{eqn choice of points}. 
	However, the specific choice does not affect on the symplectomorphism class of the resulting plumbing space if $n \geq 2$, since it does not change the collection of (symplectomorphism classes of) Weinstein sectors and the gluing information.
	The equivalence of symplectomorphism classes of Weinstein sectors can be seen by choosing isotopies of neighborhoods of plumbing points, whose existence is guaranteed by connectedness of $M(v)$ and the dimension $n \geq 2$ of $M(v)$.  
\end{rmk}

Before ending Section \ref{subsection construction of plumbing spaces}, let us explain the choice of $G_e$ and the relation between our and the conventional plumbing procedures.  
In the constructed plumbing space $P=P(Q, M, \s)$, we note that $M(v)$ can be seen as an embedded Lagrangian submanifold of $P=P(Q,M,\s)$ as follows: We have 
\[M(v) = M_v \cup \bigcup_{e = v \to \bullet} U_{p(e)} \cup \bigcup_{e = \bullet \to v} U_{q(e)}.\]
We also note that $M_v$, the zero section of $T^*M_v$, is embedded in the plumbing space. 
Moreover, $U_{p(e)}$ and $U_{q(e)}$ could be seen as $\mathbb{D}^n \times \mathbf{0}_n$ and $\mathbf{0}_n \times \mathbb{D}^n$ embedded in $\Pi_n$ corresponding to $e$, as mentioned in Remark \ref{rmk identifying stops 1} and \ref{rmk identifying stops 2}, respectively. 
Thus, $M(v)$ is an embedded Lagrangian manifold of the plumbing space. 
Moreover, given $e=v\to w$, $M(v)$ and $M(w)$ intersect at the identified plumbing point $p(e) = q(e)$ as Lagrangian submanifolds. 
By choosing $G_e$ as described above, we can achieve that $M(v)$ and $M(w)$ intersect positively (resp.\ negatively) if $\s(e)$ is $1$ (resp.\ $-1$) because of Remark \ref{rmk orientations of stops}.

\begin{rmk}
	\label{rmk equivalence with the conventional plumbing}
	Remarks \ref{rmk identifying stops 1}--\ref{rmk identifying stops 2} and the orientation argument right above imply that the conventional plumbing construction is equivalent to that given in Section \ref{subsection construction of plumbing spaces}.
\end{rmk}

\subsection{Equivalent plumbing spaces}
\label{subsection a property of plumbing spaces}
Let $(Q, M, \s)$ be a plumbing data. 
One can obtain another plumbing data by changing directions and signs of some arrows of $Q$.
Let $(Q', M', \s')$ be the new plumbing data obtained from $(Q, M, \s)$. 
Then, one can ask how different $P(Q, M, \s)$ and $P(Q', M', \s')$ are. 
In this subsection, we answer the question. 

At first glance, it seems that the changes of directions of arrows in a plumbing quiver interchange the role of $\Lambda_1$ and $\Lambda_2$ in the plumbing procedure.
Motivated from this, we will consider a Hamiltonian diffeomorphism on $\Pi_n$ which interchanges $\Lambda_1$ and $\Lambda_2$. 

To be more precise, let us recall that $\Pi_n$ is a unit disk of $\mathbb{R}^{2n}$.
In Section \ref{subsection notation}, $(x_1, \dots, x_n)$ (resp.\ $(y_1, \dots, y_n)$) coordinates the first (resp.\ last) $n$-factors of $\mathbb{R}^{2n}$. 
With the coordinates, the Liouville one form is $\lambda_n := \sum_i \tfrac{1}{2} \left(x_i dy_i - y_i dx_i\right)$. 

Let $\varphi: \mathbb{R}^{2n} \to \mathbb{R}^{2n}$ be the map defined as 
\[\varphi(x_1, \dots, x_n, y_1, \dots, y_n) = (- y_1, \dots, - y_n, x_1, \dots, x_n).\]
Since it is easy to check that $\varphi$ is a Hamiltonian diffeomorphism on $\mathbb{R}^{2n}$ such that $\varphi^* \lambda_n = \lambda_n$, even if one replaces every $\Pi_n$ in Section \ref{subsection construction of plumbing spaces} with $\varphi(\Pi_n)$, one should have the same plumbing space. 

Moreover, the following are also easy to observe:
\begin{enumerate}
	\item[(i)] As mentioned above, $\varphi(\mathbb{D}^{2n}, \lambda_n) = (\mathbb{D}^{2n}, \lambda_n)$.
	\item[(ii)] $\varphi(\Lambda_1) = \Lambda_2$.
	\item[(iii)] $\varphi|_{\Lambda_1}$ preserves the orientation. 
	\item[(iv)] $\varphi(\Lambda_2) = \Lambda_1$.
	\item[(v)] if $n$ is an odd integer, $\varphi|_{\Lambda_2}$ reverses the orientation, i.e., as oriented manifolds $\varphi(\Lambda_2) = -\Lambda_1$.
	\item[(vi)] if $n$ is an even integer, $\varphi|_{\Lambda_2}$ preserves the orientation, i.e., as oriented manifolds $\varphi(\Lambda_2) = \Lambda_1$.
\end{enumerate}
Then, the above arguments and the observations (i)--(vi) prove Propositions \ref{prp property 1} and \ref{prp property 2}.

\begin{prp}
	\label{prp property 1}
	Let $n \in \mathbb{N}$ be an odd integer.
	Let $(Q, M, \s)$ and $(Q', M', \s')$ be two sets of plumbing data satisfying the following:  
	\begin{itemize}
		\item Two quivers $Q$ and $Q'$ have the same base graph. In other words, $V(Q) = V(Q')$ and $E(Q) = E(Q')$. (This is by abuse of notation. Here we see $E(Q)$ and $E(Q')$ as sets of `edges', not `arrows'.)
		\item $M : V(Q) \to \mathcal{O}_n$ and $M': V(Q') \to \mathcal{O}_n$ are the same maps. 
		\item For any arrow $e \in E(Q) = E(Q')$, if $e$ has the same (resp.\ opposite) directions on $Q$ and $Q'$, then $\s(e) = \s'(e)$ (resp.\ $\s(e)= - \s'(e)$).
	\end{itemize}
	Then $P(Q, M, \s) = P(Q', M', \s')$.
\end{prp}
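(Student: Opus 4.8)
The plan is to reduce the claim to a local statement about each arrow, using the fact that the plumbing space is built by gluing the local Weinstein sectors $T^*M_v$ and $\Pi_e=\Pi_n$ according to the information (A) and (B) of Section \ref{subsection construction of plumbing spaces}. Since $Q$ and $Q'$ share the same base graph and the same manifold assignment $M=M'$, the collections of Weinstein sectors produced in Section \ref{subsubsection collection of Weinstein sectors} are literally identical; only the gluing identifications can differ, and they differ only on those arrows $e$ whose direction is reversed when passing from $Q$ to $Q'$. So it suffices to show that, for a single arrow $e=v\to w$ in $Q$ that becomes $e'=w\to v$ in $Q'$ with $\s'(e')=-\s(e)$, the piece of the plumbing obtained by gluing $\Pi_e$ to $T^*M_v$ and $T^*M_w$ is Weinstein-isomorphic to the piece obtained from $\Pi_{e'}$, compatibly with the rest of the construction (i.e.\ the isomorphism is the identity outside the copy of $\Pi_n$ attached to $e$).

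The key device is the Hamiltonian diffeomorphism $\varphi$ of $\mathbb{R}^{2n}$ defined just above the statement, which satisfies $\varphi^*\lambda_n=\lambda_n$, preserves $(\mathbb{D}^{2n},\lambda_n)$, and swaps the two stops: $\varphi(\Lambda_1)=\Lambda_2$, $\varphi(\Lambda_2)=\Lambda_1$. First I would observe that replacing the copy $\Pi_e\subset P(Q,M,\s)$ by $\varphi(\Pi_e)$ gives a Weinstein manifold symplectomorphic to $P(Q,M,\s)$ rel the complement of $\Pi_e$ — this is immediate because $\varphi$ is a Liouville automorphism of $\Pi_n$ and we may apply it only to the $e$-th summand before gluing. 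Under this replacement, the stop of $\varphi(\Pi_e)$ that gets glued to $\partial U_{p(e)}\subset T^*M_v$ is now $\varphi(\Lambda_1)=\Lambda_2$, and the stop glued to $\partial U_{q(e)}\subset T^*M_w$ is $\varphi(\Lambda_2)=\Lambda_1$. In other words, from the point of view of $\varphi(\Pi_e)$, the roles of "incoming" and "outgoing" are exchanged — which is exactly the combinatorial effect of reversing the arrow $e$. So $\varphi(\Pi_e)$ with its glued-in data literally presents the arrow $e'=w\to v$ of $Q'$, provided the induced diffeomorphisms of the identifying spheres match the ones allowed in the construction of $P(Q',M',\s')$.

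The heart of the argument — and the step I expect to be the only real obstacle — is checking that the orientations work out, i.e.\ that the gluing diffeomorphisms produced via $\varphi$ are admissible choices for the $G_{e'}$-rule in Section \ref{subsubsection gluing information} precisely when the sign is flipped to $\s'(e')=-\s(e)$. Here $n$ is odd, so $\tfrac12 n(n-1)$ is even, and the rule for $G$ reads: orientation-preserving if $\s=1$, orientation-reversing if $\s=-1$. Using observations (iii) and (v) above — $\varphi|_{\Lambda_1}$ preserves orientation while $\varphi|_{\Lambda_2}$ reverses it, since $n$ is odd — one tracks the compositions $\Psi_{e}\circ G_{e}^{-1}$ and $\Phi_{e}\circ F_{e}^{-1}$ through $\varphi$. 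The identification $\partial U_{q(e)}\to\Lambda_2\xrightarrow{\varphi}\Lambda_1=\partial(\mathbb D^n\times\mathbf 0_n)$ composes $G_e$ (orientation type dictated by $\s(e)$) with an orientation-reversing map, flipping the type — which is exactly the type demanded for $F_{e'}:S^{n-1}\to\partial U_{q(e)}$ (note $q(e)$ plays the role of "$p$" for $e'$, so it must be orientation-preserving, with no sign involved), and simultaneously the identification through $\varphi|_{\Lambda_1}$ on the other side is orientation-preserving, so the sign that was carried by $G_e$ now must be carried by $G_{e'}$ attached to the vertex $v$; comparing with the $G$-rule shows this forces $\s'(e')=-\s(e)$. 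I would lay this out as a short orientation bookkeeping using Remarks \ref{rmk identifying stops 1} and \ref{rmk identifying stops 2} (identifying the disks $U$ with $\mathbb D^n\times\mathbf 0_n$ and $\mathbf 0_n\times\mathbb D^n$ as oriented manifolds), so that the sign condition becomes a single parity comparison. Finally, since the isomorphism is supported on each modified $\Pi_e$ and is the identity elsewhere, doing this simultaneously over all reversed arrows yields a Weinstein isomorphism $P(Q,M,\s)\xrightarrow{\sim}P(Q',M',\s')$, proving the proposition.
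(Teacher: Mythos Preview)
Your approach is essentially the same as the paper's: use the Liouville automorphism $\varphi$ of $\Pi_n$ to swap $\Lambda_1\leftrightarrow\Lambda_2$ on each reversed arrow, then read off from observations (iii) and (v) that for odd $n$ the sign must flip. The paper's own proof is the single sentence ``the above arguments and the observations (i)--(vi) prove Propositions \ref{prp property 1} and \ref{prp property 2},'' so your writeup is in fact more detailed than the paper's.

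One small point to tighten in your orientation bookkeeping: the way you phrase it, the computation $\varphi\circ\Psi_e\circ G_e^{-1}:\partial U_{q(e)}\to\Lambda_1$ has orientation type $-\s(e)$, while the $Q'$-construction demands the map $\Phi_{e'}\circ F_{e'}^{-1}$ to be orientation-preserving. So strictly speaking your $\varphi$-argument directly matches the $Q'$-data only when $\s(e)=-1$ (and then forces $\s'(e')=+1=-\s(e)$). The case $\s(e)=+1$ follows by symmetry---run the same argument starting from $Q'$, or equivalently use $\varphi^{-1}$ instead of $\varphi$ (which has the opposite orientation behaviour on $\Lambda_1$ and $\Lambda_2$). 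This is implicit in your plan but worth making explicit in the final writeup; once you do, the argument is complete and matches the paper's.
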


One can observe from Proposition \ref{prp property 1} that if $n$ is odd and an arrow $e \in E(Q)$ is a loop, then $\s(e)$ does not affect the Hamiltonian isotopy class of $P(Q,M,\s)$.  

\begin{prp}
	\label{prp property 2}
	Let $n \in \mathbb{N}$ be an even integer.
	Let $(Q, M, \s)$ and $(Q', M', \s')$ be two sets of plumbing data satisfying the following:  
	\begin{itemize}
		\item Two quivers $Q$ and $Q'$ have the same base graph. In other words, $V(Q) = V(Q')$ and $E(Q) = E(Q')$.
		\item $M : V(Q) \to \mathcal{O}_n$ and $M': V(Q') \to \mathcal{O}_n$ are the same maps. 
		\item For every arrow $e \in E(Q) = E(Q')$, $\s(e) = \s'(e)$.
	\end{itemize}
	Then $P(Q, M, \s) = P(Q', M', \s')$.
\end{prp}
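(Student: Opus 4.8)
\emph{Proof proposal.} By hypothesis $Q'$ has the same underlying graph as $Q$, with $M'=M$ and $\s'=\s$, so $Q'$ is obtained from $Q$ simply by reversing the directions of some set $E_0\subseteq E(Q)$ of arrows. I would first make the two constructions of Section \ref{subsection construction of plumbing spaces} agree as much as possible: in the construction of $P(Q')$ choose the same plumbing points and neighborhoods $U$ as in $P(Q)$ (for a reversed arrow $e_0=v\to w$ of $Q$, viewed as $w\to v$ in $Q'$, take $p'(e_0)=q(e_0)$ and $q'(e_0)=p(e_0)$). Then $M_v'=M_v$ for every vertex $v$, the two collections $\{T^*M_v\}\cup\{\Pi_e\}$ of Weinstein sectors coincide, and the gluings along every arrow $e\notin E_0$ are identical. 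It therefore suffices to treat a single reversed arrow $e_0$ and to compare the gluing of the sector $\Pi_{e_0}$ to $T^*M_v$ and $T^*M_w$ in the two pictures; the general case follows by carrying this out simultaneously over all of $E_0$.

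In $P(Q)$ the first arm $\mathbb{D}^n\times\mathbf{0}_n$ of the skeleton of $\Pi_{e_0}$ is glued to the disk $U_{p(e_0)}\subset M(v)$ by an orientation-preserving diffeomorphism (by \eqref{eq:weinstein-isomorphism-1}, since $\Phi_{e_0}$ and $F_{e_0}$ preserve orientation), while the second arm $\mathbf{0}_n\times\mathbb{D}^n$ is glued to $U_{q(e_0)}\subset M(w)$ by a diffeomorphism of orientation sign $\epsilon:=(-1)^{n(n-1)/2}\s(e_0)$ (by \eqref{eq:weinstein-isomorphism-2} and the definition of $G_{e_0}$). In $P(Q')$, since $\s'(e_0)=\s(e_0)$, the matched choices glue the first arm to $U_{q(e_0)}\subset M(w)$ orientation-preservingly and the second arm to $U_{p(e_0)}\subset M(v)$ with sign $\epsilon$. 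Now replace the sector $\Pi_{e_0}$ occurring in $P(Q')$ by $\varphi(\Pi_{e_0})$; since $\varphi^*\lambda_n=\lambda_n$ and $\varphi$ preserves $\mathbb{D}^{2n}$ and the stop $\Lambda_1\sqcup\Lambda_2$ (observation (i)), this re-gluing produces a Weinstein manifold isomorphic to $P(Q')$. By observations (ii)--(iv) it interchanges the two arms, and by (iii) and (vi) (here we use that $n$ is even) $\varphi$ acts orientation-preservingly on each of $\Lambda_1,\Lambda_2$ and hence on each arm. Thus the modified $P(Q')$ glues the first arm to $U_{p(e_0)}\subset M(v)$ with sign $\epsilon$ and the second arm to $U_{q(e_0)}\subset M(w)$ orientation-preservingly --- the same incidence pattern as $P(Q)$, but with the sign $\epsilon$ now sitting on the $M(v)$-arm rather than the $M(w)$-arm.

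To remove this last discrepancy, which is vacuous unless $\epsilon=-1$, I would replace $\Pi_{e_0}$ once more by $\rho(\Pi_{e_0})$, where $\rho(x_1,\dots,x_n,y_1,\dots,y_n)=(-x_1,x_2,\dots,x_n,-y_1,y_2,\dots,y_n)$ (the unitary reflection $z_1\mapsto -z_1$ on $\mathbb{C}^n$). One checks immediately that $\rho^*\lambda_n=\lambda_n$, that $\rho$ preserves $\mathbb{D}^{2n}$ and maps each of $\Lambda_1,\Lambda_2$, and each arm, to itself, and that $\rho$ reverses orientation on each arm; hence $\rho$ is again a Liouville automorphism, and replacing $\Pi_{e_0}$ by $\rho(\Pi_{e_0})$ multiplies both arm-gluing signs by $-1$, so that the two sets of gluing data coincide. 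Finally, since any two orientation-preserving (resp.\ orientation-reversing) diffeomorphisms of $S^{n-1}$ differ by one isotopic to the identity, and such an isotopy extends to a Weinstein isotopy of the relevant contact collar, the precise choices of the diffeomorphisms $F_\bullet,G_\bullet$ are immaterial once the incidence patterns and orientation signs agree. This yields $P(Q,M,\s)=P(Q',M',\s')$.

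The step I expect to be the main obstacle is exactly the orientation bookkeeping of the middle paragraph: one must verify that a plumbing is insensitive to which of the two arms of $\Pi_n$ carries the sign $\epsilon$, only the product of the two arm-signs --- equivalently, the intersection sign of $M(v)$ and $M(w)$ at the plumbing point, which for $n$ even is symmetric in the two factors --- being intrinsic. The auxiliary reflection $\rho$ is what makes this precise, and it is also precisely here that the parity hypothesis is used, in contrast with Proposition \ref{prp property 1}, where $\varphi$ reverses the orientation of $\Lambda_2$ (observation (v)) and the reversal of directions must therefore be accompanied by a change of $\s$.
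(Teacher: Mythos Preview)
Your proposal is correct and follows the same approach as the paper: apply the Liouville automorphism $\varphi$ of $\Pi_n$ to swap the roles of $\Lambda_1$ and $\Lambda_2$, using observations (i)--(vi). The paper's own proof is just the one-line statement that these observations suffice; you have carried out the bookkeeping the paper leaves implicit.

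In doing so you noticed something the paper does not spell out: after applying $\varphi$ (with $n$ even), the arm incidences match those of $P(Q)$ but the sign $\epsilon=(-1)^{n(n-1)/2}\s(e_0)$ now sits on the $\Lambda_1$-arm rather than the $\Lambda_2$-arm. Your introduction of the auxiliary Liouville automorphism $\rho\colon (x,y)\mapsto(-x_1,x_2,\dots,x_n,-y_1,y_2,\dots,y_n)$, which preserves each $\Lambda_i$ setwise while reversing its orientation, is a clean way to repair this. This extra step is not in the paper; the paper presumably regards it as part of ``the above arguments'', since the plumbing depends only on the intersection sign of the two zero-sections at the plumbing point, and for $n$ even that sign is symmetric in the two factors. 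Your $\rho$ makes this symmetry explicit at the level of the sector.

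One small caveat: your final claim that any two orientation-preserving diffeomorphisms of $S^{n-1}$ are isotopic is not literally true for all $n$, but this is exactly the convention the paper adopts in Section~\ref{subsubsection gluing information} (``choose \emph{any} orientation preserving diffeomorphism''), so you are on the same footing as the paper here.
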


Proposition \ref{prp property 2} means that if $n$ is an even integer, the directions of arrows of a plumbing quiver does not affect on the resulting plumbing space. 
We note that as one can see in Proposition \ref{prp property 1}, if $n$ is an odd integer, then the directions of arrows of a plumbing quiver affects on the resulting plumbing space. 
Thus, one can replace a plumbing quiver in a plumbing data with its base graph when $n$ is even. 
We note that since \cite{Etgu-Lekili17, Etgu-Lekili19} study plumbing spaces of dimension $4$, i.e., the case of $n=2$, \cite{Etgu-Lekili17, Etgu-Lekili19} can use plumbing {\em graphs} rather than plumbing quivers. 

We conclude this subsection with another comparison of plumbing data, where we maintain the directions of the arrows and only change the signs. First, observe that the plumbing space $P(Q,M,\s)$ does not depend on the specific orientation chosen for $M(v)$ at each $v\in V(Q)$. Changing the orientation of $M(v)$ at a given vertex $v$ results in the reversal of signs for all arrows in $Q$ starting or ending at $v$. Therefore, we obtain the following proposition:

\begin{prp}\label{prp property 3}
	Let $(Q, M, \s)$ and $(Q', M', \s')$ be two sets of plumbing data satisfying the following:  
	\begin{itemize}
		\item $Q$ and $Q'$ are the same quivers, with a chosen subset of vertices $I\subset V(Q)=V(Q')$.
		\item $M : V(Q) \to \mathcal{O}_n$ and $M': V(Q') \to \mathcal{O}_n$ are the same maps. 
		\item For every arrow $e \in E(Q) = E(Q')$,
		\[\s'(e)=\begin{cases}
			\s(e) & \text{if both the source and the target of $e$ is in $I$,}\\
			\s(e) & \text{if neither the source nor the target of $e$ is in $I$,}\\
			-\s(e) & \text{otherwise.}
		\end{cases}\]
	\end{itemize}
	Then $P(Q, M, \s) = P(Q', M', \s')$.
\end{prp}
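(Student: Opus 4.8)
The idea is to prove a one-vertex version of the statement and then iterate it. For a vertex $v_0\in V(Q)$, let $M^{(v_0)}\colon V(Q)\to\mathcal{O}_n$ be the map agreeing with $M$ except that the orientation of $M(v_0)$ is reversed, and let $\s^{(v_0)}$ be the sign function with $\s^{(v_0)}(e)=-\s(e)$ when $v_0$ is exactly one of the two endpoints (source, target) of $e$, and $\s^{(v_0)}(e)=\s(e)$ otherwise (so a loop at $v_0$ is unaffected). The core claim is
\[
P(Q,M,\s)=P\bigl(Q,M^{(v_0)},\s^{(v_0)}\bigr).
\]
To establish it I would trace through the construction of Section~\ref{subsection construction of plumbing spaces} and locate every place the orientation of $M(v_0)$ is used. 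It enters only for arrows incident to $v_0$: for an arrow $e$ with source $v_0$, the disk $U_{p(e)}\subset M(v_0)$ (equivalently $\mathbb{D}^n\times\mathbf{0}_n\subset\Pi_e$, cf.\ Remark~\ref{rmk identifying stops 1}) is oriented via $M(v_0)$, and this orientation is what selects $F_e$ in \eqref{eq:weinstein-isomorphism-1} (chosen orientation preserving); for an arrow $e$ with target $v_0$, the disk $U_{q(e)}$ (equivalently $\mathbf{0}_n\times\mathbb{D}^n\subset\Pi_e$, Remark~\ref{rmk identifying stops 2}) is oriented via $M(v_0)$, and the handedness rule for $G_e$ in \eqref{eq:weinstein-isomorphism-2} is measured against this orientation through $(-1)^{*_e}=(-1)^{n(n-1)/2}\s(e)$. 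Reversing the orientation of $M(v_0)$ reverses the boundary orientations of all these disks, which is the same as swapping ``orientation preserving'' and ``orientation reversing'' in those choices; for an arrow incident to $v_0$ exactly once this swap is undone precisely by flipping $\s(e)$, while for a loop at $v_0$ both the $F$-side and the $G$-side get reversed and the two reversals cancel, matching $\s^{(v_0)}(e)=\s(e)$. Hence $(Q,M,\s)$ and $(Q,M^{(v_0)},\s^{(v_0)})$ prescribe the same family of gluing Weinstein isomorphisms \eqref{eq:weinstein-isomorphism-1}--\eqref{eq:weinstein-isomorphism-2}, and so define the same Weinstein manifold; this is exactly the content of the remark, made just before the statement, that $P(Q,M,\s)$ does not depend on the orientation chosen for the $M(v)$.

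\textbf{Iteration.} Granting the one-vertex claim, I would apply it successively at the elements of $I=\{v_1,\dots,v_k\}$, writing $M^{(v_1\cdots v_j)}$, $\s^{(v_1\cdots v_j)}$ for the result of performing the one-vertex operation at $v_1,\dots,v_j$ in turn, so that
\[
P(Q,M,\s)=P\bigl(Q,M^{(v_1)},\s^{(v_1)}\bigr)=P\bigl(Q,M^{(v_1v_2)},\s^{(v_1v_2)}\bigr)=\dots=P\bigl(Q,M^{(v_1\cdots v_k)},\s^{(v_1\cdots v_k)}\bigr).
\]
On the manifold side, the operation reverses the orientation of $M(v)$ at each $v\in I$; using once more that the plumbing space is insensitive to these orientations (now applied to all $v\in I$ simultaneously), $M^{(v_1\cdots v_k)}$ may be replaced by $M$ itself. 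On the sign side, for an arrow $e$ the sign $\s(e)$ is flipped once for each endpoint of $e$ (counted with multiplicity) lying in $I$; thus a loop at a vertex of $I$ is flipped twice and an arrow with exactly one endpoint in $I$ is flipped once, so the cumulative effect multiplies $\s(e)$ by $-1$ precisely when exactly one of the source and target of $e$ is in $I$, and by $+1$ otherwise. This is exactly the definition of $\s'$, and therefore $P(Q,M,\s)=P(Q,M,\s')=P(Q,M',\s')$. The commutativity of the one-vertex operations and the resulting parity bookkeeping are routine.

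\textbf{Main obstacle.} The substantive point is the one-vertex claim, and within it the verification that reversing the orientation of $M(v_0)$ -- equivalently, composing the gluing diffeomorphisms $F_e,G_e$ along the relevant boundary spheres with orientation-reversing maps -- does not change the symplectomorphism type of the glued Weinstein manifold once the signs are adjusted accordingly. Making this precise is where one must use Remarks~\ref{rmk identifying stops 1}--\ref{rmk identifying stops 2} carefully, together with the standard fact that the plumbing of cotangent bundles depends on the gluing data only through the transverse intersection sign of the zero sections; everything after that (iteration over $I$, the parity count, replacing $M^{(v_1\cdots v_k)}$ by $M$) is bookkeeping.
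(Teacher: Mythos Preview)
Your approach is exactly the paper's: reverse orientations at the vertices in $I$ and record the induced sign changes on incident arrows. The paper's entire argument is the two sentences preceding the proposition, and your one-vertex claim together with the iteration over $I$ is a faithful, more explicit rendering of that sketch (your careful treatment of loops is a detail the paper does not spell out).

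One caution about the last step. Your one-vertex claim $P(Q,M,\s)=P\bigl(Q,M^{(v_0)},\s^{(v_0)}\bigr)$ is correct, and iterating it gives $P(Q,M,\s)=P\bigl(Q,M^{I},\s'\bigr)$. But to reach the desired $P(Q,M,\s')$ you then invoke ``the plumbing space is insensitive to these orientations'' to replace $M^{I}$ by $M$ while keeping $\s'$ fixed. That assertion, $P\bigl(Q,M^{I},\s'\bigr)=P(Q,M,\s')$, is exactly the proposition applied to $(\s',I)$; and if instead you reuse the one-vertex claim to pass from $M^{I}$ back to $M$, the signs are simultaneously sent from $\s'$ back to $(\s')^{I}=\s$, yielding only the tautology $P(Q,M,\s)=P(Q,M,\s)$. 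The paper's argument has the same shape: its opening ``observation'' that $P(Q,M,\s)$ is independent of the orientation of $M(v)$ (with $\s$ held fixed) is, via your own one-vertex analysis, equivalent to the $|I|=1$ case of the proposition rather than an independent input. So you are not missing anything relative to the paper, but be aware that neither argument supplies a separate justification for that orientation-independence.
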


An immediate corollary of Proposition \ref{prp property 3} is as follows:

\begin{cor}\label{cor property 4}
	A plumbing space $P(Q,M,\s)$ does not depend on the sign of its arrow $e\in E(Q)$ if removing $e$ from $Q$ results in a disconnected quiver. In particular, if $Q$ is a tree, then $P(Q,M,\s)$ is independent of the function $\s$ altogether.
\end{cor}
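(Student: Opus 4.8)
The plan is to deduce the corollary directly from Proposition \ref{prp property 3} by choosing the vertex subset $I$ appropriately. First I would note that an arrow whose removal disconnects $Q$ cannot be a loop, so $e$ has distinct endpoints; write $e=v\to w$. Moreover, deleting $e$ must separate $v$ from $w$: if there were a path from $v$ to $w$ inside $Q\setminus\{e\}$, then $Q$ and $Q\setminus\{e\}$ would have the same connected components, contradicting that $e$ is disconnecting. So I would take $I\subset V(Q)$ to be the vertex set of the connected component of $Q\setminus\{e\}$ containing $v$; then $v\in I$ and $w\notin I$.

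Next I would apply Proposition \ref{prp property 3} with $Q'=Q$, $M'=M$, and this choice of $I$, and check that the resulting $\s'$ is exactly $\s$ with the sign of $e$ flipped. For the arrow $e$ itself, its source lies in $I$ while its target does not, so the third case of the formula in Proposition \ref{prp property 3} gives $\s'(e)=-\s(e)$. For any other arrow $f$, since $f$ is also an arrow of $Q\setminus\{e\}$, its source and target lie in a single connected component of $Q\setminus\{e\}$, hence either both belong to $I$ or both avoid $I$; in either case the first or second line of the formula gives $\s'(f)=\s(f)$. Proposition \ref{prp property 3} then yields $P(Q,M,\s)=P(Q,M,\s')$, which is precisely the first assertion of the corollary.

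For the statement about trees, I would use that every edge of a tree is a bridge (and that a tree has no loops or multiple edges), so the argument above shows that flipping the sign on any single edge leaves the plumbing space unchanged. Iterating over the edges of $Q$ one at a time, any sign function $\s$ can be converted into any other — in particular into the constant function $1$ — without altering $P(Q,M,\s)$, so the plumbing space is independent of $\s$ altogether.

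I do not expect a genuine obstacle here, since the proof is a direct invocation of Proposition \ref{prp property 3}; the only step that requires a moment's care is verifying that with $I$ chosen as the component of $v$ in $Q\setminus\{e\}$ exactly one sign is reversed, and this reduces to the elementary fact that the two endpoints of any edge of $Q\setminus\{e\}$ lie in the same component of $Q\setminus\{e\}$.
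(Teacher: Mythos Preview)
Your proposal is correct and takes essentially the same approach as the paper, which simply declares the result an immediate corollary of Proposition~\ref{prp property 3}; you have supplied exactly the details the paper omits, namely choosing $I$ to be the component of $Q\setminus\{e\}$ containing the source of $e$ and checking that only the sign on $e$ flips.
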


\begin{rmk}
	From Proposition \ref{prp property 3}, it can be observed that for a general quiver $Q$, there are effectively $H^1(Q;\Z/2)$ many choices of the $\s$ function when defining the plumbing space $P(Q,M,\s)$.
\end{rmk}

\subsection{Homotopy colimit diagrams for the wrapped Fukaya category of plumbing spaces}
\label{subsection homotopy colimit formula for a plumbing space}

The final goal of the current paper is to compute the wrapped Fukaya categories of plumbing spaces.
In order to achieve the goal, we will apply Theorem \ref{thm:gps}. 
By applying Theorem \ref{thm:gps}, one obtains a homotopy colimit formula computing the wrapped Fukaya category of a Weinstein manifold $W$ through three steps. 
The first step is to find a Weinstein sectorial covering of $W$, then one has a homotopy colimit diagram from the sectorial covering. 
We note that the homotopy colimit diagram consists of wrapped Fukaya categories of each of Weinstein sectors.
Thus, the second step is to compute each of wrapped Fukaya categories. 
The last step is to compute the homotopy colimit, which can be achieved via \cite{hocolim, Karabas-Lee24} (or Theorem \ref{thm:hocolim-functor-dg}).

Since a plumbing space $P(Q, M, \s)$ is obtained by gluing Weinstein sectors in our construction, there is a natural Weinstein sectorial covering of $P(Q, M, \s)$ from the construction. 
In Section \ref{subsubsection hocolim diagram A}, we will give a homotopy colimit diagram for the wrapped Fukaya category of $P(Q, M, \s)$, which is induced from the natural Weinstein sectorial covering. In Section \ref{subsubsection hocolim diagram B}, we will give another homotopy colimit diagram for the wrapped Fukaya category of $P(Q, M, \s)$ which uses a different Weinstein sectorial covering.

\subsubsection{Homotopy colimit diagram from the natural Weinstein sectorial covering}\label{subsubsection hocolim diagram A}

Let $(Q, M, \s)$ be a plumbing data, and let $P$ denote the plumbing space $P = P(Q, M, \s)$.
We recall that for each vertex $v \in V(Q)$, we add a Weinstein sector $T^*M_v$ in the natural Weinstein sectorial covering, and for each arrow $e \in E(Q)$, we add a plumbing sector $\Pi_n$ in the covering.
The plumbing space $P$ is the gluing of Weinstein sectors in the covering of $P$. 

The natural Weinstein sectorial covering of $P$ induces a homotopy colimit diagram computing the wrapped Fukaya category of $P$, denoted by $\cW(P)$. 
Before discussing the induced homotopy colimit for a general plumbing space $P = P(Q, M, \s)$, let us consider the simplest case such that the quiver $Q$ consists of two vertices and only one arrow. 
We denote by $e$ the unique arrow, and by $v$ and $w$ the starting and ending vertex of $e$, i.e., 
$ Q = v \stackrel{e}{\to} w.$

The natural Weinstein sectorial covering consists of three Weinstein sectors, $T^*M_v$ corresponding to the vertex $v$, $T^*M_w$ corresponding to the vertex $w$, and $\Pi_n$ corresponding to the arrow $e$. 
Then, $T^*M_v$ (resp.\ $T^*M_w$) is connected/glued to $\Pi_n$ along $\partial U_{p(e)}$ and $\Lambda_1$ (resp.\ $\partial U_{q(e)}$ and $\Lambda_2$).
We also note that the gluing information is given by the maps $\Phi_e$, $\Psi_e$, $F_e$, and $G_e$ defined in Section \ref{subsubsection gluing information}.

By applying Theorem \ref{thm:gps}, one has the following homotopy colimit diagram up to pretriangulated equivalence:
\begin{equation}
	\label{eqn homotopy colimit example}
	\cW(P) \simeq \textup{hocolim}\left(
	\begin{tikzcd}[column sep=0em]
		\cW(T^*M_v) & & \cW(\Pi_n) & & \cW(T^*M_w)\\
		& \cW(T^*S^{n-1}) \ar[lu,"F_e"]\ar[ru,"\Phi_e"'] & &  \cW(T^*S^{n-1})  \ar[lu,"\Psi_e"]\ar[ru,"G_e"']
	\end{tikzcd}\right),
\end{equation}
where $\Phi_e$, $\Psi_e$, $F_e$, and $G_e$ in Equation \eqref{eqn homotopy colimit example} are induced functors (by \cite{gps1}) from the inclusions with the same names, i.e., $\Phi_e$, $\Psi_e$, $F_e$, and $G_e$ given in \eqref{eq:plumbing-inclusion-1} and \eqref{eq:F-inclusion}.

The homotopy colimit in Equation \eqref{eqn homotopy colimit example} is for the special case such that the quiver is $Q = v \stackrel{e}{\to} w$. 
By generalizing the above argument, for any general plumbing space $P=P(Q, M, \s)$, we have the pretriangulated equivalence (compare with \cite[Corollary 6.3]{gps3}):

\begin{equation}
	\label{eqn homotopy colimit formula}
	\cW(P) \simeq \textup{hocolim}\left(
	\begin{tikzcd}[column sep=0em]
		\coprod_{v \in V(Q)} \cW(T^*M_v) & & \coprod_{e \in E(Q)} \cW(\Pi_n)\\
		& \hspace*{-5em} \coprod_{e \in E(Q)} \left(\cW(T^*S^{n-1}) \amalg \cW(T^*S^{n-1})\right) \hspace*{-5em} \ar[lu,"\coprod_{e \in E(Q)} \left(F_e \amalg G_e\right)"]\ar[ru,"\coprod_{e \in E(Q)} \left( \Phi_e \amalg \Psi_e \right)"']
	\end{tikzcd}\right).
\end{equation}

\subsubsection{Another homotopy colimit diagram using plumbings of cotangent bundles of disks}\label{subsubsection hocolim diagram B}

Let $(Q,M,\s)$ be an arbitrary plumbing data. Then, for each $v\in V(Q)$, $M(v)$ is an $n$-dimensional connected oriented manifold (with or without a boundary) for a fixed $n\geq 2$. We can choose an arbitrary point in the interior of $M(v)$ and consider its sufficiently small closed neighborhood $U_v$, which is homeomorphic to the closed disk $\mathbb{D}^n$. We define
\[M(v)^*:=\text{ the closure of }\left(M(v)\setminus U_v\right) ,\]
which implies
$M(v)=U_v\cup M(v)^*$.
We orient $U_v$ with the induced orientation from $M(v)$, and $\partial U_v$ with the boundary orientation (as a boundary of $U_v$). Since $\partial U_v$ is also (a part of) the boundary of $M(v)^*$, we have the maps
\begin{gather*}
	\mu_v\colon S^{n-1}\xrightarrow{\sim}\partial U_v\hookrightarrow U_v , \qquad
	\eta_v\colon S^{n-1}\xrightarrow{\sim}\partial U_v\hookrightarrow M(v)^*  , 
\end{gather*}
where the diffeomorphisms ``$\xrightarrow{\sim}$" are chosen to be orientation reversing when $S^{n-1}$ is given the boundary orientation coming from the standard orientation of $\mathbb{D}^n$.
They induce the inclusions
\begin{gather}
	\label{eq:mu-temp} \mu_v\colon T^*S^{n-1}\hookrightarrow T^*U_v , \qquad \eta_v\colon T^*S^{n-1}\hookrightarrow T^*M(v)^* .
\end{gather}
As a result, we get the gluing
\[T^*M(v)=\left(T^*U_v\cup T^*M(v)^*\right)/\sim\]
via the Weinstein isomorphism $\eta_v\circ\mu_v^{-1}\colon \mu_v(T^*S^{n-1})\to\eta_v(T^*S^{n-1})$, where $\mu_v,\eta_v$ are as in \eqref{eq:mu-temp}. We want to extend this gluing to give a gluing for $P(Q,M,\s)$.

We note that to construct $P(Q,M,\s)$, we should choose plumbing points on $M(v)$ for $v \in V(Q)$. 
As mentioned in Remark \ref{rmk choice of plumbing points}, we can assume that all plumbing points on $M(v)$ are contained in $U_v$. 
Under the assumption, $P(Q,M,\s)$ could be obtained by gluing $\bigcup_{v \in V(Q)} T^*M(v)^*$ and $P(Q,M',\s)$ that is defined below.

Let $(Q,M',\s)$ be another plumbing data with the same quiver $Q$ and the same map $\s$
but $M'$ is different from $M$. We choose $M'(v)=\mathbb{D}^n$ for each $v\in V(Q)$. The map $\mu_v$ in \eqref{eq:mu-temp} can be extended to the inclusion
\begin{equation}\label{eq:mu}
	\mu_v\colon T^*S^{n-1}\hookrightarrow T^*U_v \xrightarrow{\sim} T^*\mathbb{D}^n=T^*M'(v)\hookrightarrow P(Q,M',\s) ,
\end{equation}
where the symplectomorphism ``$\xrightarrow{\sim}$" comes from an orientation preserving identification of $U_v$ and $\mathbb{D}^n$.
Then, it is clear that
\[P(Q,M,\s) = \left(P(Q,M',\s)\cup\bigcup_{v \in V(Q)} T^*M(v)^* \right)/\sim,\]
where the gluing in the right-hand side occurs via the Weinstein isomorphism $\eta_v\circ\mu_v^{-1}$ for all $v\in V(Q)$, and $\mu_v,\nu_v$ are as in  \eqref{eq:mu}. Thus, by Theorem \ref{thm:gps}, we have the pretriangulated equivalence
\begin{equation}
	\label{eqn homotopy colimit formula alternative}
	\cW(P(Q,M,\s)) \simeq \textup{hocolim}\left(
	\begin{tikzcd}[column sep=-3.5em]
		\cW(P(Q,M',\s)) & & \coprod_{v \in V(Q)} \cW(T^*M(v)^*)\\
		& \coprod_{v \in V(Q)} \cW(T^*S^{n-1}) \ar[lu,"\coprod_{v \in V(Q)} \mu_v"]\ar[ru,"\coprod_{v \in V(Q)} \eta_v"']
	\end{tikzcd}\right) ,
\end{equation}
where $\mu_v$ and $\eta_v$ in Equation \eqref{eqn homotopy colimit formula alternative} are induced functors (by \cite{gps1}) from the inclusions with the same names, i.e., $\mu_v$ and $\eta_v$ given in \eqref{eq:mu}.

\begin{rmk}\label{rmk:puncture}
	$M_v$ and $M(v)^*$ are obtained by removing small disks from $M(v)$. However, for the sake of convenience, we will refer to these removed disks as {\em punctures}. It's worth noting that the usage of the term ``puncture'' in this context differs from its conventional meaning.
\end{rmk}

\subsubsection{Our strategy for computing the wrapped Fukaya category of plumbings}\label{subsubsection strategy}

Having the formulas \eqref{eqn homotopy colimit formula} and \eqref{eqn homotopy colimit formula alternative}, our strategy for computing the wrapped Fukaya category of plumbings is as follows:
\begin{enumerate}
	\item We will compute the wrapped Fukaya category $\cW(\Pi_n)$ of the plumbing sector $\Pi_n$ in Section \ref{sec:plumbing-sectors}, and the wrapped Fukaya category of cotangent bundles of punctured spheres in Section \ref{subsection the cotangent bundles of spheres with punctures}.
		
	\item Then, using Equation \eqref{eqn homotopy colimit formula} and the previous step, we will compute the wrapped Fukaya category of plumbings of cotangent bundles of {\em disks} along any quiver in Section \ref{subsection plumbings disks}.
	
	\item Finally, using Equation \eqref{eqn homotopy colimit formula alternative} and the previous step, we will give a formula for the wrapped Fukaya category of plumbings of cotangent bundles of any collection of manifolds $M(v)$ along any quiver in Section \ref{subsection plumbings disks} and \ref{subsection plumbing grading}, provided that we know the wrapped Fukaya category $\cW(T^*M(v)^*)$ of cotangent bundles of $M(v)^*=M(v)\setminus\text{an open disk}$.
\end{enumerate}

Note that by Theorem \ref{thm:cotangent-generation} and Remark \ref{rmk:loop-space}, $\cW(T^*M(v)^*)$ is equivalently given by (up to pretriangulated equivalence) $C_{-*}(\Omega_pM(v)^*)$, i.e., the chains on the based loop space of $M(v)^*$, which only depends on the homotopy type of $M(v)^*$. In short, we will be able to formulate the wrapped Fukaya category of any plumbing space in terms of the topological data $C_{-*}(\Omega_pM(v)^*)$ at each vertex of $Q$. For an application, we will explicitly explain $C_{-*}(\Omega_pM(v)^*)$ (or equivalently, $\cW(T^*M(v)^*)$) in the following cases:
\begin{itemize}
	\item When $M(v)$ is an $n$-sphere, $\cW(T^*M(v)^*)$ is trivial as $M(v)^*$ is a disk.
	\item When $M(v)$ is an oriented closed surface, $\cW(T^*M(v)^*)$ will be explained in Section \ref{subsection the cotangent bundles of oriented surfaces with punctures}.
\end{itemize}
Using this information, we will explicitly compute the wrapped Fukaya category of
\begin{itemize}
	\item plumbings of $T^*S^n$'s along any quiver, with or without negative intersections, in Section \ref{subsection plumbings in (a)}, and
	\item plumbings of cotangent bundles of closed, oriented surfaces along any quiver, with or without negative intersections, in Section \ref{subsection plumbings in (b)}.
\end{itemize}

\section{Wrapped Fukaya category of plumbing sectors}
\label{sec:plumbing-sectors}
As mentioned in Section \ref{subsubsection strategy}, in order to compute the wrapped Fukaya category of plumbings, we first need to compute the wrapped Fukaya category $\cW(\Pi_n)$ of the plumbing sector $\Pi_n$. 
The main goal of the present section is to compute $\cW(\Pi_n)$.
Moreover, we also compute the functors $\Phi,\Psi\colon \cW(T^*S^{n-1})\to\cW(\Pi_n)$ induced by the inclusions of the boundaries of the plumbing sector as in \eqref{eq:plumbing-inclusion-1}.

In Section \ref{sec:statement-plumbing-sector}, we set our notation, recall some basic facts about the wrapped Fukaya categories of the cotangent bundles of disks, and state Theorem \ref{thm:wfuk-plumbing-3} and \ref{thm:wfuk-plumbing-2}. 
In Section \ref{sec:proof-plumbing-sector}, we present some technical tools and prove Theorem \ref{thm:wfuk-plumbing-3} and \ref{thm:wfuk-plumbing-2}.

\subsection{The plumbing sector $\Pi_n$ and its wrapped Fukaya category $\cW(\Pi_n)$}\label{sec:statement-plumbing-sector}

We will mostly work with codisk bundles instead of the cotangent bundles, where the former is a Weinstein domain of the latter, hence their wrapped Fukaya categories are the same. We will use Weinstein domains and their completions, i.e.,  Weinstein manifolds interchangeably.
Let $D^n$ (resp.\ $S^n$) be $n$-dimensional (closed) disk (resp.\ sphere) with radius 1, unless stated otherwise. The codisk bundle $D^*D^n$ can be modeled as
\[D^*D^n=\{(x_1,\ldots,x_n,y_1,\ldots,y_n) \vb x_1^2+\ldots+x_n^2\leq 1\text{ and }y_1^2+\ldots+y_n^2\leq 1\}\]
and the cosphere bundle $S^*D^n$ as
\[S^*D^n=\{(x_1,\ldots,x_n,y_1,\ldots,y_n) \vb x_1^2+\ldots+x_n^2\leq 1\text{ and }y_1^2+\ldots+y_n^2 = 1\} .\]
Note that at the boundary points, covectors are defined in the ambient space $\R^n\supset D^n$.

We start with the well-known computations of the wrapped Fukaya categories of $D^*D^n$ for the purpose of computing the wrapped Fukaya category of the plumbing sector later. Note that the grading structure and the background class needed when defining the wrapped Fukaya categories of $D^*D^n$ are uniquely determined by Remark \ref{rmk:grading-pin}.

\begin{prp}\label{prp:msh-a2}\mbox{}
	\begin{enumerate}
		\item We have the following pretriangulated equivalences for the wrapped Fukaya categories:
		\begin{itemize}
			\item $\cW(D^*D^n)\simeq\cA_1$ for any $n\geq 1$, where $\cA_1$ is the semifree dg category defined as follows:
			\begin{enumerate}[label = (\roman*)]
				\item {\em Objects:} $K$ (corresponding to a cotangent fiber of $D^*D^n$).
				\item {\em Generating morphisms:} No generating morphisms. (It means that the morphisms are generated by only identity morphisms.)
			\end{enumerate}
			
			\item $\cW(D^*D^1,\{(0,1)\})\simeq\cA_2$, where $\cA_2$ is the semifree dg category defined as follows:
			\begin{enumerate}[label = (\roman*)]
				\item {\em Objects:} $K_0,K_1$ (corresponding to two cotangent fibers of $D^*D^1$ separated by the stop $\{(0,1)\}\subset S^*D^1$).
				\item {\em Generating morphisms:} $f\in\hom^*(K_0,K_1)$.
				\item {\em Degrees:} $|f|=0$.
				\item {\em Differentials:} $df=0$.
			\end{enumerate}
		\end{itemize}
		
		\item The skeleton of $(D^*D^1,\{(0,1)\})$ is given by the $A_2$-arboreal singularity, which is the conic subset of $D^*D^1$ defined as
		\[A_2:=D^1 \cup \Cone(\{(0,1)\in S^*D^1\}) = \{(x,0)\vb x\in [-1,1]\} \cup \{(0,y) \vb y\in (0,1]\}\]
		which is depicted in black in Figure \ref{fig:a2-singularity}.
		
		\begin{figure}[ht]
			\centering	
			\begin{tikzpicture}
				\draw[thick] (0,0) to (-2,0);
				\draw[thick] (0,0) to (2,0);
				\draw[thick] (0,0) to (0,2);
				
				\node[blue] at (-2.3,0) (K0) {$K_0$};
				\node[blue] at (2.3,0) (K1) {$K_1$};
				\node[blue] at (0,2.2) (K2) {$K_2:=\Cone(f)$};
				\draw[blue,->] (K0) to[bend right] node[above]{$f$} (K1);
			\end{tikzpicture}
			
			\caption{$A_2$-arboreal singularity and the dg category $\cA_2$}
			\label{fig:a2-singularity}
		\end{figure}
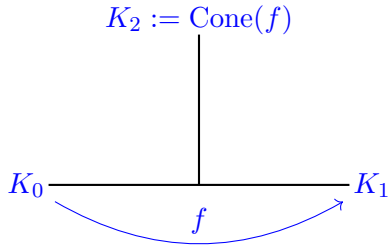
		
		\item The inclusions of the edges
		\begin{align*}
			j_0&\colon D^1\simeq\{(x,0) \vb x\in [-1,-1/2]\}\hookrightarrow A_2,\\
			j_1&\colon D^1\simeq\{(x,0) \vb x\in [1/2,1]\}\hookrightarrow A_2,\\
			j_2&\colon D^1\simeq\{(0,y) \vb y\in [1/2,1]\}\hookrightarrow A_2
		\end{align*}
		induce (up to shift) the dg functors
		\begin{align*}
			j_i\colon\cW(D^*D^1) \to\cW(D^*D^1,\{(0,1)\}), \qquad
			K \mapsto K_i
		\end{align*}
		for $i=0,1,2$, where $K_2:=\Cone(f)$, which corresponds to a linking disk for the stop $\{(0,1)\}$. See Figure \ref{fig:a2-singularity}.
	\end{enumerate}
\end{prp}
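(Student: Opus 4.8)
\emph{Overview and the easy items.} The statement bundles one easy fact about $D^*D^n$ together with three facts about the single Weinstein pair $(D^*D^1,\{(0,1)\})$; I would dispatch the easy fact first and then treat the pair as a whole. The equivalence $\cW(D^*D^n)\simeq\cA_1$ for every $n\geq 1$ is immediate from what is already recalled: since $D^n$ is contractible, Theorem \ref{thm:cotangent-generation} gives a pretriangulated equivalence $\cP(D^n)\xrightarrow{\sim}\cW(D^*D^n)$, and by Remark \ref{rmk:loop-space} the source is quasi-equivalent to $C_{-*}(\Omega_pD^n)$, which is quasi-isomorphic to $k$ concentrated in degree $0$ because $\Omega_pD^n$ is contractible; this is exactly the endomorphism algebra of the unique object of $\cA_1$, which corresponds to the cotangent fiber $K=T^*_pD^n$. (The grading structure and background class are forced by Remark \ref{rmk:grading-pin} because $H^1(D^n;\Z)=0=H^2(D^n;\Z/2)$.) Item (2) is a one-line Liouville-flow computation: for $D^*D^1$ with Liouville form $-p\,dq$ the Liouville vector field is $Z=p\,\partial_p$, whose skeleton is the zero section $\{(x,0)\mid |x|\leq 1\}$; the stop $\{(0,1)\}$ is a point, hence its own skeleton, and its backward-flow cone is $\bigcup_{t\geq 0}Z^{-t}(0,1)=\{(0,y)\mid 0<y\leq 1\}$, so by the definition of the skeleton of a Weinstein pair in Section \ref{subsection Weinstein pair} one gets exactly the displayed $A_2$-arboreal singularity.

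\emph{Generators and morphisms for $\cW(D^*D^1,\{(0,1)\})$.} For the second bullet of (1) and for (3), I would first pin down generators via Theorem \ref{thm:cocore-generate}(2): $\cW(D^*D^1,\{(0,1)\})$ is generated by the Lagrangian cocores of the three $1$-dimensional strata of the $A_2$ singularity together with the linking disk of the stop. These cocores are the arcs $K_0,K_1,K_2$ transverse to the left edge, the right edge, and the up edge respectively, and $K_2$ is precisely the linking disk of $\{(0,1)\}$, so $\{K_0,K_1,K_2\}$ generates. A direct count of holomorphic strips in the surface $D^*D^1$ (equivalently, the combinatorial model for the Fukaya category of a surface with stops, cf.\ \cite{gps1,gps2}) shows that wrapping $K_0$ or $K_1$ is obstructed on one side by the stop and on the other by the sectorial boundary $T^*_{\pm 1}D^1$: this leaves $\hom^*(K_i,K_i)\simeq k$ in degree $0$ for $i=0,1$, a single degree-$0$ generator $f\colon K_0\to K_1$ with $df=0$, and $\hom^*(K_1,K_0)=0$. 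Hence the full subcategory on $\{K_0,K_1\}$ is exactly $\cA_2$.

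\emph{Generation by $\{K_0,K_1\}$, identification of $K_2$, and the functors.} It remains to see that $\{K_0,K_1\}$ already generates and that $K_2\simeq\Cone(f)$ up to shift. The latter is the standard local picture in a surface Fukaya category: the linking disk $K_2$ of the stop and the two adjacent cocores $K_0,K_1$ bound a triangular region, which yields an exact triangle $K_0\xrightarrow{f}K_1\to K_2\xrightarrow{[1]}$; equivalently it follows from the GPS stop-removal statement that $\cW(D^*D^1)\simeq\cW(D^*D^1,\{(0,1)\})/\langle K_2\rangle$, under which $f$ must become an isomorphism since $\cA_1$ has a single object up to isomorphism, so $\Cone(f)$ maps to zero and hence lies in the thick subcategory generated by $K_2$. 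Granting $K_2\simeq\Cone(f)$, the generation of $\cW(D^*D^1,\{(0,1)\})$ by $\{K_0,K_1,K_2\}$ immediately gives generation by $\{K_0,K_1\}$, so the full subcategory on $\{K_0,K_1\}$ — which is $\cA_2$ — is pretriangulated equivalent to $\cW(D^*D^1,\{(0,1)\})$, proving the second bullet of (1). For (3), each edge inclusion $D^1\hookrightarrow A_2$ is covered by an inclusion of Weinstein sectors (a Weinstein neighborhood of that edge, which is a copy of $D^*D^1$) into $(D^*D^1,\{(0,1)\})$; by \cite{gps1} this induces the functor $\cW(D^*D^1)\to\cW(D^*D^1,\{(0,1)\})$ sending the cotangent-fiber generator $K$ to the cocore dual to that edge, i.e.\ to $K_i$, with $K_2=\Cone(f)$ by the step just completed and with the ``up to shift'' absorbing the grading normalization of cocores.

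\emph{Main obstacle.} The only genuinely computational point is the morphism count that identifies the full subcategory on $\{K_0,K_1\}$ with $\cA_2$ — that $\hom^*(K_0,K_1)$ is one-dimensional, concentrated in degree $0$, with vanishing differential, while the reverse $\hom$ vanishes, and that $\hom^*(K_i,K_i)\simeq k$. Everything else is an appeal to the cocore generation theorem, the standard triangle relating a stop to its two adjacent cocores (or the GPS stop-removal description of $\cW(D^*D^1)$), or bookkeeping with the Liouville flow. The count itself is routine for surface Fukaya categories, but it does require care with the wrapping/Reeb orientation near the stop and with the shift conventions for cocores and linking disks.
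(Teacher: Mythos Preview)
Your proposal is correct and follows essentially the same approach as the paper. The paper's own proof is extremely terse: it cites \cite{abouzaid-wrapped-generation} for generation of $\cW(D^*D^n)$ by a cotangent fiber and \cite{CDRGG,gps2} for generation of $\cW(D^*D^1,\{(0,1)\})$ by $K_0,K_1$, then declares the rest ``a standard computation''; you have simply filled in that standard computation (the Liouville-flow description of the skeleton, the surface Floer count identifying the full subcategory on $K_0,K_1$ with $\cA_2$, the exact triangle giving $K_2\simeq\Cone(f)$, and the cocore-to-cocore description of the edge-inclusion functors).
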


\begin{proof}
	$\cW(D^*D^n)$ (resp.\ $\cW(D^*D^1,\{(0,1)\})$) is generated by the cotangent fiber $K$ by \cite{abouzaid-wrapped-generation} (resp.\ the cotangent fibers $K_0,K_1$ by \cite{CDRGG} or \cite{gps2}). The rest of the proposition is a standard computation.
\end{proof}

Recall that by Definition \ref{dfn plumbing sector}, for a fixed $n\geq 1$, the plumbing sector $\Pi_n$ is given by a Weinstein pair (or equivalently, a Weinstein sector) 
\[\Pi_n := \left(D^{2n}, \lambda_n, \Lambda_1 \sqcup \Lambda_2\right)\]
where the Liouville 1-form is $\lambda_n:= \sum_{i=1}^n \tfrac{1}{2} \left(x_i d x_{n+i} - x_{n+i} d x_i\right)$, and the stops are $\Lambda_1:=S^{n-1} \times \mathbf{0}_n$ and $\Lambda_2:=\mathbf{0}_n \times S^{n-1}$.

To calculate the wrapped Fukaya category $\cW(\Pi_n)$, we need to deal with $n\geq 3$ (Theorem \ref{thm:wfuk-plumbing-3}) and $n=2$ (Theorem \ref{thm:wfuk-plumbing-2}) cases separately. We note that the grading structure and the background class needed when defining $\cW(\Pi_n)$ are uniquely determined by Remark \ref{rmk:grading-pin}.

Now, we present Theorem \ref{thm:wfuk-plumbing-3} and \ref{thm:wfuk-plumbing-2}. We postpone their proofs to Section \ref{sec:proof-plumbing-sector}. 

\begin{thm}\label{thm:wfuk-plumbing-3}
	Fix a natural number $n\geq 3$.
	\begin{enumerate}
		\item\label{item:plumbing-3} The wrapped Fukaya category of the plumbing sector $\Pi_n$ is given, up to pretriangulated equivalence, by
		\[\cW(\Pi_n)\simeq\cD_n^{12}\]
		where $\cD_n^{12}$ is the semifree dg category given as follows:
		\begin{enumerate}[label = (\roman*)]
			\item {\em Objects:} $L_1,L_2$ (representing a linking disk of the stop $\Lambda_1$ and $\Lambda_2$, respectively).
			\item {\em Generating morphisms:}
			\[\begin{tikzcd}
				L_1\ar[r,"x", bend left]& L_2\ar[l,"y",bend left]
			\end{tikzcd}\]
			\item {\em Degrees:} $|x|=0,\quad |y|=2-n$.
			\item {\em Differentials:} $dx=dy=0$.
		\end{enumerate}
		
		\item \label{item:inclusion-plumbing-3} The inclusions $\Phi\colon T^*S^{n-1}\hookrightarrow\Pi_n$ and $\Psi\colon T^*S^{n-1}\hookrightarrow\Pi_n$ in \eqref{eq:plumbing-inclusion-1} (when equipped with appropriate grading data) induce the dg functors
		\begin{gather*}
			\Phi\colon \cW(T^*S^{n-1}) \to\cW(\Pi_n)\\
			L \mapsto L_1, \qquad
			z \mapsto yx\\
			\Psi\colon \cW(T^*S^{n-1}) \to\cW(\Pi_n)\\
			L \mapsto L_2, \qquad
			z \mapsto xy
		\end{gather*}
		where $\cW(T^*S^{n-1})$ is given in Proposition \ref{prp:wfuk-sphere}. See Figure \ref{fig:wfuk-plumb-3}.
		
		\begin{figure}[ht]
			\centering	
			\begin{tikzpicture}	
				\draw[domain=0:1.5,smooth,variable=\x] plot ({\x^2},{\x});
				\draw[domain=0:1.5,smooth,variable=\x] plot ({\x^2},{-\x});
				\draw[domain=0:1.5,smooth,variable=\x] plot ({-\x^2},{\x});
				\draw[domain=0:1.5,smooth,variable=\x] plot ({-\x^2},{-\x});
				\draw (1.5^2,-1.5) to[bend left] (1.5^2,1.5);
				\draw (1.5^2,-1.5) to[bend right] (1.5^2,1.5);
				\draw (-1.5^2,-1.5) to[bend left] (-1.5^2,1.5);
				\draw[dashed] (-1.5^2,-1.5) to[bend right] (-1.5^2,1.5);
				
				\node[blue] at (-1.6,0) (X) {$L_1$};
				\node[blue] at (1.6,0) (Y) {$L_2$};
				\draw[blue,->] (X) to[bend left] node[above]{$x$} (Y);
				\draw[blue,->] (Y) to[bend left] node[below]{$y$} (X);
				\draw[blue,dashed,<-] (-1.6,0.3) arc (10:355:0.7cm and 2cm) node[midway,left]{$yx$};
				\draw[blue,dashed,<-] (1.6,0.3) arc (170:-175:0.7cm and 2cm) node[midway,right]{$xy$};
			\end{tikzpicture}
			
			\caption{The wrapped Fukaya category $\cW(\Pi_n)$ for $n\geq 3$}
			\label{fig:wfuk-plumb-3}
		\end{figure}
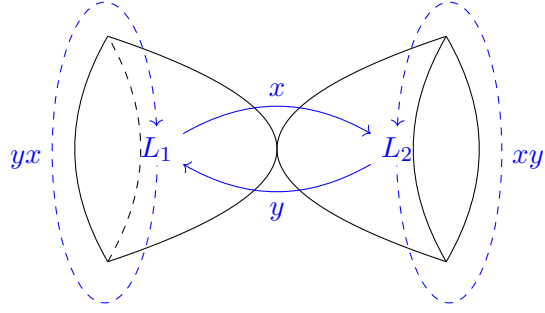
	\end{enumerate}
\end{thm}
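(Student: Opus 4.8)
The plan is to compute the homotopy colimit from \cite[Lemma 6.2]{gps3} (equivalently, the diagram in \cite[Section 6.2]{gps3}) presenting $\cW(\Pi_n)$ in terms of wrapped Fukaya categories of cotangent bundles of disks, and then simplify the resulting semifree dg category using Propositions \ref{prp:variable-change} and \ref{prp:destabilisation}, keeping track of which generators are images of the canonical generators of $\cW(T^*S^{n-1})$ under the inclusion functors. Concretely, the skeleton of $\Pi_n$ is $(\D^n\times\mathbf 0_n)\cup(\mathbf 0_n\times\D^n)$, two $n$-disks meeting transversally at the origin; a Weinstein sectorial covering by a neighborhood of each disk has overlap a neighborhood of the transverse intersection point, which deformation retracts to (a sector modeled on) $F\times T^*[0,1]$ for $F$ a point, so the relevant cover pieces are copies of $D^*D^n$ (with an appropriate stop) and the overlap is $D^*D^{n-1}$-type data. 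First I would set up the precise homotopy pushout, using Theorem \ref{thm:gps} together with Proposition \ref{prp:msh-a2} as the ingredient computing each piece (the $A_2$-type local model controls how a linking disk of a stop relates to the cotangent fiber via a cone), and using Theorem \ref{thm:hocolim-functor-dg}(1) to write the answer explicitly as a semifree extension of the disjoint union of the two pieces by the morphisms $t_X$ (localized) and $t_f$.

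Next I would identify the generators. Each of the two pieces $\cW(D^*D^n)\simeq\cA_1$ contributes one object (a cotangent fiber); the homotopy colimit adds, for the single relevant object $X$ of the overlap category $\cW(T^*S^{n-1})$, a morphism $t_X$ together with its localization data $\hat t_X,\check t_X,\bar t_X$ of degrees $-1,-1,-2$ as in Proposition \ref{prp:localise-semifree}, and for the generator $z$ of $\cW(T^*S^{n-1})$ (degree $1-(n-1)=2-n$, $dz=0$, so \emph{no} correction term) a morphism $t_z$ of degree $|z|-1=1-n$ with $d t_z=(-1)^{|z|}(\beta(z)\,t_X-t_X\,\alpha(z))$. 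Now I would exploit that the cotangent fibers $K$ of $D^*D^n$ in $\cA_1$ have no endomorphisms beyond the identity and that the linking disks $L_1,L_2$ of $\Lambda_1,\Lambda_2$ arise (via the $A_2$ local model, Proposition \ref{prp:msh-a2}(3)) as cones. The variable-change and cancellation propositions then let me eliminate the acyclic pair coming from the localization of $t_X$ (the $\hat t_X,\check t_X,\bar t_X$ machinery collapses once $t_X$ becomes invertible on the nose after the identifications) and rename the two surviving generators as $x\colon L_1\to L_2$ (degree $0$) and $y\colon L_2\to L_1$ (degree $2-n$), with $dx=dy=0$. The computation of $\Phi,\Psi$ is then a bookkeeping statement: the generator $z$ of $\cW(T^*S^{n-1})$ maps to the composite of the glued-in morphisms, which after the renaming is $yx$ (for $\Phi$, landing on $L_1$) and $xy$ (for $\Psi$, landing on $L_2$) — and here one must check the degrees match, $|yx|=|y|+|x|=2-n=|z|$, consistent with Proposition \ref{prp:wfuk-sphere} for $S^{n-1}$.

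The main obstacle I anticipate is \textbf{not} the algebra of the homotopy colimit — that is essentially mechanical given Theorem \ref{thm:hocolim-functor-dg} and the destabilization lemmas — but rather pinning down the geometric input with the correct grading and the correct \emph{identification of which disks are which}: namely, verifying that under the inclusions \eqref{eq:plumbing-inclusion-1} the cotangent fiber of $T^*S^{n-1}$ maps to the linking disk $L_i$ of $\Lambda_i$ (not, say, to a Lagrangian cocore), and that the resulting composite really is $yx$ resp.\ $xy$ rather than some other word, with the right sign and grading shift. This requires using the $A_2$-arboreal local picture of Proposition \ref{prp:msh-a2} at each of the two stops and checking compatibility of the grading structures (uniquely determined by Remark \ref{rmk:grading-pin}, since $H^1(\Pi_n)=0=H^1(T^*S^{n-1})$ for $n-1\geq 2$, so there is no ambiguity, which is precisely why we need $n\geq 3$ here and treat $n=2$ separately in Theorem \ref{thm:wfuk-plumbing-2}). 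A secondary check is that the degrees are globally consistent: $|x|=0$, $|y|=2-n$, and the would-be fourth generator $\bar t_X$ of degree $-2$ together with $t_z$ of degree $1-n$ must cancel against each other or against pieces of the localization, leaving exactly the two-generator semifree category $\cD_n^{12}$; I would verify this by a direct count of generators and differentials after applying Proposition \ref{prp:destabilisation}.
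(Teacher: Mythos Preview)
Your sectorial covering is not correctly set up, and the inconsistency in your description is the symptom. You say the cover is by ``a neighborhood of each disk'' with overlap ``a neighborhood of the transverse intersection point, \ldots\ $F\times T^*[0,1]$ for $F$ a point'', but then immediately switch to ``the overlap category $\cW(T^*S^{n-1})$'' with its generator $z$. These are different diagrams. If the overlap were a point, there would be no $z$ and no $t_z$. If instead you glue two copies of $D^*D^n$ (each with $\cW\simeq\cA_1$) along $T^*S^{n-1}$, then your own computation of $t_z$ (degree $1-n$, closed, since $\alpha(z)=\beta(z)=0$ in $\cA_1$) together with the localization of $t_X$ collapses to a single object with one generator of degree $1-n$: that is $\cW(T^*S^n)$, not $\cW(\Pi_n)$. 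The stops $\Lambda_1,\Lambda_2$ have disappeared from your picture, and with them the linking disks.

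The paper's route is different and asymmetric. One first uses the stop--sector correspondence to rewrite $\Pi_n\simeq(D^*D^n,S^*_0D^n)$, then perturbs the stop to the outward conormal $\Lambda$ of a small sphere and covers by an inner disk $D^*D^n_{1/4}$ and an annular piece $(D^*(D^n\setminus\mathrm{int}\,D^n_{1/4}),\Lambda)$. The annular piece is the crucial one: it is a K\"unneth product $(D^*D^1,\{(0,1)\})\times D^*S^{n-1}$, so its wrapped category is $\cA_2\otimes\cC_{n-1}$, which has \emph{two} objects $L_0,L_1$ (one on each side of the stop). After replacing this by a semifree model $\cB_n^{01}$, one arrow in the pushout becomes a semifree extension and the hocolim is an ordinary colimit (Remark~\ref{rmk colimit}), yielding the category $\cD_n^{01}$ with generators $g,\alpha_1,h$ and $dh=\alpha_1 g$. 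The passage to $\cD_n^{12}$ is then a genuine generator change (Lemma~\ref{lem:generator-change}): one replaces $L_0$ by $L_2:=\Cone(g)$, and under this change $\alpha_1\mapsto yx$ while the analogous endomorphism $\alpha_2$ of $L_2$ maps to $xy$. This cone construction is exactly how the second linking disk enters, and it is what makes the computation of $\Psi$ nontrivial; tracking $1_{K_2}\otimes z$ through the K\"unneth identification and the quasi-inverse of Lemma~\ref{lem:generator-change} is the substantive part of identifying $\Psi(z)=xy$. Your outline does not contain this mechanism.
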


\begin{rmk}
	In this remark, we visualize why $\Phi$ sends $z$ to $yx$, without rigorous proof. 
	From the construction of plumbing spaces given in Section \ref{section plumbing space}, $\Phi$ is induced from the geometric inclusion that identifies the zero section of $T^*S^{n-1}$ to $\Lambda_1$. 
	Since the generating morphism $z$ can be seen as a geodesic on the zero section $S^{n-1}$, $\Phi$ would send $z$ to a morphism corresponding to a geodesic on the Lagrangian skeleton of the plumbing sector $\Pi_n$. 
	Similarly, one can identify $x$ and $y$ to geodesics on the Lagrangian skeletons, which connect two points corresponding to the cocores $L_1, L_2$. 
	Thus, the fact that $\Phi$ sending $z$ to $yx$ would imply that the existence of holomorphic triangle bounded by three geodesics corresponding to $x, y$ and $z$. 
	The triangle is visualized in Figure \ref{figure triangle}.
	\begin{figure}[ht]
		\centering
		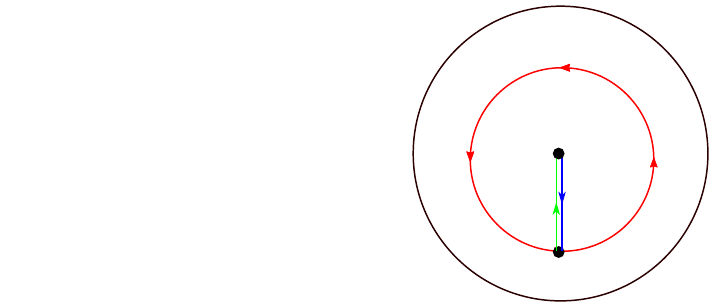		
		\caption{The left is the Lagrangian skeleton of $\Pi_n$, i.e., two disks transversely intersecting at one point. 
		The red, green, blue geodesics correspond to $z, x, y$ respectively.
		The right figure is one of two disks of the Lagrangian skeleton, and the geodesics on the disk are drawn in the same colors. 
		One can see that there exists a triangle bounded by red, green, blue geodesics on the right figure.}
		\label{figure triangle}
	\end{figure}
	
	We also note that $\Phi$ is fully faithful.
	It can be proven by simple computation. 
	Moreover, also by simple computation, one can easily check that $\Hom(L_1,L_1)=\hom(L_1,L_1)=k[yx]$ as an algebra, $\Hom(L_1,L_2) = \hom(L_1,L_2) = k \langle x(yx)^n | n \in \mathbb{N} \rangle$ as a $k$-module, and so on. 
\end{rmk}

\begin{rmk}\label{rmk:shifting-generators}
	The functor $\Phi$ (resp.\ $\Psi$) in Theorem \ref{thm:wfuk-plumbing-3}\eqref{item:inclusion-plumbing-3} is determined from \eqref{eq:plumbing-inclusion-1} up to a choice of grading on the linking disk $L_1$ (resp.\ $L_2$). In other words, we could have considered a different induced functor $\tilde\Phi$ satisfying $\tilde \Phi(L)=L_1[m_1]$ for some $m_1\in\Z$ (resp.\ $\tilde\Psi$ satisfying $\tilde \Psi(L)=L_2[m_2]$ for some $m_2\in\Z$). To make this compatible with the equivalence given in Theorem \ref{thm:wfuk-plumbing-3}\eqref{item:plumbing-3}, we can use a pretriangulated equivalence $\cW(\Pi_n)\simeq \tilde\cD_n^{12}$ (for $n\geq 3$) instead, where the semifree dg category $\tilde\cD_n^{12}$ is given as follows:
	\begin{enumerate}[label = (\roman*)]
		\item {\em Objects:} $L_1[m_1],\quad L_2[m_2]$.
		\item {\em Generating morphisms:} $\tilde x\colon L_1[m_1]\to L_2[m_2],\quad \tilde y\colon L_2[m_2]\to L_1[m_1]$.
		\item {\em Degrees:} $|\tilde x|=m_1-m_2,\quad |\tilde y|=2-n-(m_1-m_2)$.
		\item {\em Differentials:} $d\tilde x=d\tilde y=0$.
	\end{enumerate}
	Then, the dg functors $\tilde\Phi$ and $\tilde\Psi$ are described as follows:
	\begin{gather*}
		\tilde \Phi\colon \cW(T^*S^{n-1}) \to\cW(\Pi_n)\hspace{3cm} \tilde \Psi\colon \cW(T^*S^{n-1}) \to\cW(\Pi_n)\\
		L \mapsto L_1[m_1],\quad
		z \mapsto (-1)^{n(m_1-m_2)}\tilde y \tilde x,\hspace{3cm}
		L \mapsto L_2[m_2],\quad
		z \mapsto \tilde x\tilde y .
	\end{gather*}
\end{rmk}

\begin{proof}[Proof of Remark \ref{rmk:shifting-generators}]
	It is easy to see that the dg category $\tilde\cD_n^{12}$ is the full dg subcategory of $\Tw(\cD_n^{12})$ with the objects $L_1[m_1]$ and $L_2[m_2]$. Indeed, it has the generating morphisms
	\[\begin{tikzcd}
		L_1[m_1]\ar[rrr,"\tilde x:=1_{m_2,m_1}\otimes x", bend left]& & & L_2[m_2]\ar[lll,"\tilde y:=1_{m_1,m_2}\otimes y",bend left]
	\end{tikzcd}\]
	where we consider $L_i[m_i]$ as $k[m_i]\otimes L_i$ for $i=1,2$, and $1_{p,q}\colon k[q]\to  k[p]$ is the canonical isomorphism of degree $q-p$ (recall that $k$ is the coefficient ring). Note that $d\tilde x=d\tilde y=0$, $|\tilde x|=m_1-m_2$, and $|\tilde y|=2-n-(m_1-m_2)$. Therefore, $\tilde\cD_n^{12}$ is pretriangulated equivalent to $\cD_n^{12}$, and hence, to $\cW(\Pi_n)$.
	
	To determine the functor $\tilde \Phi$, first consider the $m_1$-shift functor
	\begin{align*}
		k[m_1]\otimes -\colon\cW(\Pi_n)&\to\cW(\Pi_n)\\
		A&\mapsto k[m_1]\otimes A=:A[m_1] &&\text{for every object $A$ in $\cW(\Pi_n)$},\\
		\theta &\mapsto 1_{m_1,m_1}\otimes\theta &&\text{for every morphism $\theta$ in $\cW(\Pi_n)$} .
	\end{align*}
	The functor $\tilde \Phi$ is the composition of the $m_1$-shift functor with the functor $\Phi$ in Theorem \ref{thm:wfuk-plumbing-3}. Hence, we have
	\begin{align*}
		\tilde \Phi\colon \cW(T^*S^{n-1}) &\to\cW(\Pi_n)\\
		L \mapsto L_1[m_1],\quad z&\mapsto 1_{m_1,m_1}\otimes yx .
	\end{align*}
	We want to determine $1_{m_1,m_1}\otimes yx$ in terms of $\tilde x$ and $\tilde y$. For that, we have the equalities
	\begin{align*}
		1_{m_1,m_1}\otimes yx&=(1_{m_1,m_2}\circ 1_{m_2,m_1})\otimes (y\circ x)\\
		&=(-1)^{n(m_1-m_2)}(1_{m_1,m_2}\otimes y)\circ(1_{m_2,m_1}\otimes x)\\
		&=(-1)^{n(m_1-m_2)}\tilde y \tilde x
	\end{align*}
	using the Koszul sign rule. Hence, we indeed have $\tilde \Phi(z)=(-1)^{n(m_1-m_2)}\tilde y\tilde x$. A similar calculation determines the functor $\tilde \Psi$.
\end{proof}

\begin{thm}\label{thm:wfuk-plumbing-2}\mbox{}
	\begin{enumerate}
		\item\label{item:plumbing-2} The wrapped Fukaya category of the plumbing sector $\Pi_2$ is given, up to pretriangulated equivalence, by
		\[\cW(\Pi_2)\simeq\cD_2^{12}[(1_{L_2}+xy)^{-1}]\]
		where $\cD_2^{12}$ is the semifree dg category given as follows:
		\begin{enumerate}[label = (\roman*)]
			\item {\em Objects:} $L_1,L_2$ (representing a linking disk of the stop $\Lambda_1$ and $\Lambda_2$, respectively).
			\item {\em Generating morphisms:}
			\[\begin{tikzcd}
				L_1\ar[r,"x", bend left]& L_2\ar[l,"y",bend left]
			\end{tikzcd}\]
			\item {\em Degrees:} $|x|=0,\quad |y|=0$.
			\item {\em Differentials:} $dx=dy=0$.
		\end{enumerate}
		
		\item \label{item:inclusion-plumbing-2} The inclusions $\Phi\colon T^*S^{1}\hookrightarrow\Pi_2$ and $\Psi\colon T^*S^{1}\hookrightarrow\Pi_2$ in \eqref{eq:plumbing-inclusion-1} (when equipped with appropriate grading data) induce the dg functors
		\begin{gather*}
			\Phi\colon \cW(T^*S^1) \to\cW(\Pi_2) \hspace{3cm}
			\Psi\colon \cW(T^*S^1) \to\cW(\Pi_2)\\
			L \mapsto L_1, 
			z \mapsto 1_{L_1}+yx  \hspace{3cm}
			L \mapsto L_2, 
			z \mapsto 1_{L_2}+xy
		\end{gather*}
		where $\cW(T^*S^1)$ is given in Proposition \ref{prp:wfuk-sphere}. See Figure \ref{fig:wfuk-plumb-2}.
		
		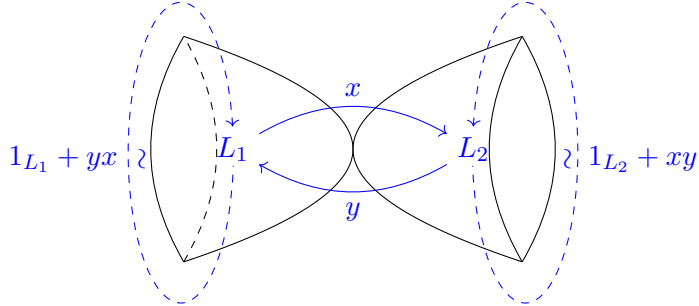
\begin{figure}[ht]
			\centering	
			\begin{tikzpicture}	
				\draw[domain=0:1.5,smooth,variable=\x] plot ({\x^2},{\x});
				\draw[domain=0:1.5,smooth,variable=\x] plot ({\x^2},{-\x});
				\draw[domain=0:1.5,smooth,variable=\x] plot ({-\x^2},{\x});
				\draw[domain=0:1.5,smooth,variable=\x] plot ({-\x^2},{-\x});
				\draw (1.5^2,-1.5) to[bend left] (1.5^2,1.5);
				\draw (1.5^2,-1.5) to[bend right] (1.5^2,1.5);
				\draw (-1.5^2,-1.5) to[bend left] (-1.5^2,1.5);
				\draw[dashed] (-1.5^2,-1.5) to[bend right] (-1.5^2,1.5);
				
				\node[blue] at (-1.6,0) (X) {$L_1$};
				\node[blue] at (1.6,0) (Y) {$L_2$};
				\draw[blue,->] (X) to[bend left] node[above]{$x$} (Y);
				\draw[blue,->] (Y) to[bend left] node[below]{$y$} (X);
				\draw[blue,dashed,<-] (-1.6,0.3) arc (10:355:0.7cm and 2cm) node[midway,left]{$1_{L_1}+yx$} node[midway,rotate=90,below]{$\sim$};
				\draw[blue,dashed,<-] (1.6,0.3) arc (170:-175:0.7cm and 2cm) node[midway,right]{$1_{L_2}+xy$} node[midway,rotate=90,above,yshift=-0.1cm]{$\sim$};
			\end{tikzpicture}
			
			\caption{The wrapped Fukaya category $\cW(\Pi_2)$}
			\label{fig:wfuk-plumb-2}
		\end{figure}
	\end{enumerate}
\end{thm}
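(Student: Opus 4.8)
The plan is to run the same local-to-global argument used for the $n\geq 3$ case in Theorem \ref{thm:wfuk-plumbing-3}, now with $n=2$, the only --- but crucial --- difference in the input being that the boundary sector now contributes $\cW(T^*S^1)\simeq\cC_1[z^{-1}]$ (Proposition \ref{prp:wfuk-sphere}) instead of $\cW(T^*S^{n-1})\simeq\cC_{n-1}$. First, I would write down the homotopy colimit diagram computing $\cW(\Pi_n)$ provided by \cite[Lemma 6.2]{gps3} --- equivalently, the one obtained by applying Theorem \ref{thm:gps} to the natural sectorial decomposition of $\Pi_n$ along its skeleton $(\mathbb{D}^n\times\mathbf{0}_n)\cup(\mathbf{0}_n\times\mathbb{D}^n)$. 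Its terms are wrapped Fukaya categories of codisk bundles of disks, which are determined by Proposition \ref{prp:msh-a2} ($\cW(D^*D^n)\simeq\cA_1$, $\cW(D^*D^1,\{(0,1)\})\simeq\cA_2$, together with the explicit inclusion functors $j_0,j_1,j_2$), glued along copies of $\cW(T^*S^{n-1})$ via the inclusions $\Phi,\Psi$. Specialising $n=2$ turns this into an explicit finite diagram of semifree dg categories and semifree extensions.

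The second step is to compute that homotopy colimit by hand, using the explicit formula of Theorem \ref{thm:hocolim-functor-dg} together with Propositions \ref{prp:colimit-dg} and \ref{prp:localise-semifree} for the colimits and localizations of semifree dg categories involved, and then simplifying the output with the change of variables of Proposition \ref{prp:variable-change} and the cancellation of generators of Proposition \ref{prp:destabilisation}. This should collapse the model down to the two linking disks $L_1,L_2$ of $\Lambda_1,\Lambda_2$, with generators $x\colon L_1\to L_2$ and $y\colon L_2\to L_1$ of degree $0$ (note $2-n=0$) and vanishing differential --- i.e.\ the dg category $\cD_2^{12}$ --- but now carrying, in addition, the closed degree $0$ morphism inherited from the generator $z$ of $\cW(T^*S^1)$, which the same bookkeeping identifies with $1_{L_2}+xy$ (equivalently $1_{L_1}+yx$). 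Since $z$ is \emph{inverted} in $\cC_1[z^{-1}]$ --- in contrast to $n\geq 3$, where $z\in\cC_{n-1}$ is not inverted and the analogous computation simply yields $\cD_n^{12}$ --- the corresponding morphism of $\cW(\Pi_2)$ must be inverted as well, giving $\cW(\Pi_2)\simeq\cD_2^{12}[(1_{L_2}+xy)^{-1}]$. I would also record the elementary equivalence $\cD_2^{12}[(1_{L_2}+xy)^{-1}]\simeq\cD_2^{12}[(1_{L_1}+yx)^{-1}]$, so that the choice of stop used to phrase the localization is immaterial.

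For part \eqref{item:inclusion-plumbing-2}, the functors $\Phi,\Psi\colon\cW(T^*S^1)\to\cW(\Pi_2)$ are the ones induced by the inclusions of Liouville sectors in \eqref{eq:plumbing-inclusion-1}, and I would read them off from naturality of the homotopy colimit (Theorem \ref{thm:hocolim-functor-dg}\,(3)): the canonical maps of the two boundary $T^*S^1$-terms into the computed model send $L\mapsto L_1$, $L\mapsto L_2$, and the generator $z$ to the distinguished closed degree $0$ morphism identified above, namely $1_{L_1}+yx$ and $1_{L_2}+xy$ respectively --- there is no grading ambiguity of the kind discussed in Remark \ref{rmk:shifting-generators} because all degrees here vanish. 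This matches Figure \ref{fig:wfuk-plumb-2}, and is consistent with the identification of the resulting finite-dimensional modules with constructible sheaves on $\R^2$ for the stratification defined by the origin and its complement (equivalently perverse sheaves), where invertibility of $1_{L_2}+xy$ is exactly the monodromy condition along $\R^2\setminus\mathbf{0}_2\simeq S^1$.

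The main obstacle is the correct handling of this localization inside the homotopy colimit. Theorem \ref{thm:hocolim-functor-dg}\,(2) says a localization in the \emph{spine} of a span can be discarded without changing the homotopy colimit, so one must be careful to track that the inverted generator $z$ of $\cC_1[z^{-1}]$ survives the \cite{gps3} diagram as a genuine morphism whose image in the computed model is the non-unit $1_{L_2}+xy$ of $\cD_2^{12}$, rather than being erased. Moreover, since for $n=2$ all generators have degree $0$ (whereas $|y|=2-n<0$ for $n\geq 3$), one must check that no new differentials or correction terms in the sense of \eqref{eqn correction term} appear, and keep careful track of Koszul signs. Finally, pinning down the precise images $z\mapsto 1_{L_1}+yx$ and $z\mapsto 1_{L_2}+xy$ --- rather than, say, $z\mapsto yx$ --- is the one step that is not purely formal: the generator $z$ records the once-around wrapping of a cotangent fiber of the boundary $S^1$, and seeing that this class becomes the identity plus the correction term $xy$ requires input from the geometry of $\Pi_2$, namely the holomorphic strips in the explicit model of \cite{gps3}.
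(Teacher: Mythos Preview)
Your overall plan matches the paper's: run the homotopy colimit argument of Theorem \ref{thm:wfuk-plumbing-3} with $\cC_1[z^{-1}]$ in place of $\cC_{n-1}$, and track the extra localization through. You correctly identify that the localization must survive and that the key question is why $z\mapsto 1+yx$ rather than $z\mapsto yx$.

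However, you misdiagnose the source of the ``$1+$'' as geometric input (holomorphic strips). In the paper's proof it is purely algebraic and falls out of the same machinery you already invoke. The homotopy colimit diagram has the form
\[\cW(\Pi_2)\simeq\hocolim\bigl(\cB_2^{01}[\{\alpha_0,\alpha_1\}^{-1}]\xleftarrow{F''}\cC_1[z^{-1}]\xrightarrow{G'}\cA_1\bigr),\]
and the crucial observation is that $G'\colon\cC_1[z^{-1}]\to\cA_1$ must send the \emph{invertible} $z$ to an invertible element of $\hom^*_{\cA_1}(K,K)=k$, hence to $1_K$ --- not to $0$, as in the $n\geq 3$ case where $|z|\neq 0$ forces $G'(z)=0$. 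After stripping the spine localization (Theorem \ref{thm:hocolim-functor-dg}(2)) and pulling the $\{\alpha_0,\alpha_1\}$-localization outside, the colimit identifies $\alpha_0=1_{L_0}$, so the relation $dh=\alpha_1 g-g\alpha_0$ in $\cB_2^{01}$ becomes $dh=(\alpha_1-1_{L_1})g$. Now Lemma \ref{lem:generator-change} --- the tool that performs the generator change from $\{L_0,L_1\}$ to $\{L_1,L_2\}$, which you invoke only implicitly --- applies with $\alpha_1':=\alpha_1-1_{L_1}$ playing the role of $\alpha_1$, giving $\alpha_1'\mapsto yx$ and hence $\Phi(z)=\alpha_1=1_{L_1}+\alpha_1'\mapsto 1_{L_1}+yx$. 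The computation of $\Psi(z)=1_{L_2}+xy$ is analogous. So the step you flagged as ``not purely formal'' is in fact formal once you see that invertibility of $z$ forces $G'(z)=1_K$; no holomorphic strip count is needed.

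One minor correction: there \emph{is} grading ambiguity in the $n=2$ case --- see Remark \ref{rmk:shifting-generators-2} --- though for the canonical choice $m_1=m_2=0$ it does not affect the stated formulas.
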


\begin{rmk}\label{rmk:wfuk-plumb-2-inv}
	We could also write $\cW(\Pi_2)\simeq\cD_2^{12}[(1_{L_1}+yx)^{-1}]$. Having either $1_{L_1}+yx$ or $1_{L_2}+xy$ invertible up to homotopy implies both of $1_{L_1}+yx$ and $1_{L_2}+xy$ are invertible up to homotopy.
\end{rmk}

\begin{rmk}\label{rmk:shifting-generators-2}
	As in Remark \ref{rmk:shifting-generators}, we could have considered a different functor $\tilde\Phi$ induced from \eqref{eq:plumbing-inclusion-1} satisfying $\tilde \Phi(L)=L_1[m_1]$ for some $m_1\in\Z$ (resp.\ $\tilde\Psi$ induced from \eqref{eq:plumbing-inclusion-1} satisfying $\tilde \Psi(L)=L_2[m_2]$ for some $m_2\in\Z$) in Theorem \ref{thm:wfuk-plumbing-2}\eqref{item:inclusion-plumbing-2}. To make this compatible with the equivalence given in Theorem \ref{thm:wfuk-plumbing-2}\eqref{item:plumbing-2}, we can use a pretriangulated equivalence $\cW(\Pi_2)\simeq \tilde\cD_2^{12}[(1_{L_2[m_2]}+\tilde x\tilde y)^{-1}]$ instead, where the semifree dg category $\tilde\cD_2^{12}$ is given as follows:
	\begin{enumerate}[label = (\roman*)]
		\item {\em Objects:} $L_1[m_1],\quad L_2[m_2]$.
		\item {\em Generating morphisms:} $\tilde x\colon L_1[m_1]\to L_2[m_2],\quad \tilde y\colon L_2[m_2]\to L_1[m_1]$.
		\item {\em Degrees:} $|\tilde x|=m_1-m_2,\quad |\tilde y|=m_2-m_1$.
		\item {\em Differentials:} $d\tilde x=d\tilde y=0$.
	\end{enumerate}
	Then, the dg functors $\tilde\Phi$ and $\tilde\Psi$ are described as follows:
	\begin{gather*}
		\tilde \Phi\colon \cW(T^*S^1) \to\cW(\Pi_2)\hspace{3cm} \tilde \Psi\colon \cW(T^*S^1) \to\cW(\Pi_2)\\
		L \mapsto L_1[m_1],\quad
		z \mapsto 1_{L_1[m_1]}+\tilde y \tilde x,\hspace{2cm}
		L \mapsto L_2[m_2],\quad
		z \mapsto 1_{L_2[m_2]}+\tilde x\tilde y .
	\end{gather*}
\end{rmk}

\subsection{Proof of the computations of $\cW(\Pi_n)$}\label{sec:proof-plumbing-sector}

The goal of this subsection is to prove Theorem \ref{thm:wfuk-plumbing-3} and \ref{thm:wfuk-plumbing-2}. We start with the description of the dg category obtained by adding a cone of a morphism to a given dg category. It will be used to prove Lemma \ref{lem:generator-change}.

\begin{prp}\label{prp:extend-cone}
	Let $\cA$ be a dg category, $L_0,L_1\in\cA$, and $g\in\hom^0_{\cA}(L_0,L_1)$ with $dg=0$. Let $\hat\cA$ be the full dg subcategory of $\Tw(\cA)$ with the objects of $\cA$ and $L_2:=\Cone(g)$. Then, the dg category $\hat\cA$ is given as follows:
	\begin{enumerate}[label = (\roman*)]
		\item {\em Objects:} The objects of $\cA$, and $L_2:=\Cone(g)$.
		\item {\em Generating morphisms:} The morphisms in $\cA$, and the morphisms $i_0,i_1,p_0,p_1$ shown in blue below:
		\[\begin{tikzcd}
			& \textcolor{blue}{L_2}\ar[ld,"p_0"',bend right=20,blue]\ar[rd,"p_1",bend left=20,blue]\\
			L_0 \ar[rr,"g"']\ar[ru,"i_0"',blue]&
			& L_1\ar[lu,"i_1",blue]
		\end{tikzcd}\]
		\item {\em Degrees:} The degrees of the morphisms from $\cA$ are the same as in $\cA$, and
		\[|i_0|=-1,\quad |i_1|=0,\quad |p_0|=1,\quad |p_1|=0 .\]
		\item {\em Differentials:} The differentials of the morphisms from $\cA$ are the same as in $\cA$, and
		\[di_0=i_1 g,\quad
		di_1=0,\quad
		dp_0=0,\quad
		dp_1=-g p_0 .\]
		\item {\em Relations:} The relations between the morphisms from $\cA$ are the same as in $\cA$. The compositions involving $i_0,i_1,p_0,p_1$ are free, except
		\[p_0 i_0 =1_{L_0},\quad
		p_0 i_1 =0,\quad
		p_1 i_0 =0,\quad
		p_1 i_1 =1_{L_1},\quad
		i_0 p_0+i_1 p_1=1_{L_2} .\]
	\end{enumerate}
\end{prp}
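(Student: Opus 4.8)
The plan is to unwind the construction of $\Tw(\cA)$ and describe the full subcategory $\hat\cA$ by hand. Since $g\colon L_0\to L_1$ is closed of degree $0$, the twisted complex $L_2:=\Cone(g)$ is well defined: its underlying object is $L_0[1]\oplus L_1$, and its only nonzero twisted differential component is $g$, viewed as a map $L_0[1]\to L_1$. The four morphisms $i_0,i_1,p_0,p_1$ are then nothing but the structural inclusion/projection morphisms of this biproduct: $i_0$ includes $L_0$ into the summand $L_0[1]$ (whence $|i_0|=-1$), $i_1$ includes $L_1$ (whence $|i_1|=0$), and $p_0,p_1$ are the matching projections (whence $|p_0|=1$, $|p_1|=0$). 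This already pins down item (iii), and shows that $\hat\cA$ is a genuine dg category ($\Tw$ of a dg category carries no higher operations) containing $\cA$ as a full subcategory together with the single new object $L_2$.

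First I would record the hom-complexes. For objects $X,Y$ of $\cA$ nothing changes, $\hom_{\hat\cA}(X,Y)=\hom_\cA(X,Y)$; and from the biproduct decomposition together with the usual shift conventions (see \cite{seidel}) one gets
\[
\hom_{\hat\cA}(X,L_2)\;\cong\;\hom_\cA(X,L_0)[1]\oplus\hom_\cA(X,L_1),\qquad
\hom_{\hat\cA}(L_2,X)\;\cong\;\hom_\cA(L_0,X)[-1]\oplus\hom_\cA(L_1,X),
\]
with $\hom_{\hat\cA}(L_2,L_2)$ the corresponding $2\times 2$ ``matrix'' of such terms. Under these identifications a morphism into $L_2$ is $i_0a+i_1b$, a morphism out of $L_2$ is $ap_0+bp_1$, and an endomorphism of $L_2$ is $\sum_{a,b}i_a m_{ab}p_b$ with entries in $\cA$; hence the morphisms of $\cA$ together with $i_0,i_1,p_0,p_1$ generate all morphisms as an algebra, which is item (ii).

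Next I would compute the differentials. The differential on $\hom_{\Tw(\cA)}$ is the ambient $d_\cA$ twisted by the Koszul-signed commutator with the twisted differentials of source and target. The maps $i_0,i_1,p_0,p_1$ are closed for $d_\cA$ and for the trivial twisted differentials of $L_0,L_1$, so their differentials come entirely from the commutator with $\delta_{L_2}=g$; a direct sign computation gives $di_0=i_1 g$, $di_1=dp_0=0$, $dp_1=-g p_0$, and the Leibniz rule propagates this to all of $\hat\cA$, establishing item (iv) (as a check, $d(i_1 g)=0$ and $d(-g p_0)=0$ since $dg=0$). Finally, for item (v): the biproduct identities $p_a i_b=\delta_{ab}1$ and $i_0p_0+i_1p_1=1_{L_2}$ hold by the definition of the direct sum, and using them any word in the generators collapses to a normal form $m$, $i_a m$, $m p_b$, or $i_a m p_b$ with $m$ a morphism of $\cA$ (every passage through $L_2$ is an $i_a$ immediately followed by a $p_b$, since there is no other generator between two copies of $L_2$); comparing with the hom-complexes above shows these normal forms are exactly a basis, so there are no further relations.

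I expect the only real friction to be bookkeeping rather than a genuine obstacle: keeping the conventions for $[1]$, $\Tw$, and $\Cone$ consistent so that the signs come out precisely as $di_0=i_1g$ and $dp_1=-gp_0$, and phrasing the ``no further relations'' step (the normal-form reduction) carefully rather than loosely. Getting the signs exactly right matters here because this presentation is what feeds into Lemma \ref{lem:generator-change} and, downstream, the homotopy-colimit computations of $\cW(\Pi_n)$.
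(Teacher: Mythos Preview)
Your proposal is correct and follows essentially the same approach as the paper: both write $L_2=\Cone(g)$ as the twisted complex $L_0[1]\oplus L_1$ with twisted differential $g$, identify $i_0,i_1,p_0,p_1$ as the biproduct inclusions/projections, and read off the degrees, differentials, and relations directly from the definition of $\Tw(\cA)$. If anything, your write-up is more thorough than the paper's, which only exhibits the computation of $di_0$ as a sample and leaves the rest (including the ``no further relations'' claim) to the reader; your normal-form argument makes that step explicit.
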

We note that the dg category $\hat{\cA}$ in the above proposition is not semifree since it has nontrivial relations. 

\begin{proof}[Proof of Proposition \ref{prp:extend-cone}]
	Write $L_2:=\Cone(g)$ as the twisted complex
	\[L_2=\left(L_0[1]\oplus L_1,\mx{0 & 0 \\ 1_{0,1}\otimes g & 0}\right)\]
	where the second term denotes the differential of the twisted complex, and $1_{p,q}\colon k[q]\to  k[p]$ is the canonical isomorphism of degree $q-p$ (recall that $k$ is the coefficient ring).
	Then, we have the morphisms
	\begin{gather*}
		i_0:=\mx{1_{1,0}\otimes 1_{L_0} \\ 0}\colon L_0 \to L_2, \qquad
		i_1:=\mx{0 \\ 1_{L_1}}\colon L_1 \to L_2,\\
		p_0:=\mx{1_{0,1}\otimes 1_{L_0}  0}\colon L_2 \to L_0, \qquad
		p_1:=\mx{0  1_{L_1}}\colon L_2 \to L_1 .
	\end{gather*}
	Obviously, the morphisms in $\hat\cA$ are generated by the morphisms in $\cA$, and $i_0,i_1,p_0,p_1$. The compositions of the morphisms in $\cA$ with either of $i_0,i_1,p_0,p_1$ are free. Gradings, differentials, and compositions of $i_0,i_1,p_0,p_1$ follow from the definition of $\Tw(\cA)$, see \cite{seidel}. As an example, by seeing the object $L_0$ as the twisted complex $(L_0,0)$, we have
	\begin{align*}
		di_0=d\mx{1_{1,0} \otimes 1_{L_0} \\ 0}&=\mx{d(1_{1,0} \otimes 1_{L_0}) \\ 0}+\mx{0 & 0 \\ 1_{0,1}\otimes g & 0}\mx{1_{1,0} \otimes 1_{L_0} \\ 0} - (-1)^{|i_0|}\mx{1_{1,0} \otimes 1_{L_0} \\ 0} 0\\
		&=\mx{(d1_{1,0}) \otimes 1_{L_0}+(-1)^{|1_{1,0}|} 1_{1,0}\otimes (d1_{L_0}) \\ 0}+\mx{0 \\ (1_{0,1}\otimes g)\circ (1_{1,0} \otimes 1_{L_0})}\\
		&=\mx{0 \\ (-1)^{|g| |1_{1,0}|}(1_{0,1}\circ 1_{1,0}) \otimes (g\circ 1_{L_0})}=\mx{0 \\ g}=\mx{0 \\ 1_{L_1}} g\\
		&=i_1 g
	\end{align*}
	where the graded Leibniz rule and Koszul sign rule are used whenever needed.
\end{proof}

The following lemma is about changing generators of the triangulated closure of a particular dg category. It will be used in the proof of Theorem \ref{thm:wfuk-plumbing-3} and \ref{thm:wfuk-plumbing-2}.

\begin{lem}\label{lem:generator-change}
	For any $n\in\Z$, let $\cD_n^{01}$ be a semifree dg category given as follows:
	\begin{enumerate}[label = (\roman*)]
		\item {\em Objects:} $L_0,L_1$.
		\item {\em Generating morphisms:}
		$\begin{tikzcd}
			L_0\ar[r,"g"]\ar[r,"h"', bend right]& L_1\ar[loop right,"\alpha_1"]
		\end{tikzcd}.$
		\item {\em Degrees:} $|g|=0,\quad |\alpha_1|=2-n,\quad |h|=1-n$.
		\item {\em Differentials:} $dg=d\alpha_1=0,\quad dh=\alpha_1 g$.
	\end{enumerate}
	Then, the following hold:
	\begin{enumerate}
		\item The full dg subcategory of $\Tw(\cD_n^{01})$ with the objects $L_1$ and $L_2:=\Cone(g)$ is quasi-equivalent to the semifree dg category $\cD_n^{12}$ given as follows:
		\begin{enumerate}[label = (\roman*)]
			\item {\em Objects:} $L_1,L_2$.
			\item {\em Generating morphisms:}
			$\begin{tikzcd}
				L_1\ar[r,"x", bend left]& L_2\ar[l,"y",bend left]
			\end{tikzcd}.$
			\item {\em Degrees:} $|x|=0,\quad |y|=2-n$.
			\item {\em Differentials:} $dx=dy=0$.
		\end{enumerate}
		
		\item We have the pretriangulated equivalence $\cD_n^{12}\simeq\cD_n^{01}$, or equivalently, the quasi-equivalence $\Tw(\cD_n^{12})\simeq\Tw(\cD_n^{01})$ induced by
		\begin{align*}
			\cD_n^{12}&\to\Tw(\cD_n^{01})\\
			L_1\mapsto L_1,\quad L_2&\mapsto L_2,\quad x\mapsto i_1,\quad y\mapsto (-1)^n hp_0 + \alpha_1 p_1
		\end{align*}
		where the morphisms $i_0,i_1,p_0,p_1$ are as in Proposition \ref{prp:extend-cone}.
		
		\item The induced quasi-isomorphism
		\[\hom^*_{\Tw(\cD_n^{12})}(L_j,L_j)\to \hom^*_{\Tw(\cD_n^{01})}(L_j,L_j)\]
		sends $yx$ to $\alpha_1$ when $j=1$, and $xy$ to $\alpha_2:=i_1\alpha_1 p_1 + (-1)^n i_1hp_0$ when $j=2$.
	\end{enumerate}
\end{lem}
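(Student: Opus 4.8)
The plan is to realize both asserted equivalences through the single explicit dg functor $\Theta\colon\cD_n^{12}\to\Tw(\cD_n^{01})$ defined on objects by $L_1\mapsto L_1$, $L_2\mapsto\Cone(g)$ and on generators by $x\mapsto i_1$, $y\mapsto(-1)^nhp_0+\alpha_1p_1$, where $i_0,i_1,p_0,p_1$ are the structure morphisms of $\Cone(g)$ supplied by Proposition~\ref{prp:extend-cone} applied to $\cA=\cD_n^{01}$ and the closed degree $0$ morphism $g$. The first step is to check that $\Theta$ is a well-defined dg functor. Since $\cD_n^{12}$ is freely generated by $x$ and $y$ with $dx=dy=0$, it is enough to verify that $i_1$ and $(-1)^nhp_0+\alpha_1p_1$ are closed of degrees $0$ and $2-n$ respectively; the degree count follows from Proposition~\ref{prp:extend-cone} together with $|\alpha_1|=2-n$ and $|h|=1-n$, and closedness reduces to the cancellation $d\bigl((-1)^nhp_0+\alpha_1p_1\bigr)=(-1)^n\alpha_1gp_0-(-1)^n\alpha_1gp_0=0$, using $dh=\alpha_1g$, $dp_1=-gp_0$, $dp_0=d\alpha_1=0$ and the Koszul sign rule. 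Note that $\Theta$ factors through the full dg subcategory $\cC\subset\Tw(\cD_n^{01})$ on the objects $L_1$ and $L_2=\Cone(g)$.

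Next I would record the two relevant composites, using the relations $p_0i_1=0$ and $p_1i_1=1_{L_1}$ from Proposition~\ref{prp:extend-cone}:
\[
\Theta(yx)=\bigl((-1)^nhp_0+\alpha_1p_1\bigr)i_1=\alpha_1,\qquad
\Theta(xy)=i_1\bigl((-1)^nhp_0+\alpha_1p_1\bigr)=i_1\alpha_1p_1+(-1)^ni_1hp_0=\alpha_2 .
\]
These will give part (3) once $\Theta$ is known to realize the equivalences, because the induced map on the self-hom-complexes is $\Theta$ itself.

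For part (1), I would compute $H^*\hom_\cC$ directly from the presentation in Proposition~\ref{prp:extend-cone} of the full subcategory of $\Tw(\cD_n^{01})$ on $L_0,L_1,\Cone(g)$. Since $\hom_{\cD_n^{01}}(L_1,L_0)=0$ and the only relations tying $\{L_1,L_2\}$ to $L_0$ are $p_0i_1=0$, $p_1i_1=1_{L_1}$, $i_0p_0+i_1p_1=1_{L_2}$, every morphism of $\cC$ reduces to a $k$-linear combination of normal-form words of the shapes $\alpha_1^k$; $i_1\alpha_1^k$; $\alpha_1^kp_1,\ \alpha_1^kgp_0,\ \alpha_1^khp_0$; and $1_{L_2},\ i_1\alpha_1^kp_1,\ i_1\alpha_1^kgp_0,\ i_1\alpha_1^khp_0$, with differential governed by $dp_1=-gp_0$ and $dh=\alpha_1g$. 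A short computation then yields $H^*\hom_\cC(L_1,L_1)=k[\alpha_1]$, $H^*\hom_\cC(L_1,L_2)=\bigoplus_{k\ge0}k\cdot i_1\alpha_1^k$, $H^*\hom_\cC(L_2,L_1)=\bigoplus_{m\ge1}k\cdot[\alpha_1^mp_1+(-1)^n\alpha_1^{m-1}hp_0]$, and $H^*\hom_\cC(L_2,L_2)=k\cdot[1_{L_2}]\oplus\bigoplus_{m\ge1}k\cdot[i_1\alpha_1^mp_1+(-1)^ni_1\alpha_1^{m-1}hp_0]$, each agreeing degreewise with the (zero-differential) hom-complexes of $\cD_n^{12}$, whose bases are the alternating words in $x,y$. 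The computations $\Theta(x(yx)^k)=i_1\alpha_1^k$, $\Theta((yx)^{m-1}y)=\alpha_1^mp_1+(-1)^n\alpha_1^{m-1}hp_0$ and $\Theta((xy)^m)=\alpha_2^m=i_1\alpha_1^mp_1+(-1)^ni_1\alpha_1^{m-1}hp_0$ (iterating the previous paragraph) then show that $\Theta$ carries a basis of each $H^*\hom_{\cD_n^{12}}$ to a basis of the corresponding $H^*\hom_\cC$. Hence the corestriction $\cD_n^{12}\to\cC$ of $\Theta$ is bijective on objects and a quasi-isomorphism on each hom-complex, i.e.\ a quasi-equivalence, which is part (1).

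Finally, part (2): the inclusion $\cC\hookrightarrow\Tw(\cD_n^{01})$ is a pretriangulated equivalence, since rotating the exact triangle $L_0\xrightarrow{g}L_1\xrightarrow{i_1}\Cone(g)\to L_0[1]$ identifies $L_0\cong\Cone(i_1)[-1]$, so $\{L_1,L_2\}$ generates $\Tw(\cD_n^{01})$, which contains $\cD_n^{01}$. Composing with the quasi-equivalence of part (1), $\Theta$ induces a quasi-equivalence $\Tw(\cD_n^{12})\xrightarrow{\sim}\Tw(\cD_n^{01})$, i.e.\ the pretriangulated equivalence $\cD_n^{12}\simeq\cD_n^{01}$, whose effect on objects and generators is exactly the one stated; part (3) then follows at once from the composites computed in the second paragraph. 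The main obstacle I anticipate is the bookkeeping in part (1): correctly enumerating the reduced words of $\cC$ together with the action of the differential, computing the four cohomologies, and keeping careful track of the Koszul signs so that the signs produced by $\Theta$ match those in the surviving cohomology classes.
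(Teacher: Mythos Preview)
Your proposal is correct and follows essentially the same approach as the paper: both construct the same explicit dg functor $\cD_n^{12}\to\Tw(\cD_n^{01})$ (with $x\mapsto i_1$ and $y\mapsto(-1)^nhp_0+\alpha_1p_1$) and verify it is a quasi-equivalence onto the full subcategory on $L_1,L_2$ by computing the cohomology of each hom-complex. The only cosmetic difference is that the paper first re-presents that full subcategory $\cE_n^{12}$ using the generators $x=i_1$, $a=p_1$, $b=gp_0$, $c=hp_0$ and then performs the change of variable $y:=(-1)^nc+\alpha_1 a$ (so that $yx=\alpha_1$) before computing cohomology, which makes the cancellation of the pairs $(a,b)$ transparent, whereas you compute the cohomology directly in the original basis; the content and the signs are identical.
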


\begin{proof}
	First, we prove the first item. Let $\cD_n^{012}$ be the full dg subcategory of $\Tw(\cD_n^{01})$ with the objects $L_0,L_1,L_2=\Cone(g)$. Then by Proposition \ref{prp:extend-cone}, the morphisms in $\cD_n^{012}$ are generated by the morphisms below:
	\[\begin{tikzcd}
		& \textcolor{blue}{L_2}\ar[ld,"p_0"',bend right=20,blue]\ar[rd,"p_1",bend left=20,blue]\\
		L_0 \ar[rr,"g"]\ar[rr,"h",bend right]\ar[ru,"i_0"',blue]&
		& L_1\ar[lu,"i_1",blue]\ar[loop right,"\alpha_1"]
	\end{tikzcd}\]
	The compositions are free, except the ones given in Proposition \ref{prp:extend-cone}. The gradings and differentials of $i_0,i_1,p_0,p_1$ are given in Proposition \ref{prp:extend-cone}.
	
	Now, we want to consider the full dg subcategory $\cE_n^{12}$ of $\cD_n^{012}$ consisting of the objects $L_1$ and $L_2$. Note that since both $\{L_0,L_1\}$ and $\{L_1,L_2\}$ generate $\cD_n^{012}$ (because $L_2$ is the cone of $g\colon L_0\to L_1$), we have the quasi-equivalences
	\[\Tw(\cE_n^{12})\simeq\Tw(\cD_n^{012})\simeq\Tw(\cD_n^{01}) .\]
	By setting
	\[x:=i_1,\quad a:=p_1,\quad b:= g p_0,\quad c:=h p_0,\]
	the dg category $\cE_n^{12}$ is given as follows:
	\begin{enumerate}[label = (\roman*)]
		\item {\em Objects:} $L_1,L_2$.
		\item {\em Generating morphisms:}
		$\begin{tikzcd}
			L_1\ar[loop left,"\alpha_1"]\ar[r,"x",bend left]& L_2\ar[l,"{a,b,c}", bend left]
		\end{tikzcd}.$
		\item {\em Degrees:} $|\alpha_1|=2-n,\quad |x|=0,\quad |a|=0,\quad |b|=1,\quad |c|=2-n$.
		\item {\em Differentials:} $d\alpha_1=0,\quad dx=0,\quad da=-b,\quad db=0,\quad dc=\alpha_1 b$.
		\item {\em Relations:} $a x =1_{L_1},\quad b x =0,\quad c x =0$.
	\end{enumerate}
	We can simplify $\cE_n^{12}$ by defining $y:=(-1)^n c + \alpha_1 a$ to replace $c$. In that case, we have $y x= \alpha_1$, so we can remove $\alpha_1$ from the generating morphisms. Hence, we can express the dg category $\cE_n^{12}$ as follows:
	\begin{enumerate}[label = (\roman*)]
		\item {\em Objects:} $L_1,L_2$.
		\item {\em Generating morphisms:}
		$\begin{tikzcd}
			L_1\ar[r,"x", bend left]& L_2\ar[l,"{a,b,y}",bend left]
		\end{tikzcd}.$
		\item {\em Degrees:} $|x|=0,\quad |y|=2-n,\quad |a|=0,\quad |b|=1$.
		\item {\em Differentials:} $dx=0,\quad dy=0,\quad da=-b,\quad db=0$.
		\item {\em Relations:} $a x =1_{L_1},\quad b x =0$.
	\end{enumerate}
	Finally, we want to get rid of $a$ and $b$ to show that $\cE_n^{12}$ and $\cD_n^{12}$ are quasi-equivalent. For that, define the dg functor
	\begin{align*}
		F\colon\cD_n^{12}&\to\cE_n^{12}\\
		L_1\mapsto L_1,\quad L_2&\mapsto L_2,\quad x\mapsto x,\quad y\mapsto y .
	\end{align*}
	To show that $F$ is a quasi-equivalence, we need to show that the induced chain maps
	\[F_{ij}\colon \hom^*_{\cD_n^{12}}(L_i,L_j)\to\hom^*_{\cE_n^{12}}(L_i,L_j)\]
	are quasi-isomorphisms for any $i,j\in\{1,2\}$. Here, we will just consider the case of $i=2, j=1$. The other cases can be easily proven in a similar way.
	
	Note that $\hom^*_{\cE_n^{12}}(L_2,L_1)$ is additively generated by the morphisms
	\[(yx)^m y , \quad (yx)^m a, \quad (yx)^m b ,\]
	where $m\geq 0$. Since $d((yx)^m a)= -(-1)^{nm}(yx)^m b$ for all $m\geq 0$, we can get rid of $(yx)^m a, (yx)^m b$ in the cohomology. Hence, $\{(yx)^m y\vb m\geq 0\}$  additively generates the cohomology of $\hom^*_{\cE_n^{12}}(L_2,L_1)$. We also know that $\{(yx)^m y\vb m\geq 0\}$ additively generates the cohomology of $\hom^*_{\cD_n^{12}}(L_2,L_1)$, and $F_{21}$ sends $(yx)^m y$ to $(yx)^m y$. Hence, $F_{21}$ is a quasi-isomorphism.
	
	Therefore, the full dg subcategory $\cE_n^{12}$ of $\Tw(\cD_n^{01})$ with the objects $L_1$ and $L_2=\Cone(g)$ is quasi-equivalent to $\cD_n^{12}$. This proves the first item. In particular, we have the quasi-equivalences
	\[\Tw(\cD_n^{12})\simeq\Tw(\cE_n^{12})\simeq \Tw(\cD_n^{01})\]
	induced by
	\begin{alignat*}{2}
		\cD_n^{12}&\xrightarrow{F}\cE_n^{12}&&\mapsto \Tw(\cD_n^{01})\\
		L_1&\mapsto L_1&&\mapsto L_1\\
		L_2&\mapsto L_2&&\mapsto L_2\\
		x&\mapsto x&&\mapsto i_1\\
		y&\mapsto y&&\mapsto (-1)^n hp_0 + \alpha_1 p_1
	\end{alignat*}
	which proves the second item. The third item is easy to confirm from this map.
\end{proof}

Now, we are ready to prove Theorem \ref{thm:wfuk-plumbing-3}.

\begin{proof}[Proof of Theorem \ref{thm:wfuk-plumbing-3}]
	One can easily see that the Weinstein sector $\Pi_n$ also corresponds to the Weinstein pair $(D^*D^n,S^*_0D^n)$ by the stop-sector correspondence in \cite{gps1} (or by Proposition \ref{prp:pair-sector-correspondence}), hence we have the quasi-equivalence
	\[\cW(\Pi_n)\simeq\cW(D^*D^n,S^*_0D^n) .\]
	Also, by perturbing all Lagrangians with an appropriate Hamiltonian whose flow on $S^*D^n$ is the reverse Reeb flow, we have the quasi-equivalence
	\[\cW(D^*D^n,S^*_0D^n)\simeq\cW(D^*D^n,\Lambda)\]
	where $\Lambda$ is the perturbation of $S^*_0 D^n$ by the flow of the Hamiltonian. Hence, by an appropriate choice of such Hamiltonian, $\Lambda$ can be seen as the boundary of the outward conormal bundle of a sphere $S_{1/2}^{n-1}\subset D^n$ centered at the origin with radius $1/2$. Explicitly, it is given by
	\[\Lambda:=\left.\left\{\left(\frac{1}{\sqrt{2}}x_1,\ldots,\frac{1}{\sqrt{2}}x_n,x_1,\ldots,x_n\right)\, \right|\,  x_1^2+\ldots+x_n^2=1\right\}\subset S^*D^n .\]
	
	For $n=2$ (for illustration purposes), Figure \ref{fig:reeb-flow-n=2} depicts $\Lambda\subset S^*D^n$ as the boundary of the outward conormal bundle of the red circle $\pi(\Lambda)$ of radius $1/2$, where $\pi\colon S^*D^n\to D^n$ is the projection map, and a blue arrow at any point $p$ on the red circle $\pi(\Lambda)$ represents a point of $\Lambda$ by giving a codirection in $S^*_pD^n$.
	
	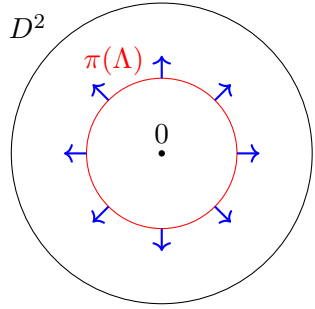
\begin{figure}[ht]
		\centering
		\begin{tikzpicture}
			\draw (0,0) circle (2cm) node[above left,xshift=-1.4cm,yshift=1.4cm]{$D^2$};
			\draw[red] (0,0) circle (1cm) node[above left,xshift=-0.1cm,yshift=0.9cm]{$\pi(\Lambda)$};
			\draw[fill=black] (0,0) circle (1 pt) node [above] {0};
			\draw[->,blue,thick] ({cos(0)},{sin(0)}) -- ({1.3*cos(0)},{1.3*sin(0)});
			\draw[->,blue,thick] ({cos(45)},{sin(45)}) -- ({1.3*cos(45)},{1.3*sin(45)});
			\draw[->,blue,thick] ({cos(90)},{sin(90)}) -- ({1.3*cos(90)},{1.3*sin(90)});
			\draw[->,blue,thick] ({cos(135)},{sin(135)}) -- ({1.3*cos(135)},{1.3*sin(135)});
			\draw[->,blue,thick] ({cos(180)},{sin(180)}) -- ({1.3*cos(180)},{1.3*sin(180)});
			\draw[->,blue,thick] ({cos(225)},{sin(225)}) -- ({1.3*cos(225)},{1.3*sin(225)});
			\draw[->,blue,thick] ({cos(270)},{sin(270)}) -- ({1.3*cos(270)},{1.3*sin(270)});
			\draw[->,blue,thick] ({cos(315)},{sin(315)}) -- ({1.3*cos(315)},{1.3*sin(315)});
		\end{tikzpicture}
		\caption{Perturbation $\Lambda\subset S^*D^n$ of $S^*_0D^n$ by the reverse Reeb flow for the case $n=2$}
		\label{fig:reeb-flow-n=2}
	\end{figure}
	
	From now on, we will consider every Weinstein pair as a Weinstein sector via stop-sector correspondence. The Weinstein pair $(D^*D^n,\Lambda)$ has a Weinstein sectorial covering by
	\[D^*D^n_{1/4}\quad\text{and}\quad(D^*(D^n\setminus \intr(D^n_{1/4})),\Lambda),\]
	where $D^n_{1/4}\subset D^n$ is the radius $1/4$ disk centered at the origin, and $\intr$ stands for interior. Hence, by \cite{gps2} (Theorem \ref{thm:gps}), we have the pretriangulated equivalence
	\[\cW(D^*D^n,\Lambda)\simeq\hocolim(\cW(D^*(D^n\setminus \intr(D^n_{1/4})),\Lambda)\leftarrow \cW(D^*S^{n-1})\rightarrow \cW(D^*D^n_{1/4})) .\]
	Moreover, since we have $(D^*D^1,\{(0,1)\})\times D^*S^{n-1}\simeq (D^*(D^n\setminus \intr(D^n_{1/4})),\Lambda)$, we have the K\"unneth embedding (a quasi-full and faithful functor) by \cite{gps2}
	\[\cW(D^*D^1,\{(0,1)\})\otimes \cW(D^*S^{n-1})\hookrightarrow\cW(D^*(D^n\setminus \intr(D^n_{1/4})),\Lambda)\]
	which is a pretriangulated equivalence since it hits the generators of $\cW(D^*(D^n\setminus \intr(D^n_{1/4})),\Lambda)$.
	
	Collecting all the results, we have the pretriangulated equivalence
	\begin{equation}\label{eq:wfuk-plumbing-sector-hocolim}
		\cW(\Pi_n)\simeq\hocolim(\cW(D^*D^1,\{(0,1)\})\otimes \cW(D^*S^{n-1})\leftarrow \cW(D^*S^{n-1})\rightarrow \cW(D^*D^n_{1/4})) .
	\end{equation}
	Also, we have the pretriangulated equivalences $\cW(D^*D^n_{1/4})\simeq\cA_1$ and $\cW(D^*D^1,\{(0,1)\})\simeq\cA_2$ by Proposition \ref{prp:msh-a2}, and $\cW(D^*S^{n-1})\simeq \cC_{n-1}$ by Proposition \ref{prp:wfuk-sphere} (since $n-1\geq 2$), where the semifree dg categories $\cA_1$, $\cA_2$, and $\cC_{n-1}$ are described in the relevant propositions. Hence, we get the pretriangulated equivalence
	\begin{gather}
		\label{eqn gps}
		\cW(\Pi_n)\simeq\hocolim(\Tw(\cA_2\otimes \cC_{n-1})\xleftarrow{F} \Tw(\cC_{n-1})\xrightarrow{G} \Tw(\cA_1)).
	\end{gather}
	
	Before going further, we note that the homotopy colimit diagram in \eqref{eqn gps} is similarly given in \cite[Lemma 6.2]{gps3}.
	
	Now, we will describe $\cA_2\otimes \cC_{n-1}$ explicitly. 
	We note that $\cA_2 \otimes \cC_{n-1}$ is {\em derived} tensor product, but since $\cA_2$ and $\cC_{n-1}$ are semifree, the derived tensor product is quasi-equivalent to the strict tensor product. 
	See, for example, \cite{dgcat}. 
	It is the dg category given as follows:
	\begin{enumerate}[label = (\roman*)]
		\item {\em Objects:} $K_0\otimes L,\quad K_1\otimes L$.
		\item {\em Generating morphisms:}
		\[\begin{tikzcd}[column sep=2cm]
			K_0\otimes L\ar[loop left, "1_{K_0}\otimes z"]\ar[r,"f\otimes 1_L"]& K_1\otimes L\ar[loop right,"1_{K_1}\otimes z"]
		\end{tikzcd}\]
		\item {\em Degrees:} $|1_{K_0}\otimes z|= |1_{K_1}\otimes z|=2-n,\quad |f\otimes 1_L|=0$.
		\item {\em Differentials:} $d(1_{K_0}\otimes z)=d(1_{K_1}\otimes z)=d(f\otimes 1_L)=0$.
		\item {\em Relations:} $(1_{K_1}\otimes z)\circ(f\otimes 1_L)=(f\otimes 1_L)\circ (1_{K_0}\otimes z)$.
	\end{enumerate}
	The relation above follows from the equations
	\begin{align*}
		(1_{K_1}\otimes z)\circ(f\otimes 1_L)&=(-1)^{|z| |f|}(1_{K_1}\circ f)\otimes (z\circ 1_L)=f\otimes z,\\
		(f\otimes 1_L)\circ (1_{K_0}\otimes z)&=(-1)^{|1_L| |1_{K_0}|}(f\circ 1_{K_0})\otimes (1_L\circ z)=f\otimes z,
	\end{align*}
	where we used the Koszul sign rule. (Recall that $|f|=0$ and $df=0$.)
	
	The dg functors $F$ and $G$ are induced from the inclusion maps from $D^*S^{n-1}$ to the corresponding boundary of $D^*(D^n\setminus \intr(D^n_{1/4}))$ and $D^*D^n_{1/4}$, respectively. Therefore, it is easy to describe $F$ and $G$ as they send cotangent fibers to cotangent fibers (up to shifts, which does not affect the result since the plumbing sector has a unique grading structure).
	
	Since a dg functor from $\Tw(\cC_{n-1})$ is determined by its restriction to $\cC_{n-1}$, the dg functors $F$ and $G$ are given by
	\begin{alignat*}{5}
		F\colon \cC_{n-1}&\xrightarrow{F'}\cA_2\otimes\cC_{n-1}&&\hookrightarrow \Tw(\cA_2\otimes\cC_{n-1}), \qquad G\colon &\cC_{n-1}&&\xrightarrow{G'}\cA_1 &\hookrightarrow\Tw(\cA_1)\\
		L&\mapsto\hspace{0.3cm} (K_0,L)&&\mapsto (K_0,L) &L&&\mapsto K&\mapsto K\\
		z&\mapsto\hspace{0.3cm} 1_{K_0}\otimes z && \mapsto 1_{K_0}\otimes z & z&&\mapsto 0&\mapsto 0,
	\end{alignat*}
	hence we have the pretriangulated equivalence
	\[\cW(\Pi_n)\simeq\hocolim(\cA_2\otimes \cC_{n-1}\xleftarrow{F'} \cC_{n-1}\xrightarrow{G'} \cA_1) .\]
	To replace $\cA_2\otimes\cC_{n-1}$ with a semifree dg category, define a semifree dg category $\cB_n^{01}$ as follows:
	\begin{enumerate}[label = (\roman*)]
		\item {\em Objects:} $L_0,L_1$.
		\item {\em Generating morphisms:}
		$\begin{tikzcd}
			L_0\ar[loop left, "\alpha_0"]\ar[r,"g"]\ar[r,"h"', bend right=20]& L_1\ar[loop right,"\alpha_1"]
		\end{tikzcd}$.
		\item {\em Degrees:} $|\alpha_0|=|\alpha_1|=2-n,\quad |g|=0,\quad |h|=1-n$.
		\item {\em Differentials:} $d\alpha_0=d\alpha_1=dg=0,\quad dh=\alpha_1 g - g \alpha_0$.
	\end{enumerate}
	Then, we have the quasi-equivalence
	\begin{align}\label{eq:functor-m}
		M\colon\cB_n^{01}&\xrightarrow{\sim}\cA_2\otimes\cC_{n-1},\\
		L_i\mapsto (K_i,L),\quad\alpha_i&\mapsto 1_{K_i}\otimes z, \quad g\mapsto f\otimes 1_L,\quad h\mapsto 0,\notag
	\end{align}
	for $i=0,1$. Note that the dg functor
	\begin{gather*}
		F''\colon \cC_{n-1}\to\cB_n^{01}\\
		L\mapsto L_0,\quad
		z\mapsto \alpha_0
	\end{gather*}
	satisfies $M\circ F''=F'$, hence we have the pretriangulated equivalence
	\[\cW(\Pi_n)\simeq\hocolim(\cB_n^{01}\xleftarrow{F''} \cC_{n-1}\xrightarrow{G'} \cA_1) .\]
	All the dg categories in the homotopy colimit diagram above are semifree, and the inclusion functor $F''$ is a semifree extension. Therefore, by Remark \ref{rmk colimit}, the homotopy colimit becomes a colimit. Taking the colimit results in setting $\alpha_0=0$ in $\cB_n^{01}$ by Proposition \ref{prp:colimit-dg}. Hence, we get the pretriangulated equivalence
	\[\cW(\Pi_n)\simeq \cD_n^{01}\]
	where $\cD_n^{01}$ is the semifree dg category given as follows:
	\begin{enumerate}[label = (\roman*)]
		\item {\em Objects:} $L_0,L_1$.
		\item {\em Generating morphisms:}
		$\begin{tikzcd}
			L_0\ar[r,"g"]\ar[r,"h"', bend right]& L_1\ar[loop right,"\alpha_1"]
		\end{tikzcd}$.
		\item {\em Degrees:} $|g|=0,\quad |\alpha_1|=2-n,\quad |h|=1-n$.
		\item {\em Differentials:} $dg=d\alpha_1=0,\quad dh=\alpha_1 g$.
	\end{enumerate}
	Then, by Lemma \ref{lem:generator-change}, we have the pretriangulated equivalence
	\[\cW(\Pi_n)\simeq\cD_n^{01}\simeq\cD_n^{12},\]
	which proves the first part of Theorem \ref{thm:wfuk-plumbing-3}.
	
	As for the second part, note that the inclusion $\Phi\colon D^*S^{n-1}\to\Pi_n$ can be decomposed as
	\[\Phi\colon D^*S^{n-1}\hookrightarrow(D^*D^1,\{(0,1)\})\times D^*S^{n-1}\xrightarrow{\sim}(D^*(D^n\setminus\intr(D^n_{1/4})),\Lambda)\hookrightarrow\Pi_n,\]
	which is induced by the composition of maps on respective skeleta
	\begin{alignat*}{3}
		\Phi\colon S^{n-1}&\hookrightarrow \hspace{1cm}A_2\times S^{n-1}&&\xrightarrow{\sim}(D^n\setminus\intr(D^n_{1/4}))\cup\Cone(\Lambda)&&\hookrightarrow (D^n\times\mathbf{0}_n)\cup(\mathbf{0}_n\times D^n),\\
		(x_1,\ldots,x_n)&\mapsto ((1,0),(x_1,\ldots,x_n))&&\mapsto \hspace{0.5cm}(x_1,\ldots,x_n,0,\ldots,0)&&\mapsto (x_1,\ldots,x_n,0,\ldots,0),
	\end{alignat*}
	where $A_2$ is the $A_2$-arboreal singularity as in Proposition \ref{prp:msh-a2}. This then induces the composition of functors
	\[\Phi\colon \cW(D^*S^{n-1}) \to\cW(D^*D^1,\{(0,1)\})\otimes \cW(D^*S^{n-1})\xrightarrow{\sim}\cW(D^*(D^n\setminus\intr(D^n_{1/4})),\Lambda)\to \cW(\Pi_n),\]
	which is given by the composition of functors
	\begin{alignat*}{3}
		\Phi\colon \Tw(\cC_{n-1}) &\to\Tw(\cA_2\otimes \cC_{n-1})&&\xrightarrow{\sim}\Tw(\cB_n^{01})&&\to \Tw(\cD_n^{12})\\
		L&\mapsto \hspace{1cm}(K_1,L)&&\mapsto \hspace{0.5cm}L_1&&\mapsto L_1\\
		z&\mapsto \hspace{1cm} 1_{K_1}\otimes z &&\mapsto \hspace{0.5cm}\alpha_1 &&\mapsto \Phi(z) .
	\end{alignat*}
	To determine $\Phi(z)$, we need to determine the last arrow. It is given by the composition
	\begin{alignat*}{2}
		\Tw(\cB_n^{01})&\to \Tw(\cD_n^{01})&&\xrightarrow{\sim}\Tw(\cD_n^{12})\\
		\alpha_1&\mapsto \hspace{0.5cm}\alpha_1 &&\mapsto yx,
	\end{alignat*}
	where the second arrow is given by Lemma \ref{lem:generator-change}.

	The harder part is determining the inclusion functor $\Psi\colon \cW(T^*S^{n-1})\to\cW(\Pi_n)$. The inclusion of skeleta $\Psi\colon S^{n-1}\to (D^n\times\mathbf{0}_n)\cup(\mathbf{0}_n\times D^n)$ can be decomposed as
	\begin{alignat*}{3}
		\Psi\colon S^{n-1}&\hookrightarrow \hspace{1cm}A_2\times S^{n-1}&&\xrightarrow{\sim}\hspace{0.3cm}(D^n\setminus\intr(D^n_{1/4}))\cup\Cone(\Lambda)&&\rightarrow (D^n\times\mathbf{0}_n)\cup(\mathbf{0}_n\times D^n)\\
		(x_1,\ldots,x_n)&\mapsto ((0,1),(x_1,\ldots,x_n))&&\mapsto \left(\frac{1}{\sqrt{2}} x_1,\ldots,\frac{1}{\sqrt{2}}x_n,x_1,\ldots,x_n\right)&&\mapsto (0,\ldots,0,x_1,\ldots,x_n) .
	\end{alignat*}
	This then induces the composition of functors
	\[\Psi\colon \cW(D^*S^{n-1}) \to\cW(D^*D^1,\{(0,1)\})\otimes \cW(D^*S^{n-1})\xrightarrow{\sim}\cW(D^*(D^n\setminus\intr(D^n_{1/4})),\Lambda)\to \cW(\Pi_n),\]
	which is given by the composition of functors
	\begin{alignat*}{3}
		\Psi\colon \Tw(\cC_{n-1}) &\to\Tw(\cA_2\otimes\cC_{n-1})&&\xrightarrow{\sim}\Tw(\cB_n^{01})&&\to \Tw(\cD_n^{12})\\
		L&\mapsto \hspace{1cm}(K_2,L)&&\mapsto \hspace{0.5cm} L_2&&\mapsto L_2\\
		z&\mapsto \hspace{1cm} 1_{K_2}\otimes z &&\mapsto \hspace{0.5cm} \alpha_2' &&\mapsto \Psi(z),
	\end{alignat*}
	where $K_2=\Cone(f)$, hence $(K_2,L)\simeq \Cone(f\otimes 1_L)$, and $L_2=\Cone(g)$. We need to determine $\alpha_2'$ first.
	For that, recall the quasi-equivalence given in \eqref{eq:functor-m}
	\[M\colon \Tw(\cB_n^{01})\xrightarrow{\sim}\Tw(\cA_2\otimes\cC_{n-1}) .\]
	Note that, using the Koszul sign rule, we have
	\begin{align*}
		1_{K_2}\otimes z &=(i_0\circ p_0+i_1\circ p_1)\otimes (1_L\circ z)\\
		&=(i_0\otimes 1_L)\circ (p_0\otimes z)+ (i_1\otimes 1_L)\circ (p_1\otimes z)\\
		&=(i_0\otimes 1_L)\circ ((1_{K_0}\circ p_0)\otimes (z\circ 1_L))+ (i_1\otimes 1_L)\circ ((1_{K_1}\circ p_1)\otimes (z\circ 1_L))\\
		&= (-1)^n (i_0\otimes 1_L)\circ (1_{K_0}\otimes z)\circ (p_0\otimes 1_L)+ (i_1\otimes 1_L)\circ (1_{K_1}\otimes z)\circ (p_1\otimes 1_L),
	\end{align*}
	where $i_0,i_1,p_0,p_1$ are as in Proposition \ref{prp:extend-cone}.
	Also, we can set $M(i_l)=i_l\otimes 1_L$ and $M(p_l)=p_l\otimes 1_L$ for $l=0,1$. Hence, we must have
	\[\alpha_2'=(-1)^n i_0\alpha_0 p_0  + i_1\alpha_1 p_1 + (-1)^n i_1hp_0,\]
	up to homotopy, since $d\alpha_2'=0$ and
	\begin{gather*}
		M(\alpha_2')=(-1)^n (i_0\otimes 1_L)\circ (1_{K_0}\otimes z)\circ (p_0\otimes 1_L)+ (i_1\otimes 1_L)\circ (1_{K_1}\otimes z)\circ (p_1\otimes 1_L)=1_{K_2}\otimes z .
	\end{gather*}
	Finally, to determine the image $\Psi(z)$ of $\alpha_2'$ under the functor $\Tw(\cB_n^{01})\to\Tw(\cD_n^{12})$, consider its decomposition
	\begin{alignat*}{2}
		\Tw(\cB_n^{01})&\to\Tw(\cD_n^{01})&&\xrightarrow{\sim}\Tw(\cD_n^{12})\\
		\alpha_2'&\mapsto \hspace{0.5cm}\alpha_2&&\mapsto \Psi(z),
	\end{alignat*}
	where $\alpha_2= i_1\alpha_1 p_1 + (-1)^n i_1hp_0$, since the first arrow sends $\alpha_0$ to $0$. By Lemma \ref{lem:generator-change}, the second arrow sends $\alpha_2$ to $xy$, hence $\Psi(z)=xy$.
\end{proof}

In the beginning of the proof, we used a Hamiltonian whose flow on $S^*D^n$ is the reverse Reeb flow to perturb Lagrangians. We could have also used a Hamiltonian whose flow on $S^*D^n$ is the Reeb flow. Then, instead of \eqref{eq:wfuk-plumbing-sector-hocolim}, we would get
\begin{align*}
	\cW(D^*D^n,S^*_0D^n)&\simeq\cW(D^*D^n,\Lambda')\\
	&\simeq\hocolim(\cW(D^*D^1,\{(0,-1)\})\otimes \cW(D^*S^{n-1})\leftarrow \cW(D^*S^{n-1})\rightarrow \cW(D^*D^n_{1/4})),
\end{align*}
where $\Lambda'$ is as $\Lambda$ in Figure \ref{fig:reeb-flow-n=2}, but the blue arrows are reversed, i.e.,
\[\Lambda':=\left.\left\{\left(\frac{1}{\sqrt{2}}x_1,\ldots,\frac{1}{\sqrt{2}}x_n,-x_1,\ldots,-x_n\right)\, \right|\,  x_1^2+\ldots+x_n^2=1\right\}\subset S^*D^n .\]
Note that the stop $\{(0,1)\}$ in $S^*D^1$ is changed as $\{(0,-1)\}$. This change would affect signs appearing in the proof. However, one can check that the end result would not be affected.

Finally, we present the proof of Theorem \ref{thm:wfuk-plumbing-2} by modifying the proof of Theorem \ref{thm:wfuk-plumbing-3}.

\begin{proof}[Proof of Theorem \ref{thm:wfuk-plumbing-2}]
	The proof is similar to the proof of Theorem \ref{thm:wfuk-plumbing-3}, except that we have the pretriangulated equivalence $\cW(D^*S^1)\simeq \cC_1[z^{-1}]$ as described in Proposition \ref{prp:wfuk-sphere}. Then we have the quasi-equivalences
	\[\cA_2\otimes (\cC_1[z^{-1}])\simeq (\cA_2\otimes\cC_1)[\{1_{K_0}\otimes z,1_{K_1}\otimes z\}^{-1}]\simeq\cB_2^{01}[\{\alpha_0,\alpha_1\}^{-1}]\]
	where $\cA_2$ is as in Proposition \ref{prp:msh-a2}, and $\cB_2^{01}$ is the semifree dg category given as follows:
	\begin{enumerate}[label = (\roman*)]
		\item {\em Objects:} $L_0,L_1$.
		\item {\em Generating morphisms:}$\begin{tikzcd}
			L_0\ar[loop left, "\alpha_0"]\ar[r,"g"]\ar[r,"h"', bend right=20]& L_1\ar[loop right,"\alpha_1"]
		\end{tikzcd}$.
		\item {\em Degrees:} $|\alpha_0|=|\alpha_1|=0,\quad |g|=0,\quad |h|=-1$.
		\item {\em Differentials:} $d\alpha_0=d\alpha_1=dg=0,\quad dh=\alpha_1 g - g \alpha_0$.
	\end{enumerate}
	Hence, we have the pretriangulated equivalence
	\[\cW(\Pi_2)\simeq\hocolim(\cB_2^{01}[\{\alpha_0,\alpha_1\}^{-1}]\xleftarrow{F''} \cC_{1}[z^{-1}]\xrightarrow{G'} \cA_1)\]
	where $\cA_1$ is as in Proposition \ref{prp:msh-a2}, and
	\begin{align*}
		F''&\colon \cC_1[z^{-1}]\to\cB_2^{01} \qquad &&G'\colon \cC_1[z^{-1}]\xrightarrow{G'}\cA_1\\
		L&\mapsto L_0, z\mapsto \alpha_0 &&L\mapsto K, z\mapsto 1_K.
	\end{align*}
	Note that $G'$ sends $z$ to $1_K$, not to $0$, which is the main difference from the proof of Theorem \ref{thm:wfuk-plumbing-3}. Then, by \cite{hocolim, Karabas-Lee24} (or Theorem \ref{thm:hocolim-functor-dg}), we have the pretriangulated equivalence
	\[\cW(\Pi_2)\simeq\hocolim(\cB_2^{01}\xleftarrow{F''} \cC_{1}\xrightarrow{G'} \cA_1)[\{\alpha_0,\alpha_1\}^{-1}] .\]
	Note that all the dg categories in the homotopy colimit diagram above are semifree, and the inclusion functor $F''$ is a semifree extension. 
	Therefore, thanks to Remark \ref{rmk colimit}, the homotopy colimit becomes a colimit. Taking the colimit results in setting $\alpha_0=1_{L_0}$ in $\cB_2^{01}$ by Proposition \ref{prp:colimit-dg}, therefore we get the pretriangulated equivalence
	\[\cW(\Pi_2)\simeq\cD_2^{01} [\alpha_1^{-1}],\]
	where $\cD_2^{01}$ is the semifree dg category given as follows:
	\begin{enumerate}[label = (\roman*)]
		\item {\em Objects:} $L_0,L_1$.
		\item {\em Generating morphisms:}
		$\begin{tikzcd}
			L_0\ar[r,"g"]\ar[r,"h"', bend right]& L_1\ar[loop right,"\alpha_1"]
		\end{tikzcd}$.
		\item {\em Degrees:} $|g|=0,\quad |\alpha_1|=0,\quad |h|=-1$.
		\item {\em Differentials:} $dg=d\alpha_1=0,\quad dh=\alpha_1 g-g=\alpha_1' g$ (where we define $\alpha_1':=\alpha_1-1_{L_1}$).
	\end{enumerate}
	Then, by Lemma \ref{lem:generator-change} (note that $\alpha_1'$ here corresponds to $\alpha_1$ in the lemma), we have the pretriangulated equivalence
	\[\cW(\Pi_2)\simeq\cD_2^{01}[\alpha_1^{-1}]=\cD_2^{01}[(1_{L_1}+\alpha_1')^{-1}]\simeq\cD_2^{12}[(1_{L_1}+yx)^{-1}],\]
	since the quasi-isomorphism
	\[\hom^*_{\Tw(\cD_2^{01})}(L_1,L_1)\to \hom^*_{\Tw(\cD_2^{12})}(L_1,L_1)\]
	sends $\alpha_1=1_{L_1}+\alpha_1'$ to $1_{L_1}+yx$ by Lemma \ref{lem:generator-change}. This almost proves the first part of Theorem \ref{thm:wfuk-plumbing-2}, except that we have $\cW(\Pi_2)\simeq\cD_2^{12}[(1_{L_2}+xy)^{-1}]$ there. We will achieve this pretriangulated equivalence at the end of the proof.
	
	As for the inclusion $\Phi$ in the second part, the proof is almost the same as the proof of Theorem \ref{thm:wfuk-plumbing-3}, except that $\Phi\colon \Tw(\cC_1[z^{-1}])\to\Tw(\cD_2^{12})[(1_{L_1}+yx)^{-1}]$ can be decomposed as
	\begin{alignat*}{2}
		\Tw(\cC_1[z^{-1}])&\to \Tw(\cD_2^{01}[\alpha_1^{-1}])&&\xrightarrow{\sim}\Tw(\cD_2^{12}[(1_{L_1}+yx)^{-1}])\\
		z&\mapsto \alpha_1=1_{L_1}+\alpha_1' &&\mapsto 1_{L_1}+yx,
	\end{alignat*}
	where the second arrow is given by Lemma \ref{lem:generator-change} (note that $\alpha_1'$ here corresponds to $\alpha_1$ in the lemma).
	
	Finally, we can determine the inclusion functor $\Psi$ as similar to the proof of Theorem \ref{thm:wfuk-plumbing-3}. The functor $\Psi\colon \Tw(\cC_1[z^{-1}])\to\Tw(\cD_2^{12})[(1_{L_1}+yx)^{-1}]$ can be decomposed as
	\begin{alignat*}{3}
		\Tw(\cC_1[z^{-1}])&\to\Tw(\cB_2^{01}[\{\alpha_0,\alpha_1\}^{-1}])&&\to \Tw(\cD_2^{01}[\alpha_1^{-1}])&&\xrightarrow{\sim}\Tw(\cD_2^{12}[(1_{L_1}+yx)^{-1}])\\
		z&\mapsto \hspace{1.5cm}\alpha_2'&&\mapsto \hspace{1cm}\alpha_2 &&\mapsto \Psi(z),
	\end{alignat*}
	where
	$\alpha_2'=i_0\alpha_0 p_0  + i_1\alpha_1 p_1 + i_1hp_0$.
	We need to determine $\alpha_2$ first. The second arrow sends $\alpha_0$ to $1_{L_0}$, hence
	\[\alpha_2=i_0p_0  + i_1\alpha_1 p_1 + i_1hp_0=i_0p_0  + i_1(1_{L_1}+\alpha_1') p_1 + i_1hp_0=1_{L_2}+i_1\alpha_1'p_1+i_1hp_0 .\]
	The third arrow sends $i_1\alpha'_1 p_1 + i_1hp_0$ to $xy$ by Lemma \ref{lem:generator-change} (recall that $\alpha_1'$ here corresponds to $\alpha_1$ in the lemma), hence $\Psi(z)=1_{L_2}+xy$.
	
	We conclude with the following observation: Since $z$ is invertible up to homotopy in $\cC_1[z^{-1}]$, having $\Psi(z)=1_{L_2}+xy$ shows that $1_{L_2}+xy$ is invertible up to homotopy in $\cD_2^{12}[(1_{L_1}+yx)^{-1}]$, and by the symmetry, $1_{L_1}+yx$ is invertible up to homotopy in $\cD_2^{12}[(1_{L_2}+xy)^{-1}]$, and hence we have the pretriangulated equivalence
	\[\cW(\Pi_2)\simeq\cD_2^{12}[(1_{L_1}+yx)^{-1}]\simeq \cD_2^{12}[(1_{L_2}+xy)^{-1}],\]
	which concludes the proof of the first part of Theorem \ref{thm:wfuk-plumbing-2} (and it also proves Remark \ref{rmk:wfuk-plumb-2-inv}).
\end{proof}

\section{Wrapped Fukaya category of cotangent bundles of punctured $n$-spheres and surfaces}
\label{section wrapped Fukaya categories of Weinstein sectors}
Let us recall our strategy as stated in Section \ref{subsubsection strategy}: To provide a formula for the wrapped Fukaya category of any plumbing space, we will first compute the wrapped Fukaya category of plumbings of cotangent bundles of disks using Equation \eqref{eqn homotopy colimit formula}. To do that, we need, in particular, the wrapped Fukaya category of cotangent bundles of punctured disks, or in other words, punctured spheres, which we will compute in Section \ref{subsection the cotangent bundles of spheres with punctures}.

On the other hand, after establishing a formula for the wrapped Fukaya category of plumbings of cotangent bundles of disks, we will give an explicit computation for the examples mentioned in Section \ref{subsubsection strategy}:
\begin{itemize}
	\item Plumbings of $T^*S^n$ along any quivers, with or without negative intersections, 
	\item Plumbings of cotangent bundles of closed, oriented surfaces along any quivers, with or without negative intersections. 
\end{itemize}
According to Equation \eqref{eqn homotopy colimit formula alternative}, to provide computations for the plumbing spaces above, we will need the wrapped Fukaya category of cotangent bundles of once-punctured $n$-spheres (which are disks, hence trivial) and once-punctured oriented surfaces. In Section \ref{subsection the cotangent bundles of oriented surfaces with punctures}, we will compute the wrapped Fukaya category of cotangent bundles of oriented surfaces with any number of punctures, although once-punctured oriented surfaces are enough for our purposes.

\subsection{The cotangent bundles of spheres with punctures}
\label{subsection the cotangent bundles of spheres with punctures}
The wrapped Fukaya category of $T^*S^n$ is given in Proposition \ref{prp:wfuk-sphere}. 
In Section \ref{subsection the cotangent bundles of spheres with punctures}, we compute wrapped Fukaya categories of cotangent bundles of $n$-spheres with punctures, and describe functors from $\cW(T^*S^{n-1})$ to these wrapped Fukaya categories, which are induced by the inclusions of the boundaries of the neighborhoods of the punctures.

Given integers $n\geq 2$ and $m\geq 1$, we let $S^n_m$ denote the $n$-dimensional sphere with $m$-many punctures.
Or equivalently, 
\[S^n_m := S^n \setminus \{U_i\vb i=1,\ldots,m\},\]
where $\{U_i\}$ is a disjoint collection of small open disks in $S^n$, which we call punctures following our convention in Remark \ref{rmk:puncture}. We describe the inclusion maps of $m$-many connected boundary components of $S^n_m$ into $S^n_m$ as follows:
First, we fix an orientation on $S^{n-1}$ and $S^n$.
For any $i=1,\ldots,m$, we let $\partial U_i$ have the boundary orientation coming from $U_i\subset S^n$.
Then, we have an inclusion map
\begin{equation}\label{eq:punctued-sphere-inclusion}
	F^n_i\colon S^{n-1}\xrightarrow{\sim}\partial U_i\hookrightarrow S^n_m,
\end{equation}
such that the first arrow is an orientation preserving diffeomorphism.

Given these notations, we will compute the wrapped Fukaya category $\cW(T^*S^n_m)$. First, note that by Remark \ref{rmk:grading-pin}, $\cW(T^*S^n_m)$ can be given $\Z$-grading. Also by Remark \ref{rmk:grading-pin}, the definition of $\cW(T^*S^n_m)$ depends on the grading structure $\eta\in H^1(T^*S^n_m;\Z)$ and the background class $b\in H^2(T^*S^n_m;\Z/2)$. They are uniquely determined for $T^*S^n_m$ for $n\geq 4$. For $T^*S^2_m$ and $T^*S^3_m$, we choose the standard grading structure and the background class as in \cite[Section 5.3.1]{nadler-zaslow}. 
Especially, by \cite[Proposition 5.3.1.]{nadler-zaslow}, the bicanonical bundle of a cotangent bundle is canonically trivial.
For the standard grading structure, we are using the canonical trivialization of the bicanonical bundle. 

We will comment on the nonstandard choices in Remark \ref{rmk:punctured-sphere}\eqref{item:punctured-sphere-grading-1} and \ref{rmk:punctured-sphere}\eqref{item:punctured-sphere-grading-2}.

\begin{prp}
	\label{prp sphere with punctures} 
	Fix a pair of integers $\left(n \geq 2, m \geq 1\right)$.
	\begin{enumerate}
		\item\label{item:wfuk-punctured-sphere} Let $S^n_m$ denote the $n$-dimensional sphere with $m$-many punctures $U_1,\ldots,U_m$. Then, up to pretriangulated equivalence, we have
		\[\cW(T^*S^n_m) \simeq
		\begin{cases}
			\cS_{2,m}[\{a_1,\ldots,a_m\}^{-1}] & \text{if }n=2,\\
			\cS_{n,m} & \text{if }n\geq 3,
		\end{cases}\]
		where {\em $\cS_{n,m}$} is a semifree dg category given as follows:
		\begin{enumerate}[label = (\roman*)]
			\item {\em Objects:} $L$ (representing a cotangent fiber).
			\item {\em Generating morphisms:} $a_1, \ldots, a_m,h\in\hom^*(L,L)$.
			\item {\em Degrees:} $|a_i| = 2-n$ for all $i =1, \ldots, m$, and $|h|=1-n$.
			\item {\em Differentials:} $d a_i = 0$ for all $i = 1, \ldots, m$, and
			\[dh=\begin{cases}
				\left(\prod_{i=1}^m a_i \right)- 1_L & \text{if }n=2,\\
				\sum_{i=1}^m a_i & \text{if }n\geq 3 ,
			\end{cases}\]
			where the product is read from right to left, i.e., $\prod_{i=1}^ma_i=a_m\circ\ldots\circ a_1$.
		\end{enumerate}
		
		\item\label{item:punctured-sphere-inclusion}  Let $F^n_i\colon T^*S^{n-1}\xrightarrow{\sim}T^*\partial U_i\to T^*S^n_m$ be the inclusion of Liouville sectors coming from the orientation preserving inclusion $F^n_i\colon S^{n-1}\xrightarrow{\sim}\partial U_i\hookrightarrow S^n_m$ as in \eqref{eq:punctued-sphere-inclusion} for any $i=1,\ldots,m$.
		Note that $\cW(T^*S^{n-1})$ is given as in Proposition \ref{prp:wfuk-sphere}. Then, we have the induced dg functor
		\begin{gather*}
			F^n_i\colon \cW(T^*S^{n-1})\to \cW(T^*S^n_m)\\
			L\mapsto L, z\mapsto a_i.
		\end{gather*}
	\end{enumerate}
\end{prp}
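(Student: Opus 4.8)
The plan is to reduce everything to a statement about Pontryagin categories (equivalently, chains on based loop spaces), and then to run the higher Seifert--Van Kampen theorem together with the explicit homotopy colimit formula for semifree dg categories. By Theorem~\ref{thm:cotangent-generation} there is a pretriangulated equivalence $\cW(T^*S^n_m)\simeq\cP(S^n_m)$, compatible with $\cW(T^*S^{n-1})\simeq\cP(S^{n-1})$; moreover this equivalence is natural with respect to Liouville-sector inclusions induced by codimension-$0$ embeddings of the bases (contained in \cite{Abouzaid12,gps2}), so under it the functor $F^n_i$ becomes the functor $\cP(\partial U_i)=\cP(S^{n-1})\to\cP(S^n_m)$ induced by the inclusion $F^n_i\colon S^{n-1}\xrightarrow{\sim}\partial U_i\hookrightarrow S^n_m$ of \eqref{eq:punctued-sphere-inclusion}. (For $n\geq 4$ the grading and background data are forced by Remark~\ref{rmk:grading-pin}; for $n=2,3$ we use the standard ones.) It therefore suffices to present $\cP(S^n_m)$, together with these inclusion-induced functors, as a semifree dg category, for which we may use Theorem~\ref{thm:seifert-van-kampen} and Theorem~\ref{thm:hocolim-functor-dg}.

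\emph{Building block: punctured disks.} First I would establish the ``disk'' analogue: for $D^n_m:=D^n\setminus(U_1\sqcup\cdots\sqcup U_m)$ there is a pretriangulated equivalence
\[\cW(T^*D^n_m)\simeq\begin{cases}\cR_m[\{a_1,\ldots,a_m\}^{-1}]&n=2,\\ \cR_m&n\geq 3,\end{cases}\]
where $\cR_m$ is the semifree dg category with a single object $L$ and generating morphisms $a_1,\ldots,a_m\in\hom^*(L,L)$ with $|a_i|=2-n$ and $da_i=0$ (so $\hom^*(L,L)$ is the free algebra on the $a_i$), and such that $\partial U_i\hookrightarrow D^n_m$ induces $z\mapsto a_i$. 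This goes by induction on $m$: a hyperplane separating $U_m$ from $U_1,\ldots,U_{m-1}$ exhibits $D^n_m=A\cup_{D^{n-1}}B$, where $A$ is a half-disk containing only $U_m$ (so $A\simeq S^{n-1}$), $B$ is a half-disk containing $U_1,\ldots,U_{m-1}$ (so $B\simeq D^n_{m-1}$, by induction), and the overlap $D^{n-1}$ is contractible. By Theorem~\ref{thm:seifert-van-kampen}, $\cP(D^n_m)\simeq\hocolim(\cP(A)\leftarrow\ast\to\cP(B))$ with $\ast$ the trivial dg category; by Theorem~\ref{thm:hocolim-functor-dg}(1)--(2) a homotopy colimit over a span out of $\ast$ is just $\cP(A)\amalg\cP(B)$ with the two single objects identified (via localization of $t_\bullet$), whose endomorphism algebra is the free product $k[a_m]\ast k\langle a_1,\ldots,a_{m-1}\rangle=k\langle a_1,\ldots,a_m\rangle$ (resp.\ the localization of this at all $a_i$ for $n=2$). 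Here $\cP(A)\simeq\cW(T^*S^{n-1})$ is given by Proposition~\ref{prp:wfuk-sphere}, and $\partial U_i\hookrightarrow$ (the half-disk containing $U_i$) is a homotopy equivalence onto $S^{n-1}$ carrying $z$ to $a_i$; the base case $m=1$ is Proposition~\ref{prp:wfuk-sphere} since $D^n_1\simeq S^{n-1}$.

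\emph{From the disk to the sphere.} We may assume all $U_i$ lie in one open hemisphere, so that $S^n_m=D^n\cup_{S^{n-1}}D^n_m$ with $D^n$ a puncture-free hemisphere and the overlap (a neighborhood of) the equator $S^{n-1}$. By Theorem~\ref{thm:seifert-van-kampen}, Proposition~\ref{prp:wfuk-sphere} and the building block,
\[\cP(S^n_m)\simeq\hocolim\left(\ast\xleftarrow{\alpha}\cW(T^*S^{n-1})\xrightarrow{\beta}\cW(T^*D^n_m)\right),\]
where $\alpha$ is induced by $S^{n-1}\hookrightarrow D^n$ and $\beta$ by $S^{n-1}=\partial D^n\hookrightarrow D^n_m$. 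Since $D^n$ is contractible, $\alpha(z)=0$ for $n\geq 3$ (degree reasons) and $\alpha(z)=1_L$ for $n=2$ (as $k[z^{\pm1}]\to k$). For $\beta$: in $D^n_m$ the outer boundary $\partial D^n$ is homologous to $\sum_{i=1}^m[\partial U_i]$ in $H_{n-1}$ (the fundamental class relation of $D^n_m$), so by naturality and linearity of the transgression $\beta(z)=\sum_{i=1}^m a_i$ for $n\geq 3$; for $n=2$ the equator loop is the ordered product $a_1\cdots a_m$ of the loops around the holes in $\pi_1(D^2_m)=F_m$ (the standard presentation of $\pi_1$ of a planar surface), so $\beta(z)=\prod_{i=1}^m a_i$. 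Applying Theorem~\ref{thm:hocolim-functor-dg}(1)--(2), localizing $t_L$ identifies the single objects, and the generator $z$ (with $dz=0$, so no correction term) contributes one new generating morphism $h:=t_z$ with $|h|=|z|-1=1-n$ and
\[dh=\pm\bigl(\beta(z)\circ t_L-t_L\circ\alpha(z)\bigr)=\begin{cases}\pm\bigl(\prod_{i=1}^m a_i-1_L\bigr)&n=2,\\ \pm\sum_{i=1}^m a_i&n\geq 3,\end{cases}\]
after the identification $t_L\leadsto 1_L$; for $n=2$ the residual localization at the $a_i$ is carried along by Theorem~\ref{thm:hocolim-functor-dg}(2). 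Fixing the orientation conventions of \eqref{eq:punctued-sphere-inclusion} and, if needed, replacing $h$ by $\pm h$ (Proposition~\ref{prp:variable-change}), this is exactly $\cS_{n,m}$ for $n\geq 3$ and $\cS_{2,m}[\{a_i\}^{-1}]$ for $n=2$. Part~(2) then follows from Theorem~\ref{thm:hocolim-functor-dg}(3) applied to $\partial U_i\hookrightarrow D^n_m\hookrightarrow S^n_m$, which sends $z\mapsto a_i$ by the building block.

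\emph{Main obstacle.} The conceptual steps are routine uses of the machinery; the real work is the bookkeeping of signs and orderings needed to land on the stated presentations rather than merely on something equivalent. For $n=2$ one must choose the cyclic order of the punctures and the orientations in \eqref{eq:punctued-sphere-inclusion} compatibly so that $\beta(z)$ is precisely $a_1\cdots a_m$ read right to left, i.e.\ the careful standard analysis of $\pi_1$ of a planar surface with ordered boundary; for general $n$ one must fix the sign in $\sum_i[\partial U_i]=0\in H_{n-1}(S^n_m)$ and track the Koszul signs through the homotopy colimit formula to get $dh=\sum a_i$ on the nose. A secondary technical point is the naturality of the equivalence in Theorem~\ref{thm:cotangent-generation} with respect to the inclusions $F^n_i$, used in the first step, which is however contained in \cite{Abouzaid12,gps2}.
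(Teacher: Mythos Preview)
Your proof is correct and takes a genuinely different route from the paper's. The paper computes $\cP(S^n_m)$ by induction on the \emph{dimension} $n$: it cuts $S^n_m$ along the equator into two contractible punctured hemispheres $S^{n,\pm}_m$ whose intersection is $S^{n-1}_m$, so that $\cP(S^n_m)\simeq\hocolim(\ast\leftarrow\cP(S^{n-1}_m)\to\ast)$, and then unwinds this ``bar/suspension'' step using Theorem~\ref{thm:hocolim-functor-dg}; the base case $n=2$ is read off directly from $\pi_1$. The inclusion functors $F^n_i$ are tracked by a parallel induction on $n$ via morphisms of homotopy-colimit diagrams. Your argument instead works at a fixed $n$: you first build the \emph{punctured disk} $D^n_m$ by induction on $m$ (obtaining the free algebra on the $a_i$), and then close up one hemisphere via a single pushout $\ast\leftarrow\cP(S^{n-1})\to\cP(D^n_m)$, which contributes exactly the generator $h$.

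Each approach has its virtues. The paper's dimension induction is uniform and makes the inclusion functors fall out mechanically at every stage, at the cost of running the hocolim machine repeatedly. Your approach is more economical---one nontrivial pushout per $(n,m)$---but its one substantive input is the identification of $\beta(z)$, the class of the outer boundary $\partial D^n$ inside $C_{-*}(\Omega D^n_m)$. You justify this correctly: for $n\ge 3$ the target in degree $2-n$ of the free algebra $k\langle a_1,\ldots,a_m\rangle$ is exactly $\bigoplus_i k\cdot a_i$ (no products or boundaries in that degree), so $\beta(z)$ is pinned down by the relation $[\partial D^n]=\sum_i[\partial U_i]$ in $H_{n-1}(D^n_m)$; for $n=2$ it is the standard planar-surface relation in $\pi_1$. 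It would strengthen the write-up to state this degree argument explicitly rather than invoke ``linearity of the transgression''. Your handling of the residual sign via Proposition~\ref{prp:variable-change}, of the localizations via Theorem~\ref{thm:hocolim-functor-dg}(2), and of part~(2) via Theorem~\ref{thm:hocolim-functor-dg}(3) is all in order.
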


\begin{rmk}\label{rmk:punctured-sphere}\mbox{}
	\begin{enumerate}
		\item We could omit $a_m$ and $h$ in the description of $\cS_{n,m}$ by simplification for any $n\geq 2$. However, keeping them will be beneficial for the computations in this paper.
		
		\item \label{item:punctured-sphere-invertibility} When describing $\cW(T^*S^2_m)$, we do not actually need to invert $a_m$ as its invertibility is implied by the differential of $h$.
		
		\item Proposition \ref{prp sphere with punctures}\eqref{item:punctured-sphere-inclusion} effectively assigns each puncture $U_i$ of $S^n_m$ a generating morphism $a_i$. On the other hand, if $n=2$, the description of $\cW(T^*S^2_m)$ in Proposition \ref{prp sphere with punctures}\eqref{item:wfuk-punctured-sphere} distinguishes between different $a_i$'s as the differential of $h$ includes a term with an ordered product of $a_i$'s. Hence, when $n=2$, we must first fix an ordering of punctures $U_i$ of $S^2_m$ before talking about the inclusion functors $F^2_i$'s in Proposition \ref{prp sphere with punctures}\eqref{item:punctured-sphere-inclusion}.
		
		\item \label{item:sphere-reorder-punctures} In the description of $\mathcal{S}_{n,m}$ in Proposition \ref{prp sphere with punctures}\eqref{item:wfuk-punctured-sphere}, if $n=2$, it may appear that the order of punctures $\left\{ U_1, \dots, U_m \right\}$ affects the resulting category, differently from the case of $n \geq 3$. However, up to quasi-equivalence, the choice of order does not impact the resulting category. Instead, the choice affects the geometric meaning of the morphism $h$.
		
		\item \label{item:punctured-sphere-grading-1} For $T^*S^2_m$, there are $\Z^{m-1}$-many grading structures to define $\cW(T^*S^2_m)$. To capture the nonstandard ones, one needs to let $|a_i|=d_i$ for $d_i\in\Z$ satisfying $d_1+\ldots+d_m=0$.
		\item  \label{item:punctured-sphere-grading-2} For $T^*S^3_m$, there are another background classes to define $\cW(T^*S^3_m)$. To capture the nonstandard ones, one needs to replace some (or all) $da_i=1_L-1_L=0$ with $da_i=1_L+1_L=2\cdot 1_L$ for $i=1,\ldots,m-1$. ($da_m$ is uniquely determined.)
	\end{enumerate}
\end{rmk}

\begin{proof}[Proof of Proposition \ref{prp sphere with punctures}]
	First, we note that we will not use Theorem \ref{thm:gps} in the computation, since the sectorial coverings of $T^*S_m^n$ we will use in the proof contain Liouville sectors with corners. To deal with such coverings, an improvement of Theorem \ref{thm:gps} is needed, see \cite{gps2}. Instead, we will determine $\cW(T^*S^n_m)$ in a different way.
	
	By Theorem \ref{thm:cotangent-generation}, we have a pretriangulated equivalence
	\[\cP(S^n_m)\xrightarrow{\sim}\cW(T^*S^n_m)\]
	sending a point in $S^n_m$ to the cotangent fiber of $T^*S^n_m$ at that point, where $\cP(S^n_m)$ is the Pontryagin category of $S^n_m$ defined in Definition \ref{dfn:pontryagin-category}. By Remark \ref{rmk:loop-space}, $\cP(S^n_m)$ is just a dg category with a single object $p\in S^n_m$ whose morphism space is given by chains on the based loop space of $S^n_m$, i.e., $C_{-*}(\Omega_p S^n_m)$ since $S^n_m$ is path connected.
	
	Moreover, since $F^n_i\colon \cW(T^*S^{n-1})\to \cW(T^*S^n_m)$ sends a cotangent fiber to a cotangent fiber, it can be seen as (up to natural equivalence) a functor
	\[F^n_i\colon\cP(S^{n-1})\to\cP(S^n_m).\]
	
	To determine $\cP(S^n_m)$, without loss of generality, we assume that $S^n$ is the unit sphere in $\mathbb{R}^{n+1}$. 
	Thus, $S^n_m = S^n \setminus \{U_i\vb i=1,\ldots,m\}$ is also a subset of $\R^{n+1}$, where $\{U_i\}$ is a disjoint collection of small open disks in $S^n$.
	We can further assume that the centers of $U_i$ are located in $\mathbb{R}^n \times \{0\} \subset \mathbb{R}^{n+1}$. 
	Then, we define $S^{n,\pm}_m$ as follows:
	\[S^{n,-}_m := S^n_m \cap \left(\mathbb{R}^n \times (-\infty, 0]\right)\quad \text{  and  }\quad S^{n,+}_m := S^n_m \cap \left(\mathbb{R}^n \times [0,\infty)\right).\]
	From the covering $\{S^{n,-}_m, S^{n,+}_m\}$ of $S^n_m$ (which can be seen as an open covering after thickening), we have the following quasi-equivalence by Theorem \ref{thm:seifert-van-kampen}:
	\begin{equation}
		\label{eqn sphere with punctures}
		\cP(S^n_m) \simeq \hocolim\left(\cP(S^{n,-}_m) \xleftarrow{\alpha} \cP(S^{n,-}_m \cap S^{n,+}_m) \xrightarrow{\beta} \cP(S^{n,+}_m)\right).
	\end{equation}
	We note that $\alpha$ and $\beta$ are induced from the inclusions of $S^{n,-}_m \cap S^{n,+}_m$ into $S^{n,\pm}_m$. 
	
	One can easily observe the following facts:
	\begin{itemize}
		\item Since $S^{n,\pm}_m$ is contractible, $\cP(S^{n,\pm}_m)$ is quasi-equivalent to $\cA_1$, where $\cA_1$ is a semifree dg category defined as follows:
		\begin{enumerate}[label = (\roman*)]
			\item {\em Objects:} $K$ (representing a point in $S^{n,\pm}_m$, or a cotangent fiber of $T^*S^{n,\pm}_m$ at that point).
			\item {\em Generating morphisms:} No generating morphisms.
		\end{enumerate}
		\item By the definition of $S^{n,\pm}_m$, if $n \geq 3$, $S^{n,-}_m \cap S^{n,+}_m \subset \mathbb{R}^n \times \{0\} \subset \mathbb{R}^{n+1}$ is an $(n-1)$-dimensional sphere with $m$-many punctures, i.e., $S^{n-1}_m \subset \mathbb{R}^n$.
		Hence, if $n\geq 3$, we have
		\begin{equation}
			\label{eqn sphere with punctures 2}
			\cP(S^n_m) \simeq \hocolim\left(\cP(S^{n,-}_m) \xleftarrow{\alpha} \cP(S^{n-1}_m) \xrightarrow{\beta} \cP(S^{n,+}_m)\right).
		\end{equation}
		If $n =2$, $S^{2,-}_m \cap S^{2,+}_m$ is a disjoint union of $m$-many intervals. 
	\end{itemize}
	
	From these facts, we see that $\cP(S^n_m)$ can be computed by an induction on dimension $n$, which will prove Proposition \ref{prp sphere with punctures}\eqref{item:wfuk-punctured-sphere}.
	Each step of the induction can be proven by applying Theorem \ref{thm:hocolim-functor-dg}. A detailed induction argument will appear after the following comment on Proposition \ref{prp sphere with punctures}\eqref{item:punctured-sphere-inclusion}.
	
	As for proving Proposition \ref{prp sphere with punctures}\eqref{item:punctured-sphere-inclusion}, observe that $i^{\text{th}}$-component of the boundary of $S^n_m$ can be decomposed as $S^{n-1}\simeq\partial U_i=\partial U_i^-\cup \partial U_i^+$, where $\partial U_i^- := \partial U_i \cap S^{n,-}_m$ and $\partial U_i^+:= \partial U_i \cap S^{n,+}_m$.
	Then, we have the homotopy colimit of the morphism of diagrams as follows:
	\begin{equation}\label{eq:punctured-spheres-hocolim-diagram}
		\hocolim\left(
		\begin{tikzcd}
			\cP(\partial U_i^-) \ar[d,"F^{n,-}_i",blue] & \cP(\partial U_i^-\cap \partial U_i^+) \ar[l,"\alpha'"']\ar[r,"\beta'"]\ar[d,"F^{n-1}_i",blue] & \cP(\partial U_i^+) \ar[d,"F^{n,+}_i",blue]\\
			\cP(S^{n,-}_m) & \cP(S^{n,-}_m \cap S^{n,+}_m) \ar[l,"\alpha"']\ar[r,"\beta"] & \cP(S^{n,+}_m)
		\end{tikzcd}\right)
		\simeq
		\begin{tikzcd}
			\cP(S^{n-1})\ar[d,"F^n_i",blue]\\
			\cP(S^n_m)
		\end{tikzcd} .
	\end{equation}
	Note that $\partial U^{\pm}_i$ and $S^{n,\pm}_m$ are contractible, $\partial U_i^-\cap \partial U_i^+= S^{n-2}$, and $S^{n,-}_m \cap S^{n,+}_m=S^{n-1}_m$. The functors $F^{n,\pm}$ are uniquely determined (up to natural equivalence), hence the functors $F^n_i$ can be determined by induction on $n$, which will prove Proposition \ref{prp sphere with punctures}\eqref{item:punctured-sphere-inclusion}.
	Each step of the induction can be proven by applying Theorem \ref{thm:hocolim-functor-dg}.
		
	\noindent {\em The case of $n =2$}:
	This is the base step of the induction. From the definition of $\cP(S^2_m)$, Proposition \ref{prp sphere with punctures} directly follows. Alternatively, one can get $\cP(S^2_m)$ by computing the homotopy colimit in Equation \eqref{eqn sphere with punctures} using Theorem \ref{thm:hocolim-functor-dg} along with the fact that $\cP(S^{2,-}_m \cap S^{2,+}_m)) \simeq \coprod_{i=1}^m \cA_1$. Also, one can determine the functor $F^2_i\colon\cP(S^1)\to\cP(S^2_m)$ by computing the homotopy colimit in Equation \eqref{eq:punctured-spheres-hocolim-diagram} using Theorem \ref{thm:hocolim-functor-dg}.
		
	\noindent {\em The case of $n=3$}:
	From Equation \eqref{eqn sphere with punctures 2} and the base step of the induction, one obtains
	\[\cP(S^3_m) \simeq \hocolim\left(\cA_1 \xleftarrow{\alpha} \cS_{2,m}[\{a_1,\ldots,a_m\}^{-1}] \xrightarrow{\beta} \cA_1\right)\simeq\hocolim\left(\cA_1 \xleftarrow{\alpha} \cS_{2,m} \xrightarrow{\beta} \cA_1\right) ,\]
	where the second equivalence is by Theorem \ref{thm:hocolim-functor-dg} (2). Note that
	\begin{itemize}
		\item $\alpha$ and $\beta$ send each invertible morphism $a_i$ necessarily to an invertible element in $\cA_1$, which is the identity of $\cA_1$ (assuming the standard background class),
		\item $\alpha$ and $\beta$ send $h$ necessarily to $0$ for degree reasons.
	\end{itemize} 
	The last homotopy colimit diagram in the above equation, i.e., $\hocolim\left(\cA_1 \xleftarrow{\alpha} \cS_{2,m} \xrightarrow{\beta} \cA_1\right)$, can be explicitly written by applying Theorem \ref{thm:hocolim-functor-dg} (1). 
	The resulting category is $\cS_{3,m}'[t_L^{-1}]$, where $\cS_{3,m}'$ is a semifree dg category given as follows:
	\begin{enumerate}[label = (\roman*)]
		\item {\em Objects:} $K_1,K_2$.
		\item {\em Generating morphisms:} $t_L,t_{a_1},\ldots,t_{a_m},t_h\in\hom^*(K_1,K_2)$.
		\item {\em Degrees:} $|t_L| = 0$,\quad $|t_{a_i}|=-1$ for all $i =1, \ldots, m$, \quad $|t_h|=-2$.
		\item {\em Differentials:} $d t_L = 0$,\quad $dt_{a_i}=0$ for all $i = 1, \ldots, m$,\quad $dt_h=\sum_{i=1}^mt_{a_i}$.
	\end{enumerate}
	Finally, we have the quasi-equivalence
	\begin{align}
		\label{eq:s3m-quasi-equivalence}\cS_{3,m}&\xrightarrow{\sim}\cS'_{3,m}[t_L^{-1}]\\
		\notag L\mapsto K_1,\quad a_i&\mapsto t_L'\circ t_{a_i}\text{ for $i=1,\ldots,m$},\quad h\mapsto t_L'\circ t_h
	\end{align}
	where $t'_L$ is the inverse of $t_L$ up to homotopy. This proves Proposition \ref{prp sphere with punctures}\eqref{item:wfuk-punctured-sphere} for $n=3$.
	
	As for Proposition \ref{prp sphere with punctures}\eqref{item:punctured-sphere-inclusion}, when $n=3$, Equation \ref{eq:punctured-spheres-hocolim-diagram} becomes
	\[\hocolim\left(\begin{tikzcd}
		\cA_1 \ar[d,"1",blue] & \cC_1[z^{-1}] \ar[l,"\alpha'"']\ar[r,"\beta'"]\ar[d,"F^2_i",blue] & \cA_1\ar[d,"1",blue]\\
		\cA_1 &\cS_{2,m}[\{a_1,\ldots,a_m\}^{-1}] \ar[l,"\alpha"']\ar[r,"\beta"] & \cA_1
	\end{tikzcd}\right)
	\simeq
	\begin{tikzcd}
		\cC_2\ar[d,"F^3_i",blue]\\
		\cS_{3,m}
	\end{tikzcd}
	\]
	where $\cP(S^1)\simeq\cC_1[z^{-1}]$ and $\cP(S^2)\simeq\cC_2$ as in Proposition \ref{prp:wfuk-sphere}. We apply Theorem \ref{thm:hocolim-functor-dg} to compute the image of the diagram under the homotopy colimit functor above. As a result, we get the dg functor
	\begin{align*}
		\cC_2'[t_L^{-1}]&\to \cS'_{3,m}[t_L^{-1}]\\
		K_1\mapsto K_1,\quad K_2&\mapsto K_2,\quad t_L\mapsto t_L,\quad t_x\mapsto t_{a_i},
	\end{align*}
	where $\cC_2'$ is the semifree dg category given as follows:
	\begin{enumerate}[label = (\roman*)]
		\item {\em Objects:} $K_1,K_2$.
		\item {\em Generating morphisms:} $t_{L},t_x\in\hom^*(K_1,K_2)$.
		\item {\em Degrees:} $|t_L|=0,\quad |t_x|=-1$.
		\item {\em Differentials:} $dt_{L}=0,\quad dt_x=t_L-t_L=0$.
	\end{enumerate}
	By reinterpreting the domain and codomain of the functor above using quasi-equivalences, we get
	\begin{alignat*}{3}
		F^3_i\colon\cC_2&\xrightarrow{\sim}\cC_2'[t_L^{-1}]&&\to \cS'_{3,m}[t_L^{-1}]&&\xrightarrow{\sim}\cS_{3,m}\\
		L&\mapsto \hspace{0.3cm}K_1&&\mapsto \hspace{0.5cm}K_1&&\mapsto L\\
		z&\mapsto \hspace{0.1cm}t_L'\circ t_x&&\mapsto \hspace{0.2cm}t_L'\circ t_{a_i}&&\mapsto a_i.
	\end{alignat*}
	We note that the first quasi-equivalence is trivial from Propositions \ref{prp:variable-change} and the second quasi-equivalence is a quasi-inverse of \eqref{eq:s3m-quasi-equivalence}. This proves Proposition \ref{prp sphere with punctures}\eqref{item:punctured-sphere-inclusion} for $n=3$.
	
	\noindent {\em The case of $n \geq 4$}:
	Assuming the induction hypotheses $\cP(S^{n-1}_m)\simeq \cS_{n-1,m}$, $F^{n-1}_i(L)=L$, and $F^{n-1}_i(z)=a_i$, proofs of Proposition \ref{prp sphere with punctures}\eqref{item:wfuk-punctured-sphere} and \ref{prp sphere with punctures}\eqref{item:punctured-sphere-inclusion} for $n\geq 4$ are similar to the case of $n=3$.
\end{proof}

We conclude this subsection with an alternative description of $\cW(T^*S^n_m)$ that is more suitable for describing functors $\cW(T^*S^{n-1})\to\cW(T^*S^n_m)$ induced by the inclusion maps which are not necessarily orientation preserving. More precisely, recall that for any $i=1,\ldots,m$, we have the inclusion map \eqref{eq:punctued-sphere-inclusion}
\begin{equation*}
	F^n_i\colon S^{n-1}\xrightarrow{\sim}\partial U_i\hookrightarrow S^n_m
\end{equation*}
such that the first arrow is an orientation preserving diffeomorphism. We can consider another inclusion map that reverses orientation as follows: Let
\begin{equation}\label{eq:punctued-sphere-inclusion-reverse}
	G^n_i\colon S^{n-1}\xrightarrow{\sim}\partial U_i\hookrightarrow S^n_m
\end{equation}
such that the first arrow is an orientation reversing diffeomorphism. Then we have the following proposition:

\begin{prp}
	\label{prp sphere with punctures general} 
	Fix a pair of integers $\left(n \geq 2, m\geq 1\right)$, and another pair of integers $\left(m_+,m_-\right)$ satisfying $m_+ + m_- =m$.
	\begin{enumerate}
		\item\label{item:wfuk-punctured-sphere-general} Let $S^n_m$ denote the $n$-dimensional sphere with $m$-many punctures $U_1,\dots,U_{m_+},V_1,\ldots,V_{m_-}$. Then, up to pretriangulated equivalence, we have
		\[\cW(T^*S^n_m) \simeq
		\begin{cases}
			\cS_{2,m_+,m_-}[\{a_1,\ldots,a_{m_+},b_1,\ldots,b_{m_-}\}^{-1}] & \text{if }n=2,\\
			\cS_{n,m_+,m_-} & \text{if }n\geq 3,
		\end{cases}\]
		where {\em $\cS_{n,m_+,m_-}$} is a semifree dg category given as follows:
		\begin{enumerate}[label = (\roman*)]
			\item {\em Objects:} $L$ (representing a cotangent fiber).
			\item {\em Generating morphisms:} $a_1, \ldots, a_{m_+},b_1,\ldots,b_{m_-}, h\in\hom^*(L,L)$.
			\item {\em Degrees:} $|a_i| =2-n$ for all $i=1,\ldots,m_+$, $|b_i|= 2-n$ for all $i =1, \ldots, m_-$, and $|h|=1-n$.
			\item {\em Differentials:} $d a_i =0$ for all $i=1,\ldots,m_+$, $db_i=0$ for all $i = 1, \ldots, m_-$, and
			\[dh=\begin{cases}
				\prod_{i=1}^{m_+} a_i - \prod_{i=1}^{m_-} b_i & \text{if }n=2,\\
				\sum_{i=1}^{m_+} a_i - \sum_{i=1}^{m_-} b_i & \text{if }n\geq 3 .
			\end{cases}\]
		\end{enumerate}
		
		\item\label{item:punctured-sphere-inclusion-general} Let $F^n_i\colon T^*S^{n-1}\xrightarrow{\sim}T^*\partial U_i\to T^*S^n_m$ (resp.\ $G^n_i\colon T^*S^{n-1}\xrightarrow{\sim}T^*\partial V_i\to T^*S^n_m$) be the inclusion of Liouville sectors coming from the orientation preserving inclusion $F^n_i\colon S^{n-1}\xrightarrow{\sim}\partial U_i\hookrightarrow S^n_m$  as in \eqref{eq:punctued-sphere-inclusion} (resp.\ orientation reversing inclusion $G^n_i\colon S^{n-1}\xrightarrow{\sim}\partial V_i\hookrightarrow S^n_m$ as in \eqref{eq:punctued-sphere-inclusion-reverse}) for any $i=1,\ldots,m_{+}$ (resp.\ $i=1,\ldots,m_-$). Then, we have the induced dg functors
		\begin{align*}
			F^n_i&\colon \cW(T^*S^{n-1})\to \cW(T^*S^n_m) \qquad &&G^n_i\colon \cW(T^*S^{n-1})\to \cW(T^*S^n_m) \\
			L &\mapsto L, z\mapsto a_i &&L\mapsto L, z \mapsto b_i.
		\end{align*}
	\end{enumerate}
\end{prp}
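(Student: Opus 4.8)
The plan is to deduce Proposition~\ref{prp sphere with punctures general} from Proposition~\ref{prp sphere with punctures} by analyzing how the functor $\cW(T^*S^{n-1})\to\cW(T^*S^n_m)$ attached to a puncture changes when the diffeomorphism used to parametrize the boundary of that puncture is replaced by one of the opposite orientation. Fix once and for all an orientation-reversing diffeomorphism $r\colon S^{n-1}\to S^{n-1}$ (for instance a reflection, which we may take to fix a basepoint). Any orientation-reversing diffeomorphism $S^{n-1}\xrightarrow{\sim}\partial V_i$ is the composition of an orientation-preserving one with $r$. Since inclusions of Liouville sectors compose and the functoriality of \cite{gps1} is compatible with composition, the functor $G^n_i$ of the statement factors, up to natural equivalence, as $G^n_i\simeq F^n_{V_i}\circ r_*$, where $F^n_{V_i}$ denotes the inclusion functor associated with an orientation-preserving parametrization of $\partial V_i$, and $r_*\colon\cW(T^*S^{n-1})\to\cW(T^*S^{n-1})$ is the autoequivalence induced by the (Liouville) cotangent lift of $r$. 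Thus everything reduces to identifying $r_*$.

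Under the pretriangulated equivalence $\cW(T^*S^{n-1})\simeq\cP(S^{n-1})\simeq C_{-*}(\Omega_p S^{n-1})$ of Theorem~\ref{thm:cotangent-generation} and Remark~\ref{rmk:loop-space}, the functor $r_*$ corresponds to the dg functor $\cP(r)$ induced by postcomposing based loops with $r$; in particular it fixes the single object $L$. For $n\geq 3$ the generator $z$ of $\cC_{n-1}$ with $|z|=2-n$ (Proposition~\ref{prp:wfuk-sphere}) represents a generator of $H^{n-2}(\Omega S^{n-1})\cong\Z$, which transgresses to the fundamental class of $S^{n-1}$ in the path--loop fibration; since $r$ has degree $-1$ and transgression is natural with respect to the map of fibrations induced by $r$, it follows that $\cP(r)$ acts on this class by $-1$, and (after adjusting the chain-level representative, noting the relevant $\hom$-complex of $\cC_{n-1}$ has vanishing differential) $r_*(z)=-z$. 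For $n=2$ we have $\cW(T^*S^1)\simeq\cC_1[z^{-1}]\cong k[z^{\pm 1}]$, and since $r$ reverses based loops, $r_*(z)=z^{-1}$.

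It remains to assemble the computation. For $n\geq 3$, apply Proposition~\ref{prp sphere with punctures} treating all $m=m_++m_-$ punctures through orientation-preserving parametrizations; this gives $\cW(T^*S^n_m)\simeq\cS_{n,m}$ with generators $a_1,\dots,a_{m_+},c_1,\dots,c_{m_-}$ (with $c_j$ attached to $V_j$) and $h$, satisfying $dh=\sum_i a_i+\sum_j c_j$, together with $F^n_i(z)=a_i$ and $F^n_{V_j}(z)=c_j$. By the previous paragraph, $G^n_j(z)=-c_j$. Changing the semifree generating set via $b_j:=-c_j$, which is legitimate by Proposition~\ref{prp:variable-change} (unit $-1$), turns $\cS_{n,m}$ into $\cS_{n,m_+,m_-}$ and the functors into $F^n_i(z)=a_i$, $G^n_j(z)=b_j$, as claimed. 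For $n=2$ one argues the same way starting from $\cW(T^*S^2_m)\simeq\cS_{2,m}[\{a_i,c_j\}^{-1}]$; using the reordering freedom of Remark~\ref{rmk:punctured-sphere}\eqref{item:sphere-reorder-punctures} we place the punctures $U_1,\dots,U_{m_+}$ before $V_1,\dots,V_{m_-}$ in cyclic order, so that $dh=c_{m_-}\cdots c_1\,a_{m_+}\cdots a_1-1_L$. Setting $b_j:=c_j^{-1}$, which is an invertible generator available in the localized semifree category (cf.\ Proposition~\ref{prp:localise-semifree}), and using $G^2_j(z)=c_j^{-1}=b_j$, the relation encoded by $dh$ reads $a_{m_+}\cdots a_1=b_1\cdots b_{m_-}$; relabeling the $b_j$'s in reverse order (equivalently, reversing the chosen cyclic orientation) puts $dh$ in the advertised form $\prod_i a_i-\prod_j b_j$, yielding $\cW(T^*S^2_m)\simeq\cS_{2,m_+,m_-}[\{a_i,b_j\}^{-1}]$ with $F^2_i(z)=a_i$ and $G^2_j(z)=b_j$.

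The main obstacle is the identification of $r_*$ carried out in the second step: pinning down the action of an orientation-reversing self-diffeomorphism of $S^{n-1}$ on $\cW(T^*S^{n-1})$ at the level of the dg generator (the transgression computation for $n\geq 3$, loop reversal for $n=2$), and, for $n=2$, tracking the cyclic order of the punctures and which loops get inverted carefully enough that $dh$ ends up in the stated normal form; the rest is a formal consequence of Proposition~\ref{prp sphere with punctures} together with the basis-change Propositions~\ref{prp:variable-change} and~\ref{prp:localise-semifree}. As an alternative, one can instead repeat the Seifert--van Kampen induction used to prove Proposition~\ref{prp sphere with punctures}, carrying the orientations of all boundary parametrizations as bookkeeping data: already in the base case $n=2$ the orientation-reversing inclusion along $\partial V_j$ manifestly sends $z$ to the class of a loop around $V_j$ traversed backwards, and Theorem~\ref{thm:hocolim-functor-dg} propagates this through the induction to every $n$.
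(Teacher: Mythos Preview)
Your approach is genuinely different from the paper's. For $n\geq 3$ the paper glues $S^n_{m_++1}$ and $S^n_{m_-+1}$ along an $S^{n-1}$ using orientation-preserving inclusions on both sides; the gluing then forces the orientation of one piece to flip, so the $V_j$-boundary inclusions become orientation-reversing automatically, and a single application of Theorem~\ref{thm:seifert-van-kampen} plus Theorem~\ref{thm:hocolim-functor-dg} yields the result. You instead stay with a single copy of $S^n_m$, compute the autoequivalence $r_*$ of $\cW(T^*S^{n-1})$ induced by an orientation-reversing diffeomorphism, and then perform a basis change. Your route is more conceptual (it isolates exactly what orientation reversal does to the boundary functor) and your $n\geq 3$ argument is clean and correct: the transgression argument gives $r_*(z)=-z$ on the nose since $\hom^{2-n}(L,L)=k\cdot z$ in $\cC_{n-1}$, and the substitution $b_j:=-c_j$ is a legitimate use of Proposition~\ref{prp:variable-change}. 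The paper's gluing, by contrast, avoids computing $r_*$ at all.

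For $n=2$ there is a small gap. Your identification $r_*(z)=z^{-1}$ is fine, but the step ``set $b_j:=c_j^{-1}$'' is not a basis change in the sense of Proposition~\ref{prp:variable-change}: that proposition requires $\tilde f_i=u_i f_i+g_i$ with $u_i\in k^\times$, whereas $c_j^{-1}$ is the localization generator $c_j'$ of Proposition~\ref{prp:localise-semifree}, and swapping the roles of $c_j$ and $c_j'$ in the semifree presentation does not automatically rewrite $dh$. Concretely, you still have $dh=c_{m_-}\cdots c_1\,a_{m_+}\cdots a_1-1_L$ and need to produce a new degree $-1$ generator $h'$ with $dh'=\prod a_i-\prod b_j$; this requires multiplying $h$ by an inverse-up-to-homotopy of $\prod c_j$ and correcting by the localization homotopies, which you have not done. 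The fix is routine (and your closing remark about redoing the base case of the Seifert--van Kampen induction with orientation bookkeeping would also work), but as written the $n=2$ case does not quite follow from the propositions you cite. The paper sidesteps this entirely by observing that for $n=2$ the statement is immediate from the definition of the Pontryagin category $\cP(S^2_m)$.
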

Note that with this new notation, $\cS_{n,m}$ in Proposition \ref{prp sphere with punctures} is the same as $\cS_{n,m,0}$.

\begin{proof}[Proof of Proposition \ref{prp sphere with punctures general}]
	Recall, as in the proof of Proposition \ref{prp sphere with punctures}, that we have a pretriangulated equivalence $\cP(S^n_m)\xrightarrow{\sim}\cW(T^*S^n_m)$,
	where $\cP(S^n_m)$ is the Pontryagin category of $S^n_m$. Then for $n=2$, Proposition \ref{prp sphere with punctures general} follows from the definition of $\cP(S^2_m)$. Hence, we assume $n\geq 3$ from here onwards.
	
	Consider $S^n_{m_++1}$ (resp.\ $S^n_{m_-+1}$) with punctures $U_1,\dots,U_{m_++1}$ (resp.\ $V_1,\dots,V_{m_-+1}$). The category $\cP(S^n_{m_++1})$ is given as in Proposition \ref{prp sphere with punctures}, with the dg functor $\tilde F^n_i\colon \cP(S^{n-1})\to \cP(S^n_{m_++1})$ sending $z$ to $a_i$ that is induced by an orientation preserving inclusion $\tilde F^n_i\colon S^{n-1}\xrightarrow{\sim}\partial U_i\hookrightarrow S^n_{m_++1}$ for each $i=1,\ldots,m_++1$.
	
	Similarly, $\cP(S^n_{m_-+1})$ is given as in Proposition \ref{prp sphere with punctures} (here, we replace each $a_i$ in the definition of $\cS_{n,m_-+1}$ with $b_i$ for notational purposes), with the dg functor $\tilde G^n_i\colon \cP(S^{n-1})\to \cP(S^n_{m_-+1})$ sending $z$ to $b_i$ that is induced by an orientation preserving inclusion $\tilde G^n_i\colon S^{n-1}\xrightarrow{\sim}\partial V_i\hookrightarrow S^n_{m_-+1}$ for each $i=1,\ldots,m_-+1$.
	
	Next, observe that we can glue $S^n_{m_++1}$ and $S^n_{m_-+1}$ along $S^{n-1}$ using the orientation preserving inclusions $\tilde F^n_{m_++1}$ and $\tilde G^n_{m_-+1}$, i.e., via the identification
	\[S^n_{m_++1}\hookleftarrow \partial U_{m_++1}\xleftarrow{\sim}S^{n-1}\xrightarrow{\sim}\partial V_{m_-+1}\hookrightarrow S^n_{m_-+1} .\]
	As a result of the gluing, we get $S^n_m$ with punctures $U_1,\dots,U_{m_+},V_1,\dots,V_{m_-}$. Note that this gluing, without loss of generality, keeps the orientation of $S^n_{m_++1}\subset S^n_m$ same and changes the orientation of $S^n_{m_-+1}\subset S^n_m$. Hence, the inclusion
	\[F^n_i\colon S^{n-1}\xrightarrow{\sim}\partial U_i\hookrightarrow S^n_{m_++1}\hookrightarrow S^n_m\] 
	induced by $\tilde F^n_i$ is still orientation preserving for each $i=1,\ldots,m_+$, however, the inclusion
	\[G^n_i\colon S^{n-1}\xrightarrow{\sim}\partial V_i\hookrightarrow S^n_{m_-+1}\hookrightarrow S^n_m\]
	induced by $\tilde G^n_i$ is orientation reserving for $i=1,\ldots,m_-$.
	
	Finally, by Theorem \ref{thm:seifert-van-kampen}, we have a quasi-equivalence
	\[\cP(S^n_m)\simeq\hocolim\left(\cP(S^n_{m_++1})\leftarrow\cP(S^{n-1})\rightarrow\cP(S^n_{m_-+1})\right) .\]
	Then applying Theorem \ref{thm:hocolim-functor-dg} proves the proposition (after a simplification of resulting categories). 
\end{proof}

\subsection{The cotangent bundles of oriented surfaces with punctures}
\label{subsection the cotangent bundles of oriented surfaces with punctures}
In Section \ref{subsection the cotangent bundles of oriented surfaces with punctures}, we compute the wrapped Fukaya category of cotangent bundles of oriented surfaces with punctures. 
To do that, we fix a pair of positive integers $g$ and $m$, and let $\Sigma_{g,m}$ be an oriented surface with genus $g$ and with $m$-many punctures. Or equivalently,
\[\Sigma_{g,m}:=\Sigma_g\setminus\{U_i\vb i=1,\ldots,m\},\]
where $\Sigma_g$ is a closed orientable surface of genus $g$, and $\{U_i\}$ is a disjoint collection of small open disks in $\Sigma_g$, which we call punctures following our convention in Remark \ref{rmk:puncture}.

We describe the inclusion maps of $m$-many connected boundary components of $\Sigma_{g,m}$ into $\Sigma_{g,m}$ as follows:
First, we fix an orientation on $S^1$ and $\Sigma_g$.
For any $i=1,\ldots,m$, we let $\partial U_i$ have the boundary orientation coming from $U_i\subset\Sigma_g$.
Then, we have an inclusion map
\begin{equation}\label{eq:punctued-surface-inclusion}
	F_i\colon S^1\xrightarrow{\sim}\partial U_i\hookrightarrow \Sigma_{g,m}
\end{equation}
such that the first arrow is an orientation preserving diffeomorphism.

Given these notations, we will compute the wrapped Fukaya category $\cW(T^*\Sigma_{g,m})$. First, note that by Remark \ref{rmk:grading-pin}, the wrapped Fukaya category $\cW(T^*\Sigma_{g,m})$ can be given $\Z$-grading. Also by Remark \ref{rmk:grading-pin}, the definition of $\cW(T^*\Sigma_{g,m})$ depends on the grading structure $\eta\in H^1(T^*\Sigma_{g,m};\Z)=\Z^{2g+m-1}$ and the background class $b\in H^2(T^*\Sigma_{g,m};\Z/2)=0$. 
We will work with the standard grading structure and the background class (the latter is unique) as in \cite{nadler-zaslow}. We will comment on the nonstandard grading structures in Remark \ref{rmk:punctured-surface-grading}.

\begin{prp}
	\label{prp orientable surface with puctures}
	Fix a pair of positive integers $g$ and $m$. 
	\begin{enumerate}
		\item\label{item:wfuk-punctured-surface} Let $\Sigma_{g,m}$ be an oriented surface with genus $g$ and with $m$ punctures $U_1,\ldots,U_m$. 
		Then, up to pretriangulated equivalence, we have
		\[\cW(T^*\Sigma_{g,m}) \simeq \cM_{g,m}[\{\alpha_j,\beta_j,a_i\vb i=1,\ldots,m;\, j=1,\ldots,g\}^{-1}],\]
		where {\em $\cM_{g,m}$} is a semifree dg category given as follows:
		\begin{enumerate}[label = (\roman*)]
			\item {\em Objects:} $L$ (representing a cotangent fiber).
			\item {\em Generating morphisms:} $\alpha_j, \beta_j, \gamma_j, \delta_j, a_i, h\in\hom^*(L,L)$ for $i = 1, \ldots, m$ and $j=1, \ldots, g$.
			\item {\em Degrees:} For all $i=1, \dots, m$ and for all $j=1, \dots, g$,
			\[|\alpha_j| = |\beta_j|= |\delta_j| = |a_i| =0,\quad |\gamma_j|=|h| = -1.\]
			\item {\em Differentials:} For all $i=1, \dots, m$ and for all $j=1, \dots, g$,
			\[d \alpha_j = d \beta_j = d \delta_j = d a_i =0,\quad d\gamma_j = \alpha_j \beta_j- \beta_j\alpha_j\delta_j ,\quad d h = \prod_{i=1}^m a_i -  \prod_{j=1}^g\delta_j  ,\]  
			where the product is read from right to left, e.g., $\prod_{i=1}^ma_i=a_m\circ\ldots\circ a_1$.
		\end{enumerate}
		\item\label{item:punctured-surface-inclusion}  Let $F_i\colon T^*S^1\xrightarrow{\sim}T^*\partial U_i\to T^*\Sigma_{g,m}$ be the inclusion of Liouville sectors coming from the orientation preserving inclusion $F_i\colon S^1\xrightarrow{\sim}\partial U_i\hookrightarrow \Sigma_{g,m}$ as in \eqref{eq:punctued-surface-inclusion} for any $i=1,\ldots,m$. Note that $\cW(T^*S^1)$ is computed in Proposition \ref{prp:wfuk-sphere}. Then, we have the induced dg functor
		\begin{gather*}
			F_i\colon \cW(T^*S^1)\to \cW(T^*\Sigma_{g,m})\\
			L\mapsto L, z\mapsto a_i.
		\end{gather*}
	\end{enumerate}
\end{prp}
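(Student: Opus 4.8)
The plan is to follow the same template used for Propositions \ref{prp:wfuk-sphere} and \ref{prp sphere with punctures}: pass to Pontryagin categories, decompose the surface into elementary pieces, and evaluate the resulting homotopy colimit via Theorem \ref{thm:hocolim-functor-dg}. By Theorem \ref{thm:cotangent-generation} and Remark \ref{rmk:loop-space} there is a pretriangulated equivalence $\cP(\Sigma_{g,m})\xrightarrow{\sim}\cW(T^*\Sigma_{g,m})$ sending a point to its cotangent fiber, and under it each inclusion of Liouville sectors $F_i\colon T^*S^1\hookrightarrow T^*\Sigma_{g,m}$ becomes, up to natural equivalence, the functor $\cP(S^1)\to\cP(\Sigma_{g,m})$ induced by $F_i\colon S^1\xrightarrow{\sim}\partial U_i\hookrightarrow\Sigma_{g,m}$. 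So the whole computation is carried out with Pontryagin categories, where $\cP(S^1)\simeq\cC_1[z^{-1}]$ by Proposition \ref{prp:wfuk-sphere}, and the only inputs are Theorem \ref{thm:seifert-van-kampen} and the explicit homotopy colimit formula of Theorem \ref{thm:hocolim-functor-dg}.

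The first substantial step is to establish the building block, namely the cotangent bundle of the one-holed torus $\Sigma_{1,1}$ together with its boundary inclusion functor. Using the homotopy equivalence $\Sigma_{1,1}\simeq S^1\vee S^1$ and applying Theorem \ref{thm:seifert-van-kampen} to the cover of the wedge by two thickened circles whose intersection is contractible, Theorem \ref{thm:hocolim-functor-dg} yields $\cP(\Sigma_{1,1})\simeq\hocolim(\cC_1[z^{-1}]\leftarrow\cA_1\rightarrow\cC_1[z^{-1}])$, which is the free dg category on two invertible degree-$0$ morphisms $\alpha,\beta$, i.e.\ $k[F_2]$. By Propositions \ref{prp:variable-change} and \ref{prp:destabilisation} this is quasi-equivalent to the localization of the semifree dg category with generators $\alpha,\beta,\gamma,\delta$, degrees $0,0,-1,0$, and $d\gamma=\alpha\beta-\beta\alpha\delta$ (so that $\delta$ is forced to be a commutator of $\alpha,\beta$ up to homotopy), at $\{\alpha,\beta,\delta\}$. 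After choosing basepoints and orientations compatibly, the boundary circle $\partial\Sigma_{1,1}$ represents exactly $\delta$, so the induced functor is $F\colon\cP(S^1)\to\cP(\Sigma_{1,1})$, $z\mapsto\delta$.

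The second step is to assemble $\Sigma_{g,m}$ from a central planar piece and handles: $\Sigma_{g,m}$ is obtained from $S^2_{m+g}$ (sphere with $m+g$ punctures) by capping $g$ of its punctures with copies of $\Sigma_{1,1}$, one inclusion orientation preserving and the other reversing at each gluing circle as in Proposition \ref{prp sphere with punctures general}. Using Proposition \ref{prp sphere with punctures} for $S^2_{m+g}$ and iterating Theorem \ref{thm:seifert-van-kampen} and Theorem \ref{thm:hocolim-functor-dg} (with part (2) of the latter used to discard the harmless localizations of $z$), each capping step inserts, by Theorem \ref{thm:hocolim-functor-dg}(1), an acyclic morphism $t_z$ with $dt_z=\delta_j-c_j$ identifying the loop $c_j$ around the capped puncture with the commutator generator $\delta_j$ of the new handle; ordering the handle generators before the sphere generators, a variable change (Proposition \ref{prp:variable-change}) makes $(t_z,\delta_j-c_j)$ a destabilizing pair, which is removed by Proposition \ref{prp:destabilisation}, substituting $\delta_j$ for $c_j$ everywhere, in particular inside the differential of the ``$h$'' generator coming from $S^2_{m+g}$. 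After a final variable change normalizing the order and signs of the product, one reads off $\cW(T^*\Sigma_{g,m})\simeq\cM_{g,m}[\{\alpha_j,\beta_j,\delta_j,a_i\}^{-1}]$ with $dh=\prod a_i-\prod\delta_j$, while the functors $F_i$ attached to the surviving punctures $U_i$ are untouched by these moves and still satisfy $F_i(z)=a_i$, proving \eqref{item:wfuk-punctured-surface} and \eqref{item:punctured-surface-inclusion}. (As a consistency check, after inverting the degree-$0$ generators and cancelling $\gamma_j,h$ the cohomology collapses to $k[\pi_1(\Sigma_{g,m})]=k[F_{2g+m-1}]$, which matches $C_{-*}(\Omega_p\Sigma_{g,m})$ since $\Sigma_{g,m}$ is aspherical with free fundamental group.)

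The main obstacle is the orientation and sign bookkeeping, concentrated at the one-holed-torus block and propagated through the iterated homotopy colimit. One must pin down precisely which element of $\pi_1(\Sigma_{1,1})$ the oriented boundary circle represents and choose the basepoint so that $F(z)$ is the generator $\delta$ rather than a conjugate or an inverse; one must keep track of which gluing inclusion is orientation preserving and which is reversing, so that the correction terms in Equation \eqref{eqn correction term} and the resulting signs come out as claimed; and one must choose the ordering of generators in each homotopy colimit so that Proposition \ref{prp:destabilisation} applies to the correct pair, and perform the final normalization so that $dh$ has exactly the stated product order and sign. All of this is routine in the manner of the sign computations in the proof of Theorem \ref{thm:wfuk-plumbing-3} and Proposition \ref{prp sphere with punctures general}, but it is where the real care is needed.
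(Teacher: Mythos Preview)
Your approach is correct but takes a genuinely different route from the paper's. The paper works with a finer pairs-of-pants decomposition: it first splits $\Sigma_{g,m}$ into $X=\Sigma_{g,1}$ and $Y=S^2_{m+1}$, and then further decomposes $X$ into four pieces $X_1,X_2,X_3,X_4$, where $X_1,X_2$ are each disjoint unions of $g$ copies of $S^2_3$, $X_3$ is $g$ disks, and $X_4=S^2_{g+1}$; the one-holed-torus pieces arise only after gluing $X_1\cup X_2\cup X_3$, and the boundary monodromy $\delta_j$ is read off from the pairs-of-pants data via Proposition~\ref{prp sphere with punctures general}. Throughout, the paper uses Theorem~\ref{thm:gps} rather than Theorem~\ref{thm:seifert-van-kampen} for the gluing. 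Your route is coarser and more direct: you precompute the one-holed torus $\Sigma_{1,1}$ from the homotopy equivalence with $S^1\vee S^1$, insert $\delta$ as a redundant generator for the commutator, identify the boundary loop with $\delta$ by hand, and then cap $g$ punctures of $S^2_{m+g}$. The paper's decomposition buys you the boundary identification $\partial\Sigma_{1,1}\leadsto\delta_j$ for free (it falls out of the iterated gluing of explicit punctured spheres), whereas you must argue it separately; on the other hand, your approach avoids the intermediate categories $\cG_j,\cH_j$ and the several applications of Propositions~\ref{prp:variable-change}--\ref{prp:destabilisation} needed to simplify them. One small point: to get $dh=\prod a_i-\prod\delta_j$ with the signs and product order as stated, you should invoke Proposition~\ref{prp sphere with punctures general} with $(m_+,m_-)=(m,g)$ for the central sphere rather than Proposition~\ref{prp sphere with punctures}, so that the handle boundaries enter as the $b$-generators; this is exactly what the paper does for $X_4$, and it makes the substitution $b_j\leadsto\delta_j$ land on the correct side of $dh$ without an extra normalizing variable change.
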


\begin{rmk}\label{rmk:punctured-surface-grading}
	\mbox{}
	\begin{enumerate}
		\item There are $\Z^{2g+m-1}$-many grading structures to define $\cW(T^*\Sigma_{g,m})$. To capture the nonstandard ones, one needs to let $|\alpha_j|=p_j$, $|\beta_j|=q_j$, $|\gamma_j|=p_j+q_j-1$, $|a_i|=r_i$ for $i=1,\ldots,m$, $j=1,\ldots,g$, and $p_j,q_j,r_i\in\Z$ satisfying $r_1+\ldots+r_m=0$.
		\item We would like to thank an anonymous referee for suggesting an alternative way to prove Proposition \ref{prp orientable surface with puctures} \eqref{item:wfuk-punctured-surface}. 
		His suggestion is to use the fact that $\Sigma_{g,m}$ is homotopic to a punctured sphere.
		Then, $\cW(T^*\Sigma_{g,m})$ should be equivalent to the wrapped Fukaya category of cotangent bundle of the punctured sphere, since wrapped Fukaya categories of cotangent bundles are determined by the homotopic class of the zero sections, by Abouzaid \cite{Abouzaid12}.
		Then, Proposition \ref{prp sphere with punctures general} would imply Proposition \ref{prp orientable surface with puctures}. 
	\end{enumerate}
\end{rmk}

\begin{proof}[Proof of Proposition \ref{prp orientable surface with puctures}]
Consider the decomposition $\{X,Y\}$ of $\Sigma_{g,m}$ given in Figure \ref{figure new_decomposition}, where $X$ is a genus $g$ orientable surface with one puncture, i.e., $\Sigma_{g,1}$, and $Y$ is a disk with $m$-many punctures, or equivalently, $S^2_{m+1}$ using the notation in Section \ref{subsection the cotangent bundles of spheres with punctures}.
\begin{figure}[ht]
	\centering
\begingroup%
  \makeatletter%
  \providecommand\color[2][]{%
    \errmessage{(Inkscape) Color is used for the text in Inkscape, but the package 'color.sty' is not loaded}%
    \renewcommand\color[2][]{}%
  }%
  \providecommand\transparent[1]{%
    \errmessage{(Inkscape) Transparency is used (non-zero) for the text in Inkscape, but the package 'transparent.sty' is not loaded}%
    \renewcommand\transparent[1]{}%
  }%
  \providecommand\rotatebox[2]{#2}%
  \newcommand*\fsize{\dimexpr\f@size pt\relax}%
  \newcommand*\lineheight[1]{\fontsize{\fsize}{#1\fsize}\selectfont}%
  \ifx\svgwidth\undefined%
    \setlength{\unitlength}{340.15748031bp}%
    \ifx\svgscale\undefined%
      \relax%
    \else%
      \setlength{\unitlength}{\unitlength * \real{\svgscale}}%
    \fi%
  \else%
    \setlength{\unitlength}{\svgwidth}%
  \fi%
  \global\let\svgwidth\undefined%
  \global\let\svgscale\undefined%
  \makeatother%
  \begin{picture}(1,0.39166667)%
    \lineheight{1}%
    \setlength\tabcolsep{0pt}%
    \put(0,0){\includegraphics[width=\unitlength,page=1]{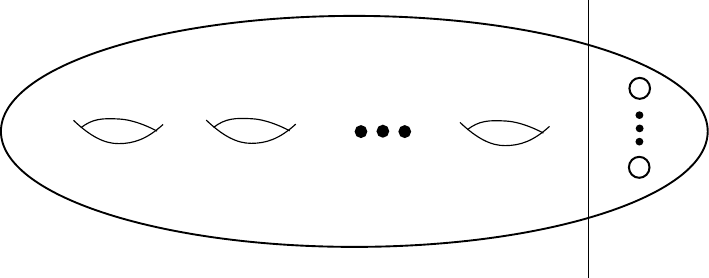}}%
    \put(0.46400531,0.01034012){\color[rgb]{0,0,0}\makebox(0,0)[lt]{\lineheight{1.25}\smash{\begin{tabular}[t]{l}$X=\Sigma_{g,1}$\end{tabular}}}}%
    \put(0.86342417,0.01234012){\color[rgb]{0,0,0}\makebox(0,0)[lt]{\lineheight{1.25}\smash{\begin{tabular}[t]{l}$Y=S^2_{m+1}$\end{tabular}}}}%
  \end{picture}%
\endgroup%
		
	\caption{A decomposition of $\Sigma_{g,m}$ as a union of two subsets $X$ and $Y$.}
	\label{figure new_decomposition}
\end{figure}

The strategy for computing $\cW(T^*\Sigma_{g,m})$ is to glue $\cW(T^*X)$ and $\cW(T^*Y)$.
Since $\cW(T^*Y)=\cW(T^*S^2_{m+1})$ is already computed in Section \ref{subsection the cotangent bundles of spheres with punctures}, our next task is to compute $\cW(T^*X)=\cW(T^*\Sigma_{g,1})$.
For that, we decompose $X=\Sigma_{g,1}$ as a union of subsets $X=X_1\cup X_2\cup X_3\cup X_4$, as described in Figure \ref{figure X_1_and_X_2}. 
\begin{figure}[ht]
	\centering
	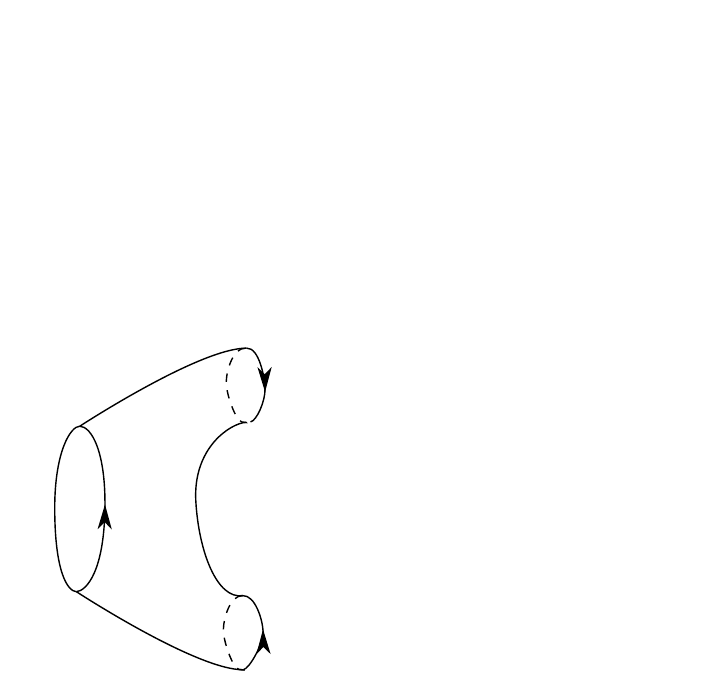		
	\caption{The upper picture is the decomposition of $X=\Sigma_{g,1}$ into four subsets $X_1, \dots, X_4$. The lower picture is describing $j^{\text{th}}$ connected component of $X_1$ and $X_2$.}
	\label{figure X_1_and_X_2}
\end{figure}
Each of subsets $X_1$ and $X_2$ is a disjoint union of $g$-many pairs of pants, or equivalently, $S^2$ with three punctures, i.e., $S^2_3$. 
Moreover, $X_3$ is a disjoint union of $g$-many disks, and $X_4$ is a $S^2$ with $(g+1)$-many punctures, i.e., $S^2_{g+1}$.
We will compute $\cW(T^*X)$ by gluing $\cW(T^*X_i)$. 

The first step is to compute $\cW(T^*X_1)$ and $\cW(T^*X_2)$. 
Since each connected component of $X_1$ (resp.\ $X_2$) is $S^2_3$, we can use Proposition \ref{prp sphere with punctures general} (with $m_+=2$ and $m_-=1$ for computational convenience) to get, up to pretriangulated equivalence
\[\cW(T^*X_i)=\cW\left(T^*\coprod_{j=1}^g S^2_3\right)\simeq \coprod_{j=1}^g\cS_{2,2,1}^{i,j}[\{a_1^{i,j},a_2^{i,j},b_1^{i,j}\}^{-1}]\]
for $i=1,2$, where we relabelled the objects and generating morphisms of $\cS_{2,2.1}$ in Proposition \ref{prp sphere with punctures general} as $L^{i,j}$ and $a_1^{i,j},a_2^{i,j},b_1^{i,j},h^{i,j}$, respectively, to define $\cS_{2,2,1}^{i,j}$.

We note that for each $i=1,2$, via Proposition \ref{prp sphere with punctures general}\eqref{item:punctured-sphere-inclusion-general}, each of the morphisms $a_1^{i,j},a_2^{i,j}$ (resp.\ $b_1^{i,j}$) corresponds to a boundary component of $X_i$ oriented with the opposite (resp.\ same) boundary orientation coming from $X_i$. The correspondence is drawn in Figure \ref{figure X_1_and_X_2}.  

Since $X_1 \cap X_2$ is a disjoint union of $2g$-many $S^1$, Theorem \ref{thm:gps} gives a pretriangulated equivalence
\[\cW\left(T^*(X_1 \cup X_2)\right) \simeq \hocolim\left(\cW(T^*X_1) \xleftarrow{T_1} \coprod_{j=1}^{g} \cW(T^*S^1)\amalg\cW(T^*S^1) \xrightarrow{T_2} \cW(T^*X_2) \right)\]
where $\cW(T^*S^1)$ is described by Proposition \ref{prp:wfuk-sphere}. For each $j=1,\ldots,g$, denote the generating objects (resp.\ generating morphisms) of $\cW(T^*S^1)\amalg\cW(T^*S^1)$ by $L_{1,j}$ and $L_{2,j}$ (resp.\ $z_{1,j}$ and $z_{2,j}$). From Figure \ref{figure X_1_and_X_2}, it is easy to observe that $T_1$ (resp.\ $T_2$) sends
\[L_{1,j},L_{2,j}\mapsto L^{1,j}, z_{1,j}\mapsto a_2^{1,j}, z_{2,j}\mapsto b_1^{1,j} (\text{resp.\  } L_{1,j},L_{2,j}\mapsto L^{2,j}, z_{1,j}\mapsto b_1^{2,j}, z_{2,j}\mapsto a_2^{2,j}).\]
Then, applying Theorem \ref{thm:hocolim-functor-dg} identifies (after simplification) $L^{1,j}=L^{2,j}$ (denote it by $L_j$) and $a_2^{1,j}=b_1^{2,j}$, and introduces new generating morphism $\beta_j,\gamma_j\colon L_j\to L_j$ such that $\beta_j$ is invertible up to homotopy, $|\gamma_j|=-1$, and $d\gamma_j= a_2^{2,j}\beta_j - \beta_j b_1^{1,j}$. Note that we also have the following equations:
\begin{gather*}
	dh^{1,j}=a_2^{1,j}a_1^{1,j}- b_1^{1,j} ,
	dh^{2,j}=a_2^{2,j}a_1^{2,j} - b_1^{2,j} .
\end{gather*}
After relabelling
\[\alpha_j:=a_2^{2,j},\quad \delta_j:=a_1^{1,j},\quad \epsilon_j:=a_1^{2,j} ,\]
and cancelling $h^{1,j}$ with $b_1^{1,j}$ and $h^{2,j}$ with $b_1^{2,j}$ using Propositions \ref{prp:variable-change} and \ref{prp:destabilisation}, we see that
\[\cW\left(T^*(X_1 \cup X_2)\right) \simeq \coprod_{j=1}^g \mathcal{G}_j[\{\alpha_j,\beta_j,\delta_j,\epsilon_j\}^{-1}]\]
up to pretriangulated equivalence, where $\mathcal{G}_j$ is a semifree dg category defined as follows:
\begin{enumerate}[label = (\roman*)]
	\item {\em Objects:} $L_j$ (representing a cotangent fiber).
	\item {\em Generating morphisms:} $\alpha_j, \beta_j, \gamma_j,\delta_j,\epsilon_j\in\hom^*(L_j,L_j)$.
	\item {\em Degrees:} $|\alpha_j|=|\beta_j|=|\delta_j|=|\epsilon_j|=0,\quad |\gamma_j|=-1$.
	\item {\em Differentials:} $d\alpha_j = d\beta_j =d\delta_j=d\epsilon_j=0,\quad d\gamma_j = \alpha_j \beta_j- \beta_j\alpha_j \epsilon_j\delta_j$. 
\end{enumerate}

The next step is to glue $\cW\left(T^*(X_1 \cup X_2)\right)$ and $\cW(T^*X_3)$. 
Since $X_3$ is a disjoint union of disks, it is easy to glue those two by Theorem \ref{thm:hocolim-functor-dg}.
The effect of the gluing is to identify $\epsilon_j$ and the identity morphism for each $j=1,\ldots,g$. Note that as a result, the invertibility (up to homotopy) of $\alpha_j$ and $\beta_j$ implies the invertibility of $\delta_j$ via the differential relation for $\gamma_j$. Hence, we no longer need to invert $\delta_j$.
Thus, we have
\[\cW\left(T^*(X_1 \cup X_2 \cup X_3)\right) \simeq \coprod_{j=1}^g \mathcal{H}_j[\{\alpha_j,\beta_j\}^{-1}]\]
up to pretriangulated equivalence, where $\mathcal{H}_j$ is a semifree dg category defined as follows:
\begin{enumerate}[label = (\roman*)]
	\item {\em Objects:} $L_j$ (representing a cotangent fiber).
	\item {\em Generating morphisms:} $\alpha_j, \beta_j, \gamma_j,\delta_j\in\hom^*(L_j,L_j)$.
	\item {\em Degrees:} $|\alpha_j|=|\beta_j|=|\delta_j|=0,\quad |\gamma_j|=-1$.
	\item {\em Differentials:} $d\alpha_j = d\beta_j =d\delta_j=0,\quad d\gamma_j = \alpha_j \beta_j- \beta_j\alpha_j\delta_j$. 
\end{enumerate}

We note that $X_1 \cup X_2 \cup X_3$ has $g$-many connected components, and each of them is a punctured torus.
Moreover, each of the morphisms $\alpha_j,\beta_j,\delta_j$ corresponds to an oriented loop in the $j^{th}$ component of $X_1 \cup X_2 \cup X_3$ as shown in Figure \ref{figure X_1 cup X_2 cup X_3}.

\begin{figure}[ht]
	\centering
	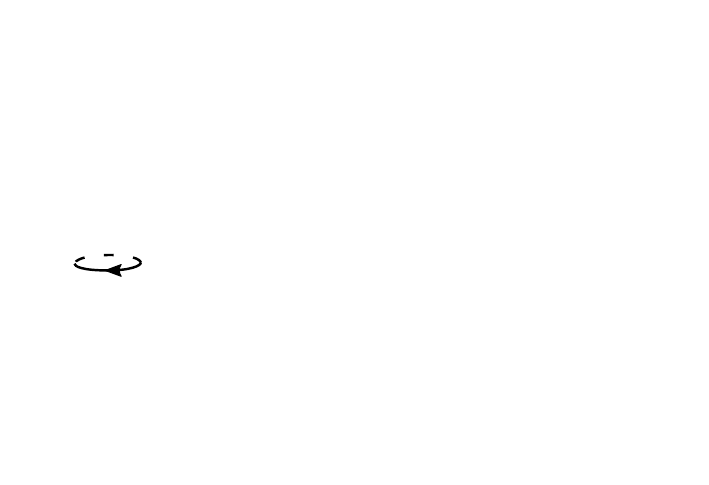		
	\caption{Each of the punctured tori in the upper part of the picture is a connected component of $X_1 \cup X_2 \cup X_3$. The oriented loops in the picture are labeled by the corresponding generating morphisms in $\cH_j$. The lower part corresponds to $X_4$ and the loops correspond to the generating morphisms in $\cS_{2,1,g}$.}
	\label{figure X_1 cup X_2 cup X_3}
\end{figure}

Next, recall that $X_4$ is a sphere with $g+1$ punctures, i.e., $S^2_{g+1}$. By Proposition \ref{prp sphere with punctures general} (with $m_+=1$ and $m_-=g$), its wrapped Fukaya category is given by
\[\cW(S^2_{g+1})\simeq\cS_{2,1,g}[\{a_1,b_1,\ldots,b_g\}^{-1}]\]
up to pretriangulated equivalence.
Moreover, by Proposition \ref{prp sphere with punctures general}\eqref{item:punctured-sphere-inclusion-general}, the morphism $a_1$ (resp.\ each of the morphisms $b_1,\ldots,b_g$) corresponds to a boundary component of $X_4$ oriented with the opposite (resp.\ same) boundary orientation coming from $X_4$. The correspondence is drawn in Figure \ref{figure X_1 cup X_2 cup X_3}.

Since $X_1\cup X_2\cup X_3$ and $X_4$ intersect at a disjoint union of $g$-many $S^1$'s, by Theorem \ref{thm:gps}, we have a pretriangulated equivalence
\[\cW(T^*X)=\cW(T^*\Sigma_{g,1}) \simeq \hocolim\left(\cW(T^*(X_1\cup X_2\cup X_3)) \xleftarrow{Q_1} \coprod_{j=1}^{g} \cW(T^*S^1)\xrightarrow{Q_2} \cW(T^*X_4) \right)\]
where $\cW(T^*S^1)$ is described by Proposition \ref{prp:wfuk-sphere}. For each $j=1,\ldots,g$, denote the generating object (resp.\ generating morphism) of $\cW(T^*S^1)$ by $L_j$ (resp.\ $z_j$). From Figure \ref{figure X_1 cup X_2 cup X_3}, it is easy to observe that $Q_1$ (resp.\ $Q_2$) sends
\[L_j\mapsto L_j, z_j\mapsto \delta_j  (\text{resp.\  } L_j\mapsto L, z_j\mapsto b_j).\]
Then, Theorem \ref{thm:hocolim-functor-dg} identifies (after simplification) $L=L_j$ and $\delta_j=b_j$ for any $j=1,\ldots,g$, and we conclude the proof of Proposition \ref{prp orientable surface with puctures}\eqref{item:wfuk-punctured-surface} for $m=1$. Since the boundary component of $\Sigma_{g,1}$ is the same as the boundary component of $X_4=S^2_{g+1}\subset \Sigma_{g,1}$ corresponding to the morphism $a_1$, Proposition \ref{prp orientable surface with puctures}\eqref{item:punctured-surface-inclusion} for $m=1$ follows from Proposition \ref{prp sphere with punctures general}\eqref{item:punctured-sphere-inclusion-general} and Theorem \ref{thm:hocolim-functor-dg}.

To prove Proposition \ref{prp orientable surface with puctures} for an arbitrary $m$, we need to glue $\cW(T^*X)=\cW(T^*\Sigma_{g,1})$ and $\cW(T^*Y)=\cW(T^*S^2_{m+1})$ as we remarked before. This is straightforward using Theorem \ref{thm:gps} and Theorem \ref{thm:hocolim-functor-dg}. Alternatively, one can replace $X_4=S^2_{g+1}$ above with $S^2_{g+m}$, and the wrapped Fukaya category $\cW(T^*X_4)\simeq \cS_{2,1,g}[\{a_1,b_1,\ldots,b_g\}^{-1}]$ with $\cS_{2,m,g}[\{a_1,\ldots,a_m,b_1,\ldots,b_g\}^{-1}]$ to directly prove Proposition \ref{prp orientable surface with puctures} for an arbitrary $m$ by gluing $\cW\left(T^*(X_1 \cup X_2\cup X_3)\right)$ and $\cW(T^*X_4)$.
\end{proof}

\begin{rmk}
	\label{rmk simplified version of M_g,m with relations}
	One can simplify $\cM_{g,m}[\{\alpha_j,\beta_j,a_i\}^{-1}]$ in Proposition \ref{prp orientable surface with puctures} by setting $\gamma_j=h=0$ and identifying $\delta_j$ with $[\alpha_j,\beta_j]$ for $j=1,\ldots,g$, where $[\alpha_j,\beta_j] := \alpha_j^{-1} \beta_j^{-1} \alpha_j \beta_j$, and $\alpha_j^{-1},\beta_j^{-1}$ are inverses of $\alpha_j,\beta_j$, respectively. As a result, we get generating morphisms $\{\alpha_j, \beta_j, a_i\}$ satisfying 
	\[d \alpha_j = d \beta_j = da_i = 0,\quad \prod_{i=1}^m a_i = \prod_{j=1}^g [\alpha_j,\beta_j] .\]
	Note that the result is not a semifree dg category anymore.
\end{rmk}

\section{Wrapped Fukaya category of plumbings}
\label{section wrapped Fukaya categories of plumbings}
The goal of Section \ref{section wrapped Fukaya categories of plumbings} is to provide a formula for the wrapped Fukaya category of plumbings of cotangent bundles. To achieve this goal, we will follow the strategy outlined in Section \ref{subsubsection strategy}.

In Section \ref{subsection plumbings disks}, we present a formula for the wrapped Fukaya category of plumbings of cotangent bundles. This formula is applicable to cotangent bundles of any collection of connected oriented $n$-manifolds $\{M(v)\vb v\in V(Q)\}$ for $n\geq 2$ along any quiver $Q$ with or without negative intersections. The formula depends on having the dg algebra $C_{-*}(\Omega_p(M(v)\setminus\text{pt}))$ available at each $v\in V(Q)$. In Section \ref{subsection plumbing grading}, we discuss the choices of grading structures when defining the wrapped Fukaya category of plumbings. Then, we present a general formula that considers different grading structures.

As an application, we explicitly present:
\begin{itemize}
	\item the wrapped Fukaya categories of $T^*S^n$'s for $n\geq 3$ as dg categories and compare them to Ginzburg dg categories in Section \ref{subsection plumbings in (a)},
	
	\item the wrapped Fukaya categories of cotangent bundles of oriented closed surfaces as dg categories and compare them to derived multiplicative preprojective algebras in Section \ref{subsection plumbings in (b)}.
\end{itemize}

\subsection{A formula for general plumbing spaces}
\label{subsection plumbings disks}
In this subsection, we give a formula for the wrapped Fukaya category $\cW(P)$ of any plumbing space $P$ in Theorem \ref{thm:wfuk-plumbing-general}.

Before we start, we note that for any plumbing space $P=P(Q,M,\s)$, as per Remark \ref{rmk:grading-pin}, $\cW(P)$ can be endowed with a $\Z$-grading. Furthermore, according to Remark \ref{rmk:grading-pin}, the definition of $\cW(P)$ relies on the grading structure $\eta\in H^1(P;\Z)$ and the background class $b\in H^2(P;\Z/2)$. In this paper, we select the standard background class
\[b\in H^2(P;\Z/2)=\bigoplus_{v\in V(Q)} H^2(M(v);\Z/2) ,\]
where $b$ restricts to the standard background class of $T^*M(v)$ in $H^2(M(v);\Z/2)$ (as explained in \cite{nadler-zaslow}) for each $v\in V(Q)$. Regarding the grading structure, we choose
\[\eta\in H^1(P;\Z)=H^1(Q;\Z)\oplus\bigoplus_{v\in V(Q)} H^1(M(v);\Z)\]
in a manner that $\eta$ restricts to the standard grading structure of $T^*M(v)$ in $H^1(M(v);\Z)$ (as explained in \cite{nadler-zaslow}[Proposition 5.3.1] and also in \cite{Abouzaid12}[Theorem 1.1]) for each $v\in V(Q)$. Additionally, $\eta$ restricts to $0\in H^1(Q;\Z)$, a point that will be elaborated in Section \ref{subsection plumbing grading}. Here, we just note that the choice $0\in H^1(Q;\Z)$ can be understood through our selection of the functors $\Phi$ and $\Psi$ in the homotopy colimit diagram \eqref{eqn homotopy colimit formula} as specified in Theorem \ref{thm:wfuk-plumbing-3} and Theorem \ref{thm:wfuk-plumbing-2}. This choice contrasts with the alternatives outlined in Remark \ref{rmk:shifting-generators} and \ref{rmk:shifting-generators-2}.

For a more in-depth discussion on grading structures, see Section \ref{subsection plumbing grading}. We refer the specific grading structure $\eta$ mentioned above as {\em the standard grading structure on $P$}, and $\cW(P)$ denotes the wrapped Fukaya category of $P$ with this standard grading structure $\eta$. We will consider the wrapped Fukaya category of $P$ with other grading structures in Section \ref{subsection plumbing grading}.

Through this subsection, we fix an integer $n \geq 2$. 
Moreover, we assume that there is a given plumbing data $(Q,M,\s)$ such that $Q$ is any quiver, $M(v)$ is a connected oriented $n$-manifold (with or without boundary) for all $v \in V(Q)$, and $\s$ is any map $E(Q)\to\{\pm 1\}$. Let $P(Q,M,\s)$ be plumbing space corresponding to the plumbing data $(Q,M,\s)$. See Section \ref{section plumbing space} for more details.

We will follow the strategy outlined in Section \ref{subsubsection strategy} to compute the wrapped Fukaya category of $P(Q,M,\s)$. Hence, we should consider another plumbing data $(Q,M',\s)$ such that $Q$ and $\s$ are as above, and $M'(v)$ is the closed disk $\D^n$ for all $v \in V(Q)$.
We will first compute the wrapped Fukaya category of $P(Q,M',\s)$ in Lemma \ref{lem:wfuk-plumbing-of-disks}.

Before stating Lemma \ref{lem:wfuk-plumbing-of-disks}, let us discuss our strategy for computing $\cW(P(Q,M',\s))$. As remarked in Section \ref{subsubsection strategy}, we can compute $\cW(P(Q,M',\s))$ using the homotopy colimit diagram
\begin{equation}\label{eq:hocolim-plumbing-disks}
	\cW(P(Q,M',\s)) \simeq \textup{hocolim}\left(
	\begin{tikzcd}[column sep=0em]
		\coprod_{v \in V(Q)} \cW(T^*M'_v) & & \coprod_{e \in E(Q)} \cW(\Pi_n)\\
		& \hspace*{-5em} \coprod_{e \in E(Q)} \left(\cW(T^*S^{n-1}) \amalg \cW(T^*S^{n-1})\right) \hspace*{-5em} \ar[lu,"\coprod_{e \in E(Q)} \left(F_e \amalg G_e\right)"]\ar[ru,"\coprod_{e \in E(Q)} \left( \Phi_e \amalg \Psi_e \right)"']
	\end{tikzcd}\right)
\end{equation}
up the pretriangulated equivalence. The wrapped Fukaya category $\cW(T^*S^{n-1})$ is described in Proposition \ref{prp:wfuk-sphere}. The wrapped Fukaya category $\cW(\Pi_n)$ of the plumbing sector $\Pi_n$ and the functors $\Phi_e$ and $\Psi_e$ are described in Theorem \ref{thm:wfuk-plumbing-2} (for $n=2$) and Theorem \ref{thm:wfuk-plumbing-3} (for $n\geq 3$). We need to explain $\cW(T^*M'_v)$ for each $v\in V(Q)$ and the functors $F_e$ and $G_e$ for each $e\in E(Q)$.

For each $v \in V(Q)$, recall from Definition \ref{dfn M_v} that
\[M'_v =\D^n\setminus \left(\bigcup_{e= v \to \bullet} U_{p(e)} \cup \bigcup_{e =\bullet \to v} U_{q(e)}\right)\]
where $p(e)$ (resp.\ $q(e)$) is a point in $\D^n$ for each edge $e$ starting (resp.\ ending) at $v$, and $U_{p(e)}$ (resp.\ $U_{q(e)}$) is a sufficiently small open disk in $\D^n$ centered at $p(e)$ (resp.\ $q(e)$) such that 
\[\left\{ U_{p(e)}\vb e=v\to\bullet\right\}\cup \left\{ U_{q(e)} \vb e = \bullet \to v \right\}\] is a mutually disjoint collection of open disks in $\D^n$. Hence, $M'_v$ can be equivalently seen as an $n$-sphere with punctures
\[M'_v =S^n\setminus \left(U_{r(v)}\cup\bigcup_{e= v \to \bullet} U_{p(e)} \cup \bigcup_{e =\bullet \to v} U_{q(e)}\right)\]
where $r(v)$ is a point in $S^n$ with a small open neighborhood $U_{r(v)}$ such that $S^n\setminus U_{r(v)}\simeq\D^n=M'(v)$. We call $U_{r(v)},U_{p(e)}, U_{q(e)}$ punctures of $M'_v$ following the convention in Remark \ref{rmk:puncture}. Then, the number of punctures in the punctured sphere $M'_v$ is $|v|+1$, where $|v|$ denotes the valency of the vertex $v$.

Fix an orientation on $S^{n-1}$ and $S^n$. Give the subspace orientation to $U_{r(v)},U_{p(e)}, U_{q(e)}\subset S^n$. Then, give the boundary orientation to $\partial U_{r(v)},\partial U_{p(e)}, \partial U_{q(e)}$ by considering them as boundaries of $U_{r(v)},U_{p(e)}, U_{q(e)}$, respectively. Recall from Section \ref{subsubsection gluing information} that for each $e\in E(Q)$,
\begin{itemize}
	\item $F_e\colon \cW(T^*S^{n-1})\to\cW(T^*M'_v)$ is induced by the map $F_e\colon S^{n-1}\xrightarrow{\sim}\partial U_{p(e)}\hookrightarrow M'_v$, where the first arrow is an orientation preserving diffeomorphism,
	\item $G_e\colon \cW(T^*S^{n-1})\to\cW(T^*M'_v)$ is induced by the map $G_e\colon S^{n-1}\xrightarrow{\sim}\partial U_{q(e)}\hookrightarrow M'_v$, where the first arrow is 
	\[\begin{cases}
		\text{  an orientation preserving diffeomorphism, if  } (-1)^{*_e}=1,\\
		\text{  an orientation reversing diffeomorphism, if  } (-1)^{*_e}=-1,
	\end{cases}\]
	where $(-1)^{*_e}=(-1)^{n(n-1)/2}\s(e)$.
\end{itemize}
We also have the map $\mu_v\colon S^{n-1}\xrightarrow{\sim}\partial U_{r(v)}\hookrightarrow M'_v$, where the first arrow is an orientation preserving diffeomorphism, which can be also interpreted as the inclusion of the boundary
\begin{equation}\label{eq:inclusion-disk-boundary}
	\mu_v\colon S^{n-1}\xrightarrow{\sim}\partial \D^n\hookrightarrow \D^n=M'(v) ,
\end{equation}
where $\partial \D^n$ has the boundary orientation coming from $\D^n$, and the first arrow is an orientation reversing diffeomorphism. It induces an inclusion of Liouville sectors
\[\mu_v\colon T^*S^{n-1}\hookrightarrow T^*\D^n=T^*M'(v)\hookrightarrow P(Q,M',\s) ,\]
which induces a functor
\[\mu_v\colon\cW(T^*S^{n-1})\to\cW(P(Q,M',\s)) .\]

Motivated by the conventions above, let us call a puncture $U$ of $M'_v$ \textit{positively (resp.\ negatively) oriented} if the relevant diffeomorphism $S^{n-1}\xrightarrow{\sim} \partial U$ above is orientation preserving (resp.\ reversing). In particular, for any $e \in E(Q)$ and $v \in V(Q)$, the punctures $U_{p(e)}$ and $U_{r(v)}$ are positively oriented.

We can use Proposition \ref{prp sphere with punctures general} to describe $\cW(T^*M'_v)$. To make the description compatible with the functors $F_e$ and $G_e$, let us first define $m_+$ (resp.\ $m_-$) to be the number of positively (resp.\ negatively) oriented punctures of $M'_v$. Explicitly, they are given as follows:
\begin{itemize}
	\item If $\tfrac{1}{2}(n-1)n$ is an even integer,
	\begin{align*}
		m_+&:= |\{e=v\to \bullet \vb e\in E(Q)\}| + |\{e=\bullet\to v\vb e \in E(Q),\, \s(e)=1\}| +1,\\
		m_-&:= |\{e=\bullet\to v\vb e\in E(Q),\, \s(e)=-1\}| .
	\end{align*}
	\item If $\tfrac{1}{2}(n-1)n$ is an odd integer,
	\begin{align*}
		m_+&:= |\{e=v\to \bullet \vb e\in E(Q)\}| + |\{e=\bullet\to v\vb e \in E(Q),\, \s(e)=-1\}| + 1 ,\\
		m_-&:= |\{e=\bullet\to v\vb e\in E(Q),\, \s(e)=1\}| .
	\end{align*}
\end{itemize}
Then, Proposition \ref{prp sphere with punctures general} with the above numbers $m_+$ and $m_-$ gives a pretriangulated equivalence
\begin{equation}\label{eq:wfuk-m-v}
	\cW(T^*M'_v)\simeq \begin{cases}
		\cS^v_{2,m_+,m_-}[\{a_1^v,\ldots,a_{m_+}^v,b_1^v,\ldots,b_{m_-}^v\}^{-1}] & \text{if }n=2,\\
		\cS^v_{n,m_+,m_-} & \text{if }n\geq 3 ,
	\end{cases}
\end{equation}
where for each $v\in V(Q)$, we define $\cS^v_{n,m_+,m_-}$ by relabeling the unique object $L$ and the generating morphisms $a_1,\ldots, a_{m_+}$, $b_1,\ldots,b_{m_-}$, $h$ of $\cS_{n,m_+,m_-}$ given in Proposition \ref{prp sphere with punctures general} by $L_v$ and $a_1^v,\ldots,a_{m_+}^v$, $b_1^v,\ldots,b_{m_-}^v$, $h_v$.

Now, give an ordering to, i.e., relabel the positively oriented punctures of $M'_v$ by $U_1,\ldots, U_{m_+}$ and the negatively oriented punctures of $M'_v$ by $V_1,\dots,V_{m_-}$ in such a way that
\begin{itemize}
	\item given an edge $e$ starting at $v$, $U_{p(e)}=U_i$ for some $i\in\{1,\ldots,m_+'\}$,\\
	
	\item given an edge $e$ ending at $v$,
	$U_{q(e)}=\begin{cases}
		U_j\text{ for some $j\in\{m_+'+1,\ldots,m_+-1\}$} & \text{if $(-1)^{*_e}=1$}\\
		V_l\text{ for some $l\in\{1,\ldots,m_-\}$} & \text{if $(-1)^{*_e}=-1$}
	\end{cases}$,\\
	\item $U_{r(v)}=U_{m_+}$,
\end{itemize}
where $m_+':= |\{e=v\to \bullet \vb e\in E(Q)\}|$, and $(-1)^{*_e}=(-1)^{n(n-1)/2}\s(e)$.

Then, by Proposition \ref{prp sphere with punctures general}\eqref{item:punctured-sphere-inclusion-general}, given $e=v\to\bullet$, the map $F_e\colon S^{n-1}\xrightarrow{\sim}\partial U_{p(e)}\hookrightarrow M'_v$ induces the dg functor
\begin{gather}\label{eq:m-v-inclusion-a}
	F_e\colon\cW(T^*S^{n-1})\to\cW(T^*M'_v)\\
	L\mapsto L_v,\quad z\mapsto a_i^v ,\notag
\end{gather}
and given $e=\bullet\to v$, the map $G_e\colon S^{n-1}\xrightarrow{\sim}\partial U_{q(e)}\hookrightarrow M'_v$ induces the dg functor
\begin{gather}\label{eq:m-v-inclusion-b}
	G_e\colon\cW(T^*S^{n-1})\to\cW(T^*M'_v)\\
	\notag L\mapsto L_v ,\quad
	z\mapsto\begin{cases}
		a_j^v & \text{if $(-1)^{*_e}=1$,}\\
		b_l^v & \text{if $(-1)^{*_e}=-1$,}
	\end{cases}
\end{gather}
where $L$ (resp.\ $z$) is the generating object (resp.\ generating morphism) of $\cW(T^*S^{n-1})$ as described in Proposition \ref{prp:wfuk-sphere}.

Having all this information, we are ready to present the following lemma describing the wrapped Fukaya category of plumbings of cotangent bundles of disks:

\begin{lem}
	\label{lem:wfuk-plumbing-of-disks}
	Fix $n\geq 2$. Let $P'$ be the plumbing of $T^*\D^n$'s along any quiver $Q$ with or without negative intersections. 
	In other words, there is a plumbing data $(Q,M',\s)$ such that $M'(v) = \D^n$ for all $v \in V(Q)$, so that $P' = P(Q,M',\s)$. 
	\begin{enumerate}
		\item\label{item:wfuk-plumb-disks} The wrapped Fukaya category of $P'$ is, up to pretriangulated equivalence, given by
		\[\cW(P')\simeq\begin{cases}
			\cP'_2[\{1+y_ex_e\vb e\in E(Q)\}^{-1}] & \text{if $n=2$,}\\
			\cP'_n & \text{if $n\geq 3$,}
		\end{cases}\]
		where $\cP'_n$ is a semifree dg category given as follows:
		\begin{enumerate}[label = (\roman*)]
			\item {\em Objects:} $L_v$ (representing a Lagrangian cocore dual to $M'(v)$) for any $v \in V(Q)$.
			\item {\em Generating morphisms:}  
			\[\left\{m_v\colon L_v\to L_v, h_v\colon L_v\to L_v, x_e\colon L_v \to L_w, y_e\colon L_w \to L_v | v \in V(Q), e = v \to w \in E(Q)\right\}.\]
			\item {\em Degrees:} $|m_v|=2-n,\quad |h_v|=1-n,\quad |x_e|=0,\quad |y_e|=2-n$. 
			\item {\em Differentials:} $dm_v=d x_e = d y_e= 0$, and
			\[dh_v=\begin{cases}
				\displaystyle m_v\prod_{\substack{e= \bullet \to v\\ \s(e)=-1}} (1+x_ey_e)\prod_{e= v\to \bullet} (1+y_ex_e) - \prod_{\substack{e= \bullet \to v\\ \s(e)=1}}(1+x_ey_e) &\text{if $n=2$,}\\
				\displaystyle m_v+\sum_{e= v\to \bullet} y_ex_e + \sum_{e= \bullet \to v}(-1)^{n(n-1)/2}\s(e)\, x_ey_e &\text{if $n\geq 3$.}
			\end{cases}\]
		\end{enumerate}
		
		\item\label{item:inclusion-plumb-disks} When $\cW(T^*S^{n-1})$ is as in Proposition \ref{prp:wfuk-sphere}, for each $v\in V(Q)$, the inclusion of the boundary $\mu_v\colon S^{n-1}\xrightarrow{\sim}\partial \D^n\hookrightarrow \D^n=M'(v)$ in \eqref{eq:inclusion-disk-boundary} induces the dg functor
		\begin{gather*}
			\mu_v\colon\cW(T^*S^{n-1}) \to\cW(P')\\
			L\mapsto L_v, 
			z\mapsto m_v.
		\end{gather*}
	\end{enumerate}
\end{lem}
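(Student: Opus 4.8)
The plan is to evaluate the homotopy colimit diagram \eqref{eq:hocolim-plumbing-disks} by means of the explicit formula of Theorem \ref{thm:hocolim-functor-dg}, and then to simplify the resulting semifree dg category using Propositions \ref{prp:variable-change} and \ref{prp:destabilisation}. First I would substitute into \eqref{eq:hocolim-plumbing-disks} all the computations already at our disposal: $\cW(T^*S^{n-1})\simeq\cC_{n-1}$ for $n\geq 3$ and $\cW(T^*S^1)\simeq\cC_1[z^{-1}]$ (Proposition \ref{prp:wfuk-sphere}); $\cW(\Pi_n)\simeq\cD_n^{12}$ for $n\geq 3$ and $\cW(\Pi_2)\simeq\cD_2^{12}[(1_{L_2}+xy)^{-1}]$, together with the induced functors $\Phi_e,\Psi_e$ (Theorems \ref{thm:wfuk-plumbing-3} and \ref{thm:wfuk-plumbing-2}, using the unshifted functors there, which is exactly the choice corresponding to the standard grading structure on $P'$); and $\cW(T^*M'_v)\simeq\cS^v_{n,m_+,m_-}$ for $n\geq 3$ (resp.\ its localization for $n=2$), together with the induced functors $F_e,G_e$ of \eqref{eq:m-v-inclusion-a}--\eqref{eq:m-v-inclusion-b}, where the labeling of the punctures $U_i,V_l$ of $M'_v$ and the integers $m_+,m_-$ are as fixed in the paragraphs preceding the lemma.

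\textbf{The case $n\geq 3$.} Here $\coprod_v\cS^v_{n,m_+,m_-}$, $\coprod_e(\cC_{n-1}\amalg\cC_{n-1})$ and $\coprod_e\cD_n^{12}$ are all semifree, so Theorem \ref{thm:hocolim-functor-dg}(1) presents $\cW(P')$ as a semifree extension of $\coprod_v\cS^v_{n,m_+,m_-}\amalg\coprod_e\cD_n^{12}$ in which each edge $e=v\to w$ and each of its two copies of $\cC_{n-1}$ (one feeding $F_e,\Phi_e$, the other $G_e,\Psi_e$) contributes an object-level morphism $t_L$ of degree $0$ with $dt_L=0$, which we localize, and a morphism $t_z$ with $|t_z|=1-n$ and $dt_z=(-1)^{|z|}(\beta(z)\,t_L-t_L\,\alpha(z))$, where $(\alpha,\beta)$ is $(F_e,\Phi_e)$ or $(G_e,\Psi_e)$ and the correction term of \eqref{eqn correction term} vanishes since $dz=0$. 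I would then run the standard two-step simplification. Localizing $t_L\colon F_e(L)\to\Phi_e(L)$, i.e.\ $t_L\colon L_v\to L_1^e$, amounts up to quasi-equivalence to the identification $L_v=L_1^e$, $t_L=1$ (cf.\ Proposition \ref{prp:localise-semifree} and the description of dg localization via colimits recalled after Proposition \ref{prp:colimit-dg}); similarly $L_w=L_2^e$ for the second copy. After doing this for all edges the surviving objects are exactly $\{L_v\vb v\in V(Q)\}$, and the generators $x,y$ of each $\cD_n^{12}$ become $x_e\colon L_v\to L_w$ and $y_e\colon L_w\to L_v$; with $t_L=1$ the differentials of the surviving $t_z$'s read $dt_z=(-1)^n(y_ex_e-a_i^v)$ for the first copy (where $a_i^v$ is the generator of $\cS^v$ attached to $U_{p(e)}$, since $\Phi_e(z)=yx$) and $dt_z=(-1)^n(x_ey_e-G_e(z))$ for the second copy (where $G_e(z)$ is $a_j^w$ or $b_l^w$ according as $(-1)^{*_e}=(-1)^{n(n-1)/2}\s(e)$ is $+1$ or $-1$, since $\Psi_e(z)=xy$). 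A change of variables (Proposition \ref{prp:variable-change}) turns each of these differentials into $\pm$ a new free generator which I cancel against $t_z$ by Proposition \ref{prp:destabilisation}; the net effect is to carry out, inside the formula $dh_v=\sum_{i=1}^{m_+}a_i^v-\sum_{l=1}^{m_-}b_l^v$ inherited from $\cS^v_{n,m_+,m_-}$, the substitutions $a_i^v=y_ex_e$ for $e=v\to\bullet$, $a_j^v=x_ey_e$ for incoming $e$ with $(-1)^{*_e}=1$, and $b_l^v=x_ey_e$ for incoming $e$ with $(-1)^{*_e}=-1$. The free generators that remain are then $\{m_v:=a^v_{m_+},\ h_v\vb v\in V(Q)\}\cup\{x_e,y_e\vb e\in E(Q)\}$ — the generator $a^v_{m_+}$ attached to $U_{r(v)}$ survives because $\mu_v$ is not one of the gluing functors in \eqref{eq:hocolim-plumbing-disks} — with the stated degrees, with $dm_v=dx_e=dy_e=0$, and with $dh_v=m_v+\sum_{e=v\to\bullet}y_ex_e+\sum_{e=\bullet\to v}(-1)^{*_e}x_ey_e$, which is the asserted formula since $(-1)^{*_e}=(-1)^{n(n-1)/2}\s(e)$; this proves \eqref{item:wfuk-plumb-disks} for $n\geq 3$. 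For \eqref{item:inclusion-plumb-disks}, $\mu_v$ is induced by the boundary inclusion \eqref{eq:inclusion-disk-boundary} of $\partial U_{r(v)}$, so by Proposition \ref{prp sphere with punctures general}\eqref{item:punctured-sphere-inclusion-general} it sends $L\mapsto L_v$ and $z\mapsto a^v_{m_+}=m_v$.

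\textbf{The case $n=2$.} The same argument applies with three adjustments. Theorem \ref{thm:hocolim-functor-dg}(2) removes the localization in the middle term $\cC_1[z^{-1}]$. The localizations present in $\cW(\Pi_2)=\cD_2^{12}[(1_{L_2}+xy)^{-1}]$ and in $\cW(T^*M'_v)$ pass through the homotopy colimit (exactly as in the proof of Theorem \ref{thm:wfuk-plumbing-2}, using that $F_e(z)=a_i^v$ and $\Phi_e(z)=1_{L_1}+yx$ are already invertible there, the latter by Remark \ref{rmk:wfuk-plumb-2-inv}), so that $\cW(P')$ is the localization of the semifree dg category obtained as above. Finally, since now $\Phi_e(z)=1_{L_1}+yx$ and $\Psi_e(z)=1_{L_2}+xy$, the cancellations replace each $a_i^v$ (resp.\ $b_l^v$) by $1+y_ex_e$ (resp.\ $1+x_ey_e$), so the multiplicative differential $dh_v=\prod a_i^v-\prod b_l^v$ of $\cS^v_{2,m_+,m_-}$ becomes the product appearing in the statement (the order of the factors being immaterial up to quasi-equivalence by Remark \ref{rmk:punctured-sphere}\eqref{item:sphere-reorder-punctures}), and the surviving localization classes become the $1+y_ex_e$, giving $\cP'_2[\{1+y_ex_e\vb e\in E(Q)\}^{-1}]$.

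\textbf{The main obstacle} is not conceptual — once Theorem \ref{thm:hocolim-functor-dg} is in hand the computation is mechanical — but rather the bookkeeping: keeping the two copies of $\cC_{n-1}$ per edge apart, matching each generator $a_i^v$ or $b_l^v$ of $\cS^v$ to the correct edge and orientation via the labeling conventions of Section \ref{subsubsection gluing information}, and, most delicately, propagating the orientation data through $G_e$ and Proposition \ref{prp sphere with punctures general}\eqref{item:punctured-sphere-inclusion-general} (orientation-reversing inclusions produce the $b_l^v$'s, which enter $dh_v$ with a minus sign) together with the Koszul signs in the homotopy colimit formula, so as to land precisely on the sign $(-1)^{n(n-1)/2}\s(e)$ and, for $n=2$, on the correct placement of $m_v$ and of the factors governed by $\s(e)=\pm1$ in the two products.
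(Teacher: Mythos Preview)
Your proposal is correct and reaches the same conclusion as the paper, but the route differs in one organizational respect worth noting. You apply the general homotopy colimit formula of Theorem \ref{thm:hocolim-functor-dg}(1) in full, introducing the auxiliary generators $t_L$ and $t_z$ for every copy of $\cC_{n-1}$, then localize each $t_L$ to identify objects and cancel each $t_z$ against the corresponding $a_i^v$ or $b_l^v$ via Propositions \ref{prp:variable-change} and \ref{prp:destabilisation}. The paper instead observes that the right-hand map $\coprod_e(\Phi_e\amalg\Psi_e)$ in the diagram \eqref{eq:hocolim-plumbing-disks} is a semifree extension, so by Remark \ref{rmk colimit} the homotopy colimit coincides with the ordinary colimit, which Proposition \ref{prp:colimit-dg} computes directly as $\coprod_v\cW(T^*M'_v)\amalg\coprod_e\cW(\Pi_n)$ with the identifications $L_v=L_1^e$, $L_w=L_2^e$, $a_i^v=y_ex_e$ (or $1+y_ex_e$), etc. This bypasses the $t_L,t_z$ generators entirely and the subsequent cancellation step. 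Your approach has the virtue of being uniform and not requiring one to check which leg is a semifree extension, at the cost of the extra bookkeeping you flag in your last paragraph; the paper's shortcut is cleaner here precisely because the structure of $\Phi_e,\Psi_e$ (sending $z$ to a word in the free generators $x_e,y_e$) makes the semifree-extension condition immediate. For $n=2$ both proofs handle the localizations the same way in spirit, and your treatment of part \eqref{item:inclusion-plumb-disks} via Proposition \ref{prp sphere with punctures general}\eqref{item:punctured-sphere-inclusion-general} matches the paper exactly.
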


\begin{rmk}\label{rmk:plumbing-disks}\mbox{}
	\begin{enumerate}
		\item \label{item:wfuk-disk-inv} When $n=2$, the invertibility (up to homotopy) of $1+y_ex_e$ implies the invertibility of $1+x_ey_e$ for each $e\in E(Q)$ by Remark \ref{rmk:wfuk-plumb-2-inv}, and then the differential of $h_v$ implies the invertibility of $m_v$ by Remark \ref{rmk:punctured-sphere}\eqref{item:punctured-sphere-invertibility}.
	
		\item \label{item:reordering-disk} When $n=2$, for each $v\in V(Q)$, the order of the terms in the product
		\begin{equation}\label{eq:dhv-left}
			m_v\prod_{\substack{e= \bullet \to v\\ \s(e)=-1}} (1+x_ey_e)\prod_{e= v\to \bullet} (1+y_ex_e)
		\end{equation}
		appearing in the differential of $h_v$ can be permuted freely, since the order comes from the auxiliary ordering of the positively oriented punctures of $M'_v$ in the paragraph after \eqref{eq:wfuk-m-v}. Similarly, the order of the terms in the product
		\begin{equation}\label{eq:dhv-right}
			\prod_{\substack{e= \bullet \to v\\ \s(e)=1}}(1+x_ey_e)
		\end{equation}
		appearing in the differential of $h_v$ can be permuted freely, since the order comes from the auxiliary ordering of the negatively oriented punctures of $M'_v$. Finally, the term $m_v$ can be carried from \eqref{eq:dhv-left} to \eqref{eq:dhv-right} as we could have declared the puncture $U_{r(v)}$ of $M'_v$, whose monodromy corresponds to $m_v$, as negatively oriented instead of positively oriented. Any of these choices gives us the same dg category $\cP'_2$ up to quasi-equivalence. This is similar to Remark \ref{rmk:punctured-sphere}\eqref{item:sphere-reorder-punctures}.
	\end{enumerate}
\end{rmk}

\begin{proof}[Proof of Lemma \ref{lem:wfuk-plumbing-of-disks}]
	As remarked before, $\cW(P')$ can be computed by the homotopy colimit diagram \eqref{eq:hocolim-plumbing-disks} up to pretriangulated equivalence. We need to determine the categories and the functors in the diagram: 
	\begin{itemize}
		\item Proposition \ref{prp:wfuk-sphere} gives us $\cW(T^*S^{n-1})$. For each $e\in E(Q)$, we relabel the generating objects (resp.\ generating morphisms) of $\cW(T^*S^{n-1})\amalg\cW(T^*S^{n-1})$ in the diagram by $L_1^e$ and $L_2^e$ (resp.\ $z_1^e\colon L_1^e\to L_1^e$ and $z_2^e\colon L_2^e\to L_2^e$).
		
		\item Theorem \ref{thm:wfuk-plumbing-2} (if $n=2$) and Theorem \ref{thm:wfuk-plumbing-3} (if $n\geq 3$) give us $\cW(\Pi_n)$. For each $e\in E(Q)$, we relabel the generating objects (resp.\ generating morphisms) of $\cW(\Pi_n)$ by $L_1^e$ and $L_2^e$ (resp.\ $x_e\colon L_1^e\to L_2^e$ and $y_e\colon L_2^e\to L_1^e$).
		
		\item With these relabellings, and by Theorem \ref{thm:wfuk-plumbing-2}\eqref{item:inclusion-plumbing-2} (if $n=2$) and Theorem \ref{thm:wfuk-plumbing-3}\eqref{item:inclusion-plumbing-3} (if $n\geq 3$), we have
		\begin{gather*}
			\Phi_e\amalg \Psi_e\colon \cW(T^*S^{n-1})\amalg\cW(T^*S^{n-1})\to \cW(\Pi_n)\\
			L_1^e \mapsto L_1^e,\quad L_2^e \mapsto L_2^e, \quad z_1^e\mapsto
			\begin{cases}
				1 + y_e x_e & \text{if $n=2$}\\
				y_e x_e & \text{if $n\geq 3$}
			\end{cases}
				, \quad z_2^e\mapsto
				\begin{cases}
					1 + x_e y_e & \text{if $n=2$}\\
					x_e y_e & \text{if $n\geq 3$}
				\end{cases} .
		\end{gather*}
		
		\item For each $v\in V(Q)$, $\cW(T^*M'_v)$ is given by the equivalence in \eqref{eq:wfuk-m-v}.
		
		\item With these relabellings, and by the expression of $F_e$ and $G_e$ given in \eqref{eq:m-v-inclusion-a} and \eqref{eq:m-v-inclusion-b}, we have
		\begin{gather*}
			F_e\amalg G_e\colon \cW(T^*S^{n-1})\amalg\cW(T^*S^{n-1})\to\cW(T^*M'_v)\amalg\cW(T^*M'_w)\\
			L_1^e\mapsto L_v,\quad L_2^e\mapsto L_w,\quad z_1^e\mapsto a_i^v,\quad
			z_2^e\mapsto\begin{cases}
				a_j^w & \text{if $(-1)^{*_e}=1$,}\\
				b_l^w & \text{if $(-1)^{*_e}=-1$,}
			\end{cases}
		\end{gather*}
		for each $e=v\to w$, where $(-1)^{*_e}=(-1)^{n(n-1)/2}\s(e)$, and $i,j,k$ are determined as explained in the paragraph after \eqref{eq:wfuk-m-v}.
	\end{itemize}
	
	In the diagram \eqref{eq:hocolim-plumbing-disks}, all the dg categories are semifree, and the dg functor $\coprod_{e\in E(Q)}(\Phi_e\amalg \Psi_e)$ is a semifree extension. Therefore, Remark \ref{rmk colimit} and \cite{dwyer-spalinski} imply that the homotopy colimit in \eqref{eq:hocolim-plumbing-disks} becomes a usual colimit. Then, Proposition \ref{prp:colimit-dg} implies that $\cW(P')$ is pretriangulated equivalent to
	\[\left(\coprod_{v\in V(Q)}\cW(T^*M'_v)\right) \amalg \left(\coprod_{e\in E(Q)}\cW(\Pi_n)\right)\]
	with the identifications
	$F_e(L_1^e)=\Phi_e(L_1^e)$, $G_e(L_2^e)=\Psi_e(L_2^e)$, $F_e(z_1^e)=\Phi_e(z_1^e)$, $G_e(z_2^e)=\Psi_e(z_2^e)$
	for each $e= v\to w\in E(Q)$, or explicitly,
	\[L_v=L_1^e ,\quad L_w=L_2^e, \quad a_i^v=
		\begin{cases}
			1+y_ex_e & \text{if $n=2$} \\
			y_ex_e & \text{if $n\geq 3$}
		\end{cases},
	\]
	along with the identification
	\[a_j^w =\begin{cases}
		1+x_ey_e & \text{if $n=2$}\\
		x_ey_e & \text{if $n\geq 3$}
	\end{cases}\]
	if $(-1)^{*_e}=1$, or
	\[b_l^w=\begin{cases}
		1+x_ey_e & \text{if $n=2$}\\
		x_ey_e & \text{if $n\geq 3$}
	\end{cases}\]
	if $(-1)^{*_e}=-1$.
	
	Using these identifications, we can assume that $\cW(P')$ is, up to pretriangulated equivalence, given by the objects $L_v$ for each $v\in V(Q)$, and the generating morphisms
	\begin{itemize}
		\item $m_v:=a_{m_+}^v$ and $h_v$ for each $v\in V(Q)$,
		
		\item $x_e,y_e$ for each $e\in E(Q)$,
	\end{itemize}
	satisfying the conditions in Lemma \ref{lem:wfuk-plumbing-of-disks}, which proves the first part.
	
	For the second part, the map $\mu_v\colon S^{n-1}\xrightarrow{\sim}\partial \D^n\hookrightarrow \D^n=M'(v)$ can be equivalently considered as the map
	\[\mu_v\colon S^{n-1}\xrightarrow{\sim}\partial U_{r(v)}\hookrightarrow M'_v\]
	as discussed before \eqref{eq:inclusion-disk-boundary}. Also, by definition, we have $U_{r(v)}=U_{m_+}$. Then, Proposition \ref{prp sphere with punctures general}\eqref{item:punctured-sphere-inclusion-general} gives the induced dg functor
	\begin{gather*}
		\mu_v\colon\cW(T^*S^{n-1})\to\cW(T^*M'_v)\\
		L\mapsto L_v,\quad z\mapsto a_{m_+}^v=m_v ,
	\end{gather*}
	where $L$ (resp.\ $z$) is the generating object (resp.\ generating morphism) of $\cW(T^*S^{n-1})$ as in Proposition \ref{prp:wfuk-sphere}. By Theorem \ref{thm:hocolim-functor-dg}, this implies that there is an induced dg functor
	\begin{gather*}
		\mu_v\colon\cW(T^*S^{n-1})\to\cW(P')\\
		L\mapsto L_v,\quad z\mapsto m_v ,
	\end{gather*}
	which proves Lemma \ref{lem:wfuk-plumbing-of-disks}\eqref{item:inclusion-plumb-disks}.
\end{proof}

Before proceeding, let us recall our notation from Section \ref{subsubsection hocolim diagram B}: For each $v\in V(Q)$, given a connected oriented $n$-manifold $M(v)$ (with or without boundary), we write $M(v)^*$ for $M(v)\setminus U_v$, where $U_v$ is a small open disk in $M(v)$. So, $M(v)^*$ is homotopically a once-punctured $M(v)$, i.e., $M(v)\setminus\text{pt}$, and we call $U_v$ a puncture of $M(v)^*$ by abuse of notation.

Recall that by Proposition \ref{thm:cotangent-generation} and Remark \ref{rmk:loop-space}, $\cW(T^*M(v)^*)$ is (up to pretriangulated equivalence) given by a single object $K_v$ (representing a cotangent fiber) with the endomorphism (dg) algebra
\begin{equation}\label{eq:hom-punctured-mv}
	\hom^*(K_v,K_v):=C_{-*}(\Omega_pM(v)^*) ,
\end{equation}
where $C_{-*}(\Omega_pM(v)^*)$ is the cochain complex given by the chains on the based loop space $\Omega_pM(v)^*$ of $M(v)^*$ at $p\in M(v)^*$.

Next, we give an orientation on $S^{n-1}$, and orient $\partial U_v$ as a boundary of $M(v)^*\subset M(v)$. It is important to highlight that before this, the boundary $\partial U$ of any puncture $U$ was oriented as a boundary of $U$, but here, we are orienting it as a boundary of $M(v)^*$. Following this adjustment, we consider an inclusion map
\begin{equation}\label{eq:eta-2}
	\eta_v\colon S^{n-1}\xrightarrow{\sim}\partial U_v\hookrightarrow M(v)^* ,
\end{equation}
where the first arrow is orientation preserving diffeomorphism. Then, we have the induced dg functor
\begin{gather}\label{eq:eta-3}
	\eta_v\colon \cW(T^*S^{n-1})\to\cW(T^*M(v)^*)\\
	L\mapsto K_v, \quad z\mapsto \eta_v(z)\notag ,
\end{gather}
where $\cW(T^*S^{n-1})$ is given in Proposition \ref{prp:wfuk-sphere}. We can describe $\eta_v(z)$ equivalently as follows: The inclusion \eqref{eq:eta-2} induces a chain map
\begin{align}\label{eq:eta-inclusion}
	\eta_v\colon C_{-*}(\Omega_p S^{n-1})&\to C_{-*}(\Omega_pM(v)^*)\\
	z&\mapsto \eta_v(z)\notag ,
\end{align}
where $C_{-*}(\Omega_p S^{n-1})$ is given by $\hom^*(L,L)$ in $\cW(T^*S^{n-1})$ in Proposition \ref{prp:wfuk-sphere}, which can be seen by Proposition \ref{thm:cotangent-generation} and Remark \ref{rmk:loop-space}. Informally, one can think $\eta_v(z)$ as representing the (higher if $n\geq 3$) monodromy of degree $2-n$ coming from the puncture $U_v$ of $M(v)^*$.

We are now ready to present one of the main theorem of the paper, which is a formula for the wrapped Fukaya category of any plumbing space.

\begin{thm}
	\label{thm:wfuk-plumbing-general}
	Fix $n\geq 2$. Let $P$ be a plumbing of cotangent bundles of connected, oriented $n$-manifolds (with or without boundary) along any quiver with or without negative intersections.
	In other words, there is an arbitrary plumbing data $(Q,M,\s)$ such that $P = P(Q,M,\s)$. 
	Then, the wrapped Fukaya category of $P$ is, up to pretriangulated equivalence, given by
		\[\cW(P)\simeq\begin{cases}
			\cP_2[\{1+y_ex_e\vb e\in E(Q)\}^{-1}] & \text{if $n=2$,}\\
			\cP_n & \text{if $n\geq 3$,}
		\end{cases}\]
		where $\cP_n$ is a semifree dg category given as follows:
		\begin{enumerate}[label = (\roman*)]
			\item {\em Objects:} $L_v$ (representing a Lagrangian cocore dual to $M(v)$) for any $v \in V(Q)$.
			\item {\em Generating morphisms:} There are three types of generating morphisms: 
			\begin{itemize}
				\item For any $v \in V(Q)$, $h_v\colon L_v\to L_v$. 
				\item For any $v\in V(Q)$, the generating morphisms of $ C_{-*}(\Omega_p (M(v)\setminus\text{pt}))$, where \\ $C_{-*}(\Omega_p (M(v)\setminus\text{pt}))$ is considered as a semifree dg algebra, see Remark \ref{rmk:wfuk-plumbing}\eqref{item:based-loop-1} and \eqref{item:based-loop-2}.
				\item For any $e=v\to w\in E(Q)$, $x_e\colon L_v \to L_w,\quad y_e\colon L_w \to L_v$.
			\end{itemize}
			\item {\em Degrees:} $|h_v|=1-n,\quad |x_e|=0,\quad |y_e|=2-n$. 
			\item {\em Differentials:} $d x_e = d y_e= 0$, and
			\[dh_v=\begin{cases}
				\displaystyle \eta_v(z)\prod_{\substack{e= \bullet \to v\\ \s(e)=-1}} (1+x_ey_e)\prod_{e= v\to \bullet} (1+y_ex_e) - \prod_{\substack{e= \bullet \to v\\ \s(e)=1}}(1+x_ey_e) &\text{if $n=2$,}\\
				\displaystyle \eta_v(z)+\sum_{e= v\to \bullet} y_ex_e + \sum_{e= \bullet \to v}(-1)^{n(n-1)/2}\s(e)\, x_ey_e &\text{if $n\geq 3$,}
			\end{cases}\]
		\end{enumerate}
		where $\eta_v(z)\in C_{-*}(\Omega_p (M(v)\setminus\text{pt}))$ is as in \eqref{eq:eta-inclusion}.
\end{thm}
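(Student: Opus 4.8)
The plan is to deduce Theorem \ref{thm:wfuk-plumbing-general} from Lemma \ref{lem:wfuk-plumbing-of-disks} using the second homotopy colimit diagram \eqref{eqn homotopy colimit formula alternative}, which expresses $\cW(P(Q,M,\s))$ as a homotopy colimit of $\cW(P(Q,M',\s))$ (the plumbing of disks, already computed in Lemma \ref{lem:wfuk-plumbing-of-disks}) with the categories $\cW(T^*M(v)^*)$ along the maps $\mu_v$ and $\eta_v$. Concretely, I would start from
\[
\cW(P) \simeq \hocolim\left(
\cW(P(Q,M',\s)) \xleftarrow{\coprod \mu_v} \coprod_{v\in V(Q)}\cW(T^*S^{n-1}) \xrightarrow{\coprod \eta_v} \coprod_{v\in V(Q)}\cW(T^*M(v)^*)
\right),
\]
and then replace each term by the explicit semifree model: $\cW(P(Q,M',\s))$ by $\cP'_n$ (or its localization when $n=2$) from Lemma \ref{lem:wfuk-plumbing-of-disks}\eqref{item:wfuk-plumb-disks}, $\cW(T^*S^{n-1})$ by $\cC_{n-1}$ (or $\cC_1[z^{-1}]$ when $n=2$) from Proposition \ref{prp:wfuk-sphere}, and $\cW(T^*M(v)^*)$ by the single-object dg category with endomorphism algebra $C_{-*}(\Omega_p(M(v)\setminus\mathrm{pt}))$ via Theorem \ref{thm:cotangent-generation} and Remark \ref{rmk:loop-space}. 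Crucially, by Lemma \ref{lem:wfuk-plumbing-of-disks}\eqref{item:inclusion-plumb-disks} the left-hand functor $\mu_v$ sends the generator $z$ to $m_v$, and by \eqref{eq:eta-3}--\eqref{eq:eta-inclusion} the right-hand functor $\eta_v$ sends $z$ to the monodromy class $\eta_v(z)\in C_{-*}(\Omega_p(M(v)\setminus\mathrm{pt}))$.

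Next I would apply Theorem \ref{thm:hocolim-functor-dg} to compute the homotopy colimit explicitly. Since the relevant dg categories are all semifree and (for $n\geq 3$) the functor $\coprod\mu_v$ can be arranged to be a semifree extension, Remark \ref{rmk colimit} collapses the homotopy colimit to an ordinary colimit, so by Proposition \ref{prp:colimit-dg} the effect is simply to take the disjoint union of $\cP'_n$ with $\coprod_v C_{-*}(\Omega_p(M(v)\setminus\mathrm{pt}))$ and identify, for each $v$, the object $L_v$ with $K_v$ and the morphism $m_v$ with $\eta_v(z)$. Substituting $m_v\mapsto \eta_v(z)$ into the differential formula for $h_v$ in $\cP'_n$ produces exactly the stated differential in $\cP_n$, and the generating morphisms become those of $C_{-*}(\Omega_p(M(v)\setminus\mathrm{pt}))$ together with $h_v, x_e, y_e$; one must check the degree $|\eta_v(z)|=2-n=|m_v|$ matches (it does, since $\eta_v$ is a dg functor preserving the degree of $z$), so the substitution is legal. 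For $n=2$ one runs the same argument but with the $z$'s inverted: by Theorem \ref{thm:hocolim-functor-dg}(2) the localizations can be pulled outside the homotopy colimit, and the localization $\cP'_2[\{1+y_ex_e\}^{-1}]$ combines with the localization implicit in $\cC_1[z^{-1}]$ to give the stated localized category $\cP_2[\{1+y_ex_e\}^{-1}]$; Remark \ref{rmk:plumbing-disks}\eqref{item:wfuk-disk-inv} and Remark \ref{rmk:punctured-sphere}\eqref{item:punctured-sphere-invertibility} ensure consistency of the invertibility conditions.

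Finally, I would verify that the generators of $\cW(P)$ so obtained are indeed the Lagrangian cocores $L_v$ dual to the $M(v)$, invoking Theorem \ref{thm:cocore-generate} together with the fact that gluing the sectorial covering identifies $L_v$ inside $T^*M(v)^*$ (a cotangent fiber) with the cocore in the glued space; this also pins down the meaning of the identification $L_v=K_v$. I expect the main obstacle to be the careful bookkeeping of the map $\eta_v$ and, in particular, confirming that the orientation conventions set up before \eqref{eq:eta-2} make $\eta_v(z)$ the \emph{positively}-oriented monodromy class (as opposed to its inverse), so that the substitution $m_v\mapsto\eta_v(z)$ is compatible with the corresponding orientation choice of the puncture $U_{r(v)}=U_{m_+}$ of $M'_v$ made in Lemma \ref{lem:wfuk-plumbing-of-disks}; this is the point where a sign or an inversion could slip in. A secondary subtlety, for $n=2$, is making sure the localizations are all simultaneously justified and that one does not need to separately invert $\eta_v(z)$ — this is handled by Remark \ref{rmk:punctured-sphere}\eqref{item:punctured-sphere-invertibility} applied at each vertex, which I would spell out. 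Everything else (degree computations, the colimit being a plain colimit, the form of $dh_v$) is a routine application of the cited results.
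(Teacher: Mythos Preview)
Your proposal is correct and follows essentially the same approach as the paper: both use the homotopy colimit diagram \eqref{eqn homotopy colimit formula alternative}, plug in Lemma \ref{lem:wfuk-plumbing-of-disks} for the disk plumbing and \eqref{eq:eta-3} for $\eta_v$, observe that $\coprod_v\mu_v$ is a semifree extension so the homotopy colimit collapses to an ordinary colimit via Remark \ref{rmk colimit} and Proposition \ref{prp:colimit-dg}, and then make the identifications $L_v=K_v$, $m_v=\eta_v(z)$. Your additional remarks on handling the $n=2$ localizations via Theorem \ref{thm:hocolim-functor-dg}(2) and on the orientation bookkeeping for $\eta_v$ are reasonable elaborations of points the paper leaves implicit.
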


\begin{rmk}\label{rmk:wfuk-plumbing}\mbox{}
	\begin{enumerate}
		\item\label{item:based-loop-1} We can always assume that $C_{-*}(\Omega_p(M(v)\setminus \text{pt}))$ is a semifree dg algebra by taking its semifree resolution, hence it has generating morphisms.
		
		\item\label{item:based-loop-2} We can compute $C_{-*}(\Omega_p(M(v)\setminus \text{pt}))$ using Theorem \ref{thm:seifert-van-kampen} since
		\[\cP(M(v)\setminus\text{pt})\simeq C_{-*}(\Omega_p(M(v)\setminus \text{pt}))\]
		up to quasi-equivalence by Remark \ref{rmk:loop-space}, when the latter is seen as a dg category with a single object whose endomorphism (dg) algebra is $C_{-*}(\Omega_p(M(v)\setminus \text{pt}))$.
		We can consider an open covering of $M(v)\setminus\text{pt}$ consisting of balls to apply Theorem \ref{thm:seifert-van-kampen}, where the homotopy colimit can be computed by our Theorem \ref{thm:hocolim-functor-dg}, which gives a semifree dg category as a result.
	
		\item When $n=2$, the invertibility (up to homotopy) of $1+y_ex_e$ implies the invertibility of $1+x_ey_e$ for each $e\in E(Q)$ as in Remark \ref{rmk:plumbing-disks}\eqref{item:wfuk-disk-inv}.
		
		\item\label{item:reordering-general} When $n=2$, for each $v\in V(Q)$, the order of the terms in the differential of $h_v$ can be rearranged as explained in Remark \ref{rmk:plumbing-disks}\eqref{item:reordering-disk} (after setting $m_v=\eta_v(z)$). Any of these rearrangements gives us the same dg category $\cP_2$ up to quasi-equivalence.
	\end{enumerate}
\end{rmk}

\begin{proof}[Proof of Theorem \ref{thm:wfuk-plumbing-general}]
	Let $M(v)^*$ denote $M(v)\setminus\text{pt}$. As discussed in Section \ref{subsubsection hocolim diagram B}, we have the homotopy colimit diagram
	\begin{equation}\label{eq:hocolim-plumbing-general}
		\cW(P) \simeq \textup{hocolim}\left(
		\begin{tikzcd}[column sep=-3.5em]
			\cW(P(Q,M',\s)) & & \coprod_{v \in V(Q)} \cW(T^*M(v)^*)\\
			& \coprod_{v \in V(Q)} \cW(T^*S^{n-1}) \ar[lu,"\coprod_{v \in V(Q)} \mu_v"]\ar[ru,"\coprod_{v \in V(Q)} \eta_v"']
		\end{tikzcd}\right) ,
	\end{equation}
	up to pretriangulated equivalence, where $(Q,M',\s)$ is another plumbing data with the same quiver $Q$ and the same map $\s$ but $M'(v)=\mathbb{D}^n$ for all $v\in V(Q)$. Then,
	\begin{itemize}
		\item Proposition \ref{prp:wfuk-sphere} gives us $\cW(T^*S^{n-1})$ such that it has the generating object $L$ and generating morphism $z$.
		
		\item Lemma \ref{lem:wfuk-plumbing-of-disks} describes $\cW(P(Q,M',\s))$ and the dg functors $\mu_v$ for all $v\in V(Q)$.
		
		\item As discussed before, for each $v\in V(Q)$, the category $\cW(T^*M(v)^*)$ is (up to pretriangulated equivalence) given by a single object $K_v$ (representing a cotangent fiber) with the endomorphism (dg) algebra $\hom^*(K_v,K_v)=C_{-*}(\Omega_pM(v)^*)$ as in \eqref{eq:hom-punctured-mv}.
		
		\item The dg functors $\eta_v$ are as in \eqref{eq:eta-3} for all $v\in V(Q)$.
	\end{itemize}
	In the diagram \eqref{eq:hocolim-plumbing-general}, all the dg categories are semifree, and the dg functor $\coprod_{v\in V(Q)}\mu_v$ is a semifree extension. It concludes that, by Remark \ref{rmk colimit} and \cite{dwyer-spalinski}, the homotopy colimit in \eqref{eq:hocolim-plumbing-general} becomes a usual colimit. Then, Proposition \ref{prp:colimit-dg} implies that $\cW(P)$ is pretriangulated equivalent to
	\[\cW(P(Q,M',\s)) \amalg \left(\coprod_{v \in V(Q)} \cW(T^*M(v)^*)\right)\]
	with the identifications
	\[\mu_v(L)=\eta_v(L),\quad \mu_v(z)=\eta_v(z) ,\]
	for each $v\in V(Q)$, or explicitly,
	\[L_v=K_v ,\quad m_v=\eta_v(z) .\]
	Using these identifications, we can assume that $\cW(P)$ is, up to pretriangulated equivalence, given by the objects $L_v$ for each $v\in V(Q)$, and the generating morphisms
	\begin{itemize}
		\item $h_v$ and the generating morphisms of $C_{-*}(\Omega_p(M(v)\setminus \text{pt}))$ for each $v\in V(Q)$,
		
		\item $x_e,y_e$ for each $e\in E(Q)$,
	\end{itemize}
	satisfying the conditions in Theorem \ref{thm:wfuk-plumbing-general}, which concludes the proof.
\end{proof}

\subsection{Plumbing spaces with an arbitrary grading structure}
\label{subsection plumbing grading}
In this subsection, we improve Theorem \ref{thm:wfuk-plumbing-general}, and give a formula for the wrapped Fukaya category $\cW(P)$ of any plumbing space $P$ with an arbitrary grading structure in Theorem \ref{thm:wfuk-plumbing-grading}.

Through this subsection, we again fix an integer $n \geq 2$. Let $P=P(Q,M,\s)$ be a plumbing space for an arbitrary plumbing data $(Q,M,\s)$ where $Q$ is any (finite) quiver, $M(v)$ is a connected oriented $n$-manifold (with or without boundary) for each $v\in V(Q)$, and $\s\colon E(Q)\to\{\pm 1\}$ is any function.

By Remark \ref{rmk:grading-pin}, the wrapped Fukaya category $\cW(P)$ of $P$ depends on the choice of a grading structure $\eta$ on $P$. Since there are effectively $H^1(P;\Z)$-many grading structures on $P$ (see \cite{seidel}), we write $\eta\in H^1(P;\Z)$ meaning that $\eta$ corresponds to one of those grading structures. Let us denote the wrapped Fukaya category of $P$ with a grading structure $\eta$ by $\cW(P;\eta)$ to distinguish different choices.

We can compute $\cW(P;\eta)$ for different choices of $\eta\in H^1(P;\Z)$ via the homotopy colimit diagram \eqref{eq:hocolim-plumbing-general} by
\begin{itemize}
	\item replacing $\cW(T^*M(v)^*)$ (recall that $M(v)^*:=M(v)\setminus\text{pt})$ with $\cW(T^*M(v)^*;\xi_v)$ for some grading structure $\xi_v\in H^1(M(v)^*;\Z)$ on $T^*M(v)^*$ for each $v\in V(Q)$, and
	\item replacing $\cW(P(Q,M',\s))$ with $\cW(P(Q,M',\s);\eta')$ for some grading structure
	\[\eta'\in H^1(P(Q,M',\s);\Z)=H^1(Q;\Z),\]
	where $M'(v)$ is an $n$-dimensional closed disk for each $v\in V(Q)$.
\end{itemize}
Indeed, these choices make up all the grading structure choices $\eta\in H^1(P;\Z)$ on $P$ since for $n\geq 2$,
\begin{equation}\label{eq:grading-plumbing}
	H^1(P;\Z)=H^1(Q;\Z)\oplus \bigoplus_{v\in V(Q)} H^1(M(v)^*;\Z) .
\end{equation}

In this subsection, we choose the standard grading structures on $T^*M(v)^*$ as explained in \cite{nadler-zaslow}, although it is also possible to deal with different grading structures. Hence, we are only interested in the grading structures $\eta$ on $P$ such that $\eta\in H^1(Q;\Z)$ via the identification \eqref{eq:grading-plumbing}.

We can compute $\cW(P(Q,M',\s);\eta')$ for a general $\eta' \in H^1(Q;\mathbb{Z})$ using the homotopy colimit diagram \eqref{eq:hocolim-plumbing-disks} with the same functors $\Psi_e$, $F_e$, $G_e$ as before, and a possibly different functor $\Phi_e$: First, note that for each $e=v\to w\in E(Q)$, the functors $\Phi_e$, $\Psi_e$, $F_e$, $G_e$ in \eqref{eq:hocolim-plumbing-disks} are induced by the maps \eqref{eq:plumbing-inclusion-1} and \eqref{eq:F-inclusion}, each of which sends a cotangent fiber $L$ of $T^*S^{n-1}$ to a Lagrangian. However, when determining the functors on the relevant Fukaya categories induced by those maps, the image of $L$ is determined only up to a shift. This gives $\Z$-many choices when determining each of the functors $\Phi_e$, $\Psi_e$, $F_e$, and $G_e$.

Moreover, it is easy to see that by replacing the categories $\cW(\Pi_n)$ and $\cW(T^*M'_v)$ in \eqref{eq:hocolim-plumbing-disks} by their images under a shift functor (which is an autoequivalence) if necessary, we can assume that the functors $\Psi_e$, $F_e$, $G_e$ are given as in Lemma \ref{lem:wfuk-plumbing-of-disks}, and $\Phi_e$ is determined up to $\Z$-many choices.

For that reason, we choose $d_e\in\Z$ for each $e\in E(Q)$ representing the different possible choices for $\Phi_e$ in \eqref{eq:hocolim-plumbing-disks}, where such choices determine a grading structure $\eta'$ for $P(Q,M',\s)$, or $\eta$ for $P$. Then, $\cW(P(Q,M',\s);\eta')$ can be computed by modifying the proof of Lemma \ref{lem:wfuk-plumbing-of-disks} in the following way: For any edge $e\in E(Q)$ and a choice $d_e\in\Z$, we replace the category $\cD^{12}_n$ by $\tilde \cD^{12}_n$ (which is used when describing $\cW(\Pi_n)$), and the functor $\Phi_e$ by $\tilde \Phi_e$ in \eqref{eq:hocolim-plumbing-disks}, as explained in Remark \ref{rmk:shifting-generators} (if $n\geq 3$) and Remark \ref{rmk:shifting-generators-2} (if $n=2$) with the choices $m_1=d_e$ and $m_2=0$. Note that $\Psi_e$ will be automatically replaced by $\tilde\Psi_e$ as a result by these remarks. Finally, $\cW(P;\eta)$ can be computed by modifying the proof of Theorem \ref{thm:wfuk-plumbing-general} by replacing $\cW(P(Q,M',\s))$ with $\cW(P(Q,M',\s);\eta')$.

To see how the choices of $d_e$'s are related to the grading structures on $P$ coming from $H^1(Q;\Z)$, define a map
\begin{align}\label{eq:grading-sigma}
	\sigma\colon\Z^{|E(Q)|}&\to H^1(Q;\Z)\\
	\langle d_e\in\Z \vb e\in E(Q)\rangle&\mapsto \sum_{i=1}^m c_i q_i\notag
\end{align}
where $\{q_i\vb i=1,\ldots,m\}$ is a generating set for $H^1(Q;\Z)$ (which is finite, since $Q$ is a finite quiver), and each $c_i\in\Z$ is determined as follows: Each $q_i$ can be seen as an oriented loop in the graph $Q$. Hence, $q_i$ is a concatenation of $e_1,\dots,e_r$ for some oriented edges $e_j$'s in the graph $Q$, whose orientations do not need to match with the orientation of the quiver $Q$. Then, we define
\[c_i:=\sum_{j=1}^r(-1)^{\dagger_{e_j}} d_{e_j}\]
where $\dagger_{e_j}$ is even if $e_j$ has the same orientation as the quiver $Q$, and odd otherwise. With this definition, the map $\sigma$ is well-defined and surjective.

Moreover, it is not hard to see (by replacing the categories in \eqref{eq:hocolim-plumbing-disks} by their images under a shift functor if necessary) that if $\langle d_e\rangle$ and $\langle d_e'\rangle$ have the same images under $\sigma$, then they give the same wrapped Fukaya category for $P$. Hence, the map $\sigma$ can be lifted to the map
\begin{align*}
	\tilde \sigma\colon\Z^{|E(Q)|}&\to \{\cW(P;\eta)\vb \eta\in H^1(Q;\Z)\}\\
	\langle d_e\in\Z \vb e\in E(Q)\rangle&\mapsto \cW(P;\sigma(\langle d_e\rangle))\notag
\end{align*}
where $\cW(P;\sigma(\langle d_e\rangle))$ can be computed as explained in the paragraph above \eqref{eq:grading-sigma}. Note that $\tilde\sigma$ is surjective.

Hence, we get the following generalization of Theorem \ref{thm:wfuk-plumbing-general}:

\begin{thm}
	\label{thm:wfuk-plumbing-grading}
	Fix $n\geq 2$. Let $P$ be a plumbing of cotangent bundles of connected, oriented $n$-manifolds (with or without boundary) along any quiver with or without negative intersections.
	In other words, there is an arbitrary plumbing data $(Q,M,\s)$ such that $P = P(Q,M,\s)$. 
	Then, the wrapped Fukaya category of $P$ equipped with a grading structure $\eta\in H^1(Q;\Z)$ as in \eqref{eq:grading-plumbing} is, up to pretriangulated equivalence, given by
	\[\cW(P;\eta)\simeq\begin{cases}
		\cP_{2,\eta}[\{1+y_ex_e\vb e\in E(Q)\}^{-1}] & \text{if $n=2$,}\\
		\cP_{n,\eta} & \text{if $n\geq 3$,}
	\end{cases}\]
	where $\cP_{n,\eta}$ is a semifree dg category given as follows:
	\begin{enumerate}[label = (\roman*)]
		\item {\em Objects:} $L_v$ (representing a Lagrangian cocore dual to $M(v)$) for any $v \in V(Q)$.
		\item {\em Generating morphisms:} There are three types of generating morphisms: 
		\begin{itemize}
			\item For any $v \in V(Q)$, $h_v\colon L_v\to L_v$. 
			\item For any $v\in V(Q)$, the generating morphisms of $ C_{-*}(\Omega_p (M(v)\setminus\text{pt}))$, where \\$C_{-*}(\Omega_p (M(v)\setminus\text{pt}))$ is considered as a semifree dg algebra, see Remark \ref{rmk:wfuk-plumbing}\eqref{item:based-loop-1} and \eqref{item:based-loop-2}.
			\item For any $e=v\to w\in E(Q)$, $x_e\colon L_v \to L_w,\quad y_e\colon L_w \to L_v$.
		\end{itemize}
		\item {\em Degrees:} $|h_v|=1-n,\quad |x_e|=d_e,\quad |y_e|=2-n-d_e$. 
		\item {\em Differentials:} $d x_e = d y_e= 0$, and
		\[dh_v=\begin{cases}
			\displaystyle \eta_v(z)\prod_{\substack{e= \bullet \to v\\ \s(e)=-1}} (1+x_ey_e)\prod_{e= v\to \bullet} (1+y_ex_e) - \prod_{\substack{e= \bullet \to v\\ \s(e)=1}}(1+x_ey_e) &\text{if $n=2$,}\\
			\displaystyle \eta_v(z)+\sum_{e= v\to \bullet} (-1)^{nd_e}y_ex_e + \sum_{e= \bullet \to v}(-1)^{n(n-1)/2}\s(e)\, x_ey_e &\text{if $n\geq 3$,}
		\end{cases}\]
	\end{enumerate}
	where $\eta_v(z)\in C_{-*}(\Omega_p (M(v)\setminus\text{pt}))$ is as in \eqref{eq:eta-inclusion}, and $\langle d_e\in\Z\vb e\in E(Q)\rangle$ is any collection that satisfies $\eta=\sigma(\langle d_e\rangle)$, where $\sigma$ is given in \eqref{eq:grading-sigma}.
\end{thm}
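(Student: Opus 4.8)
The plan is to obtain Theorem~\ref{thm:wfuk-plumbing-grading} as a graded refinement of Theorem~\ref{thm:wfuk-plumbing-general}, reusing the proof of the latter essentially verbatim but keeping track of the shift ambiguities in the functors $\Phi_e$. The starting point is the observation, recorded in \eqref{eq:grading-plumbing}, that $H^1(P;\Z)=H^1(Q;\Z)\oplus\bigoplus_{v\in V(Q)}H^1(M(v)\setminus\text{pt};\Z)$; since we fix the standard grading structures on each $T^*(M(v)\setminus\text{pt})$, all the remaining freedom lives in $H^1(Q;\Z)$, and this freedom is detected precisely by the choices of shifts in the induced functors $\Phi_e$ appearing in the homotopy colimit diagrams \eqref{eq:hocolim-plumbing-disks} and \eqref{eq:hocolim-plumbing-general}. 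So the first step is to fix, for each $e\in E(Q)$, an integer $d_e\in\Z$ encoding the shift $m_1=d_e$, $m_2=0$ in Remark~\ref{rmk:shifting-generators} (for $n\geq 3$) or Remark~\ref{rmk:shifting-generators-2} (for $n=2$), i.e.\ replace $\cD_n^{12}$ by $\tilde\cD_n^{12}$ and $\Phi_e$, $\Psi_e$ by $\tilde\Phi_e$, $\tilde\Psi_e$, while keeping $F_e$, $G_e$ as in Lemma~\ref{lem:wfuk-plumbing-of-disks} (after applying shift autoequivalences to $\cW(T^*M_v')$ if needed).

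The second step is to rerun the proof of Lemma~\ref{lem:wfuk-plumbing-of-disks} with these modified functors: taking the colimit (which is still a colimit, not merely a homotopy colimit, by Remark~\ref{rmk colimit} since $\coprod(\tilde\Phi_e\amalg\tilde\Psi_e)$ remains a semifree extension) yields a semifree dg category $\cP'_{n,\eta}$ identical to $\cP'_n$ except that $|x_e|=d_e$, $|y_e|=2-n-d_e$, and the sign $(-1)^{nd_e}$ appears in front of the term $y_ex_e$ in $dh_v$ for $n\geq 3$. This sign is exactly the Koszul sign bookkeeping from Remark~\ref{rmk:shifting-generators}: $\tilde\Phi_e(z)=(-1)^{n(m_1-m_2)}\tilde y_e\tilde x_e=(-1)^{nd_e}y_ex_e$, whereas $\tilde\Psi_e(z)=x_ey_e$ is unchanged, so only the ``source'' contributions to $dh_v$ pick up the sign. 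For $n=2$ the invertible elements $1+y_ex_e$, $1+x_ey_e$ are insensitive to the shift (since $|x_e|=d_e$, $|y_e|=-d_e$ still give $|x_ey_e|=|y_ex_e|=0$), so the differential of $h_v$ retains its form and only the localization set is unchanged. The third step is to feed $\cW(P(Q,M',\s);\eta')\simeq\cP'_{n,\eta}$ into the proof of Theorem~\ref{thm:wfuk-plumbing-general} through the diagram \eqref{eq:hocolim-plumbing-general}, identifying $L_v=K_v$ and $m_v=\eta_v(z)$ exactly as before; nothing else in that argument depends on the degrees of $x_e,y_e$, so one reads off $\cP_{n,\eta}$ immediately.

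The final step is to verify that the map $\sigma\colon\Z^{|E(Q)|}\to H^1(Q;\Z)$ of \eqref{eq:grading-sigma} is well-defined, surjective, and that $\langle d_e\rangle$ and $\langle d'_e\rangle$ with $\sigma(\langle d_e\rangle)=\sigma(\langle d'_e\rangle)$ yield pretriangulated-equivalent categories, so that $\eta=\sigma(\langle d_e\rangle)$ indeed pins down $\cW(P;\eta)$. Well-definedness amounts to checking that the formula $c_i=\sum_j(-1)^{\dagger_{e_j}}d_{e_j}$ is independent of the representation of the homology class $q_i$ as a concatenation of edges — this is just linearity of the assignment $e\mapsto d_e$ extended to $1$-chains, descending to $H_1(Q;\Z)\cong H^1(Q;\Z)$ by universal coefficients since $Q$ is a finite graph (so $H_1$ is free). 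Surjectivity is clear because one can realize any prescribed values on a spanning set of cycles by choosing the $d_e$ on a complement of a spanning tree. For the shift-invariance claim: changing $d_e$ by a coboundary (i.e.\ by a quantity of the form $d^0(\text{vertex function})$) corresponds to applying a shift autoequivalence to some of the $\cW(\Pi_n)$ and $\cW(T^*M_v')$ factors before gluing, which does not change the colimit up to pretriangulated equivalence — this is the routine ``replace categories by their images under a shift functor'' move already invoked. I expect the main obstacle to be purely organizational: carefully threading the Koszul signs from Remark~\ref{rmk:shifting-generators} through the chain of identifications so that the single sign $(-1)^{nd_e}$ lands on the correct term and no spurious signs appear on $x_ey_e$ or in the $n=2$ localization, and confirming that the asymmetry between ``source'' and ``target'' edges in the $dh_v$ formula is consistent with the asymmetric roles of $\tilde\Phi_e$ and $\tilde\Psi_e$.
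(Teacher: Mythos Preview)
Your proposal is correct and follows essentially the same approach as the paper: the paper's argument (given as the discussion in Section~\ref{subsection plumbing grading} preceding the theorem) modifies the proofs of Lemma~\ref{lem:wfuk-plumbing-of-disks} and Theorem~\ref{thm:wfuk-plumbing-general} by replacing $\cD_n^{12}$ and $\Phi_e,\Psi_e$ with $\tilde\cD_n^{12}$ and $\tilde\Phi_e,\tilde\Psi_e$ from Remarks~\ref{rmk:shifting-generators} and~\ref{rmk:shifting-generators-2} (with $m_1=d_e$, $m_2=0$), then verifies that the assignment $\langle d_e\rangle\mapsto\eta$ via $\sigma$ is surjective and that tuples with the same image give equivalent categories via shift autoequivalences. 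Your identification of the sign $(-1)^{nd_e}$ on the source term and the unchanged target term matches the paper exactly.
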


\begin{rmk}\label{rmk:wfuk-plumbing-grading}\mbox{}
	\begin{enumerate}
		\item When $n=2$, the invertibility (up to homotopy) of $1+y_ex_e$ implies the invertibility of $1+x_ey_e$ for each $e\in E(Q)$ as in Remark \ref{rmk:plumbing-disks}\eqref{item:wfuk-disk-inv}.
		
		\item\label{item:reordering-grading} When $n=2$, for each $v\in V(Q)$, the order of the terms in the differential of $h_v$ can be rearranged as explained in Remark \ref{rmk:plumbing-disks}\eqref{item:reordering-disk} (after setting $m_v=\eta_v(z)$). Any of these rearrangements gives us the same dg category $\cP_{2,\eta}$ up to quasi-equivalence.
	\end{enumerate}
\end{rmk}

As an illustration of the importance of grading structures in computations, we consider the following setting: Fix $n\geq 2$. Let $(Q,M,\s)$ be an arbitrary plumbing data where $M(v)$ is a connected oriented $n$-manifold for each $v\in V(Q)$. Choose an arrow $u\in E(Q)$ and define another plumbing data $(Q',M,\s')$ where
\begin{itemize}
	\item $Q'$ is the same quiver as $Q$ except the arrow $u$ is reversed and renamed as $\bar u$,
	
	\item $\s'$ is the same as $\s$ except $\s'(\bar u):=(-1)^n\s(u)$.
\end{itemize}
Recall from Proposition \ref{prp property 1} (if $n$ is odd) and Proposition \ref{prp property 2} (if $n$ is even) that the associated plumbing spaces $P(Q,M,\s)$ and $P(Q',M,\s')$ are equivalent. However, their wrapped Fukaya categories coming from Theorem \ref{thm:wfuk-plumbing-general} are not equivalent in general. The reason is that we should also take the grading structures into consideration. Using Theorem \ref{thm:wfuk-plumbing-grading} instead, we get the following proposition:

\begin{prp}\label{prp:wfuk-equivalent-1}
	Fix $n\geq 2$. Let $(Q,M,\s)$ and $(Q',M,\s')$ be plumbing data as above. Then, for any $\langle d_e\in\Z\vb e\in E(Q)\rangle$, we have a quasi-equivalence
	\[\cW(P(Q,M,\s);\sigma(\langle d_e\rangle))\simeq\cW(P(Q',M,\s');\sigma(\langle d'_e\rangle))\]
	where $\sigma$ is given in \eqref{eq:grading-sigma}, and $\langle d'_e\vb e\in E(Q)\rangle$ be such that $d'_e:=d_e$ for all $e\in E(Q)$ except $d'_{\bar u}:=2-n - d_u$.
\end{prp}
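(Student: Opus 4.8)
The plan is to run both sides through Theorem \ref{thm:wfuk-plumbing-grading} and then produce a dg isomorphism between the two resulting semifree models by ``transposing'' the pair of generators attached to the reversed arrow.

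First I would apply Theorem \ref{thm:wfuk-plumbing-grading} to $(Q,M,\s)$ with the collection $\langle d_e\rangle$ and to $(Q',M,\s')$ with the collection $\langle d'_e\rangle$, which identifies, up to pretriangulated equivalence, $\cW(P(Q,M,\s);\sigma(\langle d_e\rangle))$ with $\cP_{n,\eta}$ and $\cW(P(Q',M,\s');\sigma(\langle d'_e\rangle))$ with $\cP_{n,\eta'}$ (localized at the $1+y_ex_e$ when $n=2$). Since $Q$ and $Q'$ have the same underlying graph and the same $M$, these two dg categories share the objects $L_v$, the loop-space generators of the $C_{-*}(\Omega_p(M(v)\setminus\text{pt}))$, the morphisms $h_v$, and for every edge $e\neq u$ the generators $x_e,y_e$ with identical degrees and identical contributions to the $dh_v$. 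The sole discrepancy is at $u$: $\cP_{n,\eta}$ has $x_u\colon L_v\to L_w$, $y_u\colon L_w\to L_v$ with $|x_u|=d_u$, $|y_u|=2-n-d_u$, while $\cP_{n,\eta'}$ has $x_{\bar u}\colon L_w\to L_v$, $y_{\bar u}\colon L_v\to L_w$ with $|x_{\bar u}|=d'_{\bar u}=2-n-d_u$, $|y_{\bar u}|=2-n-d'_{\bar u}=d_u$; and the $u$-terms (resp.\ $\bar u$-terms) of $dh_v,dh_w$ differ accordingly.

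Next I would define $F\colon\cP_{n,\eta}\to\cP_{n,\eta'}$ to fix every $L_v$, every $h_v$, every loop-space generator, and every $x_e,y_e$ with $e\neq u$, and to set $F(x_u)=y_{\bar u}$, $F(y_u)=c\,x_{\bar u}$ for a unit $c\in k$. The degrees already match: $|x_u|=d_u=|y_{\bar u}|$ and $|y_u|=2-n-d_u=|x_{\bar u}|$. As all generators except the $h_v$ are closed and are sent by $F$ into the common closed subalgebra, the only thing to check is $F(dh_v)=dh_v$ and $F(dh_w)=dh_w$. For $n\geq 3$ this is a short sign computation: $F$ must send the $u$-term $(-1)^{nd_u}y_ux_u$ of $dh_v$ to the $\bar u$-term $(-1)^{n(n-1)/2}\s'(\bar u)\,x_{\bar u}y_{\bar u}$ of $dh_v$, and the $u$-term $(-1)^{n(n-1)/2}\s(u)\,x_uy_u$ of $dh_w$ to $(-1)^{nd'_{\bar u}}y_{\bar u}x_{\bar u}$; using $\s'(\bar u)=(-1)^n\s(u)$, $d'_{\bar u}=2-n-d_u$ and $n^2\equiv n\pmod 2$, both requirements collapse to the single equation $c=(-1)^n(-1)^{nd_u}(-1)^{n(n-1)/2}\s(u)$, so such a $c$ exists. (If $u$ is a loop, both of its terms sit in $dh_v=dh_w$ and are checked identically.) Then $F$ is a dg functor; it is invertible, with the symmetric transposition as its inverse, hence an isomorphism of dg categories and in particular a pretriangulated equivalence.

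For $n=2$ the differentials of the $h_v$ are products of factors $1+y_ex_e$, $1+x_ey_e$ rather than sums, and $\s'(\bar u)=\s(u)$; with $c=1$ the factor $1+y_ux_u$ of $dh_v$ maps to $1+x_{\bar u}y_{\bar u}$ and the factor $1+x_uy_u$ of $dh_w$ maps to $1+y_{\bar u}x_{\bar u}$, which are exactly the $\bar u$-factors of $dh_v,dh_w$ in $\cP_{2,\eta'}$ — except that, when $\s(u)=1$, these factors lie on opposite sides of the minus sign in the two presentations. That mismatch is precisely the freedom granted by Remark \ref{rmk:wfuk-plumbing-grading}\eqref{item:reordering-grading} and Remark \ref{rmk:plumbing-disks}\eqref{item:reordering-disk}: the two presentations of each $dh_v$ differ only by permuting commuting factors and by carrying a factor across the minus sign, operations that leave $\cP_{2,\eta'}$ unchanged up to quasi-equivalence, so $F$ becomes a quasi-equivalence onto a valid presentation of $\cP_{2,\eta'}$. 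Lastly $F$ fixes the inverted morphisms $1+y_ex_e$ for $e\neq u$ and sends $1+y_ux_u$ to $1+x_{\bar u}y_{\bar u}$, whose homotopy invertibility follows from inverting $1+y_{\bar u}x_{\bar u}$ via Remark \ref{rmk:wfuk-plumb-2-inv}, so $F$ descends to the localizations and stays an equivalence. I expect the one genuine obstacle to be this $n=2$ sidedness step — making the ``carry across the minus sign'' move rigorous directly in the plumbing setup rather than just for once-punctured spheres; the rest is mechanical. An alternative is the geometric route: $P(Q,M,\s)$ and $P(Q',M,\s')$ coincide as Weinstein manifolds by Proposition \ref{prp property 1} or \ref{prp property 2}, and replacing the plumbing sector $\Pi_n$ attached to $u$ by its image under the Hamiltonian diffeomorphism $\varphi$ of Section \ref{subsection a property of plumbing spaces} — which exchanges $\Lambda_1$ and $\Lambda_2$, hence $\Phi\leftrightarrow\Psi$ and $x\leftrightarrow y$, and shifts the grading of the linking disk by the amount sending $d_u$ to $2-n-d_u$ — turns the homotopy colimit diagram \eqref{eqn homotopy colimit formula} for $P(Q,M,\s)$ into that for $P(Q',M,\s')$.
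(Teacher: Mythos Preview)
Your approach is essentially the same as the paper's: both compute via Theorem \ref{thm:wfuk-plumbing-grading} and define the dg functor that is the identity except $x_u\mapsto y_{\bar u}$, $y_u\mapsto c\,x_{\bar u}$, and your sign $c=(-1)^{n+nd_u+n(n-1)/2}\s(u)$ for $n\geq 3$ matches the paper's exactly.

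For the $n=2$, $\s(u)=1$ subtlety you correctly flag as the one obstacle, note that Remarks \ref{rmk:wfuk-plumbing-grading}\eqref{item:reordering-grading} and \ref{rmk:plumbing-disks}\eqref{item:reordering-disk} only license moving the $\eta_v(z)$-factor across the minus sign, not a $(1+x_ey_e)$-factor, so your invocation of them does not quite close the gap. The paper's fix is a variant of your geometric alternative: it re-derives the presentation of $\cW(P(Q',M,\s');\sigma(\langle d'_e\rangle))$ by swapping the orientation conventions on $F_{\bar u}$ and $G_{\bar u}$ in the gluing construction of Section \ref{subsubsection gluing information} (which leaves the space unchanged), so that the $\bar u$-factors land on the correct side of the minus sign from the outset; then the functor with $c=1$ works directly.
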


\begin{proof}
	Both categories can be computed via Theorem \ref{thm:wfuk-plumbing-grading}. Then, if $n\geq 3$, the desired equivalence is given by the dg functor
	\begin{equation}\label{eq:flipping-edge}\cW(P(Q,M,\s);\sigma(\langle d_e\rangle))\to\cW(P(Q',M,\s');\sigma(\langle d'_e\rangle))\end{equation}
	which is identity on generating objects and morphisms, except
	\[x_u\mapsto y_{\bar u},\quad
	y_u\mapsto (-1)^{n+nd_u+n(n-1)/2}\s(u)\, x_{\bar u} .\]
	
	If $n=2$ and $\s(u)=-1$, the desired equivalence is given again by the dg functor \eqref{eq:flipping-edge} after reordering terms as in Remark \ref{rmk:wfuk-plumbing-grading}\eqref{item:reordering-grading} if necessary. If $n=2$ and $\s(u)=1$, the proof is a bit more complicated. When expressing $\cW(P(Q',M,\s');\sigma(\langle d'_e\rangle))$, we should modify Theorem \ref{thm:wfuk-plumbing-grading} by modifying the construction of the plumbing space $P(Q',M,\s')$: In Section \ref{subsubsection gluing information}, we should replace the orientation preserving $F_{\bar u}$ with an orientation reversing $F_{\bar u}$, and the orientation reversing $G_{\bar u}$ with an orientation preserving $G_{\bar u}$. The resulting space will not change as a result of the simultaneous modification. Then, the dg functor \eqref{eq:flipping-edge} gives the desired equivalence, except we set
	\[x_u\mapsto y_{\bar u},\quad y_u\mapsto x_{\bar u} .\]
\end{proof}

Finally, consider plumbing data $(Q,M,\s)$ and $(Q,M,\s')$ with $\s$ and $\s'$ related as in Proposition \ref{prp property 3} or Corollary \ref{cor property 4}. In this case, the plumbing spaces $P(Q,M,\s)$ and $P(Q,M,\s')$ are equivalent. Then, we have:

\begin{prp}\label{prp:wfuk-equivalent-2}
	Fix $n\geq 2$. Let $(Q,M,\s)$ and $(Q,M,\s')$ be plumbing data as above. Then, we have a quasi-equivalence
	\[\cW(P(Q,M,\s);\eta)\simeq\cW(P(Q,M,\s');\eta)\]
	where $\eta\in H^1(Q;\Z)$ is a grading structure on $P(Q,M,\s)$ and $P(Q,M,\s')$ given as in \eqref{eq:grading-plumbing}.
\end{prp}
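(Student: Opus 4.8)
The plan is to prove Proposition~\ref{prp:wfuk-equivalent-2} by exhibiting an explicit pretriangulated equivalence between the semifree dg categories produced by Theorem~\ref{thm:wfuk-plumbing-grading}, modelled on the autoequivalence underlying Proposition~\ref{prp property 3}. Recall from Proposition~\ref{prp property 3} and Corollary~\ref{cor property 4} that $\s$ and $\s'$ differ by a ``vertex subset'' operation: there is a subset $I\subset V(Q)$ such that $\s'(e)=-\s(e)$ precisely when exactly one endpoint of $e$ lies in $I$, and this reflects the geometric fact that reversing the orientation of $M(v)$ at each $v\in I$ turns $P(Q,M,\s)$ into (a space symplectomorphic to) $P(Q,M,\s')$. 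So the first step is to fix such an $I$ and to set up, for each $v\in I$, the change-of-orientation bookkeeping: reversing the orientation of $M(v)$ reverses the boundary orientation of $\partial U_v$, hence swaps the role of ``positively oriented'' and ``negatively oriented'' punctures of $M(v)^*$ (and of $M'_v$) and composes $\eta_v$ (and each $F_e,G_e$ with an endpoint at $v$) with an orientation-reversing self-diffeomorphism of $S^{n-1}$.

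Next I would translate this into the combinatorics of $\cP_{n,\eta}$. On the algebra side, reversing the orientation of $M(v)$ for $v\in I$ amounts to precomposing the map $\eta_v\colon C_{-*}(\Omega_pS^{n-1})\to C_{-*}(\Omega_p M(v)^*)$ with the automorphism of $C_{-*}(\Omega_pS^{n-1})$ induced by a degree-$0$ orientation-reversing diffeomorphism of $S^{n-1}$; on $z$ this acts by $z\mapsto -z$ when $n$ is odd (and trivially when $n$ is even), by the standard computation of $\cW(T^*S^{n-1})$. Concretely, the equivalence $\Xi\colon\cP_{n,\s,\eta}\to\cP_{n,\s',\eta}$ (for $n\geq3$; the $n=2$ case is handled in parallel with the localized categories) should be the identity on objects $L_v$ and on the generators of $C_{-*}(\Omega_pM(v)^*)$, and should act on the arrow generators by $x_e\mapsto \pm y_{e}$-type swaps or sign changes depending on whether the source and/or target of $e$ lies in $I$. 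The natural ansatz is: for an edge $e=v\to w$ with exactly one endpoint in $I$, send $x_e\mapsto \varepsilon_e\, x_e$ and $y_e\mapsto \varepsilon_e'\,y_e$ for suitable units $\varepsilon_e,\varepsilon_e'\in k$ (chosen from $\{\pm1\}$ so as to absorb the Koszul/orientation signs), and send $h_v\mapsto \pm h_v$, with these choices calibrated so that the $dh_v$ relation in $\cP_{n,\s,\eta}$ is carried exactly to the $dh_v$ relation in $\cP_{n,\s',\eta}$ — using that in the latter the sign $(-1)^{n(n-1)/2}\s'(e)=-(-1)^{n(n-1)/2}\s(e)$ appears in front of $x_ey_e$. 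One then checks $\Xi$ is a dg functor (differentials and compositions preserved — the only nontrivial check is the $dh_v$ equation) and that it is invertible, hence a quasi-equivalence, a fortiori a pretriangulated equivalence; the same substitution scheme works for $\cP_{2,\s,\eta}[\{1+y_ex_e\}^{-1}]$ after noting $\Xi$ sends $1+y_ex_e$ to $1+y_ex_e$ (or to the other invertible element $1+x_ey_e$, which by Remark~\ref{rmk:wfuk-plumb-2-inv} is equally good), so it descends to the localizations.

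An alternative, more conceptual route — which I would at least sketch as a remark — is to run the homotopy colimit diagram \eqref{eq:hocolim-plumbing-general} for $P(Q,M,\s)$ and $P(Q,M,\s')$ in parallel and build a morphism of diagrams in $\dgCat^I$ between them (identity on the $\cW(T^*M(v)^*)$ factors and on $\cW(P(Q,M',\s))=\cW(P(Q,M',\s'))$, twisted by the orientation-reversal autoequivalences of $\cW(T^*S^{n-1})$ at the vertices of $I$), and then invoke Theorem~\ref{thm:hocolim-functor-dg}(3) to get an induced equivalence of homotopy colimits; one must check the diagram genuinely commutes, which again reduces to the same sign bookkeeping at each $v\in I$. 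Either way, the main obstacle is purely the sign/Koszul accounting: getting the units $\varepsilon_e,\varepsilon_e'$ and the sign on $h_v$ to line up so that the twisted differential $dh_v$ matches on the nose, keeping careful track of $(-1)^{n(n-1)/2}$, $(-1)^{nd_e}$, and the $n$-parity of the orientation-reversal action on $z$. This is routine but must be done with care, especially in the $n=2$ localized case where the differential of $h_v$ is a product of terms $1+x_ey_e$ and $1+y_ex_e$ rather than a sum; there one uses Remark~\ref{rmk:wfuk-plumbing-grading}\eqref{item:reordering-grading} to reorder factors freely, which removes any ambiguity coming from the ordering conventions for punctures.
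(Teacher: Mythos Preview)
Your approach is correct and in fact matches the secondary route the paper alludes to (``This equivalence can also be directly demonstrated, following a similar approach to the proof of Proposition~\ref{prp:wfuk-equivalent-1}''): build an explicit isomorphism of the semifree models $\cP_{n,\eta}$ by adjusting signs on the generators $x_e,y_e,h_v$ at edges touching the distinguished vertex set $I$.

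However, the paper's primary argument is far shorter and entirely geometric. By Proposition~\ref{prp property 3} (or Corollary~\ref{cor property 4}), the plumbing spaces $P(Q,M,\s)$ and $P(Q,M,\s')$ are literally the same Weinstein manifold. Because they are built on the same underlying quiver $Q$, the identification \eqref{eq:grading-plumbing} of $H^1(P;\Z)$ agrees for both, so a class $\eta\in H^1(Q;\Z)$ names the same grading structure on the common manifold. Hence $\cW(P(Q,M,\s);\eta)\simeq\cW(P(Q,M,\s');\eta)$ immediately, with no algebraic bookkeeping required. Your explicit functor $\Xi$ is what one would write down to witness this equivalence at the level of the presentations in Theorem~\ref{thm:wfuk-plumbing-grading}, but it is not needed to establish the proposition.
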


\begin{proof}
	Since the plumbing spaces $P(Q,M,\s)$ and $P(Q,M,\s')$ share the same underlying quiver, their wrapped Fukaya categories are equivalent with the same grading structure. This equivalence can also be directly demonstrated, following a similar approach to the proof of Proposition \ref{prp:wfuk-equivalent-1}.
\end{proof}

\subsection{Plumbings of $T^*S^n$'s for $n \geq 3$ with an arbitrary grading structure}
\label{subsection plumbings in (a)}

In this subsection, we compute wrapped Fukaya categories of plumbings of $T^*S^n$'s for $n\geq 3$. A detailed statement can be found in Corollary \ref{cor homotopy colimit for sphere plumbing}, which is a corollary of Theorem \ref{thm:wfuk-plumbing-grading}. Additionally, we compare these computed wrapped Fukaya categories to Ginzburg dg categories in Corollary \ref{cor:ginzburg-equal-wfuk}.

\begin{cor}
	\label{cor homotopy colimit for sphere plumbing}
	Fix $n\geq 3$. Let $P_{S^n}$ be a plumbing of $T^*S^n$'s along any quiver $Q$ with or without negative intersections, where $S^n$ is an $n$-sphere.
	In other words, there is a plumbing data $(Q,M,\s)$ with $M(v)=S^n$ for any $v\in V(Q)$ such that $P_{S^n} = P(Q,M,\s)$. 
	Then, the wrapped Fukaya category of $P_{S^n}$ equipped with a grading structure $\eta\in H^1(Q;\Z)$ as in \eqref{eq:grading-plumbing} is, up to pretriangulated equivalence, given by
	\[\cW(P_{S^n};\eta)\simeq\cP^{S^n}_{n,\eta} ,\]
	where $\cP^{S^n}_{n,\eta}$ is a semifree dg category given as follows:
	\begin{enumerate}[label = (\roman*)]
		\item {\em Objects:} $L_v$ (representing a Lagrangian cocore dual to $M(v)=S^n$) for any $v \in V(Q)$.
		\item {\em Generating morphisms:} For any $v\in V(Q)$,
		\[h_v\colon L_v\to L_v ,\]
		and for any $e=v\to w\in E(Q)$,
		\[x_e\colon L_v \to L_w,\quad y_e\colon L_w \to L_v .\]
		\item {\em Degrees:} $|h_v|=1-n,\quad |x_e|=d_e,\quad |y_e|=2-n-d_e$. 
		\item {\em Differentials:} $d x_e = d y_e= 0$, and
		\[dh_v=\sum_{e= v\to \bullet} (-1)^{nd_e}y_ex_e + \sum_{e= \bullet \to v}(-1)^{n(n-1)/2}\s(e)\, x_ey_e ,\]
	\end{enumerate}
	where $\langle d_e\in\Z\vb e\in E(Q)\rangle$ is any collection that satisfies $\eta=\sigma(\langle d_e\rangle)$, and $\sigma$ is given in \eqref{eq:grading-sigma}.
\end{cor}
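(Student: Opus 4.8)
The plan is to deduce Corollary~\ref{cor homotopy colimit for sphere plumbing} directly from Theorem~\ref{thm:wfuk-plumbing-grading} by specializing the manifold data. Since $M(v)=S^n$ for every vertex $v\in V(Q)$ and $n\geq 3$, the once-punctured manifold $M(v)\setminus\text{pt}$ is diffeomorphic to the $n$-disk $\mathbb{D}^n$, which is contractible. First I would record the consequence of this for the based loop space: by Remark~\ref{rmk:loop-space}, $\cP(M(v)\setminus\text{pt})\simeq C_{-*}(\Omega_p(M(v)\setminus\text{pt}))$ is quasi-equivalent to $\cA_1$, i.e.\ the dg category with a single object and no generating morphisms (equivalently, $C_{-*}(\Omega_p\mathbb{D}^n)\simeq k$). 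Hence in the statement of Theorem~\ref{thm:wfuk-plumbing-grading}, for each $v$ there are \emph{no} generating morphisms of $C_{-*}(\Omega_p(M(v)\setminus\text{pt}))$ to add, and the only vertex-indexed generators that survive are the $h_v$.

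Next I would identify the term $\eta_v(z)$. It is defined in \eqref{eq:eta-inclusion} as the image of $z$ under the chain map $C_{-*}(\Omega_pS^{n-1})\to C_{-*}(\Omega_p(M(v)\setminus\text{pt}))$ induced by the inclusion $\eta_v\colon S^{n-1}\xrightarrow{\sim}\partial U_v\hookrightarrow M(v)\setminus\text{pt}$. Since the target is quasi-equivalent to $k$ and $|z|=2-n\neq 0$ for $n\geq 3$, the element $\eta_v(z)$ is a degree $2-n$ element in a cohomologically trivial-in-nonzero-degrees complex, so it is nullhomotopic; after the quasi-equivalence replacing $\cW(T^*M(v)^*)$ by $\cA_1$, the generator $\eta_v(z)$ simply becomes $0$. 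Therefore each differential $dh_v$ in Theorem~\ref{thm:wfuk-plumbing-grading} loses its $\eta_v(z)$ summand. Plugging $n\geq 3$ into the $n\geq 3$ branch of the formula for $dh_v$ and deleting $\eta_v(z)$ yields exactly
\[
dh_v=\sum_{e= v\to \bullet} (-1)^{nd_e}y_ex_e + \sum_{e= \bullet \to v}(-1)^{n(n-1)/2}\s(e)\, x_ey_e ,
\]
which is the formula claimed in Corollary~\ref{cor homotopy colimit for sphere plumbing}. The degrees $|h_v|=1-n$, $|x_e|=d_e$, $|y_e|=2-n-d_e$, the vanishing differentials $dx_e=dy_e=0$, and the description of objects $L_v$ as Lagrangian cocores dual to $M(v)=S^n$ are inherited verbatim from Theorem~\ref{thm:wfuk-plumbing-grading}, as is the condition $\eta=\sigma(\langle d_e\rangle)$ with $\sigma$ as in \eqref{eq:grading-sigma}. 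Since $\cP_{n,\eta}$ in Theorem~\ref{thm:wfuk-plumbing-grading} is semifree and the only change is the removal of a collection of generators together with a summand of a differential, the resulting category $\cP^{S^n}_{n,\eta}$ is still semifree and is quasi-equivalent (hence pretriangulated equivalent) to $\cP_{n,\eta}$; this is where I would invoke Proposition~\ref{prp:destabilisation} or simply the fact that deleting generators that contribute only nullhomotopic terms does not change the quasi-equivalence class. I do not expect a serious obstacle here; the only point requiring care is making the identification $\eta_v(z)=0$ precise at the chain level rather than merely up to homotopy, so that the substitution into the semifree presentation is legitimate — this is handled by choosing the semifree resolution of $C_{-*}(\Omega_p\mathbb{D}^n)$ to be $k$ itself, under which $\eta_v(z)$ is literally $0$, and then applying Theorem~\ref{thm:hocolim-functor-dg} as in the proof of Theorem~\ref{thm:wfuk-plumbing-general}.

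Finally, for completeness I would remark that the statement for the non-graded (standard) case follows by taking $d_e=0$ for all $e$, recovering the specialization of Theorem~\ref{thm:wfuk-plumbing-general} to $M(v)=S^n$, and that since $n\geq 3$ the sign $(-1)^{nd_e}$ and the factor $(-1)^{n(n-1)/2}\s(e)$ are the genuine content of the differential, with no collapsing of terms. The verification that $\sigma$ is surjective (so that every grading structure $\eta\in H^1(Q;\Z)$ is realized by some $\langle d_e\rangle$) is already established in the discussion preceding \eqref{eq:grading-sigma}, so nothing further is needed.
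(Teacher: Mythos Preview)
Your proposal is correct and follows exactly the same approach as the paper: specialize Theorem~\ref{thm:wfuk-plumbing-grading} to $M(v)=S^n$, observe that $C_{-*}(\Omega_p(S^n\setminus\text{pt}))\simeq C_{-*}(\Omega_p\mathbb{D}^n)\simeq k$, and conclude $\eta_v(z)=0$ for degree reasons since $|z|=2-n\neq 0$. Your additional care about chain-level versus homotopy identification of $\eta_v(z)$ with $0$ is more explicit than the paper's treatment, but the underlying argument is identical.
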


\begin{proof}
	It is a direct corollary of Theorem \ref{thm:wfuk-plumbing-grading} by setting $M(v)=S^n$, which gives
	\[C_{-*}(\Omega_p (M(v)\setminus\text{pt}))=C_{-*}(\Omega_p \D^n)\simeq k\]
	for each $v\in V(Q)$, where $k$ is the coefficient ring representing the multiples of the identity of $L_v$. Then, for each $v\in V(Q)$, $\eta_v(z)=0\in C_{-*}(\Omega_p \D^n)\simeq k$ for degree reasons since $|z|=2-n\neq 0$.
\end{proof}

To get the wrapped Fukaya category $\cW(P_{S^n})$ of a plumbing $P_{S^n}$ of $T^*S^n$'s with the {\em standard grading structure} as in Section \ref{subsection plumbings disks}, we set $d_e=0$ for each $e\in E(Q)$ in Corollary \ref{cor homotopy colimit for sphere plumbing}. Alternatively, $\cW(P_{S^n})$ can be obtained as a direct corollary of Theorem \ref{thm:wfuk-plumbing-general}.

Next, we will compare wrapped Fukaya categories $\cW(P_{S^n};\eta)$ to Ginzburg dg categories without potential, originally introduced in \cite{ginzburg-calabi-yau} (when $n=3$), and then in \cite{keller-calabi-yau} in more general context.

\begin{dfn}
	Fix $n\in\Z$. Let $Q$ be a graded quiver, i.e., each of its arrows $e$ is associated with an integer value $q_e$. We define the {\em Ginzburg dg category} $\cG_n(Q)$ of $Q$ (without potential) as a semifree dg category defined as follows:
	\begin{enumerate}[label = (\roman*)]
		\item {\em Objects:} $v \in V(Q)$.
		\item {\em Generating morphisms:} For any $v\in V(Q)$ and for any $e=v\to w\in E(Q)$,
		\[t_v\colon v\to v, e\colon v \to w,\quad e^*\colon w \to v .\]
		\item {\em Degrees:} $|t_v|=1-n,\quad |e|=q_e,\quad |e^*|=2-n-q_e$. 
		\item {\em Differentials:} $d e = d e^*= 0$, and
		\[dt_v=\sum_{e= v\to \bullet} e^*e - \sum_{e= \bullet \to v}(-1)^{|e| |e^*|} e e^* .\]
	\end{enumerate}
\end{dfn}

\begin{cor}\label{cor:ginzburg-equal-wfuk}
	Fix $n\geq 3$. Let $Q$ be a graded quiver, and denote the grading of each arrow $e\in E(Q)$ by $q_e\in\Z$. Define a plumbing space $P_{S^n}:=P(Q,M,\s)$ with a grading structure $\eta \in H^1(P_{S^n};\Z)$ by
	\begin{itemize}
		\item $M(v):=S^n$ for each $v\in V(Q)$,
		
		\item $\displaystyle \s(e):=-(-1)^{q_e+\frac{n(n-1)}{2}}$ for each $e\in E(Q)$,
		
		\item $\eta:=\sigma(\langle q_e\rangle)$, where $\sigma$ is given in \eqref{eq:grading-sigma}.
	\end{itemize}
	Then, the Ginzburg dg category $\cG_n(Q)$ of $Q$ is quasi-equivalent to the full subcategory of $\cW(P_{S^n};\eta)$ consisting of Lagrangian cocores $L_v$ for each $v\in V(Q)$ chosen as in Corollary \ref{cor homotopy colimit for sphere plumbing} with $d_e:=q_e$. More precisely,
	\[\cG_n(Q)=\cP^{S^n}_{n,\eta} .\]
	In particular, we have a pretriangulated equivalence
	\[\cG_n(Q)\simeq \cW(P_{S^n};\eta) .\]
\end{cor}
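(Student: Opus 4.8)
The plan is to derive Corollary \ref{cor:ginzburg-equal-wfuk} as a direct specialization of Corollary \ref{cor homotopy colimit for sphere plumbing}, followed by a term-by-term identification of the resulting semifree dg category with the Ginzburg dg category $\cG_n(Q)$. First I would feed the data $M(v)=S^n$, $\s(e)=-(-1)^{q_e+n(n-1)/2}$, and $\eta=\sigma(\langle q_e\rangle)$ into Corollary \ref{cor homotopy colimit for sphere plumbing} with the choice $d_e:=q_e$. Since $S^n\setminus\text{pt}\simeq\D^n$, we have $C_{-*}(\Omega_p(M(v)\setminus\text{pt}))\simeq k$, so no extra generating morphisms appear and $\eta_v(z)=0$ for degree reasons ($|z|=2-n\neq 0$ as $n\geq 3$); this is exactly why $\cP^{S^n}_{n,\eta}$ has the stated shape.

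Next I would match generators via $L_v\leftrightarrow v$, $h_v\leftrightarrow t_v$, $x_e\leftrightarrow e$, $y_e\leftrightarrow e^*$, and check degrees: with $d_e=q_e$ one gets $|h_v|=1-n=|t_v|$, $|x_e|=q_e=|e|$, and $|y_e|=2-n-q_e=|e^*|$. The one point requiring care is the comparison of $dh_v$ with $dt_v$. Here I would apply the rescaling $y_e\mapsto(-1)^{nd_e}y_e$ — a legitimate semifree change of generators by Proposition \ref{prp:variable-change} — and then verify, using $\s(e)=-(-1)^{q_e+n(n-1)/2}$ together with the parity identity $(-1)^{n(n-1)}=1$, that the coefficient of $y_ex_e$ in the first sum of $dh_v$ becomes $+1$ (matching the $e^*e$ term of $dt_v$), while the coefficient of $x_ey_e$ in the second sum becomes $-(-1)^{d_e(n+1)}$, which equals $-(-1)^{|e|\,|e^*|}$ because $|e|\,|e^*|=d_e(2-n-d_e)\equiv d_e(n+1)\pmod 2$. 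This yields the identity $\cG_n(Q)=\cP^{S^n}_{n,\eta}$ asserted in the corollary.

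Finally, the ``in particular'' clause follows by combining this identification with two facts already available: Corollary \ref{cor homotopy colimit for sphere plumbing} gives $\cW(P_{S^n};\eta)\simeq\cP^{S^n}_{n,\eta}$ as a pretriangulated equivalence realizing $\{L_v\}$ as the Lagrangian cocores, and Theorem \ref{thm:cocore-generate} guarantees these cocores generate $\cW(P_{S^n};\eta)$; hence $\cG_n(Q)\simeq\cW(P_{S^n};\eta)$. I expect the only genuinely fiddly step to be the Koszul-sign bookkeeping in $dh_v$ versus $dt_v$ described above; everything else is reading off and comparing two explicit presentations.
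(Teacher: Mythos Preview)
Your proposal is correct and follows essentially the same approach as the paper: both arguments identify $\cG_n(Q)$ with $\cP^{S^n}_{n,\eta}$ via the rescaling $e^*\leftrightarrow(-1)^{nq_e}y_e$ (equivalently your $y_e\mapsto(-1)^{nd_e}y_e$), and the sign check you outline is exactly the verification the paper leaves implicit. Two minor redundancies: the discussion of $\eta_v(z)=0$ is already absorbed into the proof of Corollary~\ref{cor homotopy colimit for sphere plumbing} and need not be repeated here, and the appeal to Theorem~\ref{thm:cocore-generate} is unnecessary since the pretriangulated equivalence $\cW(P_{S^n};\eta)\simeq\cP^{S^n}_{n,\eta}$ is already the content of Corollary~\ref{cor homotopy colimit for sphere plumbing}.
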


\begin{proof}
	First, we relabel the objects and generating morphisms of $\cG_n(Q)$ as follows:
	\begin{align*}
		v\mapsto L_v&,\quad t_v\mapsto h_v &&\text{ for each }v\in V(Q),\\
		e\mapsto x_e&,\quad e^*\mapsto (-1)^{nq_e}y_e &&\text{for each }e\in E(Q) .
	\end{align*}
	Then, we see that $\cG_n(Q)=\cP^{S^n}_{n,\eta}$. Rest of the corollary directly follows.
\end{proof}

The wrapped Fukaya category of any $2n$-dimensional Weinstein manifold is homologically smooth and $n$-Calabi-Yau, hence Corollary \ref{cor:ginzburg-equal-wfuk} implies a result of \cite{keller-calabi-yau} when $n\geq 3$:

\begin{cor}\label{cor:ginzburg-smooth-cy}
	Given $n\geq 3$ and any graded quiver $Q$, the Ginzburg dg category $\cG_n(Q)$ is homologically smooth and $n$-Calabi-Yau.
\end{cor}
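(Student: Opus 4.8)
The plan is to deduce Corollary \ref{cor:ginzburg-smooth-cy} directly from Corollary \ref{cor:ginzburg-equal-wfuk} together with two general structural facts about wrapped Fukaya categories of Weinstein manifolds. First I would recall that, given any graded quiver $Q$ and any $n\geq 3$, Corollary \ref{cor:ginzburg-equal-wfuk} produces a plumbing data $(Q,M,\s)$ with $M(v)=S^n$ for all $v$, and a grading structure $\eta$, such that there is a pretriangulated equivalence
\[\cG_n(Q)\simeq \cW(P_{S^n};\eta),\]
where $P_{S^n}=P(Q,M,\s)$ is a Weinstein manifold of dimension $2n$. Since homological smoothness and the Calabi-Yau property are invariant under pretriangulated equivalence (equivalently, under passing to $\Tw$ and then taking a quasi-equivalence), it suffices to establish these two properties for $\cW(P_{S^n};\eta)$.

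The key inputs are: (i) the wrapped Fukaya category of any Weinstein manifold is homologically smooth (this follows from the fact that it is generated by the Lagrangian cocores, Theorem \ref{thm:cocore-generate}, together with the finiteness of the cocore collection when $Q$ is finite, and the generation-implies-smoothness argument; one cites here the relevant result of Ganatra-Pardon-Shende, as is standard); and (ii) the wrapped Fukaya category of a $2n$-dimensional Weinstein manifold carries a canonical $n$-Calabi-Yau structure (the Weinstein/cyclic open-closed map and the nondegeneracy statements, again from the work surrounding \cite{gps1,gps2}, or one may cite Ganatra's thesis and its extensions). So the proof structure is: (1) invoke Corollary \ref{cor:ginzburg-equal-wfuk} to realize $\cG_n(Q)$ as $\cW(P_{S^n};\eta)$ up to pretriangulated equivalence; (2) note $P_{S^n}$ is a Weinstein $2n$-manifold by construction in Section \ref{subsection construction of plumbing spaces}; (3) apply (i) and (ii); (4) observe that smoothness and the $n$-CY property transfer across the equivalence. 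Since $n\geq 3$ is exactly the range in which Corollary \ref{cor:ginzburg-equal-wfuk} holds, this covers the claimed range.

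The main obstacle, such as it is, is not in the logical skeleton but in pinning down precisely which published statements give facts (i) and (ii) in the generality needed — in particular, that the Calabi-Yau structure on $\cW$ is the one that matches the standard smooth Calabi-Yau structure on Ginzburg dg categories from \cite{keller-calabi-yau}, rather than merely asserting existence of \emph{some} CY structure. One should be slightly careful that the grading conventions (the shift by $n$, the sign $(-1)^{n(n-1)/2}$ absorbed into $\s$) are compatible so that the CY dimension comes out as $n$ and not $n$ shifted; this is a bookkeeping check rather than a genuine difficulty, and it is already implicitly handled by the degree assignments $|h_v|=1-n$, $|x_e|=d_e$, $|y_e|=2-n-d_e$ appearing in Corollary \ref{cor homotopy colimit for sphere plumbing}. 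I would therefore keep the proof short: cite the smoothness and CY statements for $\cW$ of Weinstein manifolds, invoke Corollary \ref{cor:ginzburg-equal-wfuk}, and conclude by invariance under pretriangulated equivalence.
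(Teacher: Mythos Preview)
Your proposal is correct and follows essentially the same approach as the paper: the paper's proof simply cites Corollary \ref{cor:ginzburg-equal-wfuk} together with \cite[Theorem 1.3]{Ganatra12} (Ganatra's thesis) for the fact that the wrapped Fukaya category of a $2n$-dimensional Weinstein manifold is homologically smooth and $n$-Calabi-Yau. Your extended discussion of invariance under pretriangulated equivalence and the grading bookkeeping is accurate but more detailed than the paper's one-line proof.
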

\begin{proof}
	It follows Corollary \ref{cor:ginzburg-equal-wfuk} and \cite[Theorem 1.3]{Ganatra12}.
\end{proof}

\begin{rmk}
	A special case of Corollary \ref{cor homotopy colimit for sphere plumbing} is given by \cite{lekili-ueda} when $Q$ is a Dynkin quiver and by \cite{Asplund21} when $Q$ is any quiver with $\s(e)=-(-1)^{n(n-1)/2}$ for each $e\in E(Q)$ (implicitly chosen), and with a fixed grading structure given by $d_e=0$ for all $e\in E(Q)$, where the coefficient ring $k$ is a field. In comparison, the advantage of our computation in Corollary \ref{cor homotopy colimit for sphere plumbing} lies in its applicability to:
	\begin{itemize}
		\item any quiver $Q$,
		
		\item any sign data on $Q$, i.e., $\s(e)$ is arbitrary for each $e\in E(Q)$,
		
		\item any grading structure given by $H^1(Q;\Z)$,
		
		\item any commutative ring as a coefficient ring $k$.
	\end{itemize}
	As a result, we can realize the Ginzburg dg category of any graded quiver (without potential) as a wrapped Fukaya category of a plumbing space in Corollary \ref{cor:ginzburg-equal-wfuk}.
\end{rmk}

We conclude this subsection with a characterization of some pretriangulated (and hence, Morita) equivalent Ginzburg dg categories, observed through their realization as wrapped Fukaya categories:

\begin{cor}
	Fix $n\in \Z$. Let $Q$ be a graded quiver, and $\langle q_e\in\Z\vb e\in E(Q)\rangle$ be the grading of its edges. Let $Q'$ be another graded quiver with the same underlying quiver and with the grading $\langle q_e'\in\Z\vb e\in E(Q')\rangle$. If
	\[\sigma(\langle q_e\rangle)=\sigma(\langle q_e'\rangle)\in H^1(Q;\Z) ,\]
	where $\sigma$ is given in \eqref{eq:grading-sigma}, then there is a pretriangulated equivalence
	\[\cG_n(Q)\simeq\cG_n(Q') .\]
\end{cor}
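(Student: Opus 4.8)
The plan is to deduce this corollary from the fact, established just above, that any Ginzburg dg category can be realized as a wrapped Fukaya category of a plumbing of $T^*S^n$'s. For $n\geq 3$ this is immediate from Corollary \ref{cor:ginzburg-equal-wfuk}: given graded quivers $Q$ and $Q'$ as in the statement, define plumbing data $(Q,M,\s)$ and $(Q',M,\s')$ with $M(v)=S^n$ for every vertex, $\s(e):=-(-1)^{q_e+n(n-1)/2}$, $\s'(e):=-(-1)^{q_e'+n(n-1)/2}$, and grading structures $\eta:=\sigma(\langle q_e\rangle)$, $\eta':=\sigma(\langle q_e'\rangle)$. Since $Q$ and $Q'$ share the same underlying (ungraded) quiver, the plumbing spaces $P(Q,M,\s)$ and $P(Q',M,\s')$ have the same underlying Weinstein manifold up to the sign data; by Corollary \ref{cor property 4} / Proposition \ref{prp:wfuk-equivalent-2} the dependence on $\s$ only matters through $H^1(Q;\Z/2)$, and in any case the wrapped Fukaya category with a fixed grading structure depends only on the underlying Weinstein manifold. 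Thus, by Corollary \ref{cor:ginzburg-equal-wfuk},
\[\cG_n(Q)\simeq\cW(P(Q,M,\s);\eta),\qquad \cG_n(Q')\simeq\cW(P(Q',M,\s');\eta').\]
The hypothesis $\sigma(\langle q_e\rangle)=\sigma(\langle q_e'\rangle)$ says precisely that $\eta=\eta'$, so the two wrapped Fukaya categories are pretriangulated equivalent (they compute the same invariant of the same Weinstein manifold with the same grading structure), and hence $\cG_n(Q)\simeq\cG_n(Q')$.

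More directly, and this handles all $n\in\Z$ without invoking symplectic geometry, one can exhibit the equivalence $\cG_n(Q)\simeq\cG_n(Q')$ purely algebraically, mimicking the proof of Proposition \ref{prp:wfuk-equivalent-1}. The key point is that the surjective map $\tilde\sigma\colon\Z^{|E(Q)|}\to\{\cW(P;\eta)\}$ (and its algebraic shadow on the level of the dg categories $\cP_{n,\eta}$) factors through $\sigma$; that is, two collections $\langle d_e\rangle$ and $\langle d_e'\rangle$ with $\sigma(\langle d_e\rangle)=\sigma(\langle d_e'\rangle)$ yield pretriangulated equivalent categories. Concretely, any two such collections differ by a sum of "elementary" moves: adding a constant $c\in\Z$ to all $d_e$ for edges $e$ incident to a fixed vertex $v$ (with sign according to orientation at $v$) — which corresponds to applying a degree shift autoequivalence $k[c]\otimes-$ at the object $L_v$ — and possibly reversing an edge as in Proposition \ref{prp:wfuk-equivalent-1}. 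Each such move induces an explicit quasi-equivalence of the semifree dg categories: on objects it is the identity except for a shift at one object, and on generating morphisms it rescales $x_e,y_e,h_v$ by the canonical isomorphisms $1_{p,q}$ with the appropriate Koszul signs, exactly as in Remark \ref{rmk:shifting-generators}. Composing these moves produces the desired quasi-equivalence $\cG_n(Q)\simeq\cG_n(Q')$.

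The main obstacle is the bookkeeping of Koszul signs when one reverses an edge or shifts a vertex: after relabeling $e\mapsto x_e$, $e^*\mapsto(-1)^{nq_e}y_e$ (as in the proof of Corollary \ref{cor:ginzburg-equal-wfuk}), one must verify that the differential relation $dt_v=\sum_{e=v\to\bullet}e^*e-\sum_{e=\bullet\to v}(-1)^{|e||e^*|}ee^*$ is preserved under the reparametrization that trades $d_e$ for $d_e'$. This is precisely the content of the sign computation in Proposition \ref{prp:wfuk-equivalent-1} (with the dictionary $n(n-1)/2$-sign absorbed into the choice of $\s$), so no genuinely new difficulty arises; it is a matter of transporting that argument through the relabeling. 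A minor additional subtlety for $n=2$ is that one works with $\cP_{2,\eta}[\{1+y_ex_e\}^{-1}]$ rather than with a genuinely semifree category, but since the localization and the reparametrizations commute (the elements $1+y_ex_e$ are sent to units), the same argument applies; I would remark on this in one sentence and otherwise present the $n\geq 3$ case in full.

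\begin{proof}
	For $n\geq 3$, define plumbing data $(Q,M,\s)$ and $(Q,M,\s')$ (with the same underlying quiver, abusively denoted $Q$ in both cases) by $M(v):=S^n$ for all $v$, $\s(e):=-(-1)^{q_e+n(n-1)/2}$ and $\s'(e):=-(-1)^{q_e'+n(n-1)/2}$ for all $e$. By Corollary \ref{cor:ginzburg-equal-wfuk},
	\[\cG_n(Q)\simeq\cW(P(Q,M,\s);\sigma(\langle q_e\rangle)),\qquad \cG_n(Q')\simeq\cW(P(Q,M,\s');\sigma(\langle q_e'\rangle)) .\]
	The plumbing spaces $P(Q,M,\s)$ and $P(Q,M,\s')$ share the same underlying quiver, so by Proposition \ref{prp:wfuk-equivalent-2} their wrapped Fukaya categories with a common grading structure are pretriangulated equivalent. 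Since $\sigma(\langle q_e\rangle)=\sigma(\langle q_e'\rangle)$ by hypothesis, we conclude
	\[\cG_n(Q)\simeq\cW(P(Q,M,\s);\sigma(\langle q_e\rangle))\simeq\cW(P(Q,M,\s');\sigma(\langle q_e'\rangle))\simeq\cG_n(Q') .\]
	For general $n\in\Z$, one argues purely algebraically: the condition $\sigma(\langle q_e\rangle)=\sigma(\langle q_e'\rangle)$ means the two collections differ by a combination of the elementary moves described above (shifting all edge-degrees incident to a vertex by a constant, with signs dictated by orientation), and each such move induces an explicit quasi-equivalence of the corresponding semifree dg categories obtained by composing a shift autoequivalence at one object with a rescaling of the generating morphisms by the canonical isomorphisms $1_{p,q}$, exactly as in Remark \ref{rmk:shifting-generators} and the proof of Proposition \ref{prp:wfuk-equivalent-1}. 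Composing these yields $\cG_n(Q)\simeq\cG_n(Q')$.
\end{proof}
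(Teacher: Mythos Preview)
Your approach for $n\geq 3$ matches the paper's: realize both Ginzburg categories as wrapped Fukaya categories via Corollary \ref{cor:ginzburg-equal-wfuk}, then invoke Proposition \ref{prp:wfuk-equivalent-2}. However, you apply Proposition \ref{prp:wfuk-equivalent-2} too quickly. That proposition does \emph{not} say that any two sign functions $\s,\s'$ on the same quiver yield equivalent wrapped Fukaya categories; it requires $\s$ and $\s'$ to be related as in Proposition \ref{prp property 3}, i.e., to differ by flipping all signs at edges touching a fixed vertex set $I$. You must verify that your particular $\s(e)=-(-1)^{q_e+n(n-1)/2}$ and $\s'(e)=-(-1)^{q_e'+n(n-1)/2}$ satisfy this. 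They do: since $\sigma(\langle q_e\rangle)=\sigma(\langle q_e'\rangle)$, the difference $\langle q_e'-q_e\rangle$ lies in $\ker\sigma$, so there exist integers $c_v$ for $v\in V(Q)$ with $q_e'-q_e=c_w-c_v$ for each $e=v\to w$; then $\s'(e)/\s(e)=(-1)^{c_w-c_v}$, and taking $I=\{v:c_v\text{ odd}\}$ puts you in the situation of Proposition \ref{prp property 3}. The paper glosses this as ``it is not hard to show,'' but it is the one nontrivial step, and your formal proof omits it entirely.

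For general $n\in\Z$, the paper takes a shorter route than your explicit algebraic construction: it simply observes that the statement ``$\cG_n(Q)\simeq\cG_n(Q')$'' only depends on the parity of $n$ (since the sign $(-1)^{|e||e^*|}$ in $dt_v$ and the way shift autoequivalences interact with the generators depend only on $n\bmod 2$), so having proved it for all $n\geq 3$ suffices. Your algebraic argument via vertex shifts is correct and arguably more self-contained, but your reference to Proposition \ref{prp:wfuk-equivalent-1} is a red herring here---that proposition is about reversing an arrow, which is irrelevant since $Q$ and $Q'$ share the same underlying quiver. The relevant move is purely the vertex shift from Remark \ref{rmk:shifting-generators}.
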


\begin{proof}
	First, assume $n\geq 3$. Let $\eta:=\sigma(\langle q_e\rangle)=\sigma(\langle q_e'\rangle)\in H^1(Q;\Z)$. By Corollary \ref{cor:ginzburg-equal-wfuk}, we have $\cG_n(Q)\simeq\cW(P(Q,M,\s);\eta)$, where $M(v):=S^n$ for all $v\in V(Q)$ and $\s(e):=-(-1)^{q_e+\frac{n(n-1)}{2}}$ for all $e\in E(Q)$. We also have $\cG_n(Q')\simeq\cW(P(Q',M,\s');\eta)$, where $\s'(e):=-(-1)^{q'_e+\frac{n(n-1)}{2}}$ for all $e\in E(Q')$. Then, it is not hard to show that
	\[\cW(P(Q,M,\s);\eta)\simeq\cW(P(Q',M,\s');\eta)\]
	using Proposition \ref{prp:wfuk-equivalent-2} along with the facts that $Q$ and $Q'$ have the same underlying quiver and $\sigma(\langle q_e\rangle)=\sigma(\langle q_e'\rangle)\in H^1(Q;\Z)$, which proves the corollary for $n\geq 3$. Since the statement only depends on the parity of $n$, it holds for any $n\in\Z$.
\end{proof}

\subsection{Plumbings of cotangent bundles of oriented closed surfaces with an arbitrary grading structure}
\label{subsection plumbings in (b)}

In this subsection, we compute wrapped Fukaya categories of plumbings of cotangent bundles of oriented closed surfaces. A detailed statement can be found in Corollary \ref{cor:wfuk-plumbing-surface}, which is a corollary of Theorem \ref{thm:wfuk-plumbing-grading}. Additionally, we compare these computed wrapped Fukaya categories to derived multiplicative projective algebras in Corollary \ref{cor:multiplicative-preprojective-algebra}.

\begin{cor}
	\label{cor:wfuk-plumbing-surface}
	Let $P_{\Sigma}$ be a plumbing of $T^*\Sigma_{g_v}$'s along any quiver $Q$ with or without negative intersections, where $\Sigma_{g_v}$ is an oriented closed surface of genus $g_v\geq 0$ for any $v\in V(Q)$.
	In other words, there is a plumbing data $(Q,M,\s)$ with $M(v)=\Sigma_{g_v}$ for any $v\in V(Q)$ such that $P_{\Sigma} = P(Q,M,\s)$. 
	Then, the wrapped Fukaya category of $P_{\Sigma}$ equipped with a grading structure $\eta\in H^1(Q;\Z)$ as in \eqref{eq:grading-plumbing} is, up to pretriangulated equivalence, given by
	\[\cW(P_{\Sigma};\eta)\simeq\cP^{\Sigma}_{2,\eta}[\{1+y_ex_e,\alpha_j^v,\beta_j^v\vb e\in E(Q), v\in V(Q), j\in\{1,\ldots,g_v\}\}^{-1}]\]
	where $\cP^{\Sigma}_{2,\eta}$ is a semifree dg category given as follows:
	\begin{enumerate}[label = (\roman*)]
		\item {\em Objects:} $L_v$ (representing a Lagrangian cocore dual to $M(v)=\Sigma_{g_v}$) for any $v \in V(Q)$.
		\item {\em Generating morphisms:} For any $v\in V(Q)$,
		\[h_v\colon L_v\to L_v ,\quad \alpha_j^v, \beta_j^v, \gamma_j^v, \delta_j^v\colon L_v\to L_v\quad\text{for }j=1, \ldots, g_v,\]
		and for any $e=v\to w\in E(Q)$,
		\[x_e\colon L_v \to L_w,\quad y_e\colon L_w \to L_v .\]
		\item {\em Degrees:} $|h_v|=-1,\quad |\alpha_j^v| = |\beta_j^v|= |\delta_j^v| =0,\quad |\gamma_j^v|= -1,\quad |x_e|=d_e,\quad |y_e|=-d_e$. 
		\item {\em Differentials:} $d \alpha_j^v = d \beta_j^v = d \delta_j^v =0,\quad d\gamma_j^v = \alpha_j^v \beta_j^v- \beta_j^v\alpha_j^v\delta_j^v, \quad d x_e = d y_e= 0$, and
		\[dh_v=\prod_{j=1}^{g_v}\delta_j^v\prod_{\substack{e= \bullet \to v\\ \s(e)=-1}} (1+x_ey_e)\prod_{e= v\to \bullet} (1+y_ex_e) - \prod_{\substack{e= \bullet \to v\\ \s(e)=1}}(1+x_ey_e)\]
	\end{enumerate}
	where $\langle d_e\in\Z\vb e\in E(Q)\rangle$ is any collection that satisfies $\eta=\sigma(\langle d_e\rangle)$, where $\sigma$ is given in \eqref{eq:grading-sigma}.
\end{cor}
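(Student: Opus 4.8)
The plan is to obtain this as a specialization of Theorem~\ref{thm:wfuk-plumbing-grading} to the case $M(v)=\Sigma_{g_v}$, so that only its $n=2$ branch is needed and $\cW(P_\Sigma;\eta)\simeq\cP_{2,\eta}[\{1+y_ex_e\}^{-1}]$ with $\cP_{2,\eta}$ as in that theorem; the actual content is to make explicit the two pieces of vertex data that Theorem~\ref{thm:wfuk-plumbing-grading} takes as input, namely the semifree dg algebra $C_{-*}(\Omega_p(\Sigma_{g_v}\setminus\text{pt}))$ and the distinguished element $\eta_v(z)$ in it coming from the boundary inclusion \eqref{eq:eta-inclusion}, and then to re-present the resulting category in the stated normal form.

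First I would handle the vertices with $g_v\geq 1$. Since $\Sigma_{g_v}\setminus\text{pt}$ is homotopy equivalent to the once-punctured surface $\Sigma_{g_v,1}$, Theorem~\ref{thm:cotangent-generation} together with Remark~\ref{rmk:loop-space} identifies $C_{-*}(\Omega_p(\Sigma_{g_v}\setminus\text{pt}))$ with the endomorphism algebra underlying $\cW(T^*\Sigma_{g_v,1})$, which Proposition~\ref{prp orientable surface with puctures} computes, for $m=1$, as $\cM_{g_v,1}[\{\alpha_j,\beta_j,a_1\}^{-1}]$: a localized semifree dg algebra with generators $\alpha_j^v,\beta_j^v,\gamma_j^v,\delta_j^v$ for $j=1,\dots,g_v$ together with $a_1^v$ and an auxiliary $h^v$, satisfying $d\gamma_j^v=\alpha_j^v\beta_j^v-\beta_j^v\alpha_j^v\delta_j^v$ and $dh^v=a_1^v-\prod_{j=1}^{g_v}\delta_j^v$. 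I would then simplify this model: by Propositions~\ref{prp:variable-change} and~\ref{prp:destabilisation} the pair $(h^v,a_1^v)$ is cancelled, which deletes both generators and substitutes $a_1^v=\prod_{j=1}^{g_v}\delta_j^v$ throughout; since $a_1^v$ was inverted, the invertibility of $\prod_j\delta_j^v$, hence of each $\delta_j^v$, now follows from that of $\alpha_j^v,\beta_j^v$ via the differential of $\gamma_j^v$ (exactly as in the proof of Proposition~\ref{prp orientable surface with puctures}), so $\delta_j^v$ may be dropped from the localization set. This produces precisely the generators, degrees, and the differentials $d\gamma_j^v$ appearing in $\cP^{\Sigma}_{2,\eta}$.

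Next I would identify $\eta_v(z)$. The map $\eta_v$ is induced by the orientation-preserving inclusion $S^1\xrightarrow{\sim}\partial U_v\hookrightarrow\Sigma_{g_v}\setminus\text{pt}$ with $\partial U_v$ carrying the boundary orientation of $\Sigma_{g_v}\setminus U_v$; comparing this with the inclusion functor of Proposition~\ref{prp orientable surface with puctures}\eqref{item:punctured-surface-inclusion} (keeping in mind that there $\partial U_i$ is oriented as a boundary of $U_i$, i.e.\ with the opposite convention, which is the orientation-reversing situation of Proposition~\ref{prp sphere with punctures general}\eqref{item:punctured-sphere-inclusion-general}), the generator $z$ is sent to the boundary monodromy class, which in the simplified model is $a_1^v=\prod_{j=1}^{g_v}\delta_j^v$. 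Substituting $\eta_v(z)=\prod_{j=1}^{g_v}\delta_j^v$ into the formula for $dh_v$ of Theorem~\ref{thm:wfuk-plumbing-grading} reproduces the differential claimed in the corollary. Finally, as in the proof of Theorem~\ref{thm:wfuk-plumbing-general}, the homotopy colimit defining $\cW(P_\Sigma;\eta)$ is computed as an ordinary colimit, so the localization at $1+y_ex_e$ inherited from the plumbing sector via Theorem~\ref{thm:wfuk-plumbing-2} and the localizations at $\alpha_j^v,\beta_j^v$ surviving from $\cW(T^*\Sigma_{g_v,1})$ pass directly to the colimit, yielding the stated localization set $\{1+y_ex_e,\alpha_j^v,\beta_j^v\}$.

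It remains to dispose of the vertices with $g_v=0$: there $\Sigma_0\setminus\text{pt}$ is a disk, so $C_{-*}(\Omega_p(\Sigma_0\setminus\text{pt}))\simeq k$ contributes no generators (the empty products $\prod_{j=1}^{0}$ being read as $1_{L_v}$), and since $z$ is an invertible degree-$0$ element of $\cW(T^*S^1)\simeq\cC_1[z^{-1}]$ while the boundary circle of the disk is null-homotopic, $\eta_v(z)=1_{L_v}=\prod_{j=1}^{0}\delta_j^v$; hence the formula in the corollary is uniform for all $g_v\geq 0$. The genuinely delicate point is the orientation bookkeeping for $\eta_v(z)$ — confirming that it equals $\prod_{j=1}^{g_v}\delta_j^v$ rather than, say, a reversed-order product of inverses — and I expect this to be the main obstacle; it can be settled by tracking orientations through the decomposition of $\Sigma_{g_v,1}$ used in the proof of Proposition~\ref{prp orientable surface with puctures}.
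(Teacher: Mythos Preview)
Your proposal is correct and follows essentially the same approach as the paper: both specialize Theorem~\ref{thm:wfuk-plumbing-grading} to $M(v)=\Sigma_{g_v}$, invoke Proposition~\ref{prp orientable surface with puctures} with $m=1$ to describe $C_{-*}(\Omega_p\Sigma_{g_v,1})$, identify $\eta_v(z)$ with $a_1^v$ via the orientation comparison, and then cancel the pair $(h^v,a_1^v)$ to arrive at $\prod_j\delta_j^v$. Your treatment is in fact more detailed than the paper's terse proof, particularly in separating out the $g_v=0$ case and in tracking the localization set; the orientation subtlety you flag as the main obstacle is exactly what the paper dispatches in one clause (``after reversing the orientation of $\Sigma_{g_v,1}$ and making $F_1$ an orientation reversing inclusion'').
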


\begin{rmk}\label{rmk:wfuk-plumbing-surface}\mbox{}
	\begin{enumerate}
		\item In Corollary \ref{cor:wfuk-plumbing-surface}, when $g_v=0$, $\Sigma_{g_v}$ is $S^2$ and $\prod_{j=1}^{g_v}\delta_j^v$ is the identity.
		
		\item The invertibility (up to homotopy) of $1+y_ex_e$ implies the invertibility of $1+x_ey_e$ for each $e\in E(Q)$ as in Remark \ref{rmk:plumbing-disks}\eqref{item:wfuk-disk-inv}.
		
		\item\label{item:reordering-surface} For each $v\in V(Q)$, the order of the terms in the differential of $h_v$ can be rearranged as explained in Remark \ref{rmk:plumbing-disks}\eqref{item:reordering-disk} (after setting $m_v=\prod_{j=1}^{g_v}\delta_j^v$). Any of these rearrangements gives us the same dg category $\cP_{2,\eta}^{\Sigma}$ up to quasi-equivalence.
	\end{enumerate}
\end{rmk}

\begin{proof}[Proof of Corollary \ref{cor:wfuk-plumbing-surface}]
	It is a direct corollary of Theorem \ref{thm:wfuk-plumbing-grading} by setting $M(v)=\Sigma_{g_v}$ for each $v\in V(Q)$. In more detail, for each $v\in V(Q)$, we have
	\[C_{-*}(\Omega_p (M(v)\setminus\text{pt}))=C_{-*}(\Omega_p \Sigma_{g_v,1})\]
	which is given by $\hom^*(L,L)$ in $\cW(T^*\Sigma_{g_v,1})$ by Proposition \ref{thm:cotangent-generation} and Remark \ref{rmk:loop-space}, where $L$ is a cotangent fiber of $T^*\Sigma_{g_v,1}$. Moreover, $\cW(T^*\Sigma_{g_v,1})$ is described in Proposition \ref{prp orientable surface with puctures} (setting $m=1$), where we relabel the generating morphisms by $\alpha_j^v, \beta_j^v, \gamma_j^v, \delta_j^v, a_1^v, h^v$ for $j=1, \ldots, g_v$ here. Then, for each $v\in V(Q)$, $\eta_v(z)=F_1(z)=a_1^v$ by Proposition \ref{prp orientable surface with puctures}\eqref{item:punctured-surface-inclusion}, after reversing the orientation of $\Sigma_{g_v,1}$ and making $F_1$ an orientation reversing inclusion. Note that we can set $h^v=0$ and identify $a_1^v$ with $\prod_{j=1}^{g_v} \delta_j^v$, which proves the corollary.
\end{proof}

\begin{rmk}\mbox{}
	\begin{enumerate}
		\item To get the wrapped Fukaya category $\cW(P_{\Sigma})$ of a plumbing $P_{\Sigma}$ with the {\em standard grading structure} as in Section \ref{subsection plumbings disks}, we set $d_e=0$ for each $e\in E(Q)$ in Corollary \ref{cor:wfuk-plumbing-surface}. Alternatively, $\cW(P_{\Sigma})$ can be obtained as a direct corollary of Theorem \ref{thm:wfuk-plumbing-general}.

		\item A special case of Corollary \ref{cor:wfuk-plumbing-surface} corresponds to \cite[Theorem 1 and Remark 19]{Etgu-Lekili19}, up to quasi-equivalence, when the grading structure is given by $d_e=0$ for all $e\in E(Q)$, and the coefficient ring $k$ is a field. See Remark \ref{rmk equivalence with the conventional plumbing} and Remark \ref{rmk simplified version of M_g,m with relations}.
	\end{enumerate}
\end{rmk}

We conclude this subsection with a comparison of wrapped Fukaya categories $\cW(P_{\Sigma};\eta)$ to derived multiplicative preprojective algebras, a result also presented in \cite{Etgu-Lekili19}:

\begin{cor}\label{cor:multiplicative-preprojective-algebra}
	Let $P_{\Sigma}$ be a plumbing of $T^*\Sigma_{g_v}$'s along any quiver $Q$ without negative intersections, where $\Sigma_{g_v}$ is an oriented closed surface of genus $g_v\geq 0$ for any $v\in V(Q)$. Then, the wrapped Fukaya category $\cW(P_{\Sigma};\eta)$ of $P_{\Sigma}$ with the grading structure $\eta=0\in H^1(Q;\Z)$ is Morita equivalent to a derived multiplicative preprojective algebra defined in \cite{crawley-boevey-shaw} when $g_v=0$ for all $v\in V(Q)$ and in \cite{bezrukavnikov-kapranov} for a general choice of $g_v$'s.
\end{cor}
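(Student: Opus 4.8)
The plan is to read off the answer from the presentation already produced in Corollary \ref{cor:wfuk-plumbing-surface}. Under the hypotheses of the corollary, the grading structure $\eta = 0 \in H^1(Q;\Z)$ is realised by the collection $\langle d_e = 0\rangle$, and ``without negative intersections'' means $\s \equiv 1$, so the empty product $\prod_{e=\bullet\to v,\,\s(e)=-1}(1+x_ey_e)$ disappears and Corollary \ref{cor:wfuk-plumbing-surface} gives $\cW(P_\Sigma;0)\simeq\cP^\Sigma_{2,0}[\{1+y_ex_e,\alpha_j^v,\beta_j^v\}^{-1}]$ with
\[
dh_v = \prod_{j=1}^{g_v}\delta_j^v\prod_{e=v\to\bullet}(1+y_ex_e) - \prod_{e=\bullet\to v}(1+x_ey_e).
\]
Since the Lagrangian cocores $L_v$ generate $\cW(P_\Sigma;0)$ by Theorem \ref{thm:cocore-generate}, this dg category is Morita equivalent to the dg algebra $A_\Sigma := \bigoplus_{v,w\in V(Q)}\hom^*(L_v,L_w)$ obtained by collapsing the $L_v$ to orthogonal idempotents; so it suffices to identify $A_\Sigma$ with a dg model of the derived multiplicative preprojective algebra.

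Next I would simplify $A_\Sigma$ exactly as in Remark \ref{rmk simplified version of M_g,m with relations}: since $\alpha_j^v,\beta_j^v$ are invertible up to homotopy, multiplying $d\gamma_j^v = \alpha_j^v\beta_j^v - \beta_j^v\alpha_j^v\delta_j^v$ on the left by $(\beta_j^v\alpha_j^v)^{-1}$ and invoking Proposition \ref{prp:variable-change} replaces $\gamma_j^v$ by a generator $\tilde\gamma_j^v$ with $d\tilde\gamma_j^v = [\alpha_j^v,\beta_j^v] - \delta_j^v$, where $[\alpha_j^v,\beta_j^v] := (\alpha_j^v)^{-1}(\beta_j^v)^{-1}\alpha_j^v\beta_j^v$. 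Cancelling the pair $(\tilde\gamma_j^v,\delta_j^v)$ by Proposition \ref{prp:destabilisation} substitutes $\delta_j^v \mapsto [\alpha_j^v,\beta_j^v]$ everywhere (in particular the inversion of $\delta_j^v$ becomes redundant, cf.\ Remark \ref{rmk:wfuk-plumbing-surface}), and
\[
dh_v = \prod_{j=1}^{g_v}[\alpha_j^v,\beta_j^v]\prod_{e=v\to\bullet}(1+y_ex_e) - \prod_{e=\bullet\to v}(1+x_ey_e).
\]
After this step $A_\Sigma$ is the semifree dg algebra over $\bigoplus_v k$ on $x_e,y_e$ (degree $0$), $\alpha_j^v,\beta_j^v$ (degree $0$, invertible) and $h_v$ (degree $-1$), localised at $1+y_ex_e$ (equivalently also at $1+x_ey_e$, by Remark \ref{rmk:wfuk-plumb-2-inv}) and at $\alpha_j^v,\beta_j^v$, with $dh_v$ as above.

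The last step is to match this with the definition of the derived multiplicative preprojective algebra: the Crawley--Boevey--Shaw algebra of \cite{crawley-boevey-shaw} when all $g_v=0$, and its genus-$g$ analogue of \cite{bezrukavnikov-kapranov} in general. The dictionary is $L_v\leftrightarrow e_v$, $x_e\leftrightarrow e$, $y_e\leftrightarrow e^*$ (up to a sign absorbed by Proposition \ref{prp:variable-change}), $\alpha_j^v,\beta_j^v\leftrightarrow$ the $2g_v$ handle loops at $v$, and $h_v\leftrightarrow$ the degree $-1$ generator carrying the local relation. With the normalisation $(-1)^{n(n-1)/2}=-1$ at $n=2$ and $\s\equiv 1$, moving the invertible factor $\prod_{e=\bullet\to v}(1+x_ey_e)$ across (another change of variables of the type of Proposition \ref{prp:variable-change}) turns $dh_v$ into exactly the ordered multiplicative-preprojective relation $\prod_j[\alpha_j^v,\beta_j^v]\prod_e(1+\cdots)^{\pm 1} = 1$, and the two localisations coincide. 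This gives the claimed Morita equivalence, and for $d_e=0$ with $k$ a field it recovers the identification in \cite{Etgu-Lekili19}.

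The main obstacle I anticipate is purely bookkeeping: fixing the sign conventions and the order of the factors in the multiplicative-preprojective relation (which depends on a choice of total order on the arrows at each vertex) so that it agrees on the nose with the chosen references, and — more substantively — pinning down which cofibrant dg model of the a priori only classically defined multiplicative preprojective algebra is taken as ``the'' derived one, then checking that $A_\Sigma$ is genuinely quasi-isomorphic to it rather than merely sharing its degree $0$ cohomology. I expect both points to dissolve using only the freeness and cancellation lemmas (Propositions \ref{prp:variable-change} and \ref{prp:destabilisation}) together with Remark \ref{rmk:wfuk-plumbing-surface}, so that the argument is the $n=2$ counterpart of the Ginzburg-category identification carried out in Corollary \ref{cor:ginzburg-equal-wfuk}.
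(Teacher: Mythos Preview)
Your approach is essentially the paper's: form $L=\bigoplus_{v\in V(Q)} L_v$ and observe that its endomorphism dg algebra, as presented by Corollary \ref{cor:wfuk-plumbing-surface} with $d_e=0$ and $\s\equiv 1$, is a derived multiplicative preprojective algebra. The paper's proof is literally this one sentence and does not carry out any of the subsequent simplifications you propose; it simply takes the presentation of Corollary \ref{cor:wfuk-plumbing-surface} as matching the definitions in \cite{crawley-boevey-shaw,bezrukavnikov-kapranov} directly.

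One technical caution on the extra work you do: your invocation of Proposition \ref{prp:variable-change} to replace $\gamma_j^v$ by $(\beta_j^v\alpha_j^v)^{-1}\gamma_j^v$ is not licensed by that proposition as stated—the coefficient $u_i$ there must be a unit of the ground ring $k$, not an invertible morphism of the category (and in the localised semifree model of Proposition \ref{prp:localise-semifree}, $(\beta_j^v\alpha_j^v)^{-1}$ is only a homotopy inverse built from several new generators, not a strict inverse). The simplification you want is exactly what is recorded in Remark \ref{rmk simplified version of M_g,m with relations}, which the paper asserts but does not derive via Propositions \ref{prp:variable-change}--\ref{prp:destabilisation}, noting that the resulting category is no longer semifree. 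Since the paper's one-line argument does not rely on this step, the point is moot for the corollary itself; but if you keep the simplification, you should justify it differently (e.g.\ by exhibiting an explicit quasi-equivalence to the non-semifree quotient, as Remark \ref{rmk simplified version of M_g,m with relations} suggests) rather than citing Proposition \ref{prp:variable-change}.
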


\begin{proof}
	It is true by definition that if we define $L=\bigoplus_{v\in V(Q)} L_v$ in $\cW(P_{\Sigma};\eta)$, then $\hom^*(L,L)$ is a derived multiplicative projective algebra. Hence, we get the desired Morita equivalence.
\end{proof}

\begin{rmk}
	The converse of Corollary \ref{cor:multiplicative-preprojective-algebra} is also true, meaning that any derived multiplicative preprojective algebra can be expressed as $\cW(P_{\Sigma};\eta)$. Since we can reorder the terms in Corollary \ref{cor:wfuk-plumbing-surface} as explained in Remark \ref{rmk:wfuk-plumbing-surface}\eqref{item:reordering-surface}, we can also reorder the terms in derived multiplicative projective algebras. Additionally, by Proposition \ref{prp:wfuk-equivalent-1}, derived multiplicative preprojective algebras only depend on the underlying graph of the quiver $Q$. All these observations recover \cite[Theorem 1.4]{crawley-boevey-shaw}.
\end{rmk}

\bibliographystyle{amsalpha}
\bibliography{plumbing_topology_part_new_version}

\newcommand{\etalchar}[1]{$^{#1}$}
\providecommand{\bysame}{\leavevmode\hbox to3em{\hrulefill}\thinspace}
\providecommand{\MR}{\relax\ifhmode\unskip\space\fi MR }
\providecommand{\MRhref}[2]{%
  \href{http://www.ams.org/mathscinet-getitem?mr=#1}{#2}
}
\providecommand{\href}[2]{#2}
\begin{thebibliography}{CDRGG24}

\bibitem[Abo11a]{abouzaid-wrapped-generation}
Mohammed Abouzaid, \emph{A cotangent fibre generates the {Fukaya} category},
  Advances in Mathematics \textbf{228} (2011), no.~2, 894--939.

\bibitem[Abo11b]{Abouzaid11}
\bysame, \emph{A topological model for the {F}ukaya categories of plumbings},
  J. Differential Geom. \textbf{87} (2011), no.~1, 1--80. \MR{2786590}

\bibitem[Abo12]{Abouzaid12}
\bysame, \emph{On the wrapped {F}ukaya category and based loops}, J. Symplectic
  Geom. \textbf{10} (2012), no.~1, 27--79. \MR{2904032}

\bibitem[AS12]{Abouzaid-Smith12}
Mohammed Abouzaid and Ivan Smith, \emph{Exact {L}agrangians in plumbings},
  Geom. Funct. Anal. \textbf{22} (2012), no.~4, 785--831. \MR{2984118}

\bibitem[Asp23]{Asplund21}
Johan Asplund, \emph{Simplicial descent for {C}hekanov-{E}liashberg
  dg-algebras}, J. Topol. \textbf{16} (2023), no.~2, 489--541. \MR{4577070}

\bibitem[Avd21]{Avdek21}
Russell Avdek, \emph{Liouville hypersurfaces and connect sum cobordisms}, J.
  Symplectic Geom. \textbf{19} (2021), no.~4, 865--957. \MR{4371552}

\bibitem[BCJ{\etalchar{+}}22]{cat-entropy}
Hanwool Bae, Dongwook Choa, Wonbo Jeong, Dogancan Karabas, and Sangjin Lee,
  \emph{Categorical entropies on symplectic manifolds}, arXiv preprint
  arXiv:2203.12205 (2022).

\bibitem[BK15]{bezrukavnikov-kapranov}
Roman Bezrukavnikov and Mikhail Kapranov, \emph{Microlocal sheaves and quiver
  varieties}, Annales de la faculté des sciences de Toulouse Mathématiques
  \textbf{25} (2015).

\bibitem[CBS06]{crawley-boevey-shaw}
William Crawley-Boevey and Peter Shaw, \emph{Multiplicative preprojective
  algebras, middle convolution and the {Deligne--Simpson} problem}, Advances in
  Mathematics \textbf{201} (2006), no.~1, 180--208.

\bibitem[CDRGG24]{CDRGG}
Baptiste Chantraine, Georgios Dimitroglou~Rizell, Paolo Ghiggini, and Roman
  Golovko, \emph{Geometric generation of the wrapped {F}ukaya category of
  {W}einstein manifolds and sectors}, Ann. Sci. \'Ec. Norm. Sup\'er. (4)
  \textbf{57} (2024), no.~1, 1--85. \MR{4732675}

\bibitem[Che02]{chekanov}
Yuri Chekanov, \emph{Differential algebra of legendrian links}, Invent. Math
  \textbf{150} (2002), 441--483.

\bibitem[Dri04]{drinfeld}
Vladimir Drinfeld, \emph{{DG} quotients of {DG} categories}, Journal of Algebra
  \textbf{272} (2004), no.~2, 643--691.

\bibitem[DS95]{dwyer-spalinski}
William~G Dwyer and Jan Spalinski, \emph{Homotopy theories and model
  categories}, Handbook of algebraic topology \textbf{73} (1995), no.~126, 21.

\bibitem[EL17]{Etgu-Lekili17}
Tolga Etg\"{u} and {Yank\i} Lekili, \emph{Koszul duality patterns in {F}loer
  theory}, Geom. Topol. \textbf{21} (2017), no.~6, 3313--3389. \MR{3692968}

\bibitem[EL19]{Etgu-Lekili19}
\bysame, \emph{Fukaya categories of plumbings and multiplicative preprojective
  algebras}, Quantum Topol. \textbf{10} (2019), no.~4, 777--813. \MR{4033516}

\bibitem[EL23]{Ekholm-Lekili17}
Tobias Ekholm and {Yank\i} Lekili, \emph{Duality between {L}agrangian and
  {L}egendrian invariants}, Geom. Topol. \textbf{27} (2023), no.~6, 2049--2179.
  \MR{4634745}

\bibitem[Eli18]{Eliashberg18}
Yakov Eliashberg, \emph{Weinstein manifolds revisited}, Modern geometry: a
  celebration of the work of {S}imon {D}onaldson, Proc. Sympos. Pure Math.,
  vol.~99, Amer. Math. Soc., Providence, RI, 2018, pp.~59--82. \MR{3838879}

\bibitem[EN15]{subcritical}
Tobias Ekholm and Lenhard Ng, \emph{Legendrian contact homology in the boundary
  of a subcritical {Weinstein} 4-manifold}, Journal of Differential Geometry
  \textbf{101} (2015), no.~1, 67--157.

\bibitem[FOOO09a]{Fukaya-Oh-Ohta-Ono09a}
Kenji Fukaya, Yong-Geun Oh, Hiroshi Ohta, and Kaoru Ono, \emph{Lagrangian
  intersection {F}loer theory: anomaly and obstruction. {P}art {I}}, AMS/IP
  Studies in Advanced Mathematics, vol. 46.1, American Mathematical Society,
  Providence, RI; International Press, Somerville, MA, 2009. \MR{2553465}

\bibitem[FOOO09b]{Fukaya-Oh-Ohta-Ono09b}
\bysame, \emph{Lagrangian intersection {F}loer theory: anomaly and obstruction.
  {P}art {II}}, AMS/IP Studies in Advanced Mathematics, vol. 46.2, American
  Mathematical Society, Providence, RI; International Press, Somerville, MA,
  2009. \MR{2548482}

\bibitem[Gan12]{Ganatra12}
Sheel Ganatra, \emph{Symplectic {C}ohomology and {D}uality for the {W}rapped
  {F}ukaya {C}ategory}, ProQuest LLC, Ann Arbor, MI, 2012, Thesis
  (Ph.D.)--Massachusetts Institute of Technology. \MR{3121862}

\bibitem[Gei08]{Geiges08}
Hansj\"{o}rg Geiges, \emph{An introduction to contact topology}, Cambridge
  Studies in Advanced Mathematics, vol. 109, Cambridge University Press,
  Cambridge, 2008. \MR{2397738}

\bibitem[Gin06]{ginzburg-calabi-yau}
Victor Ginzburg, \emph{Calabi-{Yau} algebras}, arXiv preprint math/0612139
  (2006).

\bibitem[GPS20]{gps1}
Sheel Ganatra, John Pardon, and Vivek Shende, \emph{Covariantly functorial
  wrapped {F}loer theory on {L}iouville sectors}, Publ. Math. Inst. Hautes
  \'{E}tudes Sci. \textbf{131} (2020), 73--200. \MR{4106794}

\bibitem[GPS24a]{gps3}
\bysame, \emph{Microlocal {M}orse theory of wrapped {F}ukaya categories}, Ann.
  of Math. (2) \textbf{199} (2024), no.~3, 943--1042. \MR{4740209}

\bibitem[GPS24b]{gps2}
\bysame, \emph{Sectorial descent for wrapped {F}ukaya categories}, J. Amer.
  Math. Soc. \textbf{37} (2024), no.~2, 499--635. \MR{4695507}

\bibitem[HLT24]{Hu-Lau-Tan24}
Jiawei Hu, Siu-Cheong Lau, and Ju~Tan, \emph{Mirror construction for {N}akajima
  quiver varieties}, arXiv preprint arXiv:2404.16172 (2024).

\bibitem[Kel07]{dgcat}
Bernhard Keller, \emph{On differential graded categories}, Proceedings of the
  International Congress of Mathematicians Madrid, August 22--30, 2006, 2007,
  pp.~151--190.

\bibitem[Kel11]{keller-calabi-yau}
\bysame, \emph{Deformed {Calabi}--{Yau} completions}, J. Reine Angew. Math.
  \textbf{654} (2011), 125--180, With an appendix by Michel Van den Bergh.

\bibitem[KL21]{hocolim}
Dogancan Karabas and Sangjin Lee, \emph{Homotopy colimits of dg categories and
  {F}ukaya categories}, arXiv preprint arXiv:2109.03411 (2021).

\bibitem[KL24]{Karabas-Lee24}
Dogancan Karabas and Sangjin Lee, \emph{A computational approach to the
  homotopy theory of dg categories}, 2024.

\bibitem[LU21]{lekili-ueda}
Yank{\i} Lekili and Kazushi Ueda, \emph{Homological mirror symmetry for
  {M}ilnor fibers of simple singularities}, Algebr. Geom. \textbf{8} (2021),
  no.~5, 562--586. \MR{4371540}

\bibitem[Lur17]{lurie-ha}
Jacob Lurie, \emph{Higher algebra}, Available for download (2017), 1--1553.

\bibitem[NZ09]{nadler-zaslow}
David Nadler and Eric Zaslow, \emph{Constructible sheaves and the {Fukaya}
  category}, Journal of the American Mathematical Society \textbf{22} (2009),
  no.~1, 233--286.

\bibitem[Sei08a]{Seidel08}
Paul Seidel, \emph{Fukaya categories and {P}icard-{L}efschetz theory}, Zurich
  Lectures in Advanced Mathematics, European Mathematical Society (EMS),
  Z\"{u}rich, 2008. \MR{2441780}

\bibitem[Sei08b]{seidel}
\bysame, \emph{Fukaya categories and {Picard-Lefschetz} theory}, vol.~10,
  European Mathematical Society, 2008.

\bibitem[STW16]{combinatorics}
Vivek Shende, David Treumann, and Harold Williams, \emph{On the combinatorics
  of exact {Lagrangian} surfaces}, arXiv preprint arXiv:1603.07449 (2016).

\bibitem[Syl19]{Sylvan19}
Zachary Sylvan, \emph{On partially wrapped {F}ukaya categories}, J. Topol.
  \textbf{12} (2019), no.~2, 372--441. \MR{3911570}

\bibitem[Tab05]{tabuada-model}
Gon{\c{c}}alo Tabuada, \emph{Une structure de cat{\'e}gorie de modeles de
  {Quillen} sur la cat{\'e}gorie des dg-cat{\'e}gories}, Comptes Rendus
  Mathematique \textbf{340} (2005), no.~1, 15--19.

\bibitem[To{\"e}07]{toen-morita}
Bertrand To{\"e}n, \emph{The homotopy theory of dg-categories and derived
  {Morita} theory}, Inventiones mathematicae \textbf{167} (2007), no.~3,
  615--667.

\bibitem[To{\"e}11]{toen}
\bysame, \emph{Lectures on dg-categories}, Topics in algebraic and topological
  K-theory, Springer, 2011, pp.~243--302.

\bibitem[Ver85]{verdier-perverse-sheaf}
Jean-Louis Verdier, \emph{Extension of a perverse sheaf over a closed
  subspace}, Ast{\'e}risque \textbf{130} (1985), 210--217.

\end{thebibliography}

\end{document}